\renewcommand*{\backref}[1]{}
\renewcommand*{\backrefalt}[4]{\footnotesize\hspace*{0pt}\hfill \ifcase #1 \mbox{[not cited]} \or  \mbox{[p.\,#2]}  \else \mbox{[pp.\,#2]} \fi}
\crefname{lem}{Lemma}{Lemmas}
\crefname{thm}{Theorem}{Theorems}
\crefname{cor}{Corollary}{Corollaries}
\crefname{prop}{Proposition}{Propositions}
\crefname{conj}{Conjecture}{Conjectures}
\crefname{open}{Open Problem}{Open Problems}
\setlist[itemize]{topsep=0ex,itemsep=0ex,parsep=0.4ex}
\setlist[enumerate]{topsep=0ex,itemsep=0ex,parsep=0.4ex}
\def\NAT@spacechar{~}
\renewcommand{\baselinestretch}{1.1}
\DeclarePairedDelimiter{\ceil}{\lceil}{\rceil}
\DeclarePairedDelimiter{\floor}{\lfloor}{\rfloor}
\newcommand{\half}{\ensuremath{\protect\tfrac{1}{2}}}
\renewcommand{\ge}{\geqslant}
\renewcommand{\le}{\leqslant}
\renewcommand{\geq}{\geqslant}
\renewcommand{\leq}{\leqslant}
\newcommand{\blah}[1]{\ensuremath{\protect\langle#1\rangle}}
\DeclareMathOperator{\tw}{tw}
\DeclareMathOperator{\pw}{pw}
\DeclareMathOperator{\dist}{dist}
\DeclareMathOperator{\col}{col}
\newcommand{\acy}{\chi_{\text{a}}}
\newcommand{\st}{\chi_{\text{s}}}
\newcommand{\WLLL}{Weighted Local Lemma}
\newcommand{\GLLL}{General Local Lemma}
\newcommand{\ELLL}{Optimised Weighted Local Lemma}
\renewcommand{\thefootnote}{\fnsymbol{footnote}}	
\theoremstyle{plain}
\newtheorem{thm}{Theorem}[section]
\newtheorem{lem}[thm]{Lemma}
\newtheorem{cor}[thm]{Corollary}
\newtheorem{prop}[thm]{Proposition}
\newtheorem{claim}{Claim}
\theoremstyle{definition}
\newtheorem{open}[thm]{Open Problem}
\newcommand{\wish}[1]{\framebox{\textcolor{red}{#1}}}
\renewcommand{\wish}[1]{}
\newcommand{\GG}{\mathcal{G}}
\newcommand{\DD}{\mathcal{D}}
\newcommand{\NN}{\mathbb{N}}
\newcommand{\EE}{\mathcal{E}}
\newcommand{\prob}{\mathbb{P}}
\newcommand{\pich}{\pi_{\textup{ch}}}
\begin{document}

\author{David~R.~Wood\footnotemark[2]}

\footnotetext[2]{School of Mathematics, Monash University, Melbourne, Australia  (\texttt{david.wood@monash.edu}).\\ Research supported by the Australian Research Council.}

\sloppy

\title{\boldmath\bf Nonrepetitive Graph Colouring}
\maketitle


\begin{abstract}
A vertex colouring of a graph $G$ is \emph{nonrepetitive} if $G$ contains no path for which the first half of the path is assigned the same sequence of colours as the second half. Thue's famous theorem says that every path is nonrepetitively 3-colourable. This paper surveys results about nonrepetitive colourings of graphs. The goal is to give a unified and comprehensive presentation of the major results and proof methods, as well as to highlight numerous open problems. 
\end{abstract}

\renewcommand{\thefootnote}{\arabic{footnote}}

\newpage
\tableofcontents
\newpage

\section{Introduction}
\label{Introduction}

In 1906, \citet{Thue06} constructed arbitrarily long words $w_1w_2\ldots$ on an alphabet of three symbols with no repeated consecutive blocks; that is, there are no integers $i,k\in\NN$ such that 
$w_iw_{i+1}\dots w_{i+k-1}=w_{i+k}w_{i+k+1}\dots w_{i+2k-1}$. Such a word is called \emph{square-free}. 
Thue's Theorem is a foundational result in the combinatorics of words (see the surveys \citep{BK03,Berstel05,BP07,Currie-TCS05,Lothaire02,Lothaire05,Currie93,Berstel95})
and has also found applications in semi-group theory~\citep{BN71}, dynamics~\citep{MH44,Queffler10}, and most famously in the solution of the Burnside problem for groups by \citet{NA68a,NA68b,NA68c}.

In 2002, \citet{AGHR02} introduced a graph-theoretic generalisation of square-free words. They defined a vertex colouring of a graph to be \emph{nonrepetitive} if there is no path for which the first half of the path is assigned the same sequence of colours as the second half.  Thue's Theorem says that every path is nonrepetitively 3-colourable. Nonrepetitive graph colouring is interesting for several reasons: 
\begin{itemize}
\item It is a natural marriage of two major areas of combinatorial mathematics, combinatorics of words and graph colouring. 
\item Several advanced techniques have been used to obtain results in nonrepetitive graph colouring, such as the Lov\'asz Local Lemma, entropy compression, layered treewidth, and product structure theorems. Indeed, in some cases,  nonrepetitive graph colouring has motivated the development of these general-purpose tools that have then been applied to other areas.
\item Nonrepetitive graph colouring  is one of the most illustrative examples of the use of the Lov\'asz Local Lemma, since it requires the Lov\'asz Local Lemma in its full generality. I recommend teaching \cref{2DeltaSquared} in any course on the probabilistic method. 
\item Nonrepetitive graph colouring turns out to be a central concept in graph sparsity. Indeed, graph classes with bounded expansion can be characterised in terms of nonrepetitive colorings (see \cref{BoundedExpansionIffNonRep}). 
\item One of the most important recent developments in algorithmic graph theory has been the constructive proof of the Lov\'asz Local Lemma due to \citet{MoserTardos}. This lead to what Terry Tao dubbed the `entropy compression' method. Nonrepetitive graph colouring was one of the first applications of this method that showed that entropy compression can give better results than those obtained using the Lov\'asz Local Lemma~\citep{DJKW16,GKM11}. 
\item Several recent papers have presented generalised Moser--Tardos frameworks improving the original work in various ways~\citep{AIK19,Harris19,HSS11,HS17,HV20}. Nonrepetitive graph colouring has been a key test case here. One reason for this is that when modelling nonrepetitive colouring using the Lov\'asz Local Lemma the number of bad events (one for each even path) grows exponentially with the size of the graph, which is an obstacle for  polynomial-time algorithms. 
\end{itemize}

This paper surveys results about nonrepetitive colourings of graphs. The goal is to give a unified and comprehensive presentation of the major results and proof methods from the literature, to highlight numerous open problems, and to present a couple of original theorems. For previous surveys, see \citep{AG05,Gryczuk-IJMMS07,Grytczuk-DM08,CSZ,Grytczuk-Berge,Skrabu15}. 

\subsection{Path-Nonrepetitive Colourings}

We consider finite undirected graphs with no loops or parallel edges. A \emph{colouring} of a graph $G$ is a function $\phi$ that assigns a `colour' to each vertex of $G$. A colouring $\phi$ of $G$ is a \emph{$k$-colouring} if $|\{\phi(v):v\in V(G)\}|\leq k$. A colouring $\phi$ of $G$ is \emph{proper} if $\phi(v)\neq\phi(w)$ for each edge $vw\in E(G)$. The \emph{chromatic number} $\chi(G)$ is the minimum integer $k$ for which there exists a proper $k$-colouring of $G$. If $\phi$ is a colouring of $G$, then a sequence $(v_1,v_2,\dots,v_{2t})$ of vertices in $G$ is \emph{$\phi$-repetitive} if $\phi(v_i)=\phi(v_{t+i})$ for each $i\in\{1,\dots,t\}$. A $\phi$-repetitive sequence is also said to be \emph{repetitively coloured by $\phi$}. A \emph{walk} in a graph $G$ is a sequence $(v_1,v_2,\dots,v_{t})$ of vertices in $G$ such that $v_iv_{i+1}\in E(G)$ for each $i\in\{1,\dots,t-1\}$. A \emph{path} in a graph $G$ is a walk $(v_1,v_2,\dots,v_{t})$ in $G$ such that $v_i\neq v_j$ for all distinct $i,j\in\{1,\dots,t\}$. 

A colouring $\phi$ of a graph $G$ is \emph{path-nonrepetitive}, or simply \emph{nonrepetitive}, if no path of $G$ is $\phi$-repetitive. The \emph{(path-)nonrepetitive chromatic number} $\pi(G)$ is the minimum integer $k$ such that $G$ admits a nonrepetitive $k$-colouring. Note that $\pi(G)$ is also called the \emph{Thue chromatic number} or \emph{square-free chromatic number} of $G$. Thue's theorem mentioned above says that paths are nonrepetitively 3-colourable. Every path-nonrepetitive colouring  is proper, as otherwise like-coloured adjacent vertices would form a repetitively coloured path on 2 vertices. Moreover, every nonrepetitive colouring has no $2$-coloured $P_4$ (a path on four vertices). A proper colouring with no $2$-coloured $P_4$ is called a \emph{star colouring} since each bichromatic subgraph is a star forest; see \citep{ACKKR04,Grunbaum73,FRR04,Borodin-DM79,Wood-DMTCS05,NesOss03}. The \emph{star chromatic number} $\st(G)$ is the minimum number of colours in a proper colouring of $G$ with no $2$-coloured $P_4$. Thus
\begin{equation}
\chi(G)\leq \st(G)\leq \pi(G).
\end{equation}

Starting with the seminal work of \citet{AGHR02}, nonrepetitive colourings of graphs have now been widely studied, including for the following graph classes: cycles \citep{Currie-EJC02}, trees \citep{BGKNP07,KP-DM08,FOOZ11}, outerplanar graphs \citep{KP-DM08,BV07}, graphs with bounded treewidth \citep{KP-DM08,BV07}, graphs with bounded degree~\citep{AGHR02,Gryczuk-IJMMS07,HJ-DM11,DJKW16,HS17,Rosenfeld20}, graphs excluding a fixed immersion~\citep{WW19}, planar graphs~\citep{HJSS11,Przybyo14,Przybyo13,BC13,JS12,JS09,JS09,BDMR17,DFJW13,DEJWW20}, graphs embeddable on a fixed surface~\citep{DEJWW20,DMW17}, graphs excluding a fixed minor~\citep{DEJWW20,DMW17}, graphs excluding a fixed topological minor~\citep{DEJWW20,DMW17}, and graph subdivisions \citep{Gryczuk-IJMMS07,BW08,MS09,PZ09,NOW11,GPZ11,DJKW16}. \cref{ResultsTable} summarises many of these results. 

	\begin{table}
		\setlength{\tabcolsep}{1ex}
		\begin{center}
			\caption{Lower and upper bounds on $\pi$ and $\sigma$ for various graph classes.\label{ResultsTable}}
			\begin{tabular}{lccr}
				\hline
				graph class & $\pi$  & $\sigma$ & reference\\
				\hline
				paths & $3$ &  $4$ & \S\ref{Paths}\\
				cycles & $3\dots4$ &  $4\dots5$ & \S\ref{Cycles}\\
				pathwidth $k$ & $k+1\dots 2k^2+6k+1$ &$(2k^2+6k+1)(k\,\Delta+1)$ & \S\ref{Pathwidth} \\
				trees & $4$ &  $\Delta+1\dots4\Delta$ & \S\ref{Trees}\\
				outerplanar & $7\dots12$ &  $\Theta(\Delta)$ & \S\ref{Outerplanar}\\
				treewidth $k$ & $\binom{k+2}{2}\ldots4^k$ & $O(\min\{4^kk\Delta,k^2\Delta^2\})$ & \S\ref{Treewidth},\ref{TreewidthDegree}\\
				planar & $11\dots768$ &   $\Theta(\Delta)$ & \S\ref{Planar}\\
				Euler genus $g$ & $\Omega(g^{3/5}/\log^{1/5}g)\dots O(g)$ &  $O(g\Delta)$ & \S\ref{Surfaces}\\
				excluded minor & $\Theta(1)$  & $\Theta(\Delta)$ & \S\ref{MinorClosedClass}\\
				excluded topo.~minor & $\Theta(1)$ &   $\Theta(\Delta)$ & \S\ref{MinorClosedClass}\\
				max degree $\Delta$ & $\Omega(\Delta^2/\log \Delta)\dots(1+o(1))\Delta^2$ & ? & \S\ref{BoundedDegree}\\
				\hline
			\end{tabular}
		\end{center}
	\end{table}
	
\subsection{Walk-Nonrepetitive Colourings}

While path-nonrepetitive colourings are the focus of this survey, we also present results about colourings of graphs that are nonrepetitive on walks, previously studied in \citep{BV08,BW08,Aprile14}. A walk $(v_1,\dots,v_{2t})$ in a graph is \emph{boring} if $v_i=v_{t+i}$ for each $i\in\{1,\dots,t\}$. Every colouring of a boring walk is repetitive. So \citet{BW08} defined a colouring to be \emph{walk-nonrepetitive} if every repetitively coloured walk is boring. For a graph $G$, the \emph{walk-nonrepetitive chromatic number} $\sigma(G)$ is the minimum number of colours in a walk-nonrepetitive colouring of $G$. Bounds on $\sigma$ for various classes are presented in \cref{ResultsTable}.

\subsection{Stroll-Nonrepetitive Colourings}

The following notion sits between paths and walks, and is important for many proofs that follow. A \emph{stroll} in a graph $G$ is a walk  $(v_1,\dots,v_{2t})$ such that $v_i\neq v_{t+i}$ for each $i\in\{1,\dots,t\}$. A colouring of $G$ is \emph{stroll-nonrepetitive} if no stroll is repetitively coloured. For a graph $G$, the \emph{stroll-nonrepetitive chromatic number} $\rho(G)$ is the minimum number of colours in a stroll-nonrepetitive colouring of $G$. Every walk-nonrepetitive colouring is stroll-nonrepetitive and every stroll-nonrepetitive colouring is path-nonrepetitive. Thus every graph $G$ satisfies $$\pi(G) \leq \rho(G) \leq \sigma(G).$$

At first glance the definition of stroll-nonreptitive may seem arbitrary. However, stroll-nonreptitive colourings play a central role.  First, they appear in the characterisation of walk-nonrepetitive colourings (\cref{SigmaCharacterisation}). Second, several results for path-nonrepetitive colourings can be strengthened for stroll-nonrepetitive colourings, and this strengthening is sometimes needed in the proof. This includes the breakthrough result for planar graphs (\cref{PlanarRho} using \cref{ProductRho}). Despite the importance of stroll-nonreptitive colourings, the following fundamental questions remain unsolved. 

\begin{open}
\label{PiRho}
Is there a function $f$ such that $\rho(G)\leq f(\pi(G))$ for every graph $G$? It is possible that $\pi(G)=\rho(G)$ for every graph $G$. 
\end{open}

\subsection{List Colourings}

Some results about nonrepetitive colouring hold in the stronger setting of list colourings, which we now introduce. Let $G$ be a graph. A \emph{list-assignment} of $G$ is a function $L$ that assigns each vertex $v$ of $G$ a set $L(v)$, whose elements are called \emph{colours}. If $|L(v)|\geq k$ for each vertex $v$ of $G$, then $L$ is a \emph{$k$-list-assignment}. An \emph{$L$-colouring} of $G$ is a function $\phi$ such that $\phi(v)\in L(v)$ for each vertex $v$ of $G$. The \emph{list chromatic number} $\chi_{\textup{ch}}(G)$ (also called \emph{choosability} of $G$) is the minimum integer $k$ such that $G$ has a proper $L$-colouring for every $k$-list-assignment $L$ of $G$. List chromatic number is widely studied in the literature. These notions naturally extend to nonrepetitive colourings. The \emph{(path-)nonrepetitive list chromatic number} $\pich(G)$ is the minimum integer $k$ such that $G$ has a nonrepetitive $L$-colouring for every $k$-list-assignment $L$ of $G$.  

\subsection{Structure}

This survey is structured as follows. \cref{Tools} presents definitions and tools that will be used for the main results that follow. \cref{BoundedDegreeGraphs} contains results about nonrepetitive colourings of graphs with bounded degree. Most of the material here is based on the Lov\'asz Local Lemma and related methods. \cref{TreesTreewidth} begins our study of nonrepetitive colourings of structured graph classes by looking at trees and graphs of bounded treewidth. This study continues in \cref{PlanarAndBeyond}, where we consider planar graphs and other minor-closed classes.  \cref{Subdivisions} considers nonrepetitive colourings of graph subdivisions. This material is important for \cref{BoundedExpansion} which looks at connections between nonrpetitive colourings and graph expansion. 


This survey aims to present most of the main results about nonrepetitive graph colouring. Nevertheless, several relevant areas have been omitted, including game-theoretic generalisations~\citep{GKM11a,GKZ15,Pegden11,GSZ13}, anagram-free colouring~\citep{CJS17,WW18a,KLS18,WW18b,CDM18,Carpi98,Pleasants70,Keranen09}, geometric variants \citep{GKZ16,WW16,DGNPSTWW20}, $k$-power-free colourings~\citep{KM13,BreakingRhythm}, the Thue sequence of a graph \citep{KSN16}, and computational complexity issues \citep{MS09,Manin} (testing whether a given colouring of a graph is nonrepetitive is co-NP-complete, even for 4-colourings \citep{MS09}). 






\section{Tools}
\label{Tools}

\subsection{Definitions}

We use standard graph-theoretic terminology and notation~\citep{Diestel5}. 

Let $\dist_G(u,v)$ be the distance between vertices $u$ and $v$ in a graph $G$. For a vertex $v$ in a graph $G$ and $r\in\mathbb{N}$, let $N^r_G(v)$ be the set of vertices of $G$ at distance exactly $r$ from $v$, and let $N^r_G[v]$ be the set of vertices at distance at most $r$ from $v$. The set $N^r_G[v]$ is called an \emph{$r$-ball}.

The \emph{cartesian product} of graphs $A$ and $B$, denoted by $A\square B$, is the graph with vertex set $V(A)\times V(B)$, where distinct vertices $(v,x),(w,y)\in V(A)\times V(B)$ are adjacent if:
$v=w$ and $xy\in E(B)$; or
$x=y$ and $vw\in E(A)$.
The \emph{direct product} of $A$ and $B$, denoted by $A\times  B$, is the graph with vertex set $V(A)\times V(B)$, where distinct vertices $(v,x),(w,y)\in V(A)\times V(B)$ are adjacent if $vw\in E(A)$ and $xy\in E(B)$.
The \emph{strong product} of $A$ and $B$, denoted by $A\boxtimes B$, is the union of $A\square B$ and $A\times B$. 
If $X$ is a subgraph of some product $A \ast B$, then the \emph{projection} of $X$ into $A$ is the set of vertices $v\in V(A)$ such that $(v,w)\in V(X)$ for some $w\in V(B)$.


A \emph{subdivision} of a graph $G$ is a graph $G'$ obtained from $G$ by replacing each edge $vw$ of $G$ by a path $P_{vw}$ with endpoints $v$ and $w$, where the $P_{vw}$ are pairwise internally disjoint. If each path $P_{vw}$ has exactly $d$ internal vertices, then $G'$ is the \emph{$d$-subdivision} of $G$, denoted by $G^{(d)}$. If each path $P_{vw}$ has at least $d$ internal vertices, then $G'$ is a \emph{$(\geq d)$-subdivision}. If each path $P_{vw}$ has at most $d$ internal vertices, then $G'$ is a \emph{$(\leq d)$-subdivision}.

A graph $H$ is a \emph{minor} of a graph $G$ if a graph isomorphic to $H$ can be obtained from a subgraph of $G$ by contracting edges. A graph class $\GG$ is \emph{minor-closed} if for every graph $G\in\GG$, every minor of $G$ is in $\GG$. A graph $H$ is a \emph{topological minor} of $G$ if some subgraph of $G$ is isomorphic to a subdivision of $H$. 

A \emph{graph parameter} is a function $\lambda$ such that $\lambda(G)$ is a non-negative real number for every graph $G$, and $\lambda(G_1)=\lambda(G_2)$ for all isomorphic graphs $G_1$ and $G_2$. Examples include the chromatic number $\chi$, the nonrepetitive chromatic number $\pi$, etc. If $\lambda$ and $\mu$ are graph parameters, then $\lambda$ is \emph{bounded by} $\mu$ if for some function $f$,  we have $\lambda(G)\leq f(\mu(G))$ for every graph $G$. Parameters $\lambda$ and $\mu$ are \emph{tied}  if $\lambda$ is bounded by $\mu$ and $\mu$ is bounded by $\lambda$.  

\subsection{Naive Upper Bound}

Consider the following naive upper bound, where $\alpha(G)$ is the size of the largest independent set in $G$. 

\begin{lem}
	\label{NaiveUpperBound}
	For every graph $G$,
	$$\st(G) \leq \pi(G) \leq \rho(G) \leq |V(G)| - \alpha(G) + 1.$$
	For every complete multipartite graph $G$,
	$$\st(G) = \pi(G) = \rho(G) = |V(G)| - \alpha(G) + 1.$$
\end{lem}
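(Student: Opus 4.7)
The plan has two parts. For the upper bound $\rho(G) \leq |V(G)| - \alpha(G) + 1$, I would give an explicit construction. Let $I$ be a maximum independent set in $G$. Assign every vertex of $I$ a common colour $0$ and give each vertex outside $I$ its own private colour; this uses $|V(G)| - \alpha(G) + 1$ colours and is proper since $I$ is independent. To verify stroll-nonrepetitivity, suppose for contradiction that $(v_1,\dots,v_{2t})$ is a $\phi$-repetitive stroll. For each $i \in \{1,\dots,t\}$, the conditions $\phi(v_i) = \phi(v_{t+i})$ and $v_i \neq v_{t+i}$, together with the fact that the non-$I$ vertices all receive distinct private colours, force $v_i, v_{t+i} \in I$. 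In particular $v_1, v_2 \in I$, contradicting $v_1 v_2 \in E(G)$ (which holds because the sequence is a walk). The chain $\st(G) \leq \pi(G) \leq \rho(G)$ was established immediately before the lemma.

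For the equality in the complete multipartite case, it suffices to show $\st(G) \geq |V(G)| - \alpha(G) + 1$. Take any star colouring $\phi$ with colour classes $C_1,\dots,C_m$; by properness each $C_j$ lies inside a single part of $G$. Call $C_j$ \emph{big} if $|C_j| \geq 2$. The key structural observation is that two big classes cannot sit in different parts: if $u_1,u_2 \in C_a \subseteq V_i$ and $w_1,w_2 \in C_b \subseteq V_j$ with $i \neq j$, then the four vertices $u_1, w_1, u_2, w_2$ are distinct, each consecutive pair is an edge of $G$ (because it crosses parts), and the colour sequence is $a,b,a,b$ --- a 2-coloured $P_4$, contradicting $\phi$ being a star colouring. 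Hence all big classes lie in a single part of size at most $\alpha(G)$, so
$$
|V(G)| - m \;=\; \sum_j (|C_j| - 1) \;=\; \sum_{\text{big } C_j}(|C_j| - 1) \;\leq\; \alpha(G) - 1,
$$
which rearranges to $m \geq |V(G)| - \alpha(G) + 1$. Combined with the upper bound and the chain $\st \leq \pi \leq \rho$, all three parameters equal $|V(G)| - \alpha(G) + 1$.

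The main obstacle is identifying the right structural fact for the multipartite lower bound. The naive estimate ``each colour class has size at most $\alpha(G)$'' is too weak, since a proper colouring could use many classes of size $\alpha(G)$. The extra leverage comes from $P_4$-freeness of a star colouring, which pins all oversized classes to a single part; this is the non-obvious ingredient. The upper bound, by contrast, is a routine ``colour the maximum independent set uniformly'' construction, and its verification drops out of the definitional clause $v_i \neq v_{t+i}$ that distinguishes a stroll from a general walk.
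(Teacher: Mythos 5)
Your proof is correct and follows essentially the same route as the paper: the identical ``one colour on a maximum independent set, private colours elsewhere'' construction for the upper bound, and the same $2$-coloured $P_4$ obstruction for the complete multipartite lower bound (your ``all big colour classes lie in one part'' is just the paper's ``at most one part is non-rainbow'' with the counting rearranged). No gaps.
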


\begin{proof}
	Let $X$ be an independent set in $G$ with $|X|=\alpha(G)$. Assign each vertex in $V(G)\setminus X$ a unique colour. Assign the vertices in $X$ one further colour. Suppose that there is a repetitively coloured stroll $P=(v_1,\dots,v_{2t})$ in $G$. Since $X$ is an independent set, some vertex $v_i$ is in $V(G)\setminus X$. Without loss of generality, $i\in\{1,\dots,t\}$. Since $v_{t+i}$ is assigned the same colour as $v_i$ and $v_i$ is the only vertex assigned its colour, $v_i=v_{t+i}$. Thus $P$ is not a stroll, and $G$ is stroll-nonrepetitively coloured. Thus $\st(G) \leq \pi(G) \leq \rho(G) \leq |V(G)| - \alpha(G) + 1$. 
	
	Now consider a complete multipartite graph $G$ with colour classes $X_1,\dots,X_k$ with $\alpha(G)=|X_1|\geq\dots\geq |X_k|$. Consider a star colouring of $G$. Distinct sets $X_i$ and $X_j$ are assigned disjoint sets of colours. Say $X_i$ is \emph{rainbow} if $X_i$ is assigned $|X_i|$ colours. If distinct sets $X_i$ and $X_j$ are both not rainbow, then two vertices in $X_i$ are assigned the same colour, and two vertices in $X_j$ are assigned the same colour, implying there is monochromatic 4-vertex path. Thus at least $k-1$ of $X_1,\dots,X_k$ are rainbow. The remaining set is assigned at least one colour. Thus for some $i\in\{1,\dots,k\}$, the total number of colours is at least $1+\sum_{j\neq i}|X_j|\geq 1+ n-|X_1|=n-\alpha(G)+1$. Thus
	$\rho(G) \geq \pi(G) \geq \st(G) \geq n - \alpha(G)+1$. 
\end{proof}

\subsection{Extremal Questions}

This section studies the maximum number of edges in a nonrepetitively coloured graph. \citet{BW08} determined the answer precisely for path-nonrepetitive colouring. 

\begin{prop}[\citep{BW08}]
	\label{ExtremalPi}
	For all integer $n\geq c\geq 1$ the maximum number of edges in an $n$-vertex graph $G$ with $\pi(G)\leq c$ equals $(c-1)n-\binom{c}{2}$.
\end{prop}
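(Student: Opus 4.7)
The plan is to prove the two bounds separately: an explicit construction for the lower bound, and a counting argument based on the star-colouring property for the upper bound.

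For the lower bound I would take $G$ to be the complete multipartite graph with $c-1$ singleton parts and one part of size $n-c+1$, that is, $G = K_{\underbrace{1,\dots,1}_{c-1},\,n-c+1}$. Its independence number is $\alpha(G) = n-c+1$, so \cref{NaiveUpperBound} applied to complete multipartite graphs yields $\pi(G) = |V(G)| - \alpha(G) + 1 = c$. A direct count gives
$$|E(G)| = \binom{n}{2} - \binom{n-c+1}{2},$$
which simplifies algebraically to $(c-1)n - \binom{c}{2}$, achieving the claimed lower bound.

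For the upper bound, I would fix a nonrepetitive $c$-colouring $\phi$ of $G$ with colour classes $V_1,\dots,V_c$ of sizes $n_i := |V_i|$. The key observation, already noted in the introduction, is that every nonrepetitive colouring is a star colouring, so $\phi$ contains no bichromatic $P_4$. Hence for each pair $i<j$, the bipartite subgraph $G[V_i \cup V_j]$ is $P_4$-free, and any $P_4$-free bipartite graph is easily seen to be a disjoint union of stars: if some vertex $u$ has two neighbours $x,y$ and $x$ has another neighbour $u'\ne u$, then $(y,u,x,u')$ is a $P_4$. Since a star forest on $m\geq 1$ vertices has at most $m-1$ edges, whenever both $V_i$ and $V_j$ are nonempty I get $|E(G[V_i \cup V_j])| \leq n_i + n_j - 1$.

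Let $c' \leq c$ be the number of nonempty colour classes. Summing over all pairs of nonempty classes gives
$$|E(G)| \;\leq\; \sum_{1\leq i<j\leq c'} (n_i + n_j - 1) \;=\; (c'-1)\,n - \binom{c'}{2}.$$
A quick check shows $f(c') := (c'-1)n - \binom{c'}{2}$ satisfies $f(c') - f(c'-1) = n - (c'-1) \geq 0$ whenever $c' \leq n$, so $f$ is nondecreasing on $\{1,\dots,n\}$. Since $c' \leq c \leq n$, this yields $|E(G)| \leq f(c) = (c-1)n - \binom{c}{2}$, matching the construction. The only step requiring any real care is the bookkeeping around possibly empty colour classes, handled by the monotonicity of $f$; the conceptual heart of the argument — converting the no-bichromatic-$P_4$ condition into a per-pair star-forest bound and summing — is entirely routine.
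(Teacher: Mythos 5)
Your proof is correct and follows essentially the same route as the paper: bound each bichromatic subgraph $G[V_i\cup V_j]$ by $n_i+n_j-1$ edges, sum over pairs, and match with the graph $K_{c-1}$ joined to an independent set of $n-c+1$ vertices. The only cosmetic differences are that the paper gets the per-pair bound by noting each bichromatic subgraph is a forest (every cycle needs three colours) rather than a star forest, and you additionally tidy up the case of empty colour classes via monotonicity, which the paper's write-up glosses over.
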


\begin{proof}
	Say $G$ is an $n$-vertex graph with $\pi(G)\leq c$. Fix a path-nonrepetitive $c$-colouring of $G$. Say there are $n_i$ vertices in the $i$-th colour class. Every cycle receives at least three colours. Thus the subgraph induced by the vertices coloured $i$ and $j$ is a forest, and has at most $n_i+n_j-1$ edges. Hence the number of edges in $G$ is at most 
	\begin{equation*}
	\sum_{1\leq i<j\leq c}(n_i+n_j-1)=\sum_{1\leq i\leq c}(c-1)n_i-\binom{c}{2}
	=(c-1)n-\binom{c}{2}.
	\end{equation*}
	This bound is attained by the graph consisting of a complete graph $K_{c-1}$ completely joined to an independent set of $n-(c-1)$ vertices, which obviously has a path-nonrepetitive $c$-colouring.
\end{proof}

The same answer applies for stroll-nonrepetitive colourings. 

\begin{prop}
	\label{ExtremalRho}
	For all integer $n\geq c\geq 1$ the maximum number of edges in an $n$-vertex graph $G$ with $\rho(G)\leq c$ equals $(c-1)n-\binom{c}{2}$.
\end{prop}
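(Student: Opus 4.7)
The plan is to reduce both directions to results already in hand. For the upper bound, I would use the chain $\pi(G)\leq\rho(G)\leq c$ so that any $n$-vertex graph $G$ with $\rho(G)\leq c$ also has $\pi(G)\leq c$. Applying \cref{ExtremalPi} immediately gives $|E(G)|\leq (c-1)n-\binom{c}{2}$, with no further work required.

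For the lower bound, I would exhibit the same extremal graph as in \cref{ExtremalPi}: let $H$ be the graph obtained by completely joining $K_{c-1}$ to an independent set of $n-(c-1)$ vertices. This is a complete multipartite graph whose parts are $c-1$ singletons together with one part of size $n-c+1$, so $\alpha(H)=n-c+1$. By the second assertion of \cref{NaiveUpperBound} for complete multipartite graphs,
\[
\rho(H)=|V(H)|-\alpha(H)+1=n-(n-c+1)+1=c.
\]
The edge count of $H$ is the same as computed in the proof of \cref{ExtremalPi}, namely $(c-1)n-\binom{c}{2}$, so $H$ matches the upper bound.

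The main (and only) thing to watch is that for the previous proposition the lower-bound construction only needed $\pi(H)\leq c$, whereas here we need the stronger $\rho(H)\leq c$; this is exactly what the complete multipartite clause of \cref{NaiveUpperBound} provides, so there is no real obstacle.
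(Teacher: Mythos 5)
Your proposal is correct and matches the paper's argument: the upper bound comes from $\pi(G)\leq\rho(G)\leq c$ together with \cref{ExtremalPi}, and tightness comes from the same extremal graph $K_{c-1}$ joined to an independent set. The only difference is cosmetic — where the paper asserts that this example ``obviously'' has a stroll-nonrepetitive $c$-colouring, you justify it by invoking \cref{NaiveUpperBound}, which is a perfectly valid (and slightly more explicit) way to make the same point.
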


\begin{proof}
	If $\rho(G)\leq c$ then $\pi(G)\leq c$, implying $E(G)| \leq (c-1)|V(G)| -\binom{c}{2}$ by \cref{ExtremalPi}. This bound is tight since the example given in the proof of \cref{ExtremalPi} obviously has a stroll-nonrepetitive $c$-colouring. 
\end{proof}

%
%
%

%

Now consider the maximum number of edges in a walk-nonrepetitive coloured graph. First note that the example in the proof of \cref{ExtremalPi} is walk-repetitive. Since $\sigma(G)\geq\Delta(G)+1$ and $|E(G)|\leq\half\Delta(G)|V(G)|$, we have the trivial upper bound, 
$$|E(G)|\leq\half(\sigma(G)-1)|V(G)|.$$
This bound is tight for $\sigma=2$ (matchings) and $\sigma=3$ (cycles), but is not known to be tight for $\sigma\geq 4$. 

We have the following lower bound. 

\begin{prop}[\citep{BW08}]
	\label{Example}
	For all $p\geq1$, there are infinitely many graphs $G$ with $\sigma(G)\leq 4p$ and $$|E(G)|\geq\tfrac18 (3\sigma(G)-4)|V(G)|- \tfrac19 \sigma(G)^2.$$
\end{prop}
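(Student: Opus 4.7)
The plan is to construct, for each $p \geq 1$, an explicit infinite family of graphs $\{G_n\}_{n \geq N_0}$ satisfying both $\sigma(G_n) \leq 4p$ and edge density $\tfrac{3p-1}{2} + o(1)$. Since $\tfrac{1}{8}(3\cdot 4p - 4) = \tfrac{3p-1}{2}$, the claimed inequality will then follow, with any slack absorbed by the $-\tfrac{1}{9}\sigma^2$ term.

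The natural paradigm is a layered construction: take $p$ vertex-disjoint ``layers'', each being a graph known to be walk-nonrepetitively $4$-colourable, such as a long cycle of length $n$ (using the bound $\sigma(C_n)\leq 5$ from \cref{ResultsTable}, and noting that for suitably chosen $n$ one has $\sigma(C_n)\leq 4$). Assign each layer its own palette of $4$ colours, for $4p$ colours in total. Then add cross-layer edges to bring the average degree up to $3p-1$, giving $|V(G_n)| = pn$ and $|E(G_n)| = \tfrac{(3p-1)pn}{2}$. A clean choice is to insert, between each pair of layers, a $3$-regular bipartite graph of cross-edges chosen to respect the cyclic automorphism of both layers.

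The key verification is that the layered $4p$-colouring just defined is walk-nonrepetitive. If $(v_1,\dots,v_{2t})$ is a repetitively coloured walk in $G_n$, then disjointness of the palettes forces $v_i$ and $v_{t+i}$ to lie in the same layer for every $i$. One then wants the trace of the walk inside each layer to itself be a walk in that layer, so that the walk-nonrepetitive $4$-colouring of the cycle forces $v_i = v_{t+i}$ and the walk is boring. Arranging the cross-edges so that this trace-walk property holds, while still keeping the cross-layer edge count at $3$ per vertex per other layer, is the main technical obstacle; the cyclic symmetry of the cross-edges is precisely the feature designed to handle it.

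Once $\sigma(G_n) \leq 4p$ is in place, the density inequality reduces to the elementary computation
\[
\tfrac{1}{8}(3\sigma(G_n) - 4)|V(G_n)| - \tfrac{\sigma(G_n)^2}{9} \;\leq\; \tfrac{1}{8}(12p - 4)\cdot pn \;=\; \tfrac{(3p-1)pn}{2} \;=\; |E(G_n)|,
\]
with the $-\tfrac{\sigma^2}{9}$ slack comfortably absorbing any low-order corrections from small irregularities in the cross-edge pattern or from finitely many vertices of slightly different degree at the ``boundary'' of the cyclic layout.
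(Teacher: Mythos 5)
There are two genuine gaps here. First, your base case is unavailable: you build each layer from a cycle and assert that ``for suitably chosen $n$ one has $\sigma(C_n)\leq 4$'', but the only known bound is $\sigma(C_n)\leq 5$, and whether $\sigma(C_n)\leq 4$ holds for infinitely many $n$ is stated as an open problem in \cref{Cycles}; with $\sigma(C_n)\leq 5$ your palette argument only gives $\sigma(G_n)\leq 5p$, not $4p$. Second, and more seriously, the step you yourself flag as ``the main technical obstacle'' is exactly the content of the proposition and is not supplied. Disjoint palettes do force $v_i$ and $v_{t+i}$ into the same layer, but the trace of a repetitively coloured walk inside a layer is \emph{not} in general a walk (or even a lazy walk) of that layer: when the walk exits a layer through a cross-edge and re-enters one step later, the exit and re-entry vertices are only at distance $2$ through the other layer, so they need not touch in the layer graph. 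No choice of ``cyclically symmetric'' $3$-regular cross-edges fixes this by symmetry alone; by \cref{SigmaCharacterisation} you would have to verify both the distance-$2$ condition and stroll-nonrepetitivity of the combined colouring, and adding cross-edges between disjointly $4$-coloured layers does not in general preserve either. You also invoke the $-\tfrac19\sigma^2$ slack to absorb boundary irregularities, but to do that you need a lower bound on $\sigma(G_n)$ in terms of $p$ (e.g.\ $\sigma\geq\Delta+1$), which you never state.

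The paper's proof closes both gaps by making the construction literally a strong product: $G:=P_n\boxtimes K_p$ is exactly ``$p$ path layers with $3$ cross-edges per vertex to each other layer'', but in the product pattern. Then $\sigma(G)\leq\sigma(P_n)\,\sigma(K_p)\leq 4p$ follows from \cref{ProductSigma,PathSigma}; the product lemma works precisely because projections of walks in a strong product are lazy walks in each factor, so \cref{LazyWalks} applies --- this is the structural fact your generic cross-edge pattern lacks. Finally $|E(G)|=\tfrac12(3p-1)|V(G)|-p^2$, and the boundary deficit $p^2$ is handled via the lower bound $\sigma(G)\geq\Delta(G)+1=3p$, giving $p^2\leq\sigma(G)^2/9$ and $3p-1\geq\tfrac34\sigma(G)-1$. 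If you want to salvage your sketch, replace the cycle base by a path and take the cross-edges to be exactly the strong-product adjacencies, at which point you are reproducing the paper's argument; as written, the proposal's central claim is unproven and its cycle base rests on an open problem.
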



\begin{proof}
	Let $G:=P_n\boxtimes K_\ell$. 
	By \cref{ProductSigma,PathSigma}, $\sigma(G)\leq 4\ell$. 
	Note that $|E(G)|=\half(3\ell-1)|V(G)|-\ell^2$. 
	As a lower bound, $\sigma(G)\geq\Delta(G)+1=3\ell$. 
	Thus $|E(G)|\geq\half(3\sigma(G)/4-1)|V(G)|-(\sigma(G)/3)^2$.
\end{proof}

\begin{open}
	What is the maximum number of edges in an $n$-vertex graph $G$ with $\sigma(G)\leq c$?
\end{open}

\subsection{Walk-nonrepetitive Colourings}

The following result (implicit in \citep{BW08} and explicit in \citep{Aprile14}) characterises walk-nonrepetitive colourings. It provides our first example of the value of considering stroll-nonrepetitive colourings. Let $G^2$ be the \emph{square} graph of $G$. That is, $V(G^2)=V(G)$, and $vw\in E(G^2)$ if and only if the distance between $v$ and $w$ in $G$ is at most $2$. A proper colouring of $G^2$ is called a \emph{distance-$2$} colouring of $G$.

\begin{lem}[\citep{BW08}]
\label{SigmaCharacterisation}
A colouring of a graph is walk-nonrepetitive if and only if it is stroll-nonrepetitive and distance-2. 
\end{lem}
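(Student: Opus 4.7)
The plan is to prove the biconditional in two directions. The forward direction amounts to testing walk-nonrepetitiveness on a handful of very short walks; the reverse direction is the conceptual heart and turns on a ``lockstep'' propagation along the diagonal of the walk. I expect no substantial technical obstacle beyond identifying the right propagation step.

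For ($\Rightarrow$), assume $\phi$ is walk-nonrepetitive. Stroll-nonrepetitiveness is immediate, since a stroll has $v_i\neq v_{t+i}$ for every $i$ and hence is never boring. For the distance-2 property, I would feed two families of walks into the hypothesis. First, for $uv\in E(G)$ the walk $(u,v)$ is not boring, so repetitivity would force $\phi(u)=\phi(v)$, contradicting walk-nonrepetitiveness; this gives properness. Second, for distinct vertices $u\neq w$ sharing a common neighbour $v$, the 4-vertex walk $(u,v,w,v)$ is not boring (as $v_1=u\neq w=v_3$), so repetitivity would force $\phi(u)=\phi(w)$, again contradicting the hypothesis. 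Together these rule out equal colours on any two distinct vertices at graph-distance at most $2$.

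For ($\Leftarrow$), assume $\phi$ is stroll-nonrepetitive and distance-2, and let $W=(v_1,\dots,v_{2t})$ be any repetitively coloured walk; the goal is to show $W$ is boring. The central step is a lockstep claim: for each $i\in\{1,\dots,t-1\}$, $v_i=v_{t+i}$ if and only if $v_{i+1}=v_{t+i+1}$. Indeed, if $v_i=v_{t+i}$ but $v_{i+1}\neq v_{t+i+1}$, then $v_{i+1}$ and $v_{t+i+1}$ are distinct neighbours of the common vertex $v_i=v_{t+i}$, and hence lie within graph-distance $2$; but $\phi(v_{i+1})=\phi(v_{t+i+1})$ by repetitivity, contradicting the distance-2 property. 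The converse implication is symmetric. Propagating the equivalence along $i=1,\dots,t-1$ forces either $v_i=v_{t+i}$ for all $i$, in which case $W$ is boring as desired, or $v_i\neq v_{t+i}$ for all $i$, in which case $W$ is a repetitively coloured stroll, contradicting the assumption that $\phi$ is stroll-nonrepetitive. The only insight needed is the lockstep step itself; the main thing to resist is a tempting but unnecessary case analysis on where and how often the two halves of the walk coincide, since the distance-2 hypothesis is precisely calibrated to synchronise neighbouring diagonal positions in one line.
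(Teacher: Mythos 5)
Your proposal is correct and follows essentially the same route as the paper: the forward direction uses the same short walks (a monochromatic edge and the walk $u,v,w,v$), and your lockstep claim in the reverse direction is exactly the paper's key step, which chooses indices $i$ with $v_i=v_{t+i}$ and $j$ with $v_j\neq v_{t+j}$ minimising $|i-j|$, forces $|i-j|=1$, and derives the same distance-2 contradiction. The only difference is presentational (direct propagation of the equivalence versus a minimality argument by contradiction), so no changes are needed.
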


\begin{proof}
It follows from the definition that every walk-nonrepetitive colouring is stroll-nonrepetitive. Consider a walk-nonrepetitive colouring of a graph $G$. Adjacent vertices $v$ and $w$ receive distinct colours, as otherwise $v,w$ would be a repetitively coloured path. If $u,v,w$ is a path, and $u$ and $w$ receive the same colour, then the non-boring walk $u,v,w,v$ is repetitively coloured. Thus vertices at distance at most $2$ receive distinct colours. 

Now we prove the converse. Let $c$ be a stroll-nonrepetitive distance-2 colouring of $G$. Suppose for the sake of contradiction that $G$ contains a non-boring repetitively coloured walk $W=(v_1,\dots,v_{2t})$. Since $c$ is stroll-nonrepetitive, $v_i = v_{t+i}$ for some $i \in \{1,\dots,t\}$. Since $W$ is not boring, $v_j \neq  v_{t+j}$ for some $j \in \{1,\dots,t\}$. Choose such $i$ and $j$ to minimise $|i-j|$. Then $|j-i|=1$. Thus $v_i\in N[v_j] \cap N[v_{t+j}]$ and $\dist_G(v_j,v_{t+j})\leq 2$. Hence $v_j$ and $v_{t+j}$ are assigned distinct colours, and $W$ is not repetitively coloured. This contradiction shows that $G$ contains no non-boring repetitively coloured walk. That is, $c$ is walk-nonrepetitive. 
\end{proof}

\cref{SigmaCharacterisation} implies the following bounds on $\sigma(G)$.

\begin{cor}
\label{SigmaBounds}
For every graph $G$, 
$$\max\{\rho(G),\Delta(G)+1\} 
\leq \max\{\rho(G),\chi(G^2) \} 
\leq \sigma(G) 
\leq \rho(G)\, \chi(G^2)
\leq \rho(G)\,(\Delta(G)^2+1).$$
\end{cor}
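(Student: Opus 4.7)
The plan is to verify each of the four inequalities in turn, mostly by appealing to \cref{SigmaCharacterisation} and standard facts about $G^2$.

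For the leftmost inequality $\max\{\rho(G),\Delta(G)+1\} \leq \max\{\rho(G),\chi(G^2)\}$, I would simply observe that any vertex $v$ of maximum degree in $G$ together with its $\Delta(G)$ neighbours forms a clique in $G^2$, so $\chi(G^2) \geq \Delta(G)+1$. For the second inequality $\max\{\rho(G),\chi(G^2)\} \leq \sigma(G)$, \cref{SigmaCharacterisation} states that every walk-nonrepetitive colouring is both stroll-nonrepetitive and distance-2, so taking a walk-nonrepetitive colouring of $G$ using $\sigma(G)$ colours yields both a stroll-nonrepetitive colouring (witnessing $\rho(G) \leq \sigma(G)$) and a proper colouring of $G^2$ (witnessing $\chi(G^2) \leq \sigma(G)$).

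The main content is the upper bound $\sigma(G) \leq \rho(G)\,\chi(G^2)$, which I would prove by a product-colouring construction. Let $\phi_1$ be a stroll-nonrepetitive colouring of $G$ using $\rho(G)$ colours, and let $\phi_2$ be a proper colouring of $G^2$ using $\chi(G^2)$ colours. Define $\phi(v) := (\phi_1(v), \phi_2(v))$, a colouring of $G$ with at most $\rho(G)\,\chi(G^2)$ colours. Two vertices at distance at most $2$ in $G$ are distinguished by $\phi_2$ and hence by $\phi$, so $\phi$ is a distance-2 colouring of $G$. Moreover, if a stroll were repetitively coloured by $\phi$ then it would be repetitively coloured by $\phi_1$, contradicting the choice of $\phi_1$. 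Hence $\phi$ is stroll-nonrepetitive and distance-2, and by the converse direction of \cref{SigmaCharacterisation} it is walk-nonrepetitive.

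Finally, for $\rho(G)\,\chi(G^2) \leq \rho(G)(\Delta(G)^2+1)$, I would use the standard bound $\chi(G^2) \leq \Delta(G^2)+1 \leq \Delta(G)^2+1$, which holds because each vertex of $G$ has at most $\Delta(G) + \Delta(G)(\Delta(G)-1) = \Delta(G)^2$ other vertices within distance $2$. There is no real obstacle here; the only mildly nontrivial step is the product construction, whose correctness relies squarely on \cref{SigmaCharacterisation}.
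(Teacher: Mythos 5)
Your proposal is correct and follows essentially the same route as the paper: the lower bounds come from \cref{SigmaCharacterisation} together with the clique on $\Delta(G)+1$ vertices in $G^2$, the main bound $\sigma(G)\leq\rho(G)\,\chi(G^2)$ is the product of a stroll-nonrepetitive colouring with a distance-2 colouring validated via the converse of \cref{SigmaCharacterisation}, and the final bound is $\chi(G^2)\leq\Delta(G^2)+1\leq\Delta(G)^2+1$. Your write-up just spells out the details the paper leaves implicit.
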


\begin{proof}
The lower bounds on $\sigma(G)$ follow directly from  \cref{SigmaCharacterisation} and since $G^2$ has a clique on $\Delta(G)+1$ vertices. The upper bound
$\sigma(G) \leq \rho(G)\, \chi(G^2)$ is proved by considering the product of a stroll-nonrepetitive colouring and a distance-2 colouring. The final upper bound follows since $\chi(G^2)\leq\Delta(G^2)+1\leq \Delta(G)^2+1$.
\end{proof}

A graph $G$ is \emph{$d$-degenerate} if every subgraph of $G$ has minimum degree at most $d$. A greedy algorithm shows that every $d$-degenerate graph is $(d+1)$-colourable. For a $d$-degenerate graph $G$ with maximum degree $\Delta$, the square $G^2$ is $d\Delta$-degenerate and $(d\Delta+1)$-colourable. Thus \cref{SigmaBounds} implies:

\begin{cor}
\label{SigmaPiDegen}
For every $d$-degenerate graph $G$, 
$$\sigma(G) \leq \rho(G)\,(d\,\Delta(G)+1).$$
\end{cor}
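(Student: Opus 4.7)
The plan is a direct substitution into \cref{SigmaBounds}. That corollary states $\sigma(G)\le\rho(G)\,\chi(G^2)$, so it suffices to establish $\chi(G^2)\le d\,\Delta(G)+1$ for any $d$-degenerate graph $G$ with maximum degree $\Delta$. This chromatic bound is exactly what is asserted (with justification omitted) in the paragraph immediately preceding the corollary, so the corollary itself reduces to a one-line chaining of inequalities with \cref{SigmaBounds}.

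The substantive ingredient I would verify is the assertion that $G^2$ is $d\,\Delta$-degenerate, after which $\chi(G^2)\le d\,\Delta+1$ follows by the standard greedy algorithm. For this, I would take an arbitrary non-empty $S\subseteq V(G)$, use the $d$-degeneracy of the subgraph $G[S]$ to locate $v\in S$ having at most $d$ $G$-neighbours in $S$, and then bound the $G^2$-neighbours of $v$ in $S$. Splitting by $G$-distance, the distance-$1$ contribution is at most $d$ by the choice of $v$, while every distance-$2$ contribution $u\in S$ arises as a $G$-neighbour of some intermediate $w\in N_G(v)$ with $|N_G(w)|\le\Delta$.

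The main obstacle is controlling the distance-$2$ count at $d\,\Delta$ rather than the cruder $\Delta^2$ one gets from $|N_G(v)|\cdot|N_G(w)|\le\Delta^2$. The delicate sub-case is when the intermediate $w$ lies outside $S$, so the $d$-degeneracy of $G[S]$ at $v$ does not directly restrict the choice of $w$. I would resolve this by abandoning the fixed vertex $v\in S$ and instead fixing a degeneracy ordering $v_1,\ldots,v_n$ of $G$ (in which each $v_i$ has at most $d$ earlier $G$-neighbours) and colouring $G^2$ greedily in this order; a case-split on whether the intermediate witness $w$ of a distance-$2$ $G^2$-edge at $v_i$ is earlier or later than $v_i$ in the ordering, using $|N_G(v_i)|\le\Delta$ together with the at-most-$d$-earlier-neighbours property, gives the desired bound on earlier $G^2$-neighbours of $v_i$. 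Once the degeneracy claim is in hand, the corollary follows immediately from \cref{SigmaBounds}.
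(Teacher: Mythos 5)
Your overall route coincides with the paper's: the corollary is obtained by feeding $\chi(G^2)\leq d\,\Delta(G)+1$ into \cref{SigmaBounds}, and the paper indeed asserts (without proof) that $G^2$ is $d\Delta$-degenerate. The gap is in your justification of that assertion: the case split you describe does not deliver the bound $d\Delta$. Run the numbers. Fix the ordering $v_1,\dots,v_n$ in which every vertex has at most $d$ earlier $G$-neighbours, and let $e\leq d$ be the number of earlier $G$-neighbours of $v_i$. Earlier $G^2$-neighbours at distance $1$: at most $e$. Distance-$2$ neighbours whose witness $w$ is earlier than $v_i$: then $w$ is one of the at most $e$ earlier neighbours and contributes at most $\Delta-1$ vertices, giving at most $e(\Delta-1)$. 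Distance-$2$ neighbours whose witness $w$ is later than $v_i$: there can be up to $\Delta-e$ such witnesses (the later $G$-neighbours of $v_i$), and each contributes at most $d-1$ earlier neighbours of $w$ besides $v_i$, giving at most $(\Delta-e)(d-1)$. The total is $e\Delta+(\Delta-e)(d-1)\leq d\Delta+(\Delta-d)(d-1)$, which exceeds $d\Delta$ whenever $d\geq 2$ and $\Delta>d$ (for $d=2$, $\Delta=10$ you get $28$, not $20$). So your argument only shows that $G^2$ is $\bigl(d\Delta+(\Delta-d)(d-1)\bigr)$-degenerate, roughly $(2d-1)\Delta$, and hence only yields $\sigma(G)\leq\rho(G)\bigl(d\Delta+(\Delta-d)(d-1)+1\bigr)$; the count closes exactly as claimed only when $d=1$.

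The troublesome case is precisely the one you flagged as delicate (the witness lying "outside the controlled set", i.e.\ later than $v_i$): it genuinely contributes an extra term of order $\Delta(d-1)$ that cannot be absorbed into $d\Delta$. Note also that the target has no slack at all: if $G$ is the bipartite incidence graph of a $d$-uniform, $\Delta$-regular linear hypergraph with no short cycles (one side of degree $d$, the other of degree $\Delta$, girth at least $6$), then $G$ is $d$-degenerate with maximum degree $\Delta$ and $G^2$ is $d\Delta$-regular, so its degeneracy is exactly $d\Delta$. Any proof of the $d\Delta$-degeneracy must therefore be essentially exact, and a crude earlier/later count along a degeneracy ordering of $G$ cannot reach it; for $d=1$ the sharp bound is instead proved by rooting the forest and taking a deepest vertex of the given set. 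To repair your write-up you must either supply a genuinely sharper argument for the $d\Delta$ bound (or cite it, as the paper implicitly does), or settle for the weaker constant your count actually establishes.
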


It is not obvious that there is a finite algorithm to test if a given colouring of a graph is walk-nonrepetitive. However, the following lemma by \citet{BW08} implies that to test if a colouring of an $n$-vertex  graph is walk-nonrepetitive, one need only test whether walks of length at most $2n^2$ are nonrepetitive. A similar result for edge-colourings was previously proved by \citet{BV08}. 

\begin{prop}[\citep{BW08}]
	\label{SmallWalks}
	Suppose that in some coloured graph, there is a repetitively coloured non-boring walk. Then there is a repetitively coloured non-boring walk of order $k$ and length at most $2k^2$.
\end{prop}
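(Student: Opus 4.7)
The plan is to take a shortest non-boring repetitively coloured walk $W = (v_1, \ldots, v_{2t})$, let $k$ be its order (the number of distinct vertices used), and bound $t$ by a pigeonhole argument on the sequence of \emph{aligned pairs} $P_i := (v_i, v_{t+i})$ for $i \in \{1, \ldots, t\}$. Since these pairs take values in $V(W) \times V(W)$, there are at most $k^2$ possible values, so if $t > k^2$ then some $P_i = P_j$ with $i < j$.

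Whenever $P_i = P_j$ we have $v_i = v_j$ and $v_{t+i} = v_{t+j}$, so we may \emph{splice} $W$ by excising both middle segments to form
\[
W' := (v_1, \ldots, v_i, v_{j+1}, \ldots, v_t, v_{t+1}, \ldots, v_{t+i}, v_{t+j+1}, \ldots, v_{2t}).
\]
The two halves of $W'$ each have length $t-(j-i)$; the splice points are legal edges of $G$ thanks to these vertex equalities; and $W'$ remains repetitively coloured because $\phi(v_\ell) = \phi(v_{t+\ell})$ was inherited from $W$ at every surviving position. Since $W'$ is strictly shorter than $W$, it would contradict minimality, provided $W'$ is non-boring.

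This is the main obstacle. A direct inspection shows that $W'$ is boring precisely when every non-boring witness of $W$, i.e.\ every index $\ell \in L := \{\ell : v_\ell \neq v_{t+\ell}\}$, lies in the excised range $\{i+1, \ldots, j\}$. Fixing any $\ell^* \in L$, minimality then forces every equal-pair $(i,j)$ with $P_i = P_j$ and $i < j$ to satisfy $i < \ell^* \leq j$. Consequently the pairs $P_i$ for $i \in \{1, \ldots, \ell^*-1\}$ must be pairwise distinct (any coincidence among them would produce an equal-pair with $j' < \ell^*$), yielding $\ell^* - 1 \leq k^2$. The same argument on $\{\ell^*, \ldots, t\}$ gives $t - \ell^* + 1 \leq k^2$, and summing these two inequalities yields $t \leq 2k^2$, as required.
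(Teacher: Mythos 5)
Your splicing set-up is sound and is essentially the same as the paper's: you locate indices $i<j$ with $(v_i,v_{t+i})=(v_j,v_{t+j})$ by pigeonhole on the at most $k^2$ aligned pairs, excise both middle segments, note the result is a strictly shorter repetitively coloured walk, and invoke minimality. The gap is in how you handle the boring case, and it costs you the stated bound. The proposition asserts that the exhibited walk $(v_1,\dots,v_{2t})$ has length $2t\leq 2k^2$, i.e.\ $t\leq k^2$ (the paper's own proof reads ``if $2t\leq 2k^2$, then we are done''). Your straddling argument --- every repeated aligned pair must enclose a fixed witness index $\ell^*\in L$, so the pairs before $\ell^*$ are pairwise distinct and the pairs from $\ell^*$ onward are pairwise distinct --- only yields $t\leq 2k^2$, i.e.\ a walk of length at most $4k^2$; writing ``$t\leq 2k^2$, as required'' conflates $t$ with the length $2t$. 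So as written you have proved the statement with $4k^2$ in place of $2k^2$, which is genuinely weaker.

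The missing idea is to turn the boring case into a contradiction as well, not merely into a constraint on where repeats may occur. If the spliced walk $W'$ is boring, then in particular $v_i=v_{t+i}$, so all four vertices $v_i=v_j=v_{t+i}=v_{t+j}$ are a single vertex $x$, and every surviving position satisfies $v_\ell=v_{t+\ell}$; since $W$ is non-boring and $v_j=v_{t+j}=x$, some position strictly between the two occurrences of $x$ has $v_\ell\neq v_{t+\ell}$. Now consider the walk $(v_i,v_{i+1},\dots,v_{j-1},\,v_j,v_{t+i+1},\dots,v_{t+j-1})$, i.e.\ the two excised blocks each prefixed by $x$ (this is the paper's $xBxB'$). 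It is a walk because $v_j=v_{t+i}$ makes the step to $v_{t+i+1}$ legal; it is repetitively coloured because $W$ is; it is non-boring by the previous sentence; and it is strictly shorter than $W$. This contradicts minimality, so no aligned pair can repeat at all, forcing $t\leq k^2$ and hence length at most $2k^2$ as claimed. (Your choice of extremal walk --- globally shortest, with $k$ its order --- rather than the paper's minimum order followed by minimum length is fine for the statement as phrased; the factor-of-two loss in the boring case is the real issue.)
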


\begin{proof}
Let $k$ be the minimum order of a repetitively coloured non-boring walk. Let $W=(v_1,v_2,\dots,v_{2t})$ be a repetitively coloured non-boring walk of order $k$ and with $t$ minimum. If $2t\leq2k^2$, then we are done. Now assume that $t>k^2$. By the pigeonhole principle, there is a vertex $x$ that appears at least $k+1$ times in $v_1,v_2,\dots,v_t$. Thus there is a vertex $y$ that appears at least twice in the set $\{v_{t+i}:v_i=x,i\in[t]\}$. As illustrated in \cref{Shorten}, $W=AxBxCA'yB'yC'$ for some walks $A,B,C,A',B',C'$ with $|A|=|A'|$, $|B|=|B'|$, and $|C|=|C'|$. Consider the walk $U:=AxCA'yC'$. If $U$ is not boring, then it is a repetitively coloured non-boring walk of order at most $k$ and length less than $2t$, which contradicts the minimality of $W$. Otherwise $U$ is boring, implying $x=y$, $A=A'$, and $C=C'$. Thus $B\ne B'$ since $W$ is not boring, implying $xBxB'$ is a repetitively coloured non-boring walk of order at most $k$ and length less than $2t$, which contradicts the minimality of $W$.
\end{proof}

\begin{figure}[!ht]
	\centering
	\includegraphics{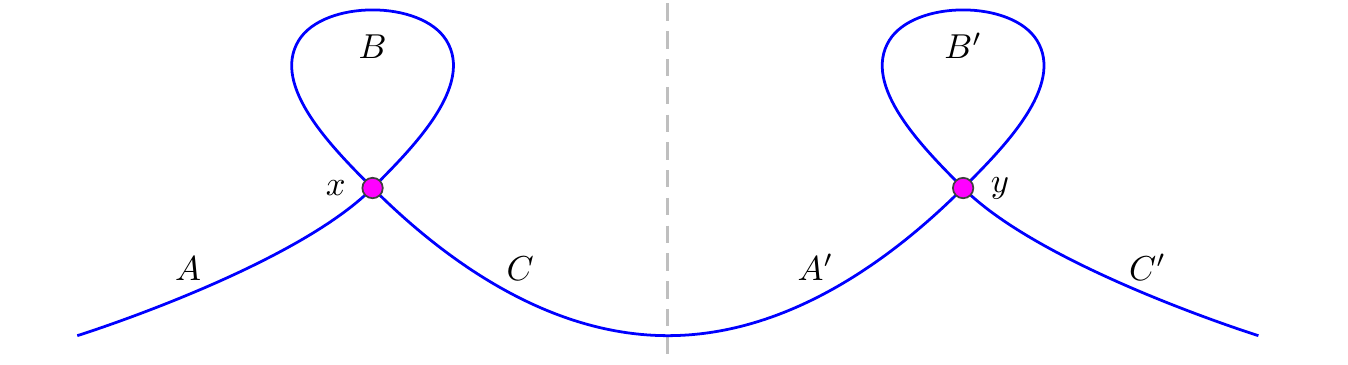}
	\caption{Illustration for the proof of \cref{SmallWalks}.}
	\label{Shorten} 
\end{figure}

\subsection{Lazy Considerations}

Many results that follow depend on the following definitions and lemmas. Two vertices in a graph are said to \emph{touch} if they are adjacent or equal. The following definition is commonly used in the theory of random walks. A \emph{lazy walk} in a graph $G$ is a sequence $(v_1,\dots,v_t)$ of vertices in $G$ such that $v_i$ and $v_{i+1}$ touch for each $i\in\{1,\dots,t-1\}$. Equivalently, a lazy walk in $G$ is a walk in the pseudograph obtained from $G$ by adding a loop at each vertex. Lazy walks were introduced in the context of nonrepetitive colourings by \citet{DEJWW20}, although the idea was implcit in a lemma of \citet{KP-DM08} about walk-nonrepetitive colourings of paths. 

\begin{lem}
\label{LazyWalks}
Every walk-nonrepetitive colouring is nonrepetitive on  non-boring lazy walks. 
\end{lem}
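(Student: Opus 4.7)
I plan to argue by contradiction. Suppose that some walk-nonrepetitive colouring $\phi$ of a graph $G$ admits a non-boring repetitively coloured lazy walk, and let $L=(v_1,\dots,v_{2t})$ be such a lazy walk with $t$ minimum. By \cref{SigmaCharacterisation}, $\phi$ is simultaneously proper, distance-$2$, and stroll-nonrepetitive; the idea is to use these three properties in turn to strip $L$ of its laziness until a contradiction emerges.

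The first step shows that $L$ is a ``lazy stroll,'' meaning $v_i \neq v_{t+i}$ for every $i$. This proceeds by a minimal-gap argument exactly as in the converse direction of the proof of \cref{SigmaCharacterisation}: if some $v_i = v_{t+i}$ while some $v_j \neq v_{t+j}$ (such $j$ exists since $L$ is non-boring), choose such $i,j$ with $|i-j|=1$; then both $v_j$ and $v_{t+j}$ touch $v_i = v_{t+i}$, so $\dist_G(v_j,v_{t+j}) \leq 2$, and the distance-$2$ property rules out $\phi(v_j)=\phi(v_{t+j})$. The second step removes lazy steps interior to either half. If $v_k = v_{k+1}$ for some $k \in \{1,\dots,t-1\}$, then repetition forces $\phi(v_{t+k}) = \phi(v_{t+k+1})$, and since these touch and $\phi$ is proper they must coincide. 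Deleting both $v_{k+1}$ and $v_{t+k+1}$ symmetrically yields a lazy walk of length $2(t-1)$ that inherits non-boringness (the pair $(v_k,v_{t+k})$ survives) and repetitivity from $L$, contradicting the minimality of $t$. The case $k \in \{t+1,\dots,2t-1\}$ is analogous.

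These reductions leave only two possibilities. If $v_t \neq v_{t+1}$, then $L$ is a genuine stroll and stroll-nonrepetitivity delivers the contradiction directly. If $v_t = v_{t+1}$, I would construct the auxiliary walk
\begin{equation*}
W' := (v_1, v_2, \dots, v_{t-1}, v_t, v_{t+2}, v_{t+3}, \dots, v_{2t-1})
\end{equation*}
of length $2(t-1)$, obtained from $L$ by shifting the second half one position to the left and discarding the trailing vertex $v_{2t}$. The identity $v_t = v_{t+1}$ together with the absence of other lazy steps ensures $W'$ is a genuine walk; combining $\phi(v_1) = \phi(v_{t+1}) = \phi(v_t)$ (where the first equality is the $i=1$ clause of the repetition of $L$) with the remaining clauses $\phi(v_j)=\phi(v_{t+j})$ for $j\geq 2$ shows $W'$ is repetitively coloured; and $v_1 \neq v_{t+1} = v_t$ certifies that $W'$ is non-boring, contradicting walk-nonrepetitivity of $\phi$ directly. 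The juncture case is the main obstacle, since the symmetric deletion used inside the halves breaks down at the boundary between them; the resolution is the asymmetric shift-and-trim above, which works precisely because the juncture equality conspires with the repetition to collapse $\phi(v_1)$ and $\phi(v_t)$ to a common colour.
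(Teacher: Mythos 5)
Your proof is correct. It shares the paper's overall skeleton (minimal counterexample, eliminate lazy steps one at a time via colour coincidences forced by the repetition), but it deviates in two genuine ways. First, you invoke \cref{SigmaCharacterisation} up front to get the distance-$2$ property, and use it to show that in the minimal counterexample every pair satisfies $v_i \neq v_{t+i}$; this is legitimate (that lemma precedes this one and its proof does not use it, so there is no circularity), and it buys you a cleaner reduction: the shortened walks are automatically non-boring, so you never face the "what if the reduced walk is boring" subcases that the paper's self-contained proof must handle explicitly (in the interior case by showing $W$ itself would be boring, and in the junction case by extracting the $4$-vertex bad walk $(v_t,v_{t-1},v_{2t},v_{2t-1})$). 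Second, your treatment of the junction lazy step $v_t=v_{t+1}$ is different: instead of deleting $v_t$ and $v_{2t}$ as the paper does, you shift the second half left and trim, producing the genuine non-boring repetitively coloured walk $(v_1,\dots,v_t,v_{t+2},\dots,v_{2t-1})$, whose repetitivity hinges on $\phi(v_1)=\phi(v_{t+1})=\phi(v_t)$ and whose non-boringness hinges on $v_1\neq v_{t+1}$ from your first step; I checked this works, including the degenerate case $t=2$ where the constructed walk is just the edge $(v_1,v_2)$. The trade-off is that the paper's argument uses only the definition of walk-nonrepetitive colouring, while yours is shorter on case analysis at the cost of importing \cref{SigmaCharacterisation}.
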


\begin{proof}
Let $c$ be  a walk-nonrepetitive colouring of a graph $G$. Suppose that $G$ contains a repetitively coloured non-boring lazy walk. Choose such a walk $W=(v_1,\dots,v_{2t})$ with minimum length $2t$. Since no non-lazy non-boring walk is repetitively coloured, by symmetry, $v_i=v_{i+1}$ for some $i\in\{1,\dots,t\}$. 

First suppose that $i=t$. Let $W'$ be the walk $(v_1,\dots,v_{t-1},v_{t+1},\dots,v_{2t-1}\}$. Then $W'$ is a repetitively coloured lazy walk of length $2t-2$. If $W'$ is not boring, then $W'$ contradicts the choice of $W$. So $W'$ is boring. In particular, $v_1=v_{t+1}=v_t$ and $v_{t-1}=v_{2t-1}$. Since $W$ is not boring, $v_t \neq v_{2t}$. Thus $(v_t,v_{t-1},v_{2t},v_{2t-1})$ is a non-boring repetitively coloured walk, which is a contradiction. 

Now assume that $i\in\{1,\dots,t-1\}$. Since $W$ is repetitively coloured, $c(v_{t+i})=c(v_i)$ and  $c(v_{t+i+1})=c(v_{i+1})$, implying $c(v_{t+i})=c(v_{t+i+1})$ since $v_i=v_{i+1}$. If $v_{t+i}\neq v_{t+i+1}$ then $(v_{t+i},v_{t+i+1})$ is a repetitively coloured non-boring non-lazy walk, which is a contradiction. So $v_{t+i}= v_{t+i+1}$. Let $W''$ be the walk 
$(v_1,\dots,v_{i},v_{i+2},\dots,v_{t+i},v_{t+i+2},\dots,v_{2t-1})$. 
Then $W''$ is a repetitively coloured lazy walk of length $2t-2$. 
If $W''$ is boring, then $v_i=v_{i+1}=v_{t+i}=v_{t+i+1}$, implying that $W$ is boring as well. Thus $W''$ is not boring. Hence $W''$ contradicts the choice of $W$. 
\end{proof}

The following similar definition was implicitly introduced in the context of nonrepetitive colourings by  \citet{DEJWW20}\footnote{\citet{DEJWW20} defined a colouring of a graph to be \emph{strongly nonrepetitive} if for every repetitively coloured lazy walk $(v_1,\dots,v_{2t})$ in $G$, we have $v_i=v_{t+i}$ for some $i\in\{1,\dots,t\}$. This is equivalent to saying that every lazy stroll is nonrepetitively coloured. They defined $\pi^*(G)$ to be the minimum number of colours in a strongly nonrepetitive colouring of a graph $G$. By \cref{LazyStroll}, $\pi^*(G)=\rho(G)$.}. A \emph{lazy stroll} in a graph $G$ is a lazy walk $(v_1,\dots,v_{2t})$ in $G$ such that $v_i\neq v_{t+i}$ for each $i\in\{1,\dots,t-1\}$.

\begin{lem}
\label{LazyStroll}
Every stroll-nonrepetitive colouring $c$ of a graph $G$ is nonrepetitive on lazy strolls. 
\end{lem}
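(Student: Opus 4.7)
I plan to argue by minimum counterexample in the spirit of the proof of \cref{LazyWalks}. Suppose for contradiction that some repetitively coloured lazy stroll exists, and choose one, $W=(v_1,\dots,v_{2t})$, of minimum length. Two initial observations will drive the proof. First, since $\pi(G)\le\rho(G)$, every stroll-nonrepetitive colouring is in particular path-nonrepetitive, so adjacent vertices receive distinct colours; consequently, any two touching vertices of the same colour must actually be equal. Second, $W$ must contain a lazy step (some $i$ with $v_i=v_{i+1}$): otherwise every consecutive pair is genuinely adjacent, making $W$ an ordinary walk, and the lazy stroll inequalities then make it a stroll, contradicting stroll-nonrepetitivity of $c$.

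Given such a lazy step at position $i$, I will split into three cases. (a)~If $i\in\{1,\dots,t-1\}$, then repetitivity gives $c(v_{t+i})=c(v_i)=c(v_{i+1})=c(v_{t+i+1})$; since $v_{t+i}$ and $v_{t+i+1}$ touch, they must be equal by the first observation. Deleting $v_{i+1}$ and $v_{t+i+1}$ yields a sequence $W'$ of length $2(t-1)$, and I will verify that it remains a lazy walk (the deletions occur at vertices equal to their neighbours, so touching is preserved at each new seam), remains repetitively coloured (the halves reindex consistently), and is a lazy stroll (each new inequality is already guaranteed by $W$); this contradicts minimality. (b)~If $i=t$, so $v_t=v_{t+1}$, I delete $v_1$ and $v_{t+1}$ instead; the only potentially non-touching new consecutive pair is $(v_t,v_{t+2})$, which does touch because $v_t=v_{t+1}$ touches $v_{t+2}$, and the other checks go through as in case (a). (c)~If $i\in\{t+1,\dots,2t-1\}$, the symmetric use of repetitivity forces $c(v_{i-t})=c(v_{i-t+1})$, and the same touching-plus-path-nonrepetitive argument forces $v_{i-t}=v_{i-t+1}$, reducing the situation to case (a).

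The main obstacle will be the careful index bookkeeping in case (a): after deleting $v_{i+1}$ and $v_{t+i+1}$, the new lazy stroll inequalities $u_j\ne u_{t'+j}$ for $j\in\{1,\dots,t'\}$ with $t'=t-1$ must be checked separately according to whether $j$ lies before or after the shift point $i$, with each instance reducing to a $v_k\ne v_{t+k}$ inequality already guaranteed by $W$. The boundary case (b) cannot be absorbed into (a) because the deleted pair is not symmetric across the two halves, so the equality $v_t=v_{t+1}$ has to be exploited directly to re-establish that the shortened sequence is lazy.
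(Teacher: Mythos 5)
Your proposal is correct and follows essentially the same route as the paper's proof: take a minimum-length repetitively coloured lazy stroll, locate a lazy step $v_i=v_{i+1}$, use repetitivity together with the fact that a same-coloured touching pair of distinct vertices would give a repetitively coloured two-vertex (non-lazy) stroll to force $v_{t+i}=v_{t+i+1}$, delete the mirrored pair, and contradict minimality. Your minor variations---deleting $v_1,v_{t+1}$ rather than $v_t,v_{2t}$ in the boundary case $i=t$, and treating a lazy step in the second half by reflecting the colour argument instead of appealing to symmetry---do not change the argument.
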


\begin{proof}
Suppose that there is a repetitively coloured lazy stroll in $G$. Choose such a stroll $W=(v_1,\dots,v_{2t})$ with minimum length $2t$. Since no (non-lazy) stroll is repetitively coloured, without loss of generality, $v_i=v_{i+1}$ for some $i\in\{1,\dots,t\}$. 

First suppose that $i=t$. Then $(v_1,\dots,v_{t-1},v_{t+1},\dots,v_{2t-1}\}$ is a repetitively coloured lazy stroll of length $2t-2$, which contradicts the choice of $W$. 

Now assume that $i\in\{1,\dots,t-1\}$. Since $W$ is repetitively coloured, $c(v_{t+i})=c(v_i)$ and  $c(v_{t+i+1})=c(v_{i+1})$, implying $c(v_{t+i})=c(v_{t+i+1})$ since $v_i=v_{i+1}$. If $v_{t+i}\neq v_{t+i+1}$ then $(v_{t+i},v_{t+i+1})$ is a repetitively coloured (non-lazy) stroll, which is a contradiction. So $v_{t+i}= v_{t+i+1}$. Then 
$(v_1,\dots,v_{i},v_{i+2},\dots,v_{t+i},v_{t+i+2},\dots,v_{2t-1})$ is a repetitively coloured lazy stroll of length $2t-2$, which contradicts the choice of $W$. 
\end{proof}

Finally, we have a similar definition and lemma for lazy paths. A \emph{lazy path} in a graph $G$ is a lazy walk $(v_1,\dots,v_t)$ such that if $v_i=v_j$ and $1\leq i<j\leq t$ then $v_i=v_{i+1}=\dots=v_j$, and $v_1\neq v_t$. The last condition says that at least two distinct vertices occur in a lazy path, which is essential for the next lemma to hold. 

\begin{lem}
\label{LazyPath}
Every path-nonrepetitive colouring $c$ of a graph $G$ is nonrepetitive on lazy paths. 
\end{lem}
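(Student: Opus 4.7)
The plan is to argue by contradiction in the same spirit as the proofs of \cref{LazyWalks,LazyStroll}. Suppose some path-nonrepetitive colouring $c$ of a graph $G$ admits a $c$-repetitive lazy path; pick a counterexample $W=(v_1,\dots,v_{2t})$ with $2t$ minimum. An immediate first observation is that if no two consecutive vertices of $W$ are equal, then the run condition in the definition of a lazy path forces the vertices of $W$ to be pairwise distinct, so $W$ is a genuine path and directly contradicts path-nonrepetitiveness. Hence some index $i$ satisfies $v_i=v_{i+1}$. Since reversing a lazy path yields a lazy path and preserves repetitiveness (the run condition and the endpoint condition are both symmetric), we may assume $i\in\{1,\dots,t\}$.

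For $i<t$ I would exploit repetitiveness to obtain $c(v_{t+i})=c(v_i)=c(v_{i+1})=c(v_{t+i+1})$. If $v_{t+i}\neq v_{t+i+1}$, these touching distinct vertices are adjacent and equally coloured, forming a $c$-repetitive path of order $2$, which contradicts path-nonrepetitiveness. Otherwise $v_{t+i}=v_{t+i+1}$, and I would delete $v_{i+1}$ and $v_{t+i+1}$ to form
$W':=(v_1,\dots,v_i,v_{i+2},\dots,v_{t+i},v_{t+i+2},\dots,v_{2t})$
of length $2(t-1)$. The routine verifications then go through: $W'$ is a lazy walk because the new consecutive pairs $v_iv_{i+2}$ and $v_{t+i}v_{t+i+2}$ inherit touching from $v_i=v_{i+1}$ (resp.\ $v_{t+i}=v_{t+i+1}$); $W'$ is repetitive because the halves of $W$ remain in phase after the symmetric deletions; the run condition transfers because each deleted vertex lay inside an existing run which is merely shortened; and the endpoints $v_1,v_{2t}$ are untouched. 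So $W'$ contradicts the minimality of $W$.

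The case $i=t$ is the main obstacle. Here $t\ge 2$ (else $v_1=v_2$ contradicts $v_1\neq v_{2t}$), and I would set $W':=(v_1,\dots,v_{t-1},v_{t+1},\dots,v_{2t-1})$, of length $2(t-1)$. That $W'$ is repetitive, is a lazy walk (via $v_{t-1}$ touching $v_t=v_{t+1}$), and satisfies the run condition are all routine. The genuine difficulty, which has no counterpart in \cref{LazyStroll} since strolls carry no endpoint constraint, is that the new endpoints $v_1$ and $v_{2t-1}$ of $W'$ might coincide: shortening at the centre can collapse the lazy-path endpoint condition. To handle this, suppose $v_1=v_{2t-1}$; then the run condition on $W$ forces $v_1=v_2=\cdots=v_{2t-1}$, and because $v_1\neq v_{2t}$, the vertices $v_{2t-1}$ and $v_{2t}$ are distinct and adjacent. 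Repetitiveness then gives $c(v_{2t-1})=c(v_1)=c(v_t)=c(v_{2t})$, exhibiting a $c$-repetitive path of order $2$, which contradicts path-nonrepetitiveness. Therefore $v_1\neq v_{2t-1}$, $W'$ is a shorter $c$-repetitive lazy path, and minimality is contradicted. This endpoint check is exactly the extra ingredient needed beyond the proof of \cref{LazyStroll} and is where the path-nonrepetitive hypothesis is used in its purest form.
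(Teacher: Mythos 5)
Your proposal is correct and follows essentially the same route as the paper's proof: a minimum repetitively coloured lazy path, a consecutive equal pair placed in the first half by symmetry, a two-vertex repetitively coloured path in the touching-but-distinct subcase, and a shortened lazy path otherwise, with the endpoint collapse $v_1=v_{2t-1}$ handled exactly as the paper does. The only quibble is in the case $i<t$ with $i=t-1$, where your claim that "the endpoints $v_1,v_{2t}$ are untouched" is literally off since $v_{t+i+1}=v_{2t}$ is deleted; this is harmless because you have already shown $v_{t+i}=v_{t+i+1}$, so the last vertex of $W'$ is the same vertex as $v_{2t}$ and the endpoint condition $v_1\neq v_{2t}$ still transfers.
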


\begin{proof}
Suppose that there is a repetitively coloured lazy path in $G$. Choose such a lazy path $P=(v_1,\dots,v_{2t})$ with the minimum number of vertices. Since no (non-lazy) path is repetitively coloured, without loss of generality, $v_i=v_{i+1}$ for some $i\in\{1,\dots,t\}$. 

First suppose that $i=t$. Let  $P':=(v_1,\dots,v_{t-1},v_{t+1},\dots,v_{2t-1}\}$.
We claim that $P'$ is a lazy path. 
This is the case unless $v_1=v_{2t-1}$, so assume that $v_1=v_{2t-1}$. 
Since $P$ is a lazy path, $v_1=v_2=\dots=v_{2t-1}$ and $v_{2t-1}\neq v_{2t}$. Since $c(v_t)=c(v_{2t})$ and $v_t=v_{2t-1}$, we have $c(v_{2t-1})=c(v_{2t})$. Thus $(v_{2t-1},v_{2t})$ is a repetitively coloured (non-lazy) path, which is a contradiction. Thus $P'$ is a lazy path with $2t-2$ vertices, which contradicts the choice of $P$. 

Now assume that  $i\in\{1,2,\dots,t-1\}$. 
Since $P$ is repetitively coloured, $c(v_{t+i})=c(v_i)$ and $c(v_{t+i+1})=c(v_{i+1})$, implying $c(v_{t+i})=c(v_{t+i+1})$ since $v_i=v_{i+1}$. If $v_{t+i}\neq v_{t+i+1}$ then $(v_{t+i},v_{t+i+1})$ is a repetitively coloured (non-lazy) path, which is a contradiction. So $v_{t+i}= v_{t+i+1}$. Let 
$P':=(v_1,\dots,v_{i},v_{i+2},\dots,v_{t+i},v_{t+i+2},\dots,v_{2t-1})$.
We claim that $P'$ is a lazy path. 
This is the case unless $v_1=v_{2t-1}$, so assume that $v_1=v_{2t-1}$. 
Since $P$ is a lazy path, $v_1=v_2=\dots=v_{2t-1}$ and $v_{2t-1}\neq v_{2t}$. Since $c(v_t)=c(v_{2t})$ and $v_t=v_{2t-1}$, we have $c(v_{2t-1})=c(v_{2t})$. Thus $(v_{2t-1},v_{2t})$ is a repetitively coloured (non-lazy) path, which is a contradiction. Thus $P'$ is a lazy path with $2t-2$ vertices, which contradicts the choice of $P$. 
%
%
\end{proof}

\subsection{Shadow-Complete Layerings}

This section presents results about shadow-complete layerings. This  tool was first introduced in the context of nonrepetitive colourings by \citet{KP-DM08}. It will be used to obtain results for trees (\cref{Trees}), graphs of bounded treewidth (\cref{Treewidth}), and graphs excluding a fixed minor or topological minor (\cref{MinorClosedClass}).

A \emph{layering} of a graph $G$ is a partition $(V_0,V_1,\dots)$ of $V(G)$ such that for every edge $vw\in E(G)$, if $v\in V_i$ and $w\in V_j$, then $|i-j| \leq 1$. Vertices in $V_i$ are said to be at \emph{depth} $i$. For example, if $r$ is a vertex in a connected graph $G$ and $V_i$ is the set of vertices at distance exactly $i$ from $r$ in $G$ for all $i\geq 0$, then  layering $(V_0,V_1,\dots)$ is a layering of $G$, called a \emph{BFS layering} of $G$.

Consider a layering $(V_0,V_1,\dots)$ of a graph $G$. Let $H$ be a connected component of $G[V_i\cup V_{i+1}\cup \cdots]$, for some $i\ge1$. The \emph{shadow} of  $H$ is the set of vertices in $V_{i-1}$ adjacent to  some vertex in $H$. The layering is \emph{shadow-complete} if every shadow is a clique. This concept was introduced by \citet{KP-DM08}, who showed the utility of  shadow-complete layerings for nonrepetitive colourings by the next lemma. 

\begin{lem}[\citep{KP-DM08}]
\label{ShadowCompletePi}
If a graph $G$ has a shadow-complete layering  $(V_0,V_1,\dots,V_n)$, then 
\begin{align*}
\pi(G) \leq 4\max_i \pi( G[V_i] ).
\end{align*}
\end{lem}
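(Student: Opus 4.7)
My plan is to construct an explicit path-nonrepetitive $4c$-colouring of $G$, where $c:=\max_i\pi(G[V_i])$, by combining within-layer colourings with a walk-nonrepetitive colouring of the path that indexes the layers. For each $i\in\{0,\dots,n\}$, fix a path-nonrepetitive $c$-colouring $\phi_i\colon V_i\to\{1,\dots,c\}$ of $G[V_i]$, and fix a walk-nonrepetitive $4$-colouring $\psi\colon\{0,\dots,n\}\to\{1,2,3,4\}$ of $P_{n+1}$; such a $\psi$ exists because $\sigma(P_{n+1})\leq 4$, as recorded in \cref{ResultsTable}. Writing $d(v)\in\{0,\dots,n\}$ for the depth of $v$, define
\[
\Phi(v):=\bigl(\phi_{d(v)}(v),\,\psi(d(v))\bigr),
\]
which uses at most $4c$ colours.

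Suppose for contradiction that some path $P=(v_1,\dots,v_{2t})$ in $G$ is $\Phi$-repetitive, and write $d_i:=d(v_i)$. The two coordinates of $\Phi$ give $\phi_{d_i}(v_i)=\phi_{d_{t+i}}(v_{t+i})$ and $\psi(d_i)=\psi(d_{t+i})$ for each $i\in\{1,\dots,t\}$. Because $v_iv_{i+1}\in E(G)$ and $(V_0,\dots,V_n)$ is a layering, $|d_{i+1}-d_i|\leq 1$, so $(d_1,\dots,d_{2t})$ is a lazy walk in $P_{n+1}$, and the $\psi$-equations say that this lazy walk is $\psi$-repetitive. By \cref{LazyWalks}, every walk-nonrepetitive colouring is nonrepetitive on non-boring lazy walks, so this lazy walk must be boring: $d_i=d_{t+i}$ for all $i$.

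With the two depth sequences now coinciding, shadow-completeness pulls the whole repetition back into a single layer. Let $m:=\min_i d_i$ and let $q_1<\dots<q_s$ be the positions in $\{1,\dots,t\}$ where $d_{q_l}=m$; by $d_i=d_{t+i}$, the positions in $P$ landing in $V_m$ are precisely $\{q_1,\dots,q_s,t+q_1,\dots,t+q_s\}$. Consider
\[
P':=(v_{q_1},\dots,v_{q_s},v_{t+q_1},\dots,v_{t+q_s}).
\]
For any two consecutive entries of $P'$, the intermediate vertices of $P$ (if any) all lie in $V_{\geq m+1}$ and form a connected sub-path within some component $C$ of $G[V_{\geq m+1}]$; both entries are $P$-adjacent to vertices of $C$, so both lie in the shadow of $C$, which is a clique by shadow-completeness, making them adjacent in $G$. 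Hence $P'$ is a path in $G[V_m]$, and it is $\phi_m$-repetitive because $\phi_m(v_{q_l})=\phi_{d_{q_l}}(v_{q_l})=\phi_{d_{t+q_l}}(v_{t+q_l})=\phi_m(v_{t+q_l})$, contradicting the nonrepetitiveness of $\phi_m$.

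The crux of the argument is choosing $\psi$ correctly. A naive periodic choice such as $\psi(i)=i\bmod 4$ fails, because the depth walk can be globally shifted by a multiple of the period while still satisfying all $\psi$-equations (producing spurious repetitions). The right ingredient is a genuinely walk-nonrepetitive $4$-colouring of $P_{n+1}$: \cref{LazyWalks} then upgrades that property to nonrepetitiveness on non-boring lazy walks, which is exactly the kind of sequence produced by depth sequences of paths in a layered graph. Once this reduction to the boring case is secured, shadow-completeness does the routine work of moving the repetition into $G[V_m]$, contradicting $\phi_m$.
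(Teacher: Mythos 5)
Your proposal is correct and follows essentially the same route as the paper's proof: the product of a walk-nonrepetitive $4$-colouring of the layer path with per-layer nonrepetitive colourings, the use of \cref{LazyWalks} to force the depth sequence of a repetitive path to be boring, and shadow-completeness to collapse the repetition into a path in the minimum-depth layer. No substantive differences.
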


\begin{proof}
Let $c:= \max_i \pi( G[V_i] )$. 
Let $\beta_i$ be a nonrepetitive $c$-colouring of $G[V_i]$ for each $i\in\{1,\dots,n\}$. 
By \cref{PathSigma} there is a walk-nonrepetitive 4-colouring $\alpha$ of the path $P=(x_1,\dots,x_n)$. 
Colour each vertex $v$ in $V_i$ by the pair $\phi(v):=(\alpha(x_i),\beta_i(v))$. 
Suppose for the sake of contradiction that $G$ contains a repetitively coloured path $W=(v_1,\ldots,v_{2k})$. 
Let $d$ be the minimum depth of a vertex in $W$.
Let $W'$ be the sequence of vertices obtained from $W$ by removing all vertices at depth greater than $d$. 
The projection of $W$ on $P$ is an $\alpha$-repetitive lazy walk in $P$, and is thus boring  by \cref{LazyWalks}. Thus the vertices $v_j$ and $v_{j+k}$ of $W$ have the same depth for every $j \in \{1, \dots, k\}$. In particular, $v_j$ is in $W'$ if and only if $v_{j+k}$ is. Hence, there are indices $1 \leq i_1 < i_2 < \cdots < i_{\ell} \leq k$ such that $W'=(v_{i_1}, v_{i_2}, \dots, v_{i_{\ell}}, v_{i_1+k}, v_{i_2+k}, \dots, v_{i_{\ell}+k})$. 
For each pair of consecutive vertices $v_a$ and $v_b$ in $W'$, the vertices strictly between $v_a$ and $v_b$ in $W$ are in a single connected component of the graph induced by the vertices of depth greater than $d$. By shadow-completeness, $v_a$ and $v_b$ are adjacent. Hence $W'$ is a path in $G[V_d]$. Since $W$ is $\phi$-repetitive, for each $j\in\{1,\dots,\ell\}$  we have $\phi(v_{i_j})=\phi(v_{i_j+k})$, implying $\beta_d(v_{i_j})=\beta_d(v_{i_j+k})$. Hence $W'$ is a $\beta_d$-repetitively coloured path in $G[V_d]$, which is the desired contradiction. 
\end{proof}

\citet{DEJWW20} implicitly proved an analogous result for $\rho$.

\begin{lem}[\citep{DEJWW20}]
\label{ShadowCompleteRho}
If a graph $G$ has a shadow-complete layering  $(V_0,V_1,\dots,V_n)$, then 
\begin{align*}
\rho(G) & \leq 4\max_i \rho( G[V_i] ) 
\end{align*}
\end{lem}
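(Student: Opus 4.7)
The plan is to mirror the proof of \cref{ShadowCompletePi} almost verbatim, substituting stroll-nonrepetitive colourings of each layer for path-nonrepetitive ones and then verifying that the contradictory substructure extracted at the deepest layer is a stroll rather than merely a path.

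Set $c := \max_i \rho(G[V_i])$. For each $i\in\{0,1,\dots,n\}$, let $\beta_i$ be a stroll-nonrepetitive $c$-colouring of $G[V_i]$, and let $\alpha$ be a walk-nonrepetitive $4$-colouring of the path $P=(x_0,x_1,\dots,x_n)$ (which exists by the same \cref{PathSigma} invoked in the $\pi$ version). Define the product colouring $\phi(v):=(\alpha(x_i),\beta_i(v))$ for each $v\in V_i$. We argue that $\phi$ is stroll-nonrepetitive.

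Suppose for a contradiction that $W=(v_1,\dots,v_{2k})$ is a $\phi$-repetitive stroll in $G$. Let $d$ be the minimum depth of a vertex of $W$, and let $W'$ be obtained from $W$ by deleting every vertex of depth greater than $d$. Since consecutive vertices of $W$ differ in depth by at most one, the sequence of depths projects to a lazy walk in $P$, and the first $\phi$-coordinate shows that this lazy walk is $\alpha$-repetitive. By \cref{LazyWalks} applied to the walk-nonrepetitive colouring $\alpha$, this lazy walk is boring, so $v_j$ and $v_{j+k}$ have the same depth for every $j\in\{1,\dots,k\}$. Consequently $v_j$ lies in $W'$ if and only if $v_{j+k}$ does, so there are indices $1\le i_1<i_2<\dots<i_\ell\le k$ with $W'=(v_{i_1},\dots,v_{i_\ell},v_{i_1+k},\dots,v_{i_\ell+k})$.

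The two main things left to check are that $W'$ is a \emph{stroll} in $G[V_d]$ and that it is $\beta_d$-repetitive. Shadow-completeness of the layering is the essential ingredient: between any two consecutive vertices of $W'$ the intermediate vertices of $W$ all lie in a single connected component of $G[V_{d+1}\cup V_{d+2}\cup\cdots]$, so their shadow in $V_d$ is a clique, which forces the two bounding vertices to be adjacent (or equal) in $G[V_d]$; equality is ruled out because $W'$ alternates between the first and second halves of $W$ only at the seam $v_{i_\ell}\to v_{i_1+k}$, and consecutive entries elsewhere are consecutive non-equal vertices of the stroll $W$ restricted to $V_d$. Because $W$ is a stroll, $v_{i_j}\ne v_{i_j+k}$ for every $j$, so the two halves of $W'$ agree nowhere in corresponding positions; hence $W'$ is genuinely a stroll. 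Finally, $\phi$-repetitiveness of $W$ gives $\beta_d(v_{i_j})=\beta_d(v_{i_j+k})$ for all $j$, so $W'$ is a $\beta_d$-repetitive stroll in $G[V_d]$, contradicting the choice of $\beta_d$.

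The only subtlety beyond the $\pi$ case is ensuring that $W'$ is a stroll, which I expect to be the main (but very mild) obstacle: the inequalities $v_{i_j}\ne v_{i_j+k}$ must be inherited from $W$, and one has to be careful that collapsing equal consecutive projected vertices does not create coincidences between the halves of $W'$. Both issues are handled automatically because $W$ is a stroll and depths are preserved across the halves, so no further work beyond that of \cref{ShadowCompletePi} is required.
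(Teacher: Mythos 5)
Your setup (the product colouring $\phi$, projecting to the path and using \cref{LazyWalks} to force $v_j$ and $v_{j+k}$ to have equal depth, and restricting to the minimum-depth layer) matches the paper, but the step where you declare $W'$ to be a genuine stroll in $G[V_d]$ does not hold. Shadow-completeness only gives that two consecutive surviving vertices of $W'$ are adjacent \emph{or equal}, and equality really can occur: the stroll condition on $W$ only forbids coincidences $v_i=v_{i+k}$ \emph{across} the two halves, so nothing prevents $W$ from leaving a vertex $v\in V_d$ into deeper layers and returning to that same vertex $v$, which produces two equal consecutive entries of $W'$. Your justification (``consecutive entries elsewhere are consecutive non-equal vertices of the stroll $W$ restricted to $V_d$'') is not correct, since consecutive entries of $W'$ are in general not consecutive in $W$; also the seam pair $v_{i_\ell},v_{i_1+k}$ is not governed by the stroll condition unless $\ell=1$. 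So $W'$ is in general only a \emph{lazy} stroll, and stroll-nonrepetitiveness of $\beta_d$ says nothing a priori about lazy strolls, so the contradiction you draw is not yet justified.

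The missing ingredient is exactly \cref{LazyStroll}. Since $W$ is a stroll and depths agree across the halves, $v_{i_j}\neq v_{i_j+k}$ for every $j$, so $W'$ is a $\beta_d$-repetitive \emph{lazy stroll} in $G[V_d]$ (touching consecutive vertices by shadow-completeness), and \cref{LazyStroll} (every stroll-nonrepetitive colouring is nonrepetitive on lazy strolls) yields the contradiction. This is how the paper's proof works: it shows that every $\phi$-repetitive walk satisfies $v_j=v_{j+k}$ for some $j$, by observing that $W'$ is a lazy walk and then invoking the lazy machinery to extract the coincidence; the whole point of the ``Lazy Considerations'' lemmas is to cover precisely the case of repeated consecutive projected vertices that your argument rules out incorrectly. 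With that one-line substitution (cite \cref{LazyStroll} instead of claiming $W'$ is a non-lazy stroll), your proof is correct and essentially identical to the paper's.
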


\begin{proof}
Let $c:= \max_i \pi( G[V_i] )$. 
Let $\beta_i$ be a nonrepetitive $c$-colouring of $G[V_i]$.
By \cref{PathSigma} there is a walk-nonrepetitive 4-colouring $\alpha$ of the path $(x_1,\dots,x_n)$. 
Colour each vertex $v$ in $V_i$ by the pair $\phi(v):=(\alpha(x_i),\beta_i(v))$. 

We now prove that $\phi$ is path-nonrepetitive. 
Let $W$ be a $\phi$-repetitive walk $v_1,\ldots,v_{2k}$.
Our goal is to prove that $v_j=v_{j+k}$ for some $j\in\{1,\ldots,k\}$. 
Let $d$ be the minimum depth of a vertex in $W$.
Let $W'$ be the sequence of vertices obtained from $W$ by removing all vertices at depth greater than $d$. 
We claim that $W'$ is a lazy walk. To see this, consider vertices $v_i,v_{i+1}, \ldots, v_{i+t}$ of $W$ such that $v_i$ and $v_{i+t}$ have depth
$d$ but $v_{i+1}, \ldots, v_{i+t-1}$ all have depth greater than $d$; thus, $v_{i+1}, \ldots, v_{i+t-1}$ were removed when constructing $W'$.
Then, the vertices $v_{i+1}, \ldots, v_{i+t-1}$ lie
in a connected component of the graph induced by the vertices at depth
greater than $d$. Since the layering is shadow-complete, $v_i$
and $v_{i+t}$ are adjacent or equal. This shows that $W'$ is a lazy walk in $G[V_d]$.

The projection of $W$ into $P$ is an $\alpha$-repetitive lazy walk in $P$, and is thus boring by \cref{LazyWalks}. Thus the vertices $v_j$ and $v_{j+k}$ of $W$ have the same depth for every $j \in \{1, \dots, k\}$. In particular, $v_j$ was removed from $W'$ if and only if $v_{j+k}$ was. Hence, there are indices $1 \leq i_1 < i_2 < \cdots < i_{\ell} \leq k$ such that $W'=v_{i_1}, v_{i_2}, \dots, v_{i_{\ell}}, v_{i_1+k}, v_{i_2+k}, \dots, v_{i_{\ell}+k}$. Since $W$ is $\phi$-repetitive, it follows that $W'$ is also $\phi$-repetitive and in particular $W'$ is $\beta_d$-repetitive. Hence there is an index $i_r$ such that $v_{i_r}=v_{i_r+k}$, which completes the proof.
\end{proof}

\citet{BW08} proved an analogous result for  $\sigma$, which we refine as follows. 

\begin{lem}
\label{ShadowCompleteWalk}
Let $G$ be a graph that has a shadow-complete layering $(V_0,V_1,\dots,V_n)$. 
Assume that $G$ has a $k$-colouring $\beta$ in which $G[V_i]$ is stroll-nonrepetitively coloured for each $i\in\{0,1,\dots,n\}$, and 
distinct vertices $v,w$ of $G$ are assigned distinct colours whenever $v,w\in V_i$ for some $i\in\{1,\dots,n\}$ and $v,w\in N(u)$ for some vertex $u\in V_{i-1}\cup V_i$. Then 
\begin{align*}
\sigma(H) & \leq 4k.
\end{align*}
\end{lem}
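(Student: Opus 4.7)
The plan is to mirror the proofs of \cref{ShadowCompletePi,ShadowCompleteRho}, combining $\beta$ with a walk-nonrepetitive $4$-colouring $\alpha$ of the path $P=(x_1,\dots,x_n)$ (which exists by \cref{PathSigma}) to produce the $4k$-colouring defined by $\phi(v):=(\alpha(x_i),\beta(v))$ for each $v\in V_i$. I then show that $\phi$ is walk-nonrepetitive by verifying the two conditions of \cref{SigmaCharacterisation}, namely that $\phi$ is distance-$2$ and $\phi$ is stroll-nonrepetitive.

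For the distance-$2$ property, let $u,v$ be distinct vertices of $G$ with $\dist_G(u,v)\leq 2$. If $u\in V_i$ and $v\in V_j$ with $i\neq j$, then $|i-j|\leq 2$, and since $\alpha$ is walk-nonrepetitive (hence distance-$2$ by \cref{SigmaCharacterisation}), $\alpha(x_i)\neq\alpha(x_j)$, so $\phi(u)\neq\phi(v)$. Otherwise $u,v\in V_i$. If $u,v$ are adjacent, then the fact that $\beta|_{G[V_i]}$ is stroll-nonrepetitive (and therefore proper) gives $\beta(u)\neq\beta(v)$. If $u,v$ are non-adjacent with common neighbour $w$, then $w\in V_{i-1}\cup V_i\cup V_{i+1}$. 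The case $w\in V_{i+1}$ is impossible because the shadow of the connected component of $w$ in $G[V_{i+1}\cup\dots\cup V_n]$ is a clique in $V_i$ containing both $u$ and $v$, forcing them to be adjacent. The remaining case $w\in V_{i-1}\cup V_i$ is exactly the hypothesis on $\beta$, giving $\beta(u)\neq\beta(v)$.

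For stroll-nonrepetitivity, suppose for contradiction that $W=(v_1,\dots,v_{2t})$ is a $\phi$-repetitive stroll in $G$. The projection of $W$ into $P$ is an $\alpha$-repetitive lazy walk, and is therefore boring by \cref{LazyWalks}, so $v_j$ and $v_{j+t}$ lie in the same layer for every $j\in\{1,\dots,t\}$. Let $d$ be the minimum depth of a vertex in $W$ and let $W'$ be obtained from $W$ by deleting all vertices at depth greater than $d$. As in the proof of \cref{ShadowCompleteRho}, shadow-completeness forces any two consecutive vertices of $W'$ to be adjacent or equal in $G[V_d]$, so $W'$ is a lazy walk in $G[V_d]$ of the form $(v_{i_1},\dots,v_{i_\ell},v_{i_1+t},\dots,v_{i_\ell+t})$, and it is $\beta$-repetitive. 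Since $W$ is a stroll, $v_{i_r}\neq v_{i_r+t}$ for every $r$, so $W'$ is in fact a lazy stroll in $G[V_d]$. But \cref{LazyStroll} applied to the stroll-nonrepetitive colouring $\beta$ of $G[V_d]$ forbids any $\beta$-repetitive lazy stroll, giving the desired contradiction.

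I expect the main subtlety to lie in the distance-$2$ verification, where one must glue together the hypothesis on $\beta$ (for common neighbours in $V_{i-1}\cup V_i$), shadow-completeness (to eliminate same-layer pairs whose only common neighbour lies deeper), and the fact that stroll-nonrepetitive colourings are proper (for adjacent same-layer pairs). The stroll-nonrepetitive part is then a direct adaptation of the proof of \cref{ShadowCompleteRho}, with \cref{LazyStroll} playing the role that the analogous walk statement played there.
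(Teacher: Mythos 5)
Your proof is correct, and it reaches the conclusion by a somewhat different route than the paper. The paper never verifies that $\phi$ is distance-2; instead it argues directly about a repetitively coloured non-boring walk: after projecting to the path and restricting to the minimum layer exactly as you do, it uses \cref{LazyStroll} to find an index $i$ with $v_i=v_{t+i}$, then (since the walk is not boring) picks $j$ with $v_j\neq v_{t+j}$ minimising $|i-j|$, so that $|i-j|=1$, the vertices $v_j,v_{t+j}$ lie in a common layer ($V_k$ or $V_{k+1}$), and they share the neighbour $v_i=v_{t+i}\in V_k$; the colour hypothesis on $\beta$ then contradicts repetitiveness. In effect the paper inlines the argument behind \cref{SigmaCharacterisation}, and only ever needs the hypothesis for a pair whose common neighbour lies in the minimum layer of the walk, hence never in a deeper layer. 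Your decomposition through \cref{SigmaCharacterisation} buys a cleaner structure — distance-2 and stroll-nonrepetitivity are each checked once and for all, and the stroll part is then literally the proof of \cref{ShadowCompleteRho} — but it costs one extra use of shadow-completeness: you must rule out same-layer pairs whose only common neighbours lie one layer deeper (your observation that such a neighbour's component forces $u,v$ into a clique of $V_i$), a configuration the paper's direct argument never encounters. Both arguments share the same boundary quibble inherited from the statement: the hypothesis on $\beta$ is indexed by $i\in\{1,\dots,n\}$, so two vertices of $V_0$ with a common neighbour inside $V_0$ are not literally covered; the paper's own proof meets the same case when the minimum layer is $V_0$, so this is a feature of the statement rather than a gap in your argument.
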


\begin{proof}
By \cref{PathSigma} there is a walk-nonrepetitive 4-colouring $\alpha$ of the path $P=(x_0,x_1,\dots,x_n)$. 
Colour each vertex $v$ in $V(G)\cap V_i$ by the pair  $\phi(v):=(\alpha(x_i),\beta(v))$. 
We claim that $\phi$ is a walk-nonrepetitive colouring of $G$.

Suppose on the contrary that $W=(v_1,\dots,v_{2t})$ is a $\phi$-repetitive non-boring walk in $G$. The projection of $W$ to $P$ is a lazy walk, which is repetitively coloured by $\alpha$, and is therefore boring by \cref{LazyWalks}. Thus, for $i\in\{1,\dots,t\}$ the vertices $v_i$ and $v_{t+i}$ are in the same layer. 

Let $k$ be the minimum layer containing a vertex in $W$. 
Let $W'$ be the sequence of vertices obtained from $W$ by deleting all vertices not in $V_k$. Since $v_i\in W'$ if and only if $v_{t+i}\in W'$, 
the sequence $W'$ is repetitively coloured. Let $v_i$ and $v_j$ be consecutive vertices in $W'$ with $i<j$. Then there is walk from $v_i$ to $v_j$ with all its internal vertices in $V_{k+1}\cup\dots\cup V_n$ (since $k$ was chosen minimum), implying $v_i=v_j$ or $v_iv_j$ is an edge of $H$ (since the layering is shadow-complete). Thus $W'$ forms a repetitively coloured lazy walk in $G[V_k]$. Since $G[V_i]$ is stroll-repetitively coloured by $\beta$, by \cref{LazyStroll}, some vertex $v_i=v_{t+i}$ is in $W'$. Since $W$ is not boring, $v_j\ne v_{t+j}$ for some $j\in[t]$. Choose such $i$ and $j$ to minimise $|i-j|$. Thus $|i-j|=1$. Hence $v_j,v_{t+j}\in V_k$ or $v_j,v_{t+j}\in V_{k+1}$. Moreover, $v_j$ and $v_{t+j}$ have a common neighbour $v_i=v_{t+i}$. By assumption, $\beta(v_j)\neq \beta(v_{t+j})$, which contradicts the assumption that $W$ is repetitively coloured.
\end{proof}

\subsection{Strong Products}
\label{StrongProducts}

Nonrepetitive colourings of graph products have been studied in \citep{KP-DM08,KPZ14,BW08,PSFT18,DEJWW20,BK-AC04}. Here we focus on strong products because doing so has applications to numerous graph classes, such as planar graphs (\cref{Planar}) and graphs excluding a minor (\cref{MinorClosedClass}). 

\begin{lem}[\citep{DEJWW20}]
\label{ProductRho}
For all graphs $G$ and $H$, 
$$\pi(G\boxtimes H) \leq \rho(G \boxtimes H)\le \rho(G) \cdot \sigma(H).$$ 
\end{lem}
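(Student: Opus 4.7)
The first inequality $\pi(G \boxtimes H) \leq \rho(G \boxtimes H)$ is immediate from the general inequality $\pi \leq \rho$ noted after the definition of the stroll-nonrepetitive chromatic number, so the content is the second inequality. The plan is the obvious product construction: let $\alpha$ be a stroll-nonrepetitive colouring of $G$ using $\rho(G)$ colours, let $\beta$ be a walk-nonrepetitive colouring of $H$ using $\sigma(H)$ colours, and define $\phi(v,x) := (\alpha(v),\beta(x))$ for every vertex $(v,x)$ of $G\boxtimes H$. This uses at most $\rho(G)\cdot\sigma(H)$ colours, and it remains to show that $\phi$ is stroll-nonrepetitive.

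Suppose for contradiction that $W = ((v_1,x_1),\dots,(v_{2t},x_{2t}))$ is a $\phi$-repetitive stroll in $G\boxtimes H$, so that $(v_i,x_i)\neq(v_{t+i},x_{t+i})$ for each $i\in\{1,\dots,t\}$. By the definition of the strong product, consecutive vertices in $W$ are either equal or adjacent in each coordinate. Hence both projections $W_G := (v_1,\dots,v_{2t})$ and $W_H := (x_1,\dots,x_{2t})$ are lazy walks, in $G$ and in $H$ respectively; moreover they are both repetitively coloured, by $\alpha$ and $\beta$ respectively, since $\phi$-repetitiveness is inherited coordinatewise.

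The walk-nonrepetitive hypothesis on $\beta$ combined with \cref{LazyWalks} says that $\beta$ is nonrepetitive on non-boring lazy walks. Since $W_H$ is a repetitively coloured lazy walk, it must be boring; that is, $x_i = x_{t+i}$ for every $i\in\{1,\dots,t\}$. But then the stroll condition $(v_i,x_i)\neq(v_{t+i},x_{t+i})$ forces $v_i\neq v_{t+i}$ for every $i\in\{1,\dots,t\}$, so $W_G$ is in fact a repetitively coloured lazy \emph{stroll} in $G$.

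The final step applies \cref{LazyStroll}: since $\alpha$ is stroll-nonrepetitive, it is nonrepetitive on lazy strolls, contradicting the existence of the repetitively coloured lazy stroll $W_G$. This completes the verification that $\phi$ is stroll-nonrepetitive and hence $\rho(G\boxtimes H)\leq \rho(G)\cdot\sigma(H)$. The only delicate point in the argument is aligning the two auxiliary lemmas with the right projection: one needs the walk-nonrepetitive strength on $H$ to force the $H$-projection to be boring, after which the stroll condition upgrades the $G$-projection from a lazy walk to a lazy stroll so that the weaker stroll-nonrepetitive hypothesis on $G$ suffices.
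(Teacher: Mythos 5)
Your proof is correct and follows essentially the same route as the paper: the same product colouring $\phi(v,x)=(\alpha(v),\beta(x))$, with \cref{LazyWalks} applied to the $H$-projection to force it to be boring and \cref{LazyStroll} applied to the $G$-projection to finish. The only cosmetic difference is that the paper argues about an arbitrary $\phi$-repetitive lazy walk in $G\boxtimes H$ (so it in fact establishes nonrepetitiveness on lazy strolls), whereas you start from an ordinary stroll by contradiction, which suffices for the stated bound.
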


\begin{proof}
Let $\alpha$ be a stroll-nonrepetitive colouring of $G$ with $\rho(G)$ colours.  Let $\beta$ be a walk-nonrepetitive colouring of $H$ with $\sigma(H)$ colours. By \cref{LazyWalks}, $\beta$ is nonrepetitive on non-boring lazy walks in $H$. For any two vertices $u \in V(G)$ and $v \in V(H)$,  colour vertex $(u,v)$ of $G\boxtimes H$ by $\phi(u,v) :=(\alpha(u),\beta(v))$. We claim that $\phi$  is a stroll-nonrepetitive colouring of $G\boxtimes H$. To see this, consider a $\phi$-repetitive lazy walk $W=(u_1,v_1), \ldots, (u_{2k},v_{2k})$ in $G\boxtimes H$. By the definition of the strong product and the definition of $\phi$, the projection
$W_G=(u_1,u_2,\ldots,u_{2k})$ of $W$ into $G$ is an $\alpha$-repetitive lazy walk in $G$ and the projection $W_H=(v_1,v_2,\ldots,v_{2k})$ of $W$ into $H$ is a $\beta$-repetitive lazy walk in $H$. Since $\alpha$ is stroll-nonrepetitive, by \cref{LazyStroll}, $u_i=u_{i+k}$ for some $i\in\{1,\dots,k\}$. Since $\beta$ is
nonrepetitive on non-boring lazy walks, $v_j=v_{j+k}$ for every $j \in \{1,\dots,k\}$. In particular, $v_i=v_{i+k}$ and $(u_i,v_i)=(u_{i+k},v_{i+k})$. This shows that $\phi$ is a stroll-nonrepetitive colouring with at most $\rho(G) \cdot \sigma(H)$ colours. 
\end{proof}

Several notes about \cref{ProductRho} are in order:

\begin{itemize}
	\item There is no known upper bound on $\pi(G\boxtimes H)$ that avoids stroll-nonrepetitive colouring. This is an important reason for considering strolls. 
	
	\item $\rho(G \boxtimes H)$ is not bounded by any function of $\rho(G)$ or $\rho(H)$. For example, if $G=H=K_{1,n}$ then $\rho(G)=\rho(H)=2$, but $G\boxtimes H$ contains the complete bipartite graph $K_{n,n}$, and thus $\rho(G\boxtimes H)\geq \pi(G\boxtimes H)\geq \pi(K_{n,n}) \geq n+1$ by \cref{NaiveUpperBound}.


\item As pointed out by Kevin Hendrey [personal communication, 2020], dependence on $\Delta$ and $\sigma$ is unavoidable in \cref{ProductRho}. Since the complete bipartite graph $K_{\Delta(G),\Delta(H)}$ is a subgraph of $G \boxtimes H$, \cref{NaiveUpperBound} implies:
$$\rho(G\boxtimes H) \geq \pi(G \boxtimes H) \geq  \pi( K_{\Delta(G),\Delta(H)} ) \geq \min\{\Delta(G),\Delta(H)\}+1.$$
In particular, $\rho(H\boxtimes H) \geq \Delta(H)+1$ and $\rho(H\boxtimes H) \geq \rho(H)$, implying
$\rho(H\boxtimes H)^3 \geq \rho(H) ( \Delta(H)^2+1) \geq \sigma(H)$ (by \cref{SigmaBounds}) and $\rho(H\boxtimes H) \geq \sigma(H)^{1/3}$.
\end{itemize}

%
%

%

%
%
%

Since $\sigma(K_\ell)=\ell$, \cref{ProductRho} implies:

\begin{cor}[\citep{DEJWW20}]
\label{CompleteProductRho}
For every graph $G$ and integer $\ell\in\mathbb{N}$, 
$$\rho(G \boxtimes K_\ell)\leq \ell\, \rho(G).$$
\end{cor}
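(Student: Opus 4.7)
The plan is to derive this as an immediate corollary of \cref{ProductRho}. That lemma gives $\rho(G \boxtimes H) \le \rho(G) \cdot \sigma(H)$ for all graphs $G$ and $H$, so setting $H := K_\ell$ reduces the whole task to verifying the single numerical fact $\sigma(K_\ell) = \ell$, which is already asserted in the sentence preceding the corollary.

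To justify $\sigma(K_\ell) = \ell$, I would invoke \cref{SigmaCharacterisation}: a walk-nonrepetitive colouring must in particular be a distance-$2$ colouring. In $K_\ell$ every two distinct vertices are at distance $1$, hence at distance at most $2$, so any walk-nonrepetitive colouring must assign all $\ell$ vertices pairwise distinct colours; this yields $\sigma(K_\ell) \ge \ell$. For the matching upper bound, assigning each vertex of $K_\ell$ its own unique colour is trivially stroll-nonrepetitive (no two vertices share a colour, so no repetitively coloured sequence of length $\ge 2$ exists at all) and distance-$2$, so by \cref{SigmaCharacterisation} it is walk-nonrepetitive; hence $\sigma(K_\ell) \le \ell$.

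Combining these gives $\rho(G \boxtimes K_\ell) \le \rho(G)\cdot\sigma(K_\ell) = \ell\,\rho(G)$. There is no real obstacle here — the substantive content lives entirely in \cref{ProductRho} (which already handles the stroll-nonrepetitive/walk-nonrepetitive product argument via the lazy-walk machinery of \cref{LazyWalks,LazyStroll}), and the corollary is a one-line specialisation once the value of $\sigma(K_\ell)$ is noted.
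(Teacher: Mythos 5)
Your proposal is correct and follows exactly the paper's route: the paper derives the corollary in one line by applying \cref{ProductRho} with $H=K_\ell$ together with the fact $\sigma(K_\ell)=\ell$, which you have simply spelled out in more detail (and your verification of that fact via \cref{SigmaCharacterisation} is sound).
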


\citet{BV07} proved an analogous result for walk-nonrepetitive colourings of strong products.

\begin{lem}[\citep{BV07}]
\label{ProductSigma}
For all graphs $G$ and $H$, 
$$\sigma(G \boxtimes H)\le \sigma(G) \cdot \sigma(H).$$ 
\end{lem}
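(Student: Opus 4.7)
The plan is to mimic the proof of \cref{ProductRho}, but apply \cref{LazyWalks} on both factors instead of once on each side. Since we want a walk-nonrepetitive colouring of $G\boxtimes H$, both factors can afford to contribute walk-nonrepetitive colourings rather than one of them being merely stroll-nonrepetitive.

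First I would let $\alpha$ be a walk-nonrepetitive colouring of $G$ with $\sigma(G)$ colours and $\beta$ be a walk-nonrepetitive colouring of $H$ with $\sigma(H)$ colours, and define $\phi(u,v):=(\alpha(u),\beta(v))$ on $V(G\boxtimes H)$, which uses at most $\sigma(G)\cdot \sigma(H)$ colours. The goal is then to show $\phi$ is walk-nonrepetitive: assume for contradiction there is a non-boring $\phi$-repetitive walk $W=((u_1,v_1),\dots,(u_{2t},v_{2t}))$ in $G\boxtimes H$.

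Next I would project $W$ onto each factor: $W_G=(u_1,\dots,u_{2t})$ and $W_H=(v_1,\dots,v_{2t})$. By the definition of the strong product, consecutive pairs $(u_i,v_i),(u_{i+1},v_{i+1})$ are adjacent or equal, which means that in $G$ the vertices $u_i$ and $u_{i+1}$ either coincide or are adjacent, and likewise in $H$. Thus $W_G$ is a lazy walk in $G$ and $W_H$ is a lazy walk in $H$. Since $\phi$ is repetitive on $W$, the colourings $\alpha$ and $\beta$ are repetitive on $W_G$ and $W_H$ respectively. By \cref{LazyWalks}, the walk-nonrepetitive colouring $\alpha$ is nonrepetitive on non-boring lazy walks, so $W_G$ must be boring, meaning $u_i=u_{t+i}$ for each $i\in\{1,\dots,t\}$; similarly $W_H$ is boring, so $v_i=v_{t+i}$ for each $i\in\{1,\dots,t\}$.

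Combining these two observations yields $(u_i,v_i)=(u_{t+i},v_{t+i})$ for every $i\in\{1,\dots,t\}$, which contradicts the assumption that $W$ is not boring. Hence $\phi$ is walk-nonrepetitive and $\sigma(G\boxtimes H)\leq \sigma(G)\cdot\sigma(H)$. There is no real obstacle here; the only subtle point is recognising that the projections of a walk in a strong product are lazy walks (not ordinary walks) in the factors, which is exactly what \cref{LazyWalks} is designed to handle, and which is the reason that walk-nonrepetitiveness (rather than mere path-nonrepetitiveness) of both factors is needed for the argument to close.
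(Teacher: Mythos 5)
Your proof is correct and follows essentially the same route as the paper: take the product of walk-nonrepetitive colourings of the two factors, observe that the projections of a repetitively coloured walk are repetitively coloured lazy walks, and apply \cref{LazyWalks} to each factor to conclude both projections (hence the walk itself) are boring. No issues.
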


\begin{proof}
Let $\alpha$ be a walk-nonrepetitive colouring of $G$ with $\sigma(G)$ colours. Let $\beta$ be a walk-nonrepetitive colouring of $H$ with $\sigma(H)$ colours. By \cref{LazyWalks}, $\alpha$ is nonrepetitive on non-boring lazy walks in $G$, and $\beta$ is nonrepetitive on non-boring lazy walks in $H$. 
For any two vertices $u \in V(G)$ and $v \in V(H)$,  colour vertex $(u,v)$ of $G\boxtimes H$ by $\phi(u,v) :=(\alpha(u),\beta(v))$.
We claim that $\phi$ is a walk-nonrepetitive colouring of
$G\boxtimes H$. To see this, consider a $\phi$-repetitive walk $W=(u_1,v_1), \ldots, (u_{2k},v_{2k})$ in $G\boxtimes H$. By the definition of the strong product and the definition of $\phi$, the projection $W_G=(u_1,u_2,\ldots,u_{2k})$ of $W$ into $G$ is an $\alpha$-repetitive lazy walk in $G$ and the projection $W_H=(v_1,v_2,\ldots,v_{2k})$ of $W$ in $H$ is a $\beta$-repetitive lazy walk in $H$. Since $\alpha$ is nonrepetitive on non-boring lazy walks, $u_i=u_{i+k}$ for all $i\in\{1,\dots,k\}$. Similarly,
since $\beta$ is nonrepetitive on non-boring lazy walks, $v_i=v_{i+k}$ for all $i\in\{1,\dots,k\}$. Thus $(u_i,v_i)=(u_{i+k},v_{i+k})$ for all $i\in\{1,\dots,k\}$, implying $W$ is boring. Therefore $\phi$ is a  walk-nonrepetitive colouring of $G\boxtimes H$ with 
at most $\sigma(G)\cdot \sigma(H)$ colours. Hence 
$\sigma(G \boxtimes H)\le \sigma(G) \cdot \sigma(H)$. 
\end{proof}

The above results about strong products are applied for several graph classes in \cref{PlanarAndBeyond}. Here we give one more application. A graph class $\GG$ has \emph{polynomial growth} if for some constant $c$, for every graph $G\in\GG$, for each $r\geq 2$ every $r$-ball in $G$ has at most $r^c$ vertices. For example, every $r$-ball in an $n\times n$ grid graph is contained in a $(2r+1)\times(2r+1)$ subgrid, which has size $(2r+1)^2$; therefore the class of grid graphs has polynomial growth. More generally, let $\mathbb{Z}^d$ be the strong product of $d$ infinite two-way paths. That is, $V(\mathbb{Z}^d)=\{(x_1,\dots,x_d): x_1,\dots,x_d\in\mathbb{Z}\}$ where distinct vertices $(x_1,\dots,x_d)$ and $(y_1,\dots,y_d)$ are adjacent in $\mathbb{Z}^d$ if and only if $|x_i-y_i|\leq 1$ for each $i\in\{1,\dots,d\}$. Then every $r$-ball in $\mathbb{Z}^d$ has size at most $(2r+1)^d$. \citet{KL07} characterised the graph classes with polynomial growth as the subgraphs of $\mathbb{Z}^d$; see \citep{DHJLW20} for an alternative characterisation.

\begin{thm}[\citep{KL07}]
	\label{KL}
	Let $G$ be a graph such that for some constant $c$ and for every integer $r\geq 2$, every $r$-ball in $G$ has at most $r^c$ vertices. Then $G\subseteq \mathbb{Z}^{O(c\log c)}$. 
\end{thm}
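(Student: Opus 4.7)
The plan is to embed $G$ as a subgraph of the strong product $\mathbb{Z}^d$ with $d = O(c\log c)$ via the Krauthgamer--Lee multiscale approach. Since adjacency in $\mathbb{Z}^d$ is coordinate-wise $\ell_\infty$-distance at most $1$, the goal is an injection $f \colon V(G) \to \mathbb{Z}^d$ whose every coordinate is $1$-Lipschitz with respect to $\dist_G$. Polynomial growth first yields basic structure: every $2$-ball has at most $2^c$ vertices, so $\Delta(G) \leq 2^c - 1$; a packing-versus-covering argument then gives a doubling property, namely every ball of radius $2r$ admits a cover by at most $\lambda = 2^{O(c)}$ balls of radius $r$, where $\lambda$ depends only on $c$.

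Next, I would construct a hierarchy of nets $N_0 \supseteq N_1 \supseteq \cdots$, where $N_i$ is a maximal $2^i$-separated subset of $N_{i-1}$ under $\dist_G$, and assign each vertex $v$ a parent $\pi_i(v) \in N_i$ within distance $2^i$. By the doubling property, the auxiliary graph on $N_i$ with edges joining points at distance $O(2^i)$ has degree $\lambda^{O(1)}$, and thus admits a proper colouring with $2^{O(c)}$ colours, encodable in $O(c)$ binary coordinates per scale. Each vertex then receives $O(c)$ coordinates per scale, recording the colour of its parent $\pi_i(v)$.

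The crux is combining across infinitely many scales without blowing up the dimension. The Krauthgamer--Lee device is to group scales cyclically into $O(\log c)$ bands, so that within one band the surviving scales are separated by a factor of $2^{\Omega(\log c)}$, large enough for the coordinate data of different scales in the same band to be superimposed without ambiguity. Each band then contributes only $O(c)$ coordinates, yielding the promised total $d = O(c \log c)$. The main obstacle is verifying the two defining properties of a strong-product embedding simultaneously: $1$-Lipschitzness of each coordinate is a local condition, secured by the padded-partition property of the colouring at each scale, while global injectivity is a non-local condition, secured by the hierarchy of parent maps. Reconciling these tensions across all scales is precisely where the $\log c$ overhead in the dimension enters, and any attempt to avoid banding seems to force either loss of injectivity or loss of the Lipschitz condition.
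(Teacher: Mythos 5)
You should first note that the survey does not prove this theorem at all: it is stated with the citation \citep{KL07} and used as a black box, so there is no internal proof to compare against. Judged on its own merits, your outline has a genuine gap at its first quantitative step, on which everything else depends. You claim that the hypothesis $|N^r_G[v]|\leq r^c$ (for all $r\geq 2$) yields a doubling property with constant $\lambda=2^{O(c)}$ depending only on $c$, and you then use this to bound the degree of the auxiliary graph on each net $N_i$, to get $2^{O(c)}$ colours per scale, $O(c)$ coordinates per band, and hence dimension $O(c\log c)$. That claim is false. Consider the comb: a path $p_1,\dots,p_n$ with a pendant path (``tooth'') of length $t$ attached at each $p_i$, with tip $q_{i,t}$. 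For every $t$ and $n$ one checks $|N^r_G[v]|\leq r^2+4r+2\leq r^4$ for all $r\geq 2$, so the hypothesis holds with $c=4$ uniformly in $t$. Yet $\dist_G(q_{j,t},q_{j',t})=2t+|j-j'|\geq 2t+1$ for $j\neq j'$, while $\dist_G(p_i,q_{j,t})\leq 2t$ whenever $|j-i|\leq t$; so the ball $N^{2t}_G[p_i]$ contains $2t+1$ tips that are pairwise at distance greater than $2t$, and no ball of radius $t$ contains two of them. Hence covering this $2t$-ball requires at least $2t+1$ balls of radius $t$, so the doubling constant is unbounded in terms of $c$. Equivalently, at scale $2^i\approx t$ a $2^i$-separated net may contain all these tips, which are pairwise within distance $O(2^i)$ of one another, so your auxiliary graph has degree $\Omega(t)$ and admits no proper colouring with $2^{O(c)}$ colours. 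The packing-versus-covering argument you invoke only gives a bound of order $r^{c-1}$ (the disjoint $r/2$-balls around separated points can have as few as about $r/2$ vertices), not a constant.

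This is not a fixable technicality of the write-up but the actual difficulty of the theorem. What is required is an injective contraction into $\mathbb{Z}^d$ (each coordinate $1$-Lipschitz, the map injective), not a bi-Lipschitz embedding, and correspondingly the hypothesis of polynomial growth is genuinely weaker than doubling: the comb is itself a subgraph of $\mathbb{Z}^2$, so subgraphs of $\mathbb{Z}^d$ need not be doubling in their intrinsic metric, and any proof must cope with that. The Assouad-style net-colouring machinery you transplant is tailored to doubling metrics and breaks exactly at this point; the argument of \citet{KL07} has to exploit the growth bound in a more global, multiscale way and is considerably more delicate than the sketch suggests. Secondarily, even granting bounded per-scale colour classes, your final paragraph only asserts that $1$-Lipschitzness of the superimposed band coordinates and global injectivity can be reconciled; neither is actually verified, and injectivity is precisely where the collapsed scales within a band threaten to lose information. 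As it stands, the proposal does not constitute a proof of \cref{KL}.
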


\cref{KL,ProductRho,ProductSigma} imply:

\begin{thm}
	Let $G$ be a graph such that for some $c\in\NN$ and for every integer $r\geq 2$, every $r$-ball in $G$ has at most $r^c$ vertices. Then 
	$$\rho(G)\leq \sigma(G) \leq c^{O(c)}.$$ 
\end{thm}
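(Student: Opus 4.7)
The plan is to combine \cref{KL} (Krauthgamer--Lee) with \cref{ProductSigma} and the trivial monotonicity of $\sigma$ under taking subgraphs. First I would apply \cref{KL} to conclude that $G$ is (isomorphic to) a subgraph of $\mathbb{Z}^d$ for some $d=O(c\log c)$. Since $G$ is finite by our standing assumption, $G$ is in fact a subgraph of a finite piece of $\mathbb{Z}^d$, namely the strong product $P \boxtimes P \boxtimes \cdots \boxtimes P$ of $d$ copies of some sufficiently long finite path $P$.

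Next I would bound $\sigma$ of this finite strong product. By Thue's theorem (cited as \cref{PathSigma} in the survey), every path satisfies $\sigma(P)\leq 4$. Applying \cref{ProductSigma} inductively $d-1$ times gives
\begin{equation*}
\sigma\bigl(\underbrace{P\boxtimes\cdots\boxtimes P}_{d}\bigr) \leq \sigma(P)^d \leq 4^d.
\end{equation*}
A walk-nonrepetitive colouring of any graph restricts to a walk-nonrepetitive colouring of any subgraph (since every walk in the subgraph is also a walk in the host, and boringness is preserved), so $\sigma$ is monotone under subgraphs. Hence $\sigma(G)\leq 4^d$.

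Finally, substituting $d=O(c\log c)$ gives
\begin{equation*}
\sigma(G) \leq 4^{O(c\log c)} = 2^{O(c\log c)} = c^{O(c)},
\end{equation*}
and the bound $\rho(G)\leq\sigma(G)$ is immediate from the definitions (already noted in the introduction). This would complete the proof.

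There is no real obstacle, since the hard work is packaged inside \cref{KL} and \cref{ProductSigma}; the only thing to be careful about is the reduction from the infinite grid $\mathbb{Z}^d$ to a finite strong product of paths (so that \cref{ProductSigma}, stated for finite graphs, applies), which is handled simply by taking a bounding box around the finite image of $G$.
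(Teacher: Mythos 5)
Your proposal is correct and follows essentially the same route as the paper, which derives the theorem directly from \cref{KL} together with \cref{ProductSigma} (and \cref{ProductRho}), bounding $\sigma$ of a strong product of paths by $4^{O(c\log c)}=c^{O(c)}$ and using monotonicity under subgraphs. (One small nitpick: the bound $\sigma(P)\leq 4$ is the Kündgen--Pelsmajer result \cref{PathSigma}, not Thue's theorem itself, but the cited lemma is the right one.)
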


Our focus has been on strong products. The other two main graph products are also of interest. 

\begin{open}
What can be said about $\pi(G\square H)$ and $\pi(G\times H)$? This is related to \cref{LineGraphCompleteBipartite} since $K_n\square K_n \cong L(K_{n,n})$. 
\end{open}

\section{Bounded Degree Graphs}
\label{BoundedDegreeGraphs}

\subsection{Paths}
\label{Paths}

As mentioned in \cref{Introduction}, \citet{Thue06} proved the following:

\begin{thm}[\citep{Thue06}] 
\label{PathPi}
For every path $P$, 
\begin{equation}
\label{Path}
\pi(P) \leq 3,
\end{equation}
with equality if $|V(P)|\geq 4$. 
\end{thm}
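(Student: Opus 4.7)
The statement has two parts. For the upper bound $\pi(P)\leq 3$, the plan is to exhibit, for every $n$, a sequence $w_1w_2\dots w_n$ over $\{1,2,3\}$ that is square-free (no factor of the form $uu$ with $u$ nonempty), and then use this sequence as the colouring of the $n$-vertex path. Since every path of a graph that is a path is a subpath (that is, a contiguous subword), a square-free colouring is automatically nonrepetitive. The cleanest route is to produce a single infinite square-free word on three letters and read off prefixes of the required length.

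To construct the infinite word, I would use a morphism approach. Define a substitution $h$ on the alphabet $\{1,2,3\}$, for instance the Thue-like morphism $h(1)=12312,\ h(2)=131232,\ h(3)=1323132$ (or any of the standard square-free morphisms in the literature), and let $w$ be the fixed point obtained by iterating $h$ on the letter $1$. The key lemma to prove is that $h$ is \emph{square-free}, meaning $h(u)$ is square-free whenever $u$ is; then an induction on the number of iterations yields square-free words of unbounded length, and hence an infinite square-free word. The hard step here is the case analysis for square-freeness of $h$: one considers a putative shortest square $xx$ in $h(u)$, compares its length with the length of $h$-images, and shows by a finite computation on short factors that such a square forces a shorter square in $u$. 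This is the main technical obstacle, but it reduces to a bounded case check, and the paper is a survey so one may legitimately cite Thue's original verification.

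For the lower bound when $|V(P)|\geq 4$, the argument is short. Any nonrepetitive colouring is proper, because two adjacent vertices with the same colour form a $\phi$-repetitive path on $2$ vertices. Any proper $2$-colouring of a path must therefore alternate its two colours, so the first four vertices are coloured $abab$, which is a $\phi$-repetitive path on $4$ vertices. Hence $\pi(P)\geq 3$, and together with the upper bound this gives equality.

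The main obstacle is purely the verification of square-freeness of the chosen morphism; everything else is bookkeeping. In a survey, I would state the morphism explicitly, verify square-freeness only for factors of length up to a small constant (the so-called \emph{synchronisation} length of $h$), and refer the reader to \citet{Thue06} or a combinatorics-on-words textbook such as \citep{Lothaire02} for the full case analysis.
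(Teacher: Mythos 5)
Your plan is essentially the paper's own approach: the paper also proves the upper bound by a substitution rule (Leech's three 13-letter blocks $A_0,A_1,A_2$) and verifies by a finite case analysis---short repetitions lie inside two consecutive blocks, long ones are ruled out because every 8-letter factor synchronises, i.e.\ determines its block and position---that the substitution preserves square-freeness, which is exactly your key lemma; the only difference is that the paper carries out this bounded check explicitly rather than citing Thue, and your lower-bound argument ($abab$ on four vertices under any proper 2-colouring) is the standard one implicitly behind the equality claim. So the proposal is correct and matches the paper's method, provided you either verify your chosen morphism's square-freeness or cite a source that does.
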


\newcommand{\A}[1]{\textcolor{red}{#1}}
\renewcommand{\AA}[1]{\textcolor{brown}{#1}}
\newcommand{\AAA}[1]{\textcolor{blue}{#1}}

\begin{proof} 
The following construction is due to \citet{Leech57}. Consider the following three blocks:
\begin{align*}
A_0: &\; \A{0\,1\,2\,1\,0\,2\,1\,2\,0\,1\,2\,1\,0} \\
A_1: &\; \AA{1\,2\,0\,2\,1\,0\,2\,0\,1\,2\,0\,2\,1} \\
A_2: &\; \AAA{2\,0\,1\,0\,2\,1\,0\,1\,2\,0\,1\,0\,2}.
\end{align*}
First observe that $A_0,A_1,A_2$ are symmetric in the sense that a cyclic permutation of $0,1,2$ also permutes $A_0,A_1,A_2$. Say $W$ is a nonrepetitive word on $\{0,1,2\}$. Let $W'$ be obtained from $W$ by replacing each element $i$ in $W$ by $A_i$. We now prove that $W'$ is also nonrepetitive. Suppose on the contary that $W'$ contains a repetition $x_1,\dots,x_{2t}$. First suppose that $t\leq 7$. Then $x_1,\dots,x_{2t}$ is contained in two consecutive blocks $A_i\,A_j$, and $i\neq j$ since $W$ is nonrepetitive. By symmetry, we may assume that $i=0$. But $A_0\,A_j$ is nonrepetitive:
\begin{align*}
A_0A_1: &\; \A{0\,1\,2\,1\,0\,2\,1\,2\,0\,1\,2\,1\,0}\,\AA{1\,2\,0\,2\,1\,0\,2\,0\,1\,2\,0\,2\,1} \\
A_0A_2: &\; \A{0\,1\,2\,1\,0\,2\,1\,2\,0\,1\,2\,1\,0}\,\AAA{2\,0\,1\,0\,2\,1\,0\,1\,2\,0\,1\,0\,2} .
\end{align*}
Now assume that $t\geq 8$. Any sequence of 8 characters in any block or in any two consecutive blocks is uniquely determined by the block or blocks involved and the starting character. The following cases confirm this, since by symmetry one only needs to check sequences  beginning with 0:
\begin{align*}
A_0: &\; \framebox{\A{0\,1\,2\,1\,0\,2\,1\,2}}\,\A{0\,1\,2\,1\,0} \\
A_0: &\; \A{0\,1\,2\,1}\,\framebox{\A{0\,2\,1\,2\,0\,1\,2\,1}}\,\A{0} \\
A_1: &\; \AA{1\,2}\,\framebox{\AA{0\,2\,1\,0\,2\,0\,1\,2}}\,\AA{0\,2\,1} \\
A_1: &\; \AA{1\,2\,0\,2\,1}\,\framebox{\AA{0\,2\,0\,1\,2\,0\,2\,1}} \\
A_2: &\; \AAA{2}\,\framebox{\AAA{0\,1\,0\,2\,1\,0\,1\,2}}\,\AAA{0\,1\,0\,2}\\
A_2: &\; \AAA{2\,0\,1}\,\framebox{\AAA{0\,2\,1\,0\,1\,2\,0}}\,\AAA{1\,0\,2}\\
A_0A_1: 
&\; \A{0\,1\,2\,1\,0\,2\,1\,2\,0\,1\,2\,1}\,\framebox{\A{0}\,\AA{1\,2\,0\,2\,1\,0\,2}}\,\AA{0\,1\,2\,0\,2\,1} \\
&\; \A{0\,1\,2\,1\,0\,2\,1\,2}\,\framebox{\A{0\,1\,2\,1\,0}\,\AA{1\,2\,0}}\AA{\,2\,1\,0\,2\,0\,1\,2\,0\,2\,1} \\
A_0A_2: 
&\; \A{0\,1\,2\,1\,0\,2\,1\,2\,0\,1\,2\,1}\,\framebox{\A{0}\,\AAA{2\,0\,1\,0\,2\,1\,0}}\,\AAA{1\,2\,0\,1\,0\,2} \\
&\; \A{0\,1\,2\,1\,0\,2\,1\,2}\,\framebox{\A{0\,1\,2\,1\,0}\,\AAA{2\,0\,1}}\,\AAA{0\,2\,1\,0\,1\,2\,0\,1\,0\,2} \\
A_1A_0: 
&\; \AA{1\,2\,0\,2\,1\,0\,2\,0\,1\,2}\,\framebox{\AA{0\,2\,1}\,\A{0\,1\,2\,1\,0}}\,\A{2\,1\,2\,0\,1\,2\,1\,0} \\
&\; \AA{1\,2\,0\,2\,1\,0\,2}\,\framebox{\AA{0\,1\,2\,0\,2\,1}\,\A{0\,1}}\,\A{2\,1\,0\,2\,1\,2\,0\,1\,2\,1\,0} \\
A_1A_2: 
&\; \AA{1\,2\,0\,2\,1\,0\,2\,0\,1\,2}\,\framebox{\AA{0\,2\,1}\,\AAA{2\,0\,1\,0\,2}}\,\AAA{1\,0\,1\,2\,0\,1\,0\,2} \\
&\; \AA{1\,2\,0\,2\,1\,0\,2}\,\framebox{\AA{0\,1\,2\,0\,2\,1}\,\AAA{2\,0}}\,\AAA{1\,0\,2\,1\,0\,1\,2\,0\,1\,0\,2} \\
A_2A_0: 
&\; \AAA{2\,0\,1\,0\,2\,1\,0\,1\,2\,0\,1}\,\framebox{\AAA{0\,2}\,\A{0\,1\,2\,1\,0\,2}}\,\A{1\,2\,0\,1\,2\,1\,0}\\
&\; \AAA{2\,0\,1\,0\,2\,1\,0\,1\,2}\,\framebox{\AAA{0\,1\,0\,2}\,\A{0\,1\,2\,1}}\,\A{0\,2\,1\,2\,0\,1\,2\,1\,0}\\
&\; \AAA{2\,0\,1\,0\,2\,1}\,\framebox{\AAA{0\,1\,2\,0\,1\,0\,2}\,\A{0}}\,\A{1\,2\,1\,0\,2\,1\,2\,0\,1\,2\,1\,0}\\
A_2A_1: 
&\; \AAA{2\,0\,1\,0\,2\,1\,0\,1\,2\,0\,1}\,\framebox{\AAA{0\,2}\,\AA{1\,2\,0\,2\,1\,0}}\,\AA{2\,0\,1\,2\,0\,2\,1}\\
&\; \AAA{2\,0\,1\,0\,2\,1\,0\,1\,2}\,\framebox{\AAA{0\,1\,0\,2}\,\AA{1\,2\,0\,2}}\,\AA{1\,0\,2\,0\,1\,2\,0\,2\,1}\\
&\; \AAA{2\,0\,1\,0\,2\,1}\,\framebox{\AAA{0\,1\,2\,0\,1\,0\,2}\,\AA{1}}\,\AA{2\,0\,2\,1\,0\,2\,0\,1\,2\,0\,2\,1}
\end{align*}
Thus, for $\ell\in\{1,\dots,t-7\}$, the subsequences $x_\ell,x_{\ell+1},\dots,x_{\ell+7}$ and $x_{t+\ell},x_{t+\ell+1},\dots,x_{t+\ell+7}$ appear in copies of the same block $A_i$ or in the same block pair $A_i \,A_j$, and moreover, $x_\ell$ appears in the same position as $x_{t+\ell}$ in the corresponding block. This implies that the starting word $W$ contains a repetitive subsequence. This contradiction shows that $W'$ is nonrepetitive. 
Arbitrarily long paths can be nonrepetitively $3$-coloured by this substitution rule. 
\end{proof}

There is a large body of literature on substitution rules like that used in the above proof; see \citep{Crochemore82,
Currie19,Currie13,ARS09} for example. 

We also briefly mention another proof of \cref{PathPi} via the Thue--Morse sequence, which is the binary sequence
$\underline{0}\,\underline{1}\,\underline{10}\,\underline{1001}\,\underline{10010110}\,\dots$
where the each underlined block is the negation of the entire preceding subsequence. 
See \citep{AS99} for a survey about the Thue--Morse sequence.
%
Given a path $(v_1,v_2,\dots)$, colour each vertex $v_i$ by the difference of the $(i+1)$-th and $i$-th entries in the Thue--Morse sequence. So the sequence of colours is $(1, 0, -1, 1, -1, 0, 1, 0, \dots)$. The Thue--Morse sequence contains no $0X0X0$ or $1X1X1$ pattern. It follows that the above 3-colouring of the path is nonrepetitive.

\begin{open}
\label{PathRhoOpen}
Is $\rho(P)\leq 3$ for every path $P$?
\end{open}

\citet{KP-DM08} showed that paths are walk-nonrepetitively 4-colourable. 

\begin{lem}[\citep{KP-DM08}]
\label{PathSigma}
For every path $P$, 
\begin{equation*}
\sigma(P) \leq 4 .
\end{equation*}
with equality if $|V(P)|\geq 6$. 
\end{lem}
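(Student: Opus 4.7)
The plan is to invoke \cref{SigmaCharacterisation}, which identifies walk-nonrepetitive colourings with those that are simultaneously stroll-nonrepetitive and distance-$2$. Thus the upper bound $\sigma(P) \leq 4$ reduces to constructing, for every path $P$, a $4$-colouring satisfying both properties, and the lower bound $\sigma(P) \geq 4$ (when $|V(P)| \geq 6$) reduces to ruling out such colourings with fewer colours.

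For the lower bound, let $P_6 = v_1 v_2 \dots v_6$ and suppose $c$ is a walk-nonrepetitive $3$-colouring. By the distance-$2$ property, $c(v_1), c(v_2), c(v_3)$ are pairwise distinct, say $a, b, c$. Then $c(v_4)$ must differ from $c(v_2)$ and $c(v_3)$, forcing $c(v_4) = a$. Iterating, $c(v_5) = b$ and $c(v_6) = c$. So the colour sequence along $P_6$ is $abcabc$, meaning the path $v_1 v_2 \dots v_6$ itself is a repetitively coloured path, contradicting even path-nonrepetitivity. Hence $\sigma(P_6) \geq 4$, and so $\sigma(P) \geq 4$ whenever $|V(P)| \geq 6$.

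For the upper bound, I would construct an explicit $4$-colouring of the one-way infinite path $v_1, v_2, v_3, \dots$ with the two required properties; restricting to any prefix then handles finite paths. Starting from a ternary nonrepetitive sequence $w_1 w_2 \dots$ on $\{1, 2, 3\}$ (which exists by \cref{PathPi}), one natural attempt is to place $w_j$ at position $2j-1$ and choose a separator at each even position from $\{0, 1, 2, 3\} \setminus \{w_j, w_{j+1}\}$, alternating between $0$ and the third letter of $\{1, 2, 3\}$ so that consecutive separators differ. This makes the distance-$2$ property immediate. More robustly, one can define a fixed-point morphism $h \colon \{0, 1, 2, 3\}^* \to \{0, 1, 2, 3\}^*$ whose iterate gives an infinite colour sequence in which both conditions can be verified by induction on iterates of $h$ from a finite base case, mirroring Thue's original argument for \cref{PathPi}.

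The hard part is verifying stroll-nonrepetitivity of the constructed sequence. By \cref{LazyStroll}, it suffices to show that no lazy stroll is repetitively coloured; on a path, a lazy stroll is essentially a bidirectional walk on $\mathbb{Z}$ satisfying $v_i \neq v_{t+i}$ for each $i \leq t-1$. This allows the ``second half'' of the stroll to fold back over the first, producing repetitions more subtle than the subword repetitions ruled out by Thue's theorem. The verification therefore requires a case analysis on how the two halves of a hypothetical repetitively coloured stroll can overlap on the integer line, combined with careful use of the structure of the morphism (or of the separator pattern). A fallback, if the direct analysis becomes unwieldy, is to appeal to an entropy-compression or Moser--Tardos-style argument, which would give an explicit $4$-colouring at the cost of a less transparent construction.
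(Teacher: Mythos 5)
Your lower bound is correct and is essentially the paper's argument: the distance-$2$ property forces the colour pattern $abcabc$ on six consecutive vertices, which is a repetitively coloured path, so $\sigma(P)\geq 4$ when $|V(P)|\geq 6$. Your reduction of the upper bound via \cref{SigmaCharacterisation} is also the right frame. The gap is in the upper bound itself: you never fix a construction and never verify stroll-nonrepetitivity, which is the entire content of the lemma. Your first candidate (ternary Thue letters at odd positions, separators drawn from $\{0,1,2,3\}\setminus\{w_j,w_{j+1}\}$ at even positions) is not analysed at all, and because the separators reuse colours from $\{1,2,3\}$ the parity/colour-class separation you would want is lost; the morphism alternative is only gestured at; and the "hard part" — ruling out repetitively coloured strolls, where the walk can fold back over itself — is explicitly deferred to an unspecified case analysis or a probabilistic fallback. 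As written, the proposal proves only the lower bound. A side issue: your appeal to \cref{LazyStroll} points the wrong way. That lemma says a stroll-nonrepetitive colouring is automatically nonrepetitive on lazy strolls, so you do not get to "reduce" to lazy strolls via it; you would simply be proving a (strictly stronger) statement by hand, which makes the missing verification harder, not easier.

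For comparison, the paper's proof makes the verification short by choosing the colouring so that the fourth colour is structurally isolated: take a square-free ternary word and insert the symbol $4$ after every block of two letters (e.g.\ $123132123 \mapsto 1243143241243$). Any three consecutive symbols are then distinct, giving distance-$2$ immediately. For stroll-nonrepetitivity one takes a repetitively coloured stroll $W=(v_1,\dots,v_{2t})$ with $t$ minimum. If $W$ is a genuine subpath, then since $\phi(v_i)=4$ iff $\phi(v_{t+i})=4$, deleting the vertices coloured $4$ produces a square in the original ternary word, contradicting \cref{PathPi}. Otherwise the stroll turns, so $v_i=v_{i+2}$ for some $i$; repetitivity gives $\phi(v_{t+i})=\phi(v_{t+i+2})$, the distance-$2$ property upgrades this to $v_{t+i}=v_{t+i+2}$ (with the boundary case $i=t-1$ handled separately), and deleting four vertices yields a repetitively coloured stroll on $2t-4$ vertices, contradicting minimality. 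This is the argument your proposal would need to supply (for your own construction or the paper's) before the upper bound can be considered proved.
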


\begin{proof}
Given a nonrepetitive sequence on $\{1,2,3\}$, insert the symbol $4$ between consecutive block of length two. For example, from the sequence $123132123$ we obtain $1243143241243$. Any three consecutive elements are distinct. Thus this sequence corresponds to a distance-2 colouring $\phi$ of a path. We now show that $\phi$ is stroll-nonrepetitieve. Suppose for the sake of contradiction that there is a repetitively coloured stroll. Let $W=(v_1,\dots,v_{2t})$ be a repetitively coloured stroll with $t$ minimum. Then $t\geq 2$. 

First suppose that $W$ is a subpath. Since $\phi(v_i)=4$ if and only if $\phi(v_{t+i})=4$, removing the vertices coloured 4 in $W$ gives a repetition in the original sequence on $\{1,2,3\}$. Now assume that $W$ has a repeated vertex. Thus $v_i=v_{i+2}$ for some $i\in\{1,\dots,2t-2\}$. (The stroll must turn around somewhere.)\ By symmetry, we may assume that $i\in\{1,\dots,t-1\}$. 

Suppose that $i\in\{1,\dots,t-2\}$. Thus $\phi(v_i)=\phi(v_{i+2})=\phi(v_{t+i})=\phi(v_{t+i+2})$. Since $\phi$ is a distance-2 colouring, $v_{t+i}=v_{t+i+2}$. 
Then $(v_1,\dots,v_{i-1},v_{i+2},\dots,v_{t+i-1},v_{t+i+2},\dots,v_{2t})$ is a repetitively coloured stroll on $2t-4$ vertices, contradicting the choice of $W$. 

Thus $i=t-1$. Then $(v_1,\dots,v_{t-2},v_{t+1},\dots,v_{2t-2})$ is a repetitively coloured stroll on $2t-4$ vertices, contradicting the choice of $W$. 

Hence $\phi$ is stroll-nonrepetitive. By \cref{SigmaCharacterisation}, $\phi$ is walk-nonrepetitive. 

Suppose on the contrary that some path $P$ on at least six vertices is walk-nonrepetitive 3-colourable. Since the colouring is distance~2, without loss of generality, the colouring begins $123123$, which is a repetitively coloured path. Thus $\sigma(P)\geq 4$. 
\end{proof}

\cref{ProductRho,PathSigma} imply:

\begin{cor}[\citep{DEJWW20}]
\label{PathProductRho}
For every graph $G$ and every path $P$, 
$$\rho(G \boxtimes P)\le 4 \rho(G).$$
\end{cor}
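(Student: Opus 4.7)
The plan is to apply \cref{ProductRho} and \cref{PathSigma} in sequence with $H = P$. This is an immediate corollary rather than a result requiring new ideas, so the proposal is short.

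First, I would invoke \cref{ProductRho}, which gives
\begin{equation*}
\rho(G \boxtimes P) \leq \rho(G) \cdot \sigma(P)
\end{equation*}
for any graph $G$ and any graph $P$ (here taking $H = P$). This reduces the question to bounding $\sigma(P)$.

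Then I would invoke \cref{PathSigma}, which asserts $\sigma(P) \leq 4$ for every path $P$. Combining the two inequalities gives the desired bound $\rho(G \boxtimes P) \leq 4\rho(G)$.

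There is essentially no obstacle here: all the content is already packaged into the two cited lemmas. If one wished to unfold the argument, the key ingredient is that a walk-nonrepetitive $4$-colouring $\beta$ of $P$ (produced in \cref{PathSigma}) combined with a stroll-nonrepetitive $\rho(G)$-colouring $\alpha$ of $G$ yields, via the product colouring $\phi(u,v) := (\alpha(u), \beta(v))$, a stroll-nonrepetitive colouring of $G \boxtimes P$, because any $\phi$-repetitive lazy walk in $G \boxtimes P$ projects to an $\alpha$-repetitive lazy walk in $G$ (which by \cref{LazyStroll} must satisfy $u_i = u_{i+k}$ for some $i$) and a $\beta$-repetitive lazy walk in $P$ (which by \cref{LazyWalks} must be boring, so $v_j = v_{j+k}$ for every $j$), forcing $(u_i,v_i) = (u_{i+k},v_{i+k})$. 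But none of this needs to be repeated, since it is exactly the content of \cref{ProductRho}.
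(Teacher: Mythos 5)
Your proposal is correct and follows exactly the paper's own derivation: the corollary is stated immediately after \cref{PathSigma} with the remark that \cref{ProductRho,PathSigma} imply it, i.e.\ $\rho(G\boxtimes P)\le\rho(G)\,\sigma(P)\le 4\rho(G)$. Nothing further is needed.
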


Now consider nonrepetitive list colourings of paths. \citet{GPZ11} first proved that $\pich(P)\leq 4$. Their proof uses the Lov\'asz Local Lemma in conjunction with a deterministic colouring rule that ensures that short paths are not repetitively coloured. We present two proofs of this result. The first, due to \citet{GKM11}, uses entropy compression, which is a technique based on the algorithmic proof of the Lov\'asz Local Lemma by \citet{MoserTardos}. 

\begin{thm}[\citep{GPZ11}]
\label{PathChoose}
Every path is nonrepetitively list 4-colourable.
\end{thm}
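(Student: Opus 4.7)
The plan is to prove \cref{PathChoose} by the entropy compression method, modelled on the Moser--Tardos framework \citep{MoserTardos} and following \citet{GKM11}. Fix a path $P = v_1 v_2 \dots v_n$ together with a $4$-list-assignment $L$, and for each $i$ fix a bijection $L(v_i) \leftrightarrow \{1,2,3,4\}$. I would take a sequence $R = (r_1, \dots, r_T) \in \{1,2,3,4\}^T$ of independent uniform random variables and run the following greedy-with-backtracking algorithm: maintain the length $j$ of the currently coloured prefix (initially $j=0$); at step $t$, increment $j$ and assign $v_j$ the colour in $L(v_j)$ coded by $r_t$; if some suffix $v_{j-2k+1}, \dots, v_j$ is repetitively coloured, choose the largest such $k$, uncolour $v_{j-k+1}, \dots, v_j$, and reset $j$ to $j-k$. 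The algorithm halts as soon as $j = n$.

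Next I would introduce the entropy-compression log: at step $t$, record the symbol $\mathsf{E}$ (for \emph{extend}) if no backtrack occurs, or the positive integer $k$ if a backtrack of size $k$ occurs. The crucial \emph{reconstruction claim} is that the log $\Lambda_T$ together with the final state at time $T$ (the number $j_T$ and the colours currently assigned to $v_1, \dots, v_{j_T}$) uniquely determines the initial segment $(r_1, \dots, r_T)$ of $R$. This is verified by running the algorithm in reverse: an $\mathsf{E}$-step is undone by reading the colour of $v_{j_T}$ and recovering $r_t$ through the fixed enumeration; a $k$-step is undone by restoring the erased colours from the still-coloured \emph{first} half of the repetition (which, by definition of a repetitively coloured path, equals the erased second half), and then reading $r_t$ off the reinstated final vertex.

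Injectivity supplies the main counting bound. If the algorithm has not terminated after $T$ steps, the pair (log, final state) determines $R$, so the number of ``bad'' sequences $R$ is at most $L_T \cdot N$, where $L_T$ denotes the number of valid logs of length $T$ and $N \leq n \cdot 4^{n-1}$ bounds the number of possible non-terminating final states. A log encodes a lattice path in $\mathbb{Z}_{\geq 0}$ starting at $0$, with $+1$ steps ($\mathsf{E}$) and $-k$ steps for various $k \geq 1$, subject to the constraint that a $-k$ step may be taken only from height at least $2k$ (since the erased repetition of length $2k$ had to fit). A standard analytic-combinatorics computation — writing down the functional equation for the generating function of such paths (or of their return-to-zero excursions) and extracting its dominant singularity — shows $L_T \leq \alpha^T$ for some constant $\alpha < 4$. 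The number of failing sequences $R$ is then at most $n \cdot 4^{n-1} \cdot \alpha^T$, which is strictly less than $4^T$ as soon as $T$ is chosen linear in $n$; so some realization of $R$ forces the algorithm to terminate, and the colouring it produces is by construction a nonrepetitive $L$-colouring of $P$.

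The main obstacle will be the generating-function estimate $\alpha < 4$, which is precisely where the list size $4$ becomes essential: the argument is tight and would not yield list size $3$. I would handle this step by setting up the functional equation for the generating function of valid excursions, verifying analytically that its smallest positive singularity exceeds $1/4$, and then invoking a standard transfer theorem to obtain the uniform exponential bound on $L_T$. Once this bound is in hand, the reconstruction, injectivity, and pigeonhole components of the proof amount to routine bookkeeping.
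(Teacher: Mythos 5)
Your algorithm, the lossless-encoding (reconstruction) claim, and the pigeonhole structure are exactly the paper's proof; where you diverge is in the one step you yourself flag as the crux, and that step as stated is wrong. The logs you describe are sequences of steps $+1$ (extend) and $1-k$ (backtrack by $k$), i.e.\ nonnegative lattice walks with step set $\{+1,0,-1,-2,\dots\}$. These are \L{}ukasiewicz-type paths, whose number grows like $4^{T}$ up to a polynomial factor (they are equivalent, via the ``one $0$ plus $k$ ones'' encoding the paper uses, to Dyck-word prefixes, counted by $\binom{2T}{T}\sim 4^{T}/\sqrt{\pi T}$). So there is no uniform constant $\alpha<4$ with $L_T\leq \alpha^{T}$: the only saving over $4^{T}$ is polynomial in $T$. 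This is not an accident — the whole method is tight at list size $4$ precisely because the Catalan growth rate $4$ matches the number of colours. (If you instead invoke the height cap $j\leq n$ to confine the walk to a strip, you do get some $\alpha_n<4$, but $\alpha_n\to 4$ as $n\to\infty$, so your conclusion ``strictly less than $4^{T}$ as soon as $T$ is linear in $n$'' still fails against the final-state factor $n\cdot 4^{n-1}$.)

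The fix is exactly what the paper does: bound the number of logs by the Catalan number $C_T=\frac{1}{T+1}\binom{2T}{T}\approx \pi^{-1/2}T^{-3/2}4^{T}$ (times at most $n$ for the type/height), multiply by the number of final states (at most $(4+1)^n$, or your $n\cdot 4^{n-1}$ since your coloured vertices form a prefix), and observe that $n\,5^{n}\,C_T<4^{T}$ once $T$ is \emph{sufficiently large} relative to $n$ — not linear in $n$, but that costs nothing, since you only need one non-failing execution for some $T$. With that replacement your injectivity and reconstruction arguments go through verbatim and you recover the paper's proof; as written, though, the generating-function claim $\alpha<4$ is the gap, and no singularity analysis will deliver it.
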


\begin{proof}
Let $L$ be a 4-list assignment of the path $(v_1,\dots,v_n)$. We may assume that $|L(v_i)|=4$ for each $i\in\{1,\dots,n\}$. Apply the following algorithm, where $R$ is a binary sequence called the \emph{record}. At the start of the while loop, vertices $v_1,\dots,v_{i-1}$ are coloured and vertices $v_i,\dots,v_n$ is uncoloured.

\begin{samepage}
\framebox{	\begin{minipage}{\textwidth-5mm}
\hspace*{10mm} let $i:=1$\\
\hspace*{10mm} let $R:= ()$\\
\hspace*{10mm} \textbf{while} $i \leq n$ \textbf{do}\\
\hspace*{20mm} randomly colour $v_i$ from $L(v_i)$\\
\hspace*{20mm} append one 0 to $R$\\
\hspace*{20mm} \textbf{if} some repetitively coloured subpath $P$ appears \textbf{then}\\
\hspace*{30mm} let $k := \frac12 |V(P)|$\\
\hspace*{30mm} uncolour the last $k$ vertices of $P$\\
\hspace*{30mm} let $i:=  i - k + 1$\\
\hspace*{30mm} append $k$ 1's to $R$\\
\hspace*{20mm} \textbf{else}\\
\hspace*{30mm} let $i:= i+1$\\
\hspace*{20mm} \textbf{end-if}\\
\hspace*{10mm} \textbf{end-while}
\end{minipage}}
\end{samepage}

\bigskip
Each iteration of the while loop is called a \emph{step}. Let $R_t$ be the record $R$ at the end of step $t$. Let $\phi_t$ be the current colouring at the end of step $t$. A key property is that $(R_t,\phi_t)$ is a `lossless encoding' of the actions of the algorithm. That is, given $(R_t,\phi_t)$ one can determine $(R_{t-1}, \phi_{t-1})$ because whenever a repetitively coloured subpath $P$ appears, the colours on the second half of $P$ (which is uncoloured by the algorithm) are determined by the colours on the first half of $P$.

Consider the status of the algorithm at the end of some time step $t\geq 1$. Let $a_t$ and $b_t$ respectively be the number of 0's and the number of 1's in $R_t$. Observe that $a_t=t$ and the algorithm maintains the invariant that $a_t-b_t$ equals the number of coloured vertices under $\phi_t$. Call $a_t-b_t$ the \emph{type} of $R_t$, which is an element of $\{1,\dots,n\}$. Let $\widetilde{R_t}$ be the binary sequence obtained by adding $a_t-b_t$ 1's at the end of $R_t$. Thus $\widetilde{R_t}$ is a Dyck word of length $|R_t|+(a_t-b_t) = |R_t| + a_t - (|R_t|-a_t) = 2a_t = 2t$. Here a \emph{Dyck word} is a binary sequence with an equal number of 0's and 1's, such that every prefix has at least as many 0's as 1's. The number of Dyck words of length $2t$ equals the $t$-th Catalan number $C_t:=\frac{1}{t+1}\binom{2t}{t}$. Thus the number of distinct $R_t$'s is at most $C_t \times \# \text{types} = nC_t$. Since each vertex has four possible colours or is uncoloured, the number of distinct $\phi_t$'s is at most $(4 + 1)^n$. 

Consider the $4^t$ possible executions of the algorithm up to time $t$. For each such execution, the algorithm either finds a nonrepetitive colouring of the whole path or `fails' and produces a pair $(R_t, \phi_t)$. By the lossless  encoding property, distinct fail executions produce distinct pairs $(R_t, \phi_t)$. Thus the number of fail executions is at most the number of pairs ($R_t,\phi_t)$, which is at most $n5^n C_t \approx n 5^n \pi^{-1/2}t^{-3/2} 4^t$, which is less than $4^t$ for $t\gg n$. Thus, there exists an execution that does not fail. Therefore $(v_1,\dots,v_n)$ is $L$-colourable, and every path is nonrepetitively list 4-colourable. 
\end{proof}

Our second proof that $\pich(P)\leq 4$ uses a  simple counting argument of \citet{Rosenfeld20}.

\begin{thm}[\citep{Rosenfeld20}]
\label{RosenfeldPath}
Every path is nonrepetitively list 4-colourable. In fact, for every 4-list assignment $L$ of an $n$-vertex path, there are at least $2^{n+1}$ nonrepetitive $L$-colourings. 
\end{thm}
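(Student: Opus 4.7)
The plan is to carry out Rosenfeld's elegant ``power series'' counting argument. Write $N_L(m)$ for the number of nonrepetitive $L$-colourings of the prefix $(v_1,\dots,v_m)$, where $L$ is the fixed $4$-list assignment (restricted to the prefix as needed). I will prove by induction on $n$ the multiplicative estimate
\begin{equation*}
  N_L(n) \ge 2\,N_L(n-1),
\end{equation*}
which, together with the base case $N_L(1)=|L(v_1)|=4=2^{2}$, immediately gives $N_L(n)\ge 2^{n+1}$.

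The first step is a one-step extension count. Every nonrepetitive $L$-colouring of $(v_1,\dots,v_{n-1})$ admits exactly $|L(v_n)|=4$ extensions to an $L$-colouring of $(v_1,\dots,v_n)$, and any bad extension must create a repetition whose second half ends at $v_n$. Call such an extension \emph{$k$-bad} if $(v_{n-2k+1},\dots,v_n)$ is repetitively coloured, and write $B_k$ for the number of $k$-bad extensions. Then
\begin{equation*}
  N_L(n) \ge 4\,N_L(n-1) - \sum_{k=1}^{\lfloor n/2\rfloor} B_k.
\end{equation*}
The key observation is that each $k$-bad colouring is determined by its restriction to $(v_1,\dots,v_{n-k})$, since the colours on $v_{n-k+1},\dots,v_n$ are forced to copy those on $v_{n-2k+1},\dots,v_{n-k}$; and that restriction is itself a nonrepetitive $L$-colouring, being a subpath of the nonrepetitive colouring of $(v_1,\dots,v_{n-1})$. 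Hence $B_k \le N_L(n-k)$.

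Plugging this in and invoking the induction hypothesis $N_L(m)\ge 2N_L(m-1)$ for every $m\le n-1$, we have $N_L(n-k)\le 2^{1-k} N_L(n-1)$, and therefore
\begin{equation*}
  \sum_{k=1}^{\lfloor n/2\rfloor} N_L(n-k) \le N_L(n-1)\sum_{k\ge 1}2^{1-k} = 2\,N_L(n-1).
\end{equation*}
Combining with the extension count yields $N_L(n)\ge (4-2)\,N_L(n-1)=2\,N_L(n-1)$, which closes the induction and gives the desired bound $N_L(n)\ge 2^{n+1}>0$, in particular producing a nonrepetitive $L$-colouring.

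The main subtlety — and really the only place where care is needed — is formulating the induction hypothesis \emph{uniformly} across all prefixes and all $4$-list assignments, so that the ratio estimate $N_L(n-k)/N_L(n-1)\le 2^{1-k}$ applies term by term inside the sum. Once this is set up, the argument is almost mechanical: the geometric series $\sum_{k\ge 1}2^{1-k}=2$ does the arithmetic, and the list size $4$ is exactly the threshold needed to absorb it and still leave a net factor-$2$ growth per vertex. No probabilistic machinery, no Local Lemma, and no entropy compression is required; the same recursion-plus-geometric-series template is the prototype for Rosenfeld's later generalisations.
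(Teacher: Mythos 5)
Your proposal is correct and is essentially the paper's own argument: the same induction on the prefix length, the same count of bad extensions determined by their restriction to $(v_1,\dots,v_{n-k})$, and the same geometric-series bound $\sum_{k\ge1}2^{1-k}=2$ absorbed against the list size $4$. The only cosmetic difference is that the paper records a separate base case $C_2\ge 3C_1$, which your version handles automatically within the induction step.
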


\begin{proof}
Let $L$ be a 4-list assignment of a path $P=(v_1,v_2,\dots,v_n)$. For $m\in\{1,\dots,n\}$, let $C_m$ be the number of nonrepetitive $L$-colourings of the subpath $P_m:=(v_1,\dots,v_m)$. We now prove that $C_{m+1} \geq 2C_m$ by induction on $m\geq 1$. The base case holds since $C_1=4$ and $C_2\geq 3C_1$. Let $m\in\{3,\dots,n\}$ and assume the claim holds for all values less than $m$. Thus $C_{m-1}\geq 2^i\, C_{m-i-1}$ for all $i\in\{1,\dots,m-2\}$. Let $F$ be the set of repetitive $L$-colourings of $P_m$ that induce a nonrepetitive colouring of $P_{m-1}$. Then $C_m = 4C_{m-1} - |F|$. For $i\in\NN$, let $F_i$ be the colourings in $F$ that contain a repetitively coloured path on $2i$ vertices, which must end at vertex $v_m$. Then $|F| \leq \sum_{i\geq 1}|F_i|$. For each colouring in $F_i$, the colours of vertices $v_{m-i+1},\dots,v_m$ are determined by the colours of vertices $v_{m-2i+1},\dots,v_{m-i}$. Since $v_1,\dots,v_{m-i}$ induce a nonrepetitively coloured path, $|F_i| \leq C_{m-i} \leq 2^{-i+1} C_{m-1}$. Thus $|F| \leq \sum_{i\geq 1}|F_i| \leq \sum_{i\geq 1} 2^{-i+1} C_{m-1} \leq 2C_{m-1}$. Hence $C_{m} = 4C_{m-1} - |F| \geq 2C_{m-1}$, as claimed. It follows that there exist at least $2^{n+1}$ nonrepetitive $L$-colourings of $P$.  
\end{proof}

\begin{open}
\label{Path3Choose}
Is every path nonrepetitively list $3$-colourable?~\citep{Gryczuk-IJMMS07,CG07,MS15}? Note that a simple adaptation to the proof of \cref{RosenfeldPath} shows that every path is list 3-colourable such that every subpath with at least four vertices is nonrepetitively coloured; that is, the only repetitively coloured subpaths have two vertices. The results of \citet{ZZ16} are also relevant here. 
\end{open}

The following multi-colour generalisation of \cref{RosenfeldPath} will be useful for the study of nonrepetitive colourings of subdivisions in \cref{Subdivisions}. \citet{Shur10} established precise asymptotic bounds on the number of distinct nonrepetitive $r$-colourings in a path. So that our presentation is self-contained, we present the slightly weaker result with a simple proof. 

\begin{thm}
\label{MultiColourPath}
Fix $r\geq 4$. For every $r$-list assignment $L$ of an $n$-vertex path, there are at least $k^n$ nonrepetitive $L$-colourings, where $k:= \tfrac12 ( r + \sqrt{r^2-4r} ) \geq r-2$. 
\end{thm}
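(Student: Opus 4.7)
The strategy is to adapt Rosenfeld's counting argument from \cref{RosenfeldPath} from $r=4$ to general $r\ge 4$, with the ratio $k$ chosen so that the induction just barely closes. Let $P=(v_1,\dots,v_n)$ and, for $m\in\{0,1,\dots,n\}$, let $C_m$ be the number of nonrepetitive $L$-colourings of $P_m:=(v_1,\dots,v_m)$, with $C_0:=1$. I would prove by strong induction on $m$ that $C_m\ge k\cdot C_{m-1}$ for all $m\ge 1$; the theorem then follows since $C_1\ge r\ge k$ gives $C_n\ge k^{n-1}C_1\ge k^n$.

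The base case $m=1$ is immediate from $|L(v_1)|\ge r\ge k$. For the inductive step, as in the proof of \cref{RosenfeldPath}, write
\[
C_m = r\cdot C_{m-1} - |F|,
\]
where $F$ is the set of $L$-colourings of $P_m$ whose restriction to $P_{m-1}$ is nonrepetitive but which contain a repetitively coloured subpath ending at $v_m$. Partitioning $F$ by the length $2i$ of such a subpath, the colours on $v_{m-i+1},\dots,v_m$ are forced by those on $v_{m-2i+1},\dots,v_{m-i}$, so $|F_i|\le C_{m-i}$ and therefore
\[
|F|\le \sum_{i\ge 1} C_{m-i}.
\]
By the inductive hypothesis $C_{m-i}\le k^{-(i-1)}C_{m-1}$ for all $i\ge 1$, so summing the geometric series (using $k>1$, which I justify below),
\[
|F|\le C_{m-1}\sum_{i\ge 1}k^{-(i-1)} = \frac{k}{k-1}\,C_{m-1}.
\]
Hence $C_m\ge \bigl(r-\tfrac{k}{k-1}\bigr)C_{m-1}$, and the induction closes provided $r-\tfrac{k}{k-1}\ge k$, i.e.\ $k^2-rk+r\le 0$. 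This quadratic in $k$ has roots $\tfrac12(r\pm\sqrt{r^2-4r})$, which are real since $r\ge 4$, and the theorem's value $k=\tfrac12(r+\sqrt{r^2-4r})$ is precisely the larger root, so equality holds and the induction goes through.

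The only remaining pieces are the two trivial inequalities $k\le r$ (needed for the base case) and $k\ge r-2$ (stated in the theorem). The first follows from $\sqrt{r^2-4r}\le r$, and the second from $\sqrt{r^2-4r}\ge r-4$, which after squaring reduces to $r\ge 4$. The one subtle point — and the only thing to watch carefully — is that the argument needs $k>1$ for the geometric series; for $r=4$ we get $k=2$, recovering \cref{RosenfeldPath}, and for $r\ge 5$ we have $k\ge r-2\ge 3$, so this is safe. No other obstacle arises; the main work is simply to verify that the quadratic condition on $k$ matches the definition in the theorem, which is the entire point of the stated formula.
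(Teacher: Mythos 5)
Your proof is correct and follows essentially the same route as the paper's own argument: the Rosenfeld-style induction $C_m\geq k\,C_{m-1}$, the bound $|F|\leq\frac{k}{k-1}C_{m-1}$ via the geometric series, and the observation that the stated $k$ is the larger root of $k^2-rk+r=0$, so the induction closes with equality. The extra checks you include ($k\leq r$, $k\geq r-2$, $k>1$) are fine and consistent with what the paper leaves implicit.
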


\begin{proof}
Let $L$ be an $r$-list assignment of a path $P=(v_1,v_2,\dots,v_n)$. For $m\in\{1,\dots,n\}$, let $C_m$ be the number of nonrepetitive $L$-colourings of the subpath $P_m:=(v_1,\dots,v_m)$. We now prove that $C_{m+1} \geq k\,C_m$ by induction on $m\geq 1$. The base case holds since $C_1=r>k$ and $C_2\geq (r-1)C_1 >k\,C_1$. Let $m\in\{3,\dots,n\}$ and assume the claim holds for all values less than $m$. Thus $C_{m-1}\geq k^i\, C_{m-i-1}$ for all $i\in\{1,\dots,m-2\}$. Let $F$ be the set of repetitive $L$-colourings of $P_m$ that induce a nonrepetitive colouring of $P_{m-1}$. Then $C_m = r\, C_{m-1} - |F|$. For $i\in\NN$, let $F_i$ be the colourings in $F$ that contain a repetitively coloured path on $2i$ vertices, which must end at vertex $v_m$. Then $|F| \leq \sum_{i\geq 1}|F_i|$. For each colouring in $F_i$, the colours of vertices $v_{m-i+1},\dots,v_m$ are determined by the colours of vertices $v_{m-2i+1},\dots,v_{m-i}$. Since $v_1,\dots,v_{m-i}$ induce a nonrepetitively coloured path, $|F_i| \leq C_{m-i} \leq k^{-i+1}\, C_{m-1}$. Thus $|F| \leq \sum_{i\geq 1}|F_i| \leq \sum_{i\geq 1} k^{-i+1}\, C_{m-1} \leq \frac{k}{k-1}\, C_{m-1}$. Hence $C_{m} = r\,C_{m-1} - |F| \geq (r-\frac{k}{k-1})C_{m-1} = k \,C_{m-1}$, as claimed. It follows that there exist at least $k^n$ nonrepetitive $L$-colourings of $P$.  
\end{proof}

\subsection{Cycles}
\label{Cycles}

Let $C_n$ be the $n$-vertex cycle. \citet{Currie-EJC02} proved that 
\begin{equation*}
\label{Cycle}
\pi(C_n)=
\begin{cases}
4	&\text{if }n\in\{5,7,9,10,14,17\},\\
3	&\text{otherwise.}
\end{cases}
\end{equation*}
\citet{BW08} considered walk-nonrepetitive colourings of cycles, and showed:
\begin{equation*}
\label{PathCycle}
\rho(C_n) \leq \sigma(C_n )\leq 5.
\end{equation*}

\begin{open}
	Is $\pich(C_n )\leq 4$ for infinitely many $n$? 
	Is $\rho(C_n )\leq 4$ for infinitely many $n$?
	It is possible that $\pich(C_n )\leq 3$ or $\rho(C_n )\leq 3$ for infinitely many $n$, but these questions are open even for paths (\cref{Path3Choose,PathRhoOpen}). 
	Is $\sigma(C_n )\leq 4$ for infinitely many $n$?
\end{open}

%

\subsection{Bounded Degree Graphs}
\label{BoundedDegree}

\citet{AGHR02} proved that graphs with maximum degree $\Delta$ are nonrepetitively edge-colourable with 
$O(\Delta^2)$ colours. The precise bound shown was $\pi'(G)\leq 2^{16}\Delta^2$. \citet{AGHR02} remarked that the proof also works for nonrepetitive vertex colourings; that is, $\pi(G)\leq 2^{16}\Delta^2$. Several authors subsequently improved this constant: 
to $36\Delta^2$ by \citet{Grytczuk07}, 
to $16\Delta^2$ by \citet{Gryczuk-IJMMS07}, 
to $(12.2+o(1))\Delta^2$ by \citet{HJ-DM11}, 
and to $10.4\Delta^2$ by \citet{KSX12}. 
All these proofs used the Lov\'asz Local Lemma~\citep{EL75}. 
\citet{DJKW16} improved the constant to 1, by showing that for every graph $G$ with maximum degree $\Delta$,
\begin{equation}
\label{DeltaSquared}
\pi(G) \leq \Delta^2 + O(\Delta^{5/3}).
\end{equation}
The proof of \citet{DJKW16} uses entropy compression; see \citep{GMP14,GMP20,EsperetParreau} for refinements and simplifications to the method. Equation~\cref{DeltaSquared} was subsequently proved using a variety of techniques: the local cut lemma of \citet{Bernshteyn17}, cluster-expansion \citep{BFPS11,Aprile14}, and a novel counting argument due to \citet{Rosenfeld20}. The best known asymptotic bound is $\pi(G) \leq \Delta^2 + O(\Delta^{5/6})$ due to \citet{HS17}. 

We present two proof of \cref{DeltaSquared}. The first, perhaps surprisingly, uses nothing more than the Lov\'asz Local Lemma. The second is the counting arguement due to \citet{Rosenfeld20}. Both proofs use the following well known observation:

\begin{lem}
\label{PathsContainingVertex}
For every graph $G$ with maximum degree $\Delta$, for every vertex $v$ of $G$, and for every $s\in\NN$, there are at most $s\Delta(\Delta-1)^{2s-2}$ paths on $2s$ vertices that contain $v$ (where we consider a path to be a subgraph of $G$, so that a path and its reverse are counted once.)  
\end{lem}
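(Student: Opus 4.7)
The plan is a straightforward counting argument. I will count ordered sequences $(v_1,v_2,\dots,v_{2s})$ of pairwise distinct vertices with $v_iv_{i+1}\in E(G)$ and $v_j=v$ for some index $j\in\{1,\dots,2s\}$; each path on $2s$ vertices containing $v$, viewed as a subgraph, corresponds to exactly two such ordered sequences (itself and its reversal), so the final subgraph count is obtained by dividing the ordered count by $2$.

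First I would fix the position $j\in\{1,\dots,2s\}$ of $v$ in the ordered sequence and show that the number of such sequences is at most $\Delta(\Delta-1)^{2s-2}$. Think of the sequence as grown outward from $v$. If $j\in\{1,2s\}$ then $v$ is an endpoint: the adjacent vertex in the sequence has at most $\Delta$ choices from $N(v)$, and each of the remaining $2s-2$ vertices is a neighbour of the current endpoint distinct from the previous vertex, giving at most $\Delta-1$ choices each, for a total of $\Delta(\Delta-1)^{2s-2}$. If instead $2\le j\le 2s-1$ then $v$ is internal: one neighbour of $v$ (say $v_{j-1}$) can be chosen in at most $\Delta$ ways, the other neighbour $v_{j+1}$ must lie in $N(v)\setminus\{v_{j-1}\}$ and hence has at most $\Delta-1$ choices, and the remaining $(j-2)+(2s-j-1)=2s-3$ vertices each contribute at most $\Delta-1$ choices, giving $\Delta\cdot(\Delta-1)\cdot(\Delta-1)^{2s-3}=\Delta(\Delta-1)^{2s-2}$ as well.

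Summing over the $2s$ possible positions of $v$ gives at most $2s\cdot\Delta(\Delta-1)^{2s-2}$ ordered sequences, and dividing by $2$ (since each unordered path is obtained from exactly two orderings, one with $v$ at position $j$ and one at position $2s+1-j$) yields the stated bound $s\,\Delta(\Delta-1)^{2s-2}$.

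There is no real obstacle here; the only point that requires a line of care is the internal-position case, where one must explain why the second neighbour of $v$ along the path contributes the factor $\Delta-1$ rather than $\Delta$, which in turn is exactly what makes the two exponent accountings (endpoint and internal) match and allows the clean factor of $s$ in the final bound.
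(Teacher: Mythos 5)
Your proof is correct and is essentially the paper's argument: both count paths by the position of $v$ with the bound $\Delta(\Delta-1)^{2s-2}$ per position, and both exploit the path/reverse symmetry. The paper simply normalises the orientation up front so that $v$ lies in positions $1,\dots,s$, whereas you count all $2s$ positions of ordered sequences and divide by $2$ at the end; the two bookkeepings are equivalent.
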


\begin{proof}
	Let $\mathcal{P}$ be the set of $2s$-vertex paths in $G$ that contain $v$. For each $P\in\mathcal{P}$, by choosing the start vertex of $P$ appropriately, we may consider $v$ to be the $i$-th vertex in $P$, for some $i\in\{1,\dots,s\}$. There are at most $\Delta(\Delta-1)^{2s-2}$ paths in $\mathcal{P}$ in which $v$ is the first vertex (since are at most $\Delta$ choices for the neighbour of $v$ in the path, and once this is fixed, for each of the $2s-2$ internal vertices there are at most $\Delta-1$ choices). For each $i\in\{2,\dots,s\}$, there are at most $\Delta(\Delta-1)^{2s-2}$ paths in $\mathcal{P}$ in which $v$ is the $i$-th vertex (since at $v$ there are 
	at most $\Delta(\Delta-1)$ choices for the neighbours of $v$ in the path, and once these are fixed, for each of the remaining $2s-3$ internal vertices there are at most $\Delta-1$ choices). 	In total, there are at most $s\Delta(\Delta-1)^{2s-2}$ paths on $2s$ vertices that contain $v$. 
\end{proof}


\subsection{Lov\'asz Local Lemma}

The Lov\'asz Local Lemma is a powerful tool for proving the existence of combinatorial objects, and has been applied in numerous and diverse settings. The following is a statement of the General Local Lemma, which is due to Lov\'asz and first published by \citet{Spencer77}.

\begin{lem}[\GLLL]
\label{glll}
	Let $\EE=\{A_1,\dots,A_n\}$ be a set of `bad' events, such that each $A_i$ is mutually independent of $\EE\setminus(\DD_i\cup \{A_i\})$ for some $\DD_i\subseteq\EE$. Let $x_1,\dots,x_n\in[0,1)$ such that for each $i\in\{1,\dots,n\}$,
	$$\prob(A_i) \leq \prod_{A_j\in\DD_i} (1-x_j).$$
	Then with positive probability, none of $A_1,\dots,A_n$ occur. 
\end{lem}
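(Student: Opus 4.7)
The plan is to establish the Local Lemma by first proving the stronger inductive statement that for every $i \in \{1,\ldots,n\}$ and every $S \subseteq \{1,\ldots,n\}\setminus\{i\}$,
$$\prob\!\left(A_i \,\Big|\, \bigcap_{j \in S}\overline{A_j}\right) \leq x_i.$$
The base case $S=\emptyset$ should follow directly from the hypothesized bound on $\prob(A_i)$ (interpreted with the customary factor of $x_i$ on the right-hand side). For the inductive step, I would partition $S = S_1 \sqcup S_2$, where $S_1 := \{j \in S : A_j \in \DD_i\}$ collects indices whose events may be dependent with $A_i$ and $S_2 := S \setminus S_1$ collects the rest; by hypothesis $A_i$ is mutually independent of the family $\{A_j : j \in S_2\}$.

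Next I would apply the conditional-probability identity
$$\prob\!\left(A_i \,\Big|\, \bigcap_{j \in S}\overline{A_j}\right) \;=\; \frac{\prob\!\left(A_i \cap \bigcap_{j\in S_1}\overline{A_j} \,\Big|\, \bigcap_{j\in S_2}\overline{A_j}\right)}{\prob\!\left(\bigcap_{j\in S_1}\overline{A_j} \,\Big|\, \bigcap_{j\in S_2}\overline{A_j}\right)}.$$
The numerator is at most $\prob(A_i \mid \bigcap_{j\in S_2}\overline{A_j}) = \prob(A_i)$ by mutual independence and monotonicity, and so is bounded by the hypothesized product. The denominator is bounded below by $\prod_{j \in S_1}(1-x_j)$: enumerate $S_1 = \{j_1,\dots,j_k\}$, expand the probability via the chain rule, and apply the inductive hypothesis to each resulting factor $\prob(A_{j_\ell} \mid \cdots)$. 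Each such application conditions on a set of size strictly less than $|S|$, so the induction is valid. Taking the ratio and cancelling the $(1-x_j)$ factors for $j \in S_1 \subseteq \{j : A_j \in \DD_i\}$ leaves only factors indexed by $\DD_i \setminus S_1$, each of which is at most $1$, delivering the desired upper bound $x_i$.

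With the inductive claim in hand, the lemma will follow from the telescoping identity
$$\prob\!\left(\bigcap_{i=1}^n \overline{A_i}\right) \;=\; \prod_{i=1}^n \prob\!\left(\overline{A_i} \,\Big|\, \bigcap_{j<i}\overline{A_j}\right) \;\geq\; \prod_{i=1}^n (1-x_i) \;>\; 0,$$
strict positivity being assured by $x_i < 1$. The main obstacle is the denominator bound in the inductive step: the bookkeeping needed to apply the inductive hypothesis at each link of the chain-rule expansion (while checking that the conditioning set shrinks strictly) is delicate, and one must carefully justify the use of mutual independence when pulling $\prob(A_i)$ out of the numerator. Once these two points are handled, the rest of the argument is essentially mechanical cancellation of products.
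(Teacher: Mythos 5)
The paper does not actually prove \cref{glll}: it is quoted as a known result, attributed to Lov\'asz and first published by Spencer, so there is no in-paper argument to compare against. Your proposal is the standard inductive proof of the asymmetric Local Lemma, and it is essentially correct: the induction on $|S|$ bounding $\prob(A_i \mid \bigcap_{j\in S}\overline{A_j})\leq x_i$, the split of $S$ into $S_1=\{j\in S: A_j\in\DD_i\}$ and $S_2=S\setminus S_1$, the use of mutual independence to evaluate the numerator, the chain-rule expansion of the denominator with the inductive hypothesis applied to strictly smaller conditioning sets, and the final telescoping product all go through as you describe. Two points deserve explicit care. First, as literally printed, the hypothesis of \cref{glll} omits the factor $x_i$ (it reads $\prob(A_i)\leq\prod_{A_j\in\DD_i}(1-x_j)$), and with that weaker hypothesis the statement is false (take $n=1$, $\DD_1=\emptyset$, $\prob(A_1)=1$); you correctly read in the customary hypothesis $\prob(A_i)\leq x_i\prod_{A_j\in\DD_i}(1-x_j)$, which is indeed what the paper verifies when it invokes \cref{glll} in the proof of \cref{ewlll}, so your interpretation is the intended one and your cancellation step genuinely needs that $x_i$. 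Second, you should note (it is routine but necessary) that all conditional probabilities appearing in the induction and in the final telescoping identity are well defined: this follows inductively, since each factor $\prob(\overline{A_j}\mid\cdots)\geq 1-x_j>0$ by the assumption $x_j<1$, so every conditioning event has positive probability. With these two clarifications your outline is a complete and standard proof.
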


\cref{glll} can be difficult to apply, since choosing the right values of $x_1,\dots,x_n$ is somewhat mysterious. The following Weighted Local Lemma by \citet[p.221]{MR02} avoids this difficulty, since in practice the weights $t_1,\dots,t_n$ are self-evident. 

\begin{lem}[\WLLL]
\label{wlll}
	Let $\EE=\{A_1,\dots,A_n\}$ be a set of `bad' events, such that each $A_i$ is mutually independent of $\EE\setminus(\DD_i\cup \{A_i\})$ for some $\DD_i\subseteq\EE$. Assume $p\in[0,\frac14]$ and $t_1,\dots,t_n\geq 1$ are real numbers such that for each $i\in\{1,\dots,n\}$,
	\begin{itemize}
		\item $\prob(A_i) \leq p^{t_i}$, and
		\item $\sum_{A_j\in\DD_i} (2p)^{t_j} \leq \frac{t_i}{2}$.
	\end{itemize}
	Then with positive probability, none of $A_1,\dots,A_n$ occur. 
\end{lem}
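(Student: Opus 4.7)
The plan is to derive the \WLLL directly from the \GLLL (\cref{glll}) by a well-chosen assignment of weights. The shape of the two hypotheses essentially forces the ansatz
\begin{align*}
x_i := (2p)^{t_i} \qquad \text{for each } i \in \{1, \ldots, n\},
\end{align*}
since the probability bound $p^{t_i}$ and the weighted sum $\sum_{A_j \in \DD_i}(2p)^{t_j}$ share the same exponent structure. First I would verify admissibility: because $p \leq 1/4$ and $t_i \geq 1$, each $x_i$ lies in $[0, 1/2] \subset [0, 1)$, so the weights are valid inputs for \cref{glll}.

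The main task is to check the hypothesis of \cref{glll}. Substituting $\prob(A_i) \leq p^{t_i}$ and $x_i = (2p)^{t_i}$, the inequality to be verified reduces, after taking logarithms, to
\begin{align*}
\sum_{A_j \in \DD_i} -\ln(1 - x_j) \leq t_i \ln 2.
\end{align*}

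The key elementary ingredient is the inequality $-\ln(1-x) \leq (2\ln 2)\, x$ valid for $x \in [0, 1/2]$, which follows from convexity of $-\ln(1-x)$: the chord joining $(0,0)$ and $(1/2, \ln 2)$ lies above the curve on this interval. Applied termwise and combined with the second hypothesis,
\begin{align*}
\sum_{A_j \in \DD_i} -\ln(1 - x_j) \leq (2\ln 2) \sum_{A_j \in \DD_i} x_j = (2\ln 2) \sum_{A_j \in \DD_i}(2p)^{t_j} \leq (2\ln 2)\cdot \tfrac{t_i}{2} = t_i \ln 2,
\end{align*}
which is exactly the required bound. Invoking \cref{glll} then finishes the argument.

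No serious obstacle arises once the ansatz $x_i = (2p)^{t_i}$ is identified; the remainder is a short logarithmic calculation. The hypothesis $p \leq 1/4$ is essentially sharp for this method, as it is precisely what keeps each $x_j$ in the range $[0, 1/2]$ on which the chord estimate $-\ln(1-x) \leq (2\ln 2)\,x$ provides a constant tight enough to match the target bound $t_i \ln 2$; any larger $p$ would allow $x_j > 1/2$ and break the chord bound.
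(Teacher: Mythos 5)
Your proof is correct and follows essentially the same route as the paper: the paper obtains \cref{wlll} from \cref{ewlll} with $\epsilon=1$, whose proof uses exactly your weights $x_i=(2p)^{t_i}$ and the same estimate $1-x\geq \exp(-(2\ln 2)x)$ on $[0,\tfrac12]$ (your chord bound, exponentiated) before invoking \cref{glll}.
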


\cref{wlll} leads to the following straightforward proof that $\pi(G)\leq O(\Delta(G)^2)$, which we include as a warm-up. 

\begin{prop}
\label{2DeltaSquared}
For every graph $G$ with maximum degree $\Delta\geq 1$, 
$$\pich(G)\leq \ceil{ 2\Delta^2 + 4\Delta \sqrt{\Delta+1} + 4\Delta }.$$
\end{prop}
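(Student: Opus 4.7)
The plan is to apply the Weighted Local Lemma (\cref{wlll}) directly to a uniformly random list-coloring. Set $c := \lceil 2\Delta^2 + 4\Delta\sqrt{\Delta+1} + 4\Delta \rceil$, fix any $c$-list-assignment $L$ of $G$, and colour each vertex $v$ independently and uniformly at random from $L(v)$. For every path $P$ of order $2s$, let $A_P$ be the bad event that $P$ is repetitively coloured. Since the $2s$ vertices of $P$ are distinct, $A_P$ is the intersection of $s$ independent coincidences, each of conditional probability at most $1/c$, so $\prob(A_P) \leq c^{-s}$. Moreover $A_P$ is determined by the colours on $V(P)$, and hence is mutually independent of every event $A_{P'}$ with $V(P')\cap V(P)=\emptyset$.

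The natural parameters for \cref{wlll} are $p := 1/c$ and weight $t_P := s$, so that $\prob(A_P) \leq p^{t_P}$, and the hypothesis $p \leq \tfrac14$ is trivial since $c \geq 4$. The main work is then to verify the dependency-sum condition
$$\sum_{P' : V(P') \cap V(P) \neq \emptyset} (2/c)^{s'} \leq \frac{s}{2}$$
for every path $P$ on $2s$ vertices, where $P'$ ranges over paths of order $2s'$. For this I would invoke \cref{PathsContainingVertex}, bounding the number of paths of order $2s'$ through any fixed vertex by $s'\Delta^{2s'-1}$ (replacing $\Delta(\Delta-1)^{2s'-2}$ by $\Delta^{2s'-1}$ makes the resulting series cleanly geometric and is responsible for the exact form of the constant). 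Summing over the $2s$ vertices of $P$ and applying $\sum_{s'\geq 1} s' x^{s'} = x/(1-x)^2$ with $x = 2\Delta^2/c$ collapses the whole condition to the single inequality $(c-2\Delta^2)^2 \geq 8\Delta c$.

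The last step is purely algebraic: this is a quadratic in $c$ whose positive root is $2\Delta^2 + 4\Delta + 4\Delta\sqrt{\Delta+1}$, so the stated value of $c$ suffices and \cref{wlll} gives a positive probability that no $A_P$ occurs, yielding the desired nonrepetitive $L$-colouring. I do not expect real obstacles: the structural choices (matching the weight $t_P$ to the exponent of $\prob(A_P)$, and majorising the dependency count by a geometric series via \cref{PathsContainingVertex}) are essentially forced by the shape of \cref{wlll}. The only delicate point is ensuring $2\Delta^2/c < 1$ so that the geometric series converges, which is automatic from the value of $c$.
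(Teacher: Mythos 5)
Your proposal is correct and is essentially the paper's own argument: the paper also applies the \WLLL{} with $p=1/c$, weights $t_i=\tfrac12|V(P_i)|$, the bound from \cref{PathsContainingVertex} relaxed to $s\Delta^{2s-1}$ per vertex, and the arithmetico-geometric sum $\sum_{s\ge1}sr^s=r/(1-r)^2$ with $r=2\Delta^2/c$. The paper's final step, solving $4r=\Delta(1-r)^2$, is exactly your quadratic $(c-2\Delta^2)^2\ge 8\Delta c$ in disguise, so the two proofs coincide.
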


\begin{proof}
Let $c:= 2\Delta^2 + 4\Delta \sqrt{\Delta+1} + 4\Delta$ and $r:= \frac{2\Delta^2}{c}<1$ and $p:=\frac{1}{c}\leq \frac{1}{4}$.  Let $L$ be a $\ceil{c}$-list assignment for $G$. Colour each vertex $v$ of $G$, independently at random, by an element of $L(v)$. Let $P_1,\dots,P_n$ be the non-empty paths in $G$ with an even number of vertices. For each $i\in\{1,\dots,n\}$, let $A_i$ be the event that $P_i$ is repetitively coloured, and let $t_i:=\frac12 |V(P_i)|$. Then $\prob(A_i) \leq p^{t_i}$, and the first condition in \cref{wlll} is satisfied. Let $\DD_i:= \{A_j: P_j\cap P_i\neq\emptyset, j\neq i\}$. Then $A_i$ is mutually independent of $\{A_1,\dots,A_n\}\setminus(\DD_i\cup \{A_i\})$. By \cref{PathsContainingVertex}, each $P_i$ intersects at most $2t_is\Delta^{2s-1}$ paths with $2s$ vertices. The second condition in \cref{wlll} is satisfied since 
$$\sum_{A_j\in\DD_i}\!\! (2p)^{t_i} \; 
\leq  \sum_{s\geq 1} (2t_is\Delta^{2s-1}) (2p)^s
= \frac{2t_i}{\Delta} \sum_{s\geq 1} s(2p\Delta^2)^s 
= \frac{2t_i}{\Delta} \sum_{s\geq 1} sr^s 
= \frac{2t_i}{\Delta} \frac{r}{(1-r)^2} 
= \frac{t_i}{2}.$$
The penultimate equality here uses a formula for the sum of an arithmetico-geometric sequence~\citep{ArithmeticoGeometricSequence}. The last equality is proved by solving the quadratic, $4r=\Delta(1-r)^2$, and substituting $r=\frac{2\Delta^2}{c}$. By \cref{wlll}, with positive probability, none of $A_1,\dots,A_n$  occur. Hence there exists an $L$-colouring such that none of $A_1,\dots,A_n$ occur, in which case there are no repetitively coloured paths. Therefore $\pich(G)\leq \ceil{c}$. 
\end{proof}

\citet{MR02} write, ``As the reader will see upon reading the proof of the Weighted Local Lemma, the constant terms in the statement can be adjusted somewhat if needed.'' The next lemma does this.

\begin{lem}[\ELLL]
\label{ewlll}
Fix $\epsilon,\delta,p>0$ such that $0< p \leq \delta< \frac{1}{1+\epsilon}$. Define $\gamma := (1+\epsilon)\delta$ and $\alpha := \tfrac{1}{\gamma}\log(\tfrac{1}{1-\gamma})$ and $\beta := \tfrac{1}{\alpha}\log(1+\epsilon)$. Let $\EE=\{A_1,\dots,A_n\}$ be a set of `bad' events, such that each $A_i$ is mutually independent of $\EE\setminus(\DD_i\cup \{A_i\})$ for some $\DD_i\subseteq\EE$. Let $t_1,\dots,t_n\geq 1$ be real numbers such that for each $i\in\{1,\dots,n\}$,
\begin{itemize}
	\item $\prob(A_i) \leq p^{t_i}$, and
	\item $\displaystyle \sum_{A_j\in\DD_i} ((1+\epsilon)p)^{t_j} \leq \beta t_i $.
\end{itemize}
Then with positive probability, none of $A_1,\dots,A_n$ occur. 
\end{lem}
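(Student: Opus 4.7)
The plan is to derive the \ELLL{} from the \GLLL{} (\cref{glll}) by making a carefully chosen assignment of the $x_i$'s, where the particular form of $\alpha$ and $\beta$ dictates the correct choice. Specifically, I propose to set
\[
x_i := 1 - \exp\!\bigl(-\alpha\, ((1+\epsilon)p)^{t_i}\bigr)
\]
for each $i\in\{1,\dots,n\}$. Since $p,\epsilon,\alpha,t_i>0$, each $x_i$ lies in $(0,1)$, so this is a valid choice for the \GLLL.

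The next step is to establish the lower bound $x_i \geq ((1+\epsilon)p)^{t_i}$. Writing $u_i := ((1+\epsilon)p)^{t_i}$, this is equivalent to $\alpha u_i \geq -\log(1-u_i)$. Since $(1+\epsilon)p \leq (1+\epsilon)\delta = \gamma <1$ and $t_i \geq 1$, we have $u_i \leq \gamma$. The function $u \mapsto \tfrac{1}{u}\log\tfrac{1}{1-u}$ has the power-series expansion $1 + u/2 + u^2/3 + \cdots$, hence is increasing on $(0,1)$. Therefore $\tfrac{1}{u_i}\log\tfrac{1}{1-u_i} \leq \tfrac{1}{\gamma}\log\tfrac{1}{1-\gamma} = \alpha$, which rearranges to the desired inequality. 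This is the one step where the precise definition of $\alpha$ is forced.

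Now I would use the second hypothesis to bound the product appearing in the \GLLL. From the choice of $x_j$,
\[
\prod_{A_j\in\DD_i}(1-x_j) \;=\; \exp\!\Bigl(-\alpha \sum_{A_j\in\DD_i} ((1+\epsilon)p)^{t_j}\Bigr) \;\geq\; \exp(-\alpha\beta\, t_i) \;=\; (1+\epsilon)^{-t_i},
\]
where the last equality uses $\alpha\beta=\log(1+\epsilon)$. Combining with the bound on $x_i$ gives
\[
x_i \prod_{A_j\in\DD_i}(1-x_j) \;\geq\; ((1+\epsilon)p)^{t_i}\cdot (1+\epsilon)^{-t_i} \;=\; p^{t_i} \;\geq\; \prob(A_i),
\]
which is precisely the hypothesis of the \GLLL. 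Applying \cref{glll} then yields that with positive probability none of $A_1,\dots,A_n$ occur.

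The only nontrivial ingredient is the inequality $\tfrac{1}{u}\log\tfrac{1}{1-u} \leq \alpha$ for $u \in (0,\gamma]$, and this is really just unpacking why $\alpha$ was defined as $\tfrac{1}{\gamma}\log\tfrac{1}{1-\gamma}$; similarly the choice of $\beta$ is exactly what is needed so that the exponent $-\alpha\beta t_i$ cancels the $(1+\epsilon)^{t_i}$ slack built into $x_i$. So there is no genuine obstacle once one guesses the form of $x_i$; the entire proof is essentially a bookkeeping exercise showing that the three parameters $\alpha,\beta,\gamma$ have been tuned to make the \GLLL{} inequality tight at both ends.
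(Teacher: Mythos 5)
Your proof is correct and is essentially the paper's proof: both derive the lemma from the \GLLL{} using the single inequality $1-u\geq e^{-\alpha u}$ on $[0,\gamma]$ together with $\alpha\beta=\log(1+\epsilon)$. The only difference is cosmetic — the paper takes $x_i=((1+\epsilon)p)^{t_i}$ and bounds $1-x_j\geq e^{-\alpha x_j}$, whereas you take $x_i=1-e^{-\alpha((1+\epsilon)p)^{t_i}}$ and bound $x_i\geq((1+\epsilon)p)^{t_i}$, which is the same estimate applied to the other factor.
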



Note that \cref{ewlll} with $\epsilon=1$ implies \cref{wlll} since $\beta \geq \frac{1}{2}$ for $p\in[0,\frac{1}{4}]$.  The proof of \cref{ewlll} is essentially the same as the proof of \cref{wlll} by \citet{MR02}, who use $\epsilon=1$ and $\alpha=2\log 2$. 
 
\begin{proof}[Proof of \cref{ewlll}.]For each $i\in\{1,\dots,n\}$, let $x_i := ((1+\epsilon)p)^{t_i}$. Then $x_i\in[0, (1+\epsilon) \delta]$. Note that $\alpha\geq 1$ and $1-x\geq \exp(-\alpha x)$ for all $x\in [0,(1+\epsilon)\delta]$. Thus
\begin{align*}
x_i \prod_{A_j\in\DD_i} (1-x_j) 
& \geq x_i \prod_{A_j\in\DD_i} \exp( -\alpha x_j )\\
& = x_i \exp( - \alpha \sum_{A_j\in\DD_i}  x_j )\\
& = ((1+\epsilon)p)^{t_i}  \exp( - \alpha \sum_{A_j\in\DD_i} ((1+\epsilon)p)^{t_j})\\
& \geq ((1+\epsilon)p)^{t_i} \exp( -\alpha \beta t_i )\\
& = ((1+\epsilon)p)^{t_i} \exp( - \log(1+\epsilon) t_i )\\
& = p^{t_i} \\
& \geq \mathbb{P}(A_i).
\end{align*}	
The result follows from \cref{glll}.
\end{proof}

We now give our first proof of \cref{DeltaSquared}.

\begin{thm}
	For every graph $G$ with maximum degree $\Delta\geq 2$, 
	$$\pich(G)\leq \Delta^2 + 2^{4/3} \Delta^{5/3}  + O( \Delta^{4/3}) .$$	
\end{thm}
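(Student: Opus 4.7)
The plan is to apply the Optimised Weighted Local Lemma (\cref{ewlll}) with the same event structure as in the warm-up \cref{2DeltaSquared}, but with a carefully tuned, $\Delta$-dependent choice of $\epsilon$. Let $L$ be a $\lceil c \rceil$-list assignment of $G$, where $c$ is to be specified momentarily, and colour each vertex independently and uniformly at random from its list. Let $P_1,\dots,P_n$ enumerate the even-order paths of $G$, and let $A_i$ be the event that $P_i$ is repetitively coloured; set $t_i := \tfrac12|V(P_i)|$ and $p := 1/c$. The independence structure and bound $\mathbb{P}(A_i) \leq p^{t_i}$ go through exactly as in \cref{2DeltaSquared}. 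Using \cref{PathsContainingVertex}, the dependency sum becomes
\[
\sum_{A_j \in \DD_i} ((1+\epsilon)p)^{t_j} \;\leq\; \frac{2 t_i}{\Delta}\sum_{s\geq 1} s\,r^s \;=\; \frac{2 t_i r}{\Delta(1-r)^2},
\]
where $r := (1+\epsilon) p\, \Delta^2 = (1+\epsilon)\Delta^2/c$. The heart of the proof is therefore to choose $c$ and $\epsilon$ so that $\frac{2r}{\Delta(1-r)^2} \leq \beta\, t_i / t_i = \beta$.

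Next I would set $\delta := p = 1/c$, so that $\gamma := (1+\epsilon)\delta = (1+\epsilon)/c = O(\Delta^{-2})$. This makes $\alpha = \tfrac{1}{\gamma}\log\tfrac{1}{1-\gamma} = 1 + O(\Delta^{-2})$ essentially $1$, and hence $\beta = \alpha^{-1}\log(1+\epsilon) = \log(1+\epsilon)(1+O(\Delta^{-2}))$. Motivated by balancing the two error terms $\epsilon\Delta^2$ and $\sqrt{2/(\epsilon\Delta)}\,\Delta^2$ that will appear in $c$, I would set
\[
\epsilon := 2^{1/3}\Delta^{-1/3},
\]
so that $\sqrt{2/(\beta\Delta)} = 2^{1/3}\Delta^{-1/3}(1+o(1)) = \epsilon(1+o(1))$. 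Then I would define $c$ by
\[
c := \frac{(1+\epsilon)\Delta^2}{1 - \sqrt{2/(\beta\Delta)}},
\]
which rearranges to $1-r = \sqrt{2/(\beta\Delta)}$, i.e.\ $\frac{2r}{\Delta(1-r)^2} = \beta$, satisfying the dependency hypothesis of \cref{ewlll}. The probability hypothesis $\mathbb{P}(A_i) \leq p^{t_i}$ was already established, so \cref{ewlll} yields, with positive probability, an $L$-colouring in which no even path is repetitively coloured. Since a single-vertex path cannot be repetitive, this colouring is nonrepetitive, giving $\pich(G) \leq \lceil c \rceil$.

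The final step is a routine asymptotic expansion of $c$. Writing $x := 2^{1/3}\Delta^{-1/3}$, we have $\epsilon = x$ and $\sqrt{2/(\beta\Delta)} = x(1+o(1))$, so
\[
c = \frac{(1+x)\Delta^2}{1-x}(1+o(1)) = \Delta^2\bigl(1 + 2x + 2x^2 + O(x^3)\bigr)(1+o(1)) = \Delta^2 + 2^{4/3}\Delta^{5/3} + O(\Delta^{4/3}),
\]
as required.

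The main obstacle is verifying that the lower-order corrections do not spoil the dependency constraint: the quantities $\alpha$, $\beta$, $r$, and $\sqrt{2/(\beta\Delta)}$ each differ from their leading-order approximations by multiplicative factors $1 + O(\Delta^{-1/3})$, and one must check that the chosen $c$ absorbs all of these within its $O(\Delta^{4/3})$ correction. This is a routine but careful Taylor expansion; once the scaling $\epsilon \sim \Delta^{-1/3}$ is identified, everything else falls out. The reason the earlier $2\Delta^2$ bound cannot be improved without \cref{ewlll} is precisely that the factor $(1+\epsilon)$ in \cref{wlll} is fixed at $2$, whereas here we allow $\epsilon \to 0$ at the optimal rate $\Delta^{-1/3}$.
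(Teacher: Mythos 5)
Your proposal is correct and follows essentially the same route as the paper: the same even-path bad events and dependency count via \cref{PathsContainingVertex}, the Optimised Weighted Local Lemma with the tuned choice $\epsilon = 2^{1/3}\Delta^{-1/3}$, and the same Taylor expansion of $c$ at the end. The only differences are cosmetic — the paper fixes $\delta := (1+\epsilon)^{-1}\Delta^{-2}$ before defining $c$ (which avoids the mild circularity in your choice $\delta = p = 1/c$, since $\beta$ then depends on $c$) and solves $2r = \beta\Delta(1-r)^2$ exactly, whereas your choice $1-r=\sqrt{2/(\beta\Delta)}$ gives $\frac{2r}{\Delta(1-r)^2} = r\beta \leq \beta$, which is not the equality you state but still satisfies the lemma's hypothesis and yields the same asymptotics.
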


\begin{proof}
Let $\epsilon := 2^{1/3}\Delta^{-1/3}$. (The reason for this definition will be apparent at the end of the proof.)\ 
Let $\delta:= (1+\epsilon)^{-1}\Delta^{-2}$. As in \cref{ewlll}, define $\gamma := (1+\epsilon)\delta=\Delta^{-2}$ and $\alpha := \tfrac{1}{\gamma}\log(\tfrac{1}{1-\gamma})$ and $\beta := \tfrac{1}{\alpha}\log(1+\epsilon)$. Note that $1\leq \alpha \leq 4 \log(\tfrac{4}{3})<1.15$ since $\gamma \leq \frac{1}{4}$. (It may help the reader's intuition to pretend that $\alpha=1$.)\ Let 
$$c:= (1+\epsilon)\Delta( \Delta + \tfrac{1}{\beta} (\sqrt{2\beta\Delta+1}+1) ).$$
We now prove that $\pich(G)\leq \ceil{c}$. First we write $c$ in a more convenient form. Since $\beta>0$, 
$$(2\beta\Delta+2-2\sqrt{2\beta\Delta+1})(2\beta\Delta+2+2\sqrt{2\beta\Delta+1}) = (2\beta\Delta+2)^2 - 4(2\beta\Delta+1) = 4\beta^2\Delta^2.$$
Thus
$$\frac{2\beta\Delta^3}{2\beta\Delta+2-2\sqrt{2\beta\Delta+1}} 
= \frac{\Delta}{2\beta} (2\beta\Delta+2+2\sqrt{2\beta\Delta+1}) 
= \Delta( \Delta + \tfrac{1}{\beta}( \sqrt{2\beta\Delta+1} +1 ) ). $$
Muliplying by $1+\epsilon$, 
$$c= \frac{(1+\epsilon)2\beta\Delta^3}{(2\beta\Delta+2)- \sqrt{8\beta\Delta+4}}.$$
Let $p:=\frac{1}{c}$. Then $p\leq \delta\leq \frac{1}{1+\epsilon}$, as required by \cref{ewlll}.   
Let $r:= (1+\epsilon)p \Delta^2<1$. Then 
$$r=\frac{(1+\epsilon)\Delta^2}{c} = \frac{(2\beta\Delta+2)- \sqrt{8\beta\Delta+4}}{2\beta\Delta}
 = \frac{(2\beta\Delta+2)- \sqrt{(2\beta\Delta+2)^2 - 4 \beta^2\Delta^2}}{2\beta\Delta}.$$
By the quadratic formula,  $\beta\Delta r^2 +(-2\beta\Delta-2)r+\beta\Delta = 0$. 
That is, 	
$$2r= \beta\Delta(1-2r+r^2)= \beta\Delta(1-r)^2.$$

Let $L$ be a $\ceil{c}$-list assignment for $G$. Colour each vertex $v$ of $G$, independently at random, by an element of $L(v)$. Let $P_1,\dots,P_n$ be the non-empty paths in $G$ with an even number of vertices. Here we consider a path to be a subgraph of $G$, so that a path and its reverse are the same path.  For each $i\in\{1,\dots,n\}$, let $A_i$ be the event that $P_i$ is repetitively coloured, and let $t_i:=\frac12 |V(P_i)|$. Then $\prob(A_i)\leq p^{t_i}$, and the first condition in \cref{ewlll} is satisfied. Let $\DD_i:= \{A_j: P_j\cap P_i\neq\emptyset, j\neq i\}$. Then $A_i$ is mutually independent of $\{A_1,\dots,A_n\}\setminus(\DD_i\cup \{A_i\})$. By \cref{PathsContainingVertex}, for each $s\in\NN$, each $P_i$ intersects at most $2t_is\Delta^{2s-1}$ paths with $2s$ vertices. (The lower order terms in this result can be improved by using \cref{PathsContainingVertex} more precisely.)\ The second condition in \cref{ewlll} is satisfied since 
\begin{align}
\label{KeyLine}
\sum_{A_j\in\DD_i}\!\! ((1+\epsilon)p)^{t_i} 
& \leq  \sum_{s\geq 1} (2t_is\Delta^{2s-1}) ((1+\epsilon)p)^s\nonumber \\
&	 \leq  \sum_{s\geq 1} (2t_is\Delta^{2s-1}) \left(\frac{r}{\Delta^2}\right)^s\nonumber \\
    &= \frac{2t_i}{\Delta}  \sum_{s\geq 1} sr^s \nonumber \\
	&= \frac{2t_i}{\Delta} \frac{r}{(1-r)^2} \nonumber \\
	&= \beta\, t_i.
\end{align}
The penultimate equality uses a formula for the sum of an arithmetico-geometric sequence~\citep{ArithmeticoGeometricSequence}. By \cref{wlll}, with positive probability, none of $A_1,\dots,A_n$  occur. Hence there exists an $L$-colouring such that none of $A_1,\dots,A_n$ occur, in which case there are no repetitively coloured paths. Therefore $\pich(G)\leq c$. 

It remains to prove the upper bound on $\ceil{c}$. Using Taylor series expansion as $\epsilon\to0$, 
\begin{align*}
c = & 
(1+\epsilon)\Delta^2 +
\frac{(1+\epsilon)\Delta(\sqrt{2\beta\Delta+1}+1)}{\beta} \\
\leq & 
(1+\epsilon)\Delta^2 + 
\frac{(1+\epsilon)\Delta(\sqrt{2\beta\Delta}+2)}{\beta} \\
= & 
(1+\epsilon)\Delta^2 + 
\frac{(1+\epsilon)\sqrt{2\Delta^3}}{\sqrt{\beta}} +
\frac{2(1+\epsilon)\Delta}{\beta} \\
= & 
(1+\epsilon)\Delta^2 + 
 \frac{(2\alpha)^{1/2}\Delta^{3/2}(1+\epsilon)}{\sqrt{\log(1+\epsilon)}} +
 \frac{2\alpha\Delta(1+\epsilon)}{\log(1+\epsilon)} \\
= & 
(1+\epsilon)\Delta^2 + 
(2\alpha)^{1/2}\Delta^{3/2} \left(
 	\epsilon^{-1/2} + \tfrac{5}{4}\epsilon^{1/2}  + \tfrac{17}{96} \epsilon^{3/2} - 
 	\tfrac{13}{384} \epsilon^{5/2}  + O(\epsilon^3)		\right)  \\
& \hspace*{20mm}  + 2\alpha\Delta \left( 
	\epsilon^{-1}  + \tfrac{3}{2} + \tfrac{5}{12} \epsilon - \tfrac{1}{24}\epsilon^2 + \tfrac{11}{720} \epsilon^3 + O(\epsilon^4) \right) \\
= & 
(1+\epsilon)\Delta^2 + 
(2\alpha)^{1/2}\Delta^{3/2} \left( \epsilon^{-1/2} + \tfrac{5}{4}\epsilon^{1/2}  + O( \epsilon^{3/2})		\right)  
  + 2\alpha\Delta \left( \epsilon^{-1}  + \tfrac{3}{2} + O( \epsilon ) \right) \\
= & 
(1+2^{1/3}\Delta^{-1/3})\Delta^2 + 
(2\alpha)^{1/2}\Delta^{3/2} \left( (2^{1/3}\Delta^{-1/3})^{-1/2} + \tfrac{5}{4}(2^{1/3}\Delta^{-1/3})^{1/2}  + O(\Delta^{-1/2})		\right)  +\\
& \hspace*{32mm}
+ 2\alpha\Delta \left( (2^{1/3}\Delta^{-1/3})^{-1}  + \frac{3}{2} + O( \Delta^{-1/3} ) \right) \\
= & 
\Delta^2 + 2^{1/3}\Delta^{5/3} + 
(2\alpha)^{1/2}\Delta^{3/2} 
\left( 2^{-1/6}\Delta^{1/6} + \tfrac{5}{4}\,2^{1/6}\Delta^{-1/6}  + O(\Delta^{-1/2})		\right)  +\\
& \hspace*{32mm}
+ 2\alpha\Delta \left( 2^{-1/3}\Delta^{1/3}  + \tfrac{3}{2} + O( \Delta^{-1/3} ) \right) \\
= & 
\Delta^2 + 2^{1/3}\Delta^{5/3} + 
\Big( 2^{1/3} \alpha^{1/2}\Delta^{5/3}  + 
\alpha^{1/2} 5\,2^{-4/3} \Delta^{4/3}  +  O(\Delta)  \Big) + \\
& \hspace*{32mm} \left( \alpha 2^{2/3}\Delta^{4/3}  + 3\alpha\Delta + O( \Delta^{2/3} ) \right) \\
= & 
\Delta^2 + 
2^{1/3}( 1+  \alpha^{1/2}) \Delta^{5/3}  + ( \alpha^{1/2} 5\cdot 2^{-4/3} +  \alpha 2^{2/3}) \Delta^{4/3}  + 
O(\Delta)   .
\end{align*}
Note that $\alpha^{1/2}=(\tfrac{1}{\gamma}\log(\tfrac{1}{1-\gamma}))^{1/2} \leq 1 + \gamma$ since $\gamma\leq\frac14$. Thus 
$\alpha^{1/2} \leq 1 + \Delta^{-2}$, implying $c \leq \Delta^2 + 2^{4/3} \Delta^{5/3}  + (5\cdot 2^{-4/3} +2^{2/3}) \Delta^{4/3} + O(\Delta)$. 
\end{proof}

I now reflect on how to use the \ELLL. First introduce a parameter $\epsilon=\epsilon(\Delta)$, which tends to 0 as $\Delta\to\infty$. Leave $\epsilon$ undefined at this stage; it can be determined optimally at the end of this process. Introduce a variable $c$ for the number of colours, and  leave $c$ undefined at first. Guess a lower bound $c'$ for $c$, as close to $c$ as possible. In the above proof, $c'$ is  $(1+\epsilon)\Delta^2$. 
Let $p:=\frac{1}{c}$ and $\delta:=\frac{1}{c'}$. So $\delta$ is a close upper bound for $p$. Define $\gamma$, $\alpha$ and $\beta$ (in terms of $\delta$ and $\epsilon$) as in \cref{ewlll}.  Define appropriate events, and compute their probabilities (in term of $p$), from which the weights $t_1,\dots,t_n$ should be self-evident. Then bound the dependencies of events. Equation~\cref{KeyLine} is the heart of the proof. Using the bounds on the dependencies, determine an upper bound $X_i(p,\epsilon)$ for $\sum_{A_j\in\DD_i}((1+\epsilon)p)^{t_i}$. Then solving the equation $X_i(p,\epsilon)=\beta t_i$  gives a value for $p$ and in turn a value for $c$ (in terms of $\epsilon$) so that \cref{ewlll} is applicable. Finally, choose $\epsilon$ to minimise $c$. Using this approach, the mysterious process of choosing the numbers $x_1,\dots,x_n$ in the General Local Lemma is partially automated. 

%
%
%
%
%
%
%
%

%
%
%

\subsection{Rosenfeld Counting}
\label{RosenfeldCounting}

The following proof of \cref{DeltaSquared} uses a clever counting argument due to \citet{Rosenfeld20}\footnote{The bound in \cref{DeltaSquaredRosenfeld} is slightly less than the bound of \citet{Rosenfeld20}, improving the coefficient in the $\Delta$ term.}, which is inspired by the so-called power series method for pattern avoidance~\citep{Rampersad11,Ochem16,BW13}. See \citep{WW20} for an abstract generalisation of \cref{DeltaSquaredRosenfeld} that has applications to several other (hyper)graph colouring problems. 

\begin{thm}
\label{DeltaSquaredRosenfeld}
For every graph $G$ with maximum degree $\Delta\geq 2$,
$$\pich(G) \leq  \Delta^2 + 3\cdot 2^{-2/3} \Delta^{5/3} + 2^{2/3} \Delta^{4/3} - \Delta  - 2^{4/3} \Delta ^{2/3} + 2.$$
\end{thm}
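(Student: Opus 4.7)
The plan is to generalise the counting argument of \cref{RosenfeldPath} (in the multi-colour form of \cref{MultiColourPath}) from paths to arbitrary bounded-degree graphs. Set $r:=\lceil\Delta^2+3\cdot 2^{-2/3}\Delta^{5/3}+2^{2/3}\Delta^{4/3}-\Delta-2^{4/3}\Delta^{2/3}+2\rceil$, let $L$ be any $r$-list-assignment of $G$, and for each $U\subseteq V(G)$ let $C(U)$ denote the number of nonrepetitive $L$-colourings of $G[U]$. It suffices to show $C(V(G))\geq 1$, which would follow from iterating the following central claim with $k:=(\Delta-1)^2+2^{1/3}\Delta^{5/3}$:
$$C(U\cup\{v\})\geq k\cdot C(U)\quad\text{for every }U\subseteq V(G)\text{ and }v\in V(G)\setminus U.$$
I would prove this by strong induction on $|U|$. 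The base case $|U|=0$ holds since $C(\{v\})=|L(v)|\geq r\geq k$. For the inductive step, extend each nonrepetitive $L$-colouring of $G[U]$ to $v$ by every colour of $L(v)$, giving at least $r\cdot C(U)$ extensions in total. An extension is \emph{bad} only if it creates a repetitive path through $v$; let $F_s$ be the set of bad extensions creating such a path on $2s$ vertices, so $C(U\cup\{v\})\geq r\cdot C(U)-\sum_{s\geq1}|F_s|$.

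The key bound on $|F_s|$ is the counting heart of the proof. By \cref{PathsContainingVertex} at most $s\Delta(\Delta-1)^{2s-2}$ paths on $2s$ vertices in $G[U\cup\{v\}]$ pass through $v$. Given such a path $P=(u_1,\dots,u_{2s})$, orient it so that $v\in\{u_{s+1},\dots,u_{2s}\}$, and set $W_P:=\{u_{s+1},\dots,u_{2s}\}\setminus\{v\}$, a set of $s-1$ vertices in $U$. Because the repetition constraint $\phi(u_{s+i})=\phi(u_i)$ determines the colours on $W_P\cup\{v\}$ from the colours on $\{u_1,\dots,u_s\}\subseteq U\setminus W_P$, any bad extension creating $P$ as a repetitive path is uniquely determined by its (itself nonrepetitive) restriction to $G[U\setminus W_P]$. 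Hence the number of such extensions is at most $C(U\setminus W_P)$, and iterating the inductive hypothesis $s-1$ times (using sizes strictly less than $|U\cup\{v\}|$) yields $C(U\setminus W_P)\leq C(U)/k^{s-1}$.

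Summing and evaluating the resulting arithmetico-geometric series exactly as in the proof of \cref{2DeltaSquared} gives
$$\sum_{s\geq1}|F_s|\;\leq\;C(U)\sum_{s\geq1}\frac{s\Delta(\Delta-1)^{2s-2}}{k^{s-1}}\;=\;C(U)\cdot\frac{\Delta\,k^2}{(k-(\Delta-1)^2)^2},$$
so the inductive step succeeds as soon as $r\geq k+\Delta\,k^2/(k-(\Delta-1)^2)^2$. Writing $k=(\Delta-1)^2+a$, this becomes $r\geq(\Delta-1)^2+a+\Delta\,((\Delta-1)^2+a)^2/a^2$. The leading asymptotic behaviour is $r\approx\Delta^2+a+\Delta^5/a^2$, which is minimised in $a$ by $a=2^{1/3}\Delta^{5/3}$ (the condition $1-2\Delta^5/a^3=0$ balancing the $a$ and $\Delta^5/a^2$ terms at order $\Delta^{5/3}$). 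Substituting this choice of $k$ and expanding in decreasing powers of $\Delta$ recovers the constants $3\cdot 2^{-2/3}$ on $\Delta^{5/3}$ and $2^{2/3}$ on $\Delta^{4/3}$, with the $-\Delta$, $-2^{4/3}\Delta^{2/3}$, and $+2$ terms emerging from careful expansion of $(\Delta-1)^2$ against $\Delta^2$ and of $(1+\Delta^{1/3}/2^{1/3})^2$ in the remainder.

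The main obstacle is choosing the right inductive invariant. A naive order-based induction on $C_m:=C(\{v_1,\dots,v_m\})$ for some vertex ordering fails because the $s-1$ ``dependent'' vertices $W_P$ of a bad path $P$ through $v_{m+1}$ are not in general the most recently added ones, so $G_m\setminus W_P$ is not a prefix graph and the inductive hypothesis gives no bound on the count of its nonrepetitive colourings. Indexing $C(\cdot)$ by \emph{arbitrary} subsets $U$ and performing strong induction on $|U|$ circumvents this cleanly, since the iterated bound $C(U)\geq k^{s-1}C(U\setminus W_P)$ only invokes the claim at strictly smaller set sizes. Beyond this, the only remaining work is the mechanical (but slightly tedious) Taylor expansion needed to certify the exact lower-order constants in the statement.
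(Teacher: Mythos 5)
Your argument is correct and is essentially the paper's own proof: it is the Rosenfeld-counting induction of \cref{RosenfeldRevised} (charge bad extensions to even paths through $v$ via \cref{PathsContainingVertex}, iterate the inductive inequality over the deleted half-path, sum the arithmetico-geometric series), with your $k=(\Delta-1)^2+2^{1/3}\Delta^{5/3}$ playing the role of the paper's $\beta=(\Delta-1)^2/r$ at $r=(1+2^{1/3}\Delta^{-1/3})^{-1}$, and phrasing the induction over vertex subsets of a fixed $G$ rather than over all graphs of maximum degree $\Delta$ is only cosmetic. One trivial adjustment: to conclude the stated (non-integral) bound you should give the lists $\lfloor\cdot\rfloor$ rather than $\lceil\cdot\rceil$ colours, which the slack in your threshold inequality (it holds with room to spare for all $\Delta\geq 2$) readily permits.
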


This theorem is implied by the following lemma with $r:= (1+2^{1/3} \Delta^{-1/3})^{-1}$. In fact, this lemma proves a stronger result that implies there are exponentially many colourings and is essential for the inductive argument. For a list assignment $L$ of a graph $G$, let $\Pi(G,L)$ be the number of nonrepetitive $L$-colourings of $G$.

\begin{lem}
	\label{RosenfeldRevised}
	Fix an integer $\Delta\geq 2$ and a real number $r \in(0,1)$. Let 
	$$\beta := \frac{(\Delta-1)^2}{r} \quad \text{and}\quad c:= \ceil*{\beta + \frac{\Delta}{(1-r)^2}}.$$  
	Then for every graph $G$ with maximum degree $\Delta$, for every $c$-list assignment $L$ of $G$, and for every vertex $v$ of $G$, 
	$$\Pi(G,L) \geq \beta\, \Pi(G-v,L).$$ 
\end{lem}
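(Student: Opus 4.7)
The plan is to induct on $|V(G)|$. The base case $|V(G)|=1$ is immediate, since $\Pi(G-v,L)=1$ (the empty colouring) and $\Pi(G,L)=|L(v)|\geq c\geq\beta$.

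For the inductive step, I fix $v\in V(G)$ and count as follows. The number of $L$-colourings of $G$ whose restriction to $G-v$ is nonrepetitive is at least $c\,\Pi(G-v,L)$, since any nonrepetitive colouring of $G-v$ may be extended by any colour in $L(v)$. Let $F$ denote the number of such extensions that are repetitive on $G$. Then $\Pi(G,L)\geq c\,\Pi(G-v,L)-F$, so the goal reduces to proving
\begin{equation*}
F\;\leq\;\frac{\Delta}{(1-r)^2}\,\Pi(G-v,L);
\end{equation*}
together with $c\geq \beta+\frac{\Delta}{(1-r)^2}$ this yields the lemma.

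Any colouring counted by $F$ contains a repetitively coloured even path through $v$ (since the restriction to $G-v$ is nonrepetitive). For each such path, classified by half-length $s\geq 1$, I orient $P=(w_1,\dots,w_{2s})$ so that $v=w_i$ with $i\leq s$; this is uniquely possible, because $i$ and $2s-i+1$ cannot both exceed $s$. With this orientation the second half $w_{s+1},\dots,w_{2s}$ lies in $G-v$, and the repetition equalities $\phi(w_j)=\phi(w_{j+s})$ simultaneously determine $\phi(v)=\phi(w_{i+s})$ and pin down the colours on $\{w_{s+1},\dots,w_{2s}\}\setminus\{w_{i+s}\}$ from the colours on $\{w_1,\dots,w_s\}\setminus\{v\}$. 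Setting $T:=\{v\}\cup(\{w_{s+1},\dots,w_{2s}\}\setminus\{w_{i+s}\})$, the colouring $\phi$ is then fully determined by its restriction to $G-T$, which is itself nonrepetitive since $G-T\subseteq G-v$. Hence the contribution of $P$ to $F$ is at most $\Pi(G-T,L)$, and $|T|=s$.

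Applying the induction hypothesis $s-1$ times, peeling the vertices of $T\setminus\{v\}$ off $G-v$ one by one, gives $\Pi(G-v,L)\geq \beta^{s-1}\Pi(G-T,L)$. Combined with the bound $s\Delta(\Delta-1)^{2s-2}$ on the number of paths through $v$ of length $2s$ (\cref{PathsContainingVertex}) and the identity $(\Delta-1)^2=r\beta$, the summation collapses to
\begin{equation*}
F\;\leq\;\Pi(G-v,L)\cdot \Delta\sum_{s\geq 1}s\,r^{s-1}\;=\;\frac{\Delta}{(1-r)^2}\,\Pi(G-v,L),
\end{equation*}
as required. The main subtlety is the ``$v$ in the first half'' orientation convention and the accompanying choice of $T$: by removing from the second half only the single vertex $w_{i+s}$ (whose colour is already dictated by $\phi(v)$) rather than all $s$ second-half vertices, one gets $|T\setminus\{v\}|=s-1$ — precisely what is needed to cancel $(\Delta-1)^{2s-2}$ against $\beta^{s-1}$ in the sum and produce a geometric series in $r=(\Delta-1)^2/\beta$.
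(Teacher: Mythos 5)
Your proof is correct and follows essentially the same Rosenfeld-counting induction as the paper: bound the number of repetitive extensions $F$ by charging each to an even repetitively coloured path through $v$, reconstruct $s$ of its vertices from the restriction to the remaining graph, apply the inductive hypothesis $s-1$ times, and sum the geometric series $\sum_{s\geq 1} s\,r^{s-1}=(1-r)^{-2}$. The only (immaterial) difference is the choice of deleted set: the paper removes the entire first half $V(P)$ and recovers its colours from the second half, whereas you remove $v$ together with the second half minus the mirror vertex $w_{i+s}$ --- both sets have size $s$ and yield the same bound.
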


\begin{proof}
We proceed by induction on $|V(G)|$. The base case with $|V(G)|=1$ is trivial (assuming $\Pi(G,L)\neq\emptyset$ if $V(G)=\emptyset$). Let $n$ be an integer such that the lemma holds for all graphs with less than $n$ vertices.
Let $G$ be an $n$-vertex graph with maximum degree $\Delta$. Let $L$ be a $c$-list assignment of $G$. Let $v$ be any vertex of $G$. Let $F$ be the set of $L$-colourings of $G$ that are repetitive but are nonrepetitive on $G-v$. Then 
	\begin{align}
	\label{F}
	\Pi(G,L) = |L(v)| \, \Pi(G-v,L) \,-\,|F| \geq c \, \Pi(G-v,L) \,-\,|F|.
	\end{align}
We now upper-bound $|F|$. For $i\in\NN$, let $F_i$ be the set of colourings in $F$, for which there is a  repetitively path in $G$ on $2i$ vertices. Then $|F| \leq \sum_{i\in\NN} |F_i|$. For each colouring $\phi$ in $F_i$ there is a repetitively path $PQ$ on $2i$ vertices in $G$ such that $v\in V(P)$,  $G-V(P)$ is nonrepetitively coloured by $\phi$, and $\phi$ is completely determined by the restriction of $\phi$ to $G-V(P)$ colouring (since the colouring of $Q$ is identical to the colouring of $P$). Charge $\phi$ to $PQ$. The number of colourings in $F_i$ charged to $PQ$ is at most $\Pi(G-V(P),L)$. Since $P$ contains $v$ and $i-1$ other vertices, by induction
	\begin{align*}
	\Pi(G-v,L) \geq \beta^{i-1} \, \Pi(G-V(P),L). 
	\end{align*}
Thus the number of colourings in $F_i$ charged to $PQ$ is at most $\beta^{1-i}\,\Pi(G-v,L)$. By \cref{PathsContainingVertex}, there are at most $i\Delta(\Delta-1)^{2i-2}$ paths on $2i$ vertices including $v$.  Thus 
	\begin{align*}
	|F_i|  \leq  	i\,\Delta(\Delta-1)^{2i-2}\,\beta^{1-i}\,\Pi(G-v,L)
	& = 	i\,\Delta \left(\frac{ (\Delta-1)^2}{\beta} \right)^{i-1}\,\Pi(G-v,L)\\
	& = 	i\,\Delta r^{i-1}\,\Pi(G-v,L).
	\end{align*}
	Hence
	\begin{align*}
	|F|  \leq \sum_{i\in\NN} |F_i| 
	=
	\sum_{i\in\NN}  i\,\Delta r^{i-1}\,\Pi(G-v,L)
	& =
	\Delta\, \Pi(G-v,L) \sum_{i\in\NN}  i\,r^{i-1}\\
	& =
	\frac{\Delta}{(1-r)^2}\, \Pi(G-v,L) .
	\end{align*}
By \cref{F}, 
	\begin{align*}
	\Pi(G,L) 
	\geq c \, \Pi(G-v,L) \,-\,|F|
	& \geq c \, \Pi(G-v,L) \,-\,\frac{\Delta}{(1-r)^2}\, \Pi(G-v,L)	\\
	& \geq \beta\, \Pi(G-v,L)  ,
	\end{align*}
as desired. 
\end{proof}

%

\begin{open}
What is the maximum nonrepetitive chromatic number of graphs $G$ with maximum degree 3? \cref{RosenfeldRevised} with $r=0.389$ implies $\pich(G)\leq 19$. I expect this bound can be improved using entropy compression tailored to the $\Delta\leq 3$ case.  
\end{open}

\subsection{Lower Bound}

\citet{AGHR02}  proved the following lower bound on the nonrepetitive chromatic nuber of bounded degree graphs. 

\begin{thm}[\cite{AGHR02}]
\label{DegreeLowerBound}
There is an absolute constant $c>0$ such that for all $\Delta$ there exists a graph $G$ with maximum degree $\Delta$, and
$$\pi(G) \geq \frac{c \Delta^2}{\log\Delta}.$$
\end{thm}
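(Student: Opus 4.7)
The plan is to establish the lower bound via the probabilistic method, producing for each $\Delta$ a random graph on $n=n(\Delta)$ vertices whose maximum degree is at most $\Delta$ and whose nonrepetitive chromatic number exceeds $c\Delta^2/\log\Delta$ with positive probability.

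First I would take $G \sim G(n,p)$ with $p = \Delta/n$ for some large $n$, and condition on the (likely) event $\Delta(G) \le \Delta$. Fix a parameter $t$ to be optimised. For each colouring $\phi\colon V(G)\to[k]$, let $Y_\phi$ denote the number of repetitively coloured paths on $2t$ vertices in $G$. A repetitive colour pattern of length $2t$ is determined by its first half, so there are $k^t$ such patterns. For any fixed ordered sequence of $2t$ distinct vertices, the probability it forms a path in $G(n,p)$ is $p^{2t-1}$, and linearity of expectation gives
\[ \mathbb{E}[Y_\phi] \;\geq\; (\text{number of injective vertex sequences whose colour pattern is repetitive}) \cdot p^{2t-1}. \]
Using convexity to handle unbalanced colour-class sizes, one obtains an estimate of the form $\mathbb{E}[Y_\phi] \gtrsim n\,(np^2/k)^{t}$ up to lower-order factors, which grows rapidly in $n$ provided $np^2 \gg k$, i.e.\ $k \ll \Delta^2$.

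Next I would apply Janson's inequality to conclude $\prob[Y_\phi = 0] \le \exp(-\Omega(\mathbb{E}[Y_\phi]/(1+\Delta^\ast)))$, where $\Delta^\ast$ is the usual dependency parameter, dominated by pairs of $2t$-paths sharing at least one edge. Taking a union bound over all $k^n$ colourings and over the conditioning event, the argument succeeds provided
\[ n\log k \;\ll\; \mathbb{E}[Y_\phi]/(1+\Delta^\ast). \]
Balancing these quantities with an optimal choice of $t$ forces $k = \Theta(\Delta^2/\log\Delta)$: the $\log\Delta$ in the denominator is precisely the entropy cost of beating the union bound over the $k^n$ colourings, trading the $n\log k$ factor against the polynomial growth of the expected number of repetitive paths.

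The main obstacle will be the second-moment / dependency estimate $\Delta^\ast$, and, relatedly, ensuring that $\mathbb{E}[Y_\phi]$ remains large for \emph{every} colouring, not only for balanced ones. Pathological colourings (for example one in which a single colour class contains nearly all the vertices) may need to be handled separately, either by a direct observation (such colourings trivially contain a repetition) or by redoing the count while tracking the multiset of colour-class sizes produced by $\phi$. Tuning $t$, $n$, $p$ and $k$ simultaneously so that concentration beats the $n\log k$ union bound while the conditioning on $\Delta(G)\le\Delta$ costs only a constant factor is the delicate balance that both pins down the constant $c$ and enforces the $1/\log\Delta$ loss.
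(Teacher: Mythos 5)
Your plan is genuinely different from the paper's proof, but as written it has a real gap: the two steps you defer as "the main obstacle" are exactly where the theorem lives, and nothing in the sketch makes it plausible that they close in the way you describe. Concretely, you never verify that Janson's dependency term $\bar\Delta$, summed over all overlap patterns of pairs of repetitively coloured $2t$-paths (shared subpaths, near-identical paths differing in a few colour-forced vertices, several shared segments), leaves $\Pr[Y_\phi=0]\le k^{-n}$ uniformly over \emph{every} colouring $\phi$, and your parameter balancing is asserted rather than carried out. Two smaller but genuine errors compound this: with $p=\Delta/n$ every vertex has expected degree about $\Delta$, so the event $\Delta(G)\le\Delta$ is not "likely" (its probability is roughly $2^{-n}$) and conditioning on it is not a constant-factor cost — you need $p=c\Delta/n$ with $c<1$ and $n$ at most exponential in $\Delta$; and the displayed first moment is wrong: the number of colour-repetitive $2t$-tuples for a balanced colouring is about $n^t(n/k)^t$, so $\mathbb{E}[Y_\phi]\approx n^{2t}k^{-t}p^{2t-1}=\tfrac1p\bigl((np)^2/k\bigr)^t$, and the growth condition is $(np)^2=\Delta^2\gg k$, not $np^2\gg k$ (the latter tends to $0$).

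More importantly, your explanation that "the $\log\Delta$ is precisely the entropy cost of the union bound over $k^n$ colourings" is not what your own framework produces. That accounting is correct for $t=1$ (monochromatic edges, where it reproduces the chromatic-number threshold $\Delta/\log\Delta$), but for growing $t$ the first moment $\tfrac n\Delta(\Delta^2/k)^t$ already exceeds $n\log k$ with $t\approx\log\Delta$ even when $k=\Delta^2/O(1)$, and the naive overlap estimates only require $t^2\ll\Delta/\log\Delta$. So if the deferred second-moment and uniformity-over-colourings steps were as routine as the sketch suggests, the argument would yield $\pi(G)\ge\Delta^2/O(1)$ — far stronger than the theorem and than anything known (closing the gap to the $(1+o(1))\Delta^2$ upper bound is an open problem highlighted in this survey). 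Either the sketch silently proves too much, or — far more likely — the unverified estimates hide the actual obstruction; in either case the proposal does not yet pin down the stated bound. For comparison, the paper (following Alon, Grytczuk, Ha\l uszczak and Riordan) works in a different regime and avoids a union bound over colourings altogether: it takes $G(12n,p)$ with $p=4\sqrt{\log n/n}$, so $\Delta=\Theta(\sqrt{n\log n})$ and the number of colours to defeat, $6n$, is a constant fraction of the vertex count; it proves three structural properties whp (maximum degree, an edge between any two disjoint linear-sized sets, and connectedness of the auxiliary "pair graph" for every system of $3n$ disjoint pairs), and then argues deterministically that any $6n$-colouring yields $3n$ monochromatic pairs by pigeonhole and hence a repetitively coloured path of unbounded, graph-dependent length. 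There the union bounds are over pairings and pairs of sets, not colourings, and the $\log$ enters because $p\gtrsim\sqrt{\log n/n}$ is needed for those connectivity-type properties. To salvage your route you would need to carry out the overlap estimates uniformly in $\phi$ and explain honestly where (if anywhere) they cap $k$ at $\Delta^2/\log\Delta$; otherwise, switch to the paper's regime and argument.
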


\begin{proof}
	We make no attempt to optimise the constant $c$. We may assume that $\Delta$ is sufficiently large, since the result is trivial for small $\Delta$ if $c$ is small enough. Let $n$ be a (large) integer. Define $p:=4 \sqrt{\frac{\log n}{n}}$. Let $G=G(12n,p)$ be the graph with vertex set $\{1,\dots,12n\}$ obtained by choosing each pair of distinct vertices to be an edge, independently at random with probability $p$. 
	
	We claim that $G$ satisfies the following three properties  with probability tending to 1 as $n\to\infty$:
	\begin{enumerate}[(i)]
		\item The maximum degree of $G$ is at most $\Delta:= 96\sqrt{n\log n}$.
		\item There is at least one edge of $G$ between any two disjoint sets of vertices each of size at least $n$.
		\item For every collection of $3n$ pairwise disjoint subsets $\{u_1,v_1\},\{u_2,v_2\},\dots,\{u_{3n},v_{3n}\}$ of $V(G)$, there is a subset $S\subseteq \{1,\dots,3n\}$ with $|S|\geq n$ such that the graph with vertex-set $S$ and edge-set $\{st: u_su_t,v_sv_t\in E(G)\}$ is connected.
	\end{enumerate}
	
	To prove this claim, we show that for each of (i) -- (iii) the probability of failure tends to 0 as $n\to\infty$. 
	
	For (i), the expected degree of each vertex $v$ of $G$ equals $p(12n-1) < 48 \sqrt{n\log n}$. Chernoff's Inequality implies that  the probability that $\deg(v) > 96\sqrt{n \log n}$ is less than $\exp(-16\sqrt{n \log n})$. Thus the probability that some vertex of $G$ has degree greater than $96\sqrt{n \log n}$ is less than $\exp(-16\sqrt{n \log n})n \to 0$. 
	
	For (ii), consider disjoint sets $A,B\subseteq V(G)$ with $|A|,|B|\geq n$. The probability that there is no $AB$-edge in $G$ equals $(1-p)^{|A|\,|B|} \leq \exp( -p\, |A|\,|B|)\leq  \exp( - 4n^{3/2} \sqrt{ \log n}) $. The number of such pairs $A,B$ is less than $\binom{12n}{n}^2\leq  (12e)^{2n}$. Hence, the probability that (ii) fails is less than 
	$\exp( -4 n^{3/2}\sqrt{\log n}) (12e)^{2n} = \exp( -(4 + o(1))\sqrt{\log n}\,n^{3/2} ) \to 0$.
	
	For (iii), fix pairwise disjoints subsets $\{u_1,v_1\},\{u_2,v_2\},\dots,\{u_{3n},v_{3n}\}$ of $V(G)$. We now estimate the probability that there is no set $S$ as in (iii). Let $H$ be the graph with vertex-set $\{1,\dots,3n\}$ and edge-set $\{ij: u_iu_j,v_iv_j\in E(G)\}$. Then $H$ is a random graph with $3n$ vertices in which every pair of distinct vertices forms an edge, independently at random with probability $p^2=\frac{16\log n}{n}$. Our objective is to estimate the probability that there is no connected component of at least $n$ vertices in $H$. If this happens, then the set of vertices of $H$ can be partitioned into two disjoint sets, each of size at least $n$ with no edges between them. The probability of this event is less than 
	$2^{3n}(1-p^2)^{n^2} < 2^{3n} \exp( -p^2 n^2) \leq 2^{3n} \exp( -16 (\log n) n) = 2^{3n} n^{ -16 n}$. 
	The number of pairwise disjoint subsets $\{u_1,v_1\},\{u_2,v_2\},\dots,\{u_{3n},v_{3n}\}$ of $V(G)$ equals 
	$\binom{12n}{6n} (6n-1)!! = \binom{12n}{6n} \frac{ (6n)!}{2^{3n} 3n!}  < (12n)^{6n}$. 
	%
	Thus the probability that (iii) fails is less than $(12n)^{6n} 2^{3n} n^{-16n} \to 0$. This completes the proof of the claim.
	
	Returning to the proof of the theorem, suppose that $G$ satisfies (i)--(iii). Consider any $6n$-colouring of $G$. Omit one vertex from each color class containing an odd number of vertices, and partition the remaining vertices within each colour class into pairs. This produces at least $3n$ pairs $\{u_1, v_1\},\dots,\{u_{3n}, v_{3n}\}$, where $u_i$ and $v_i$ have the same color. Thus there is a subset $S \subseteq \{1,\dots,3n\}$ satisfying (iii). By (ii) applied to the sets $\{u_t:t\in S\}$ and $\{v_s : s \in S\}$ there is an edge $u_tv_s$ of $G$ with $s, t \in S$. Let $(s = s_1, s_2, \dots , s_r = t)$ be a path from $s$ to $t$ in the graph with vertex-set $S$ and edge-set $\{ij:u_iu_j,v_iv_j \in E(G)\}$. Such a path exists, by (iii). Then, the path
	$(u_s,u_{s_2},u_{s_3},\dots,u_t,v_s,v_{s_2}v_{s_3},\dots, v_t)$ in $G$ is repetitively coloured. Thus $\pi(G)> 6n\geq \frac{\Delta^2}{3072 \log\Delta}$.
\end{proof}

We finish this subsection with a number of open problems. 


\begin{open}
What is the maximum nonrepetitive chromatic number of graphs with maximum degree $\Delta$? 	The answer is between $\Omega(\Delta^2/\log\Delta)$ and $\Delta^2+O(\Delta^{5/6})$. Given the plethora of proofs of the $(1+o(1))\Delta^2$ upper bound, it would be very interesting to obtain a $(1-\epsilon)\Delta^2$ upper bound for some fixed $\epsilon$. 
\end{open}

Little is known about stroll-nonrepetitive and walk-nonrepetitive colourings of graphs with bounded degree. 

\begin{open}
\label{DeltaRho}
Is there a function $f$ such that $\rho(G) \leq f( \Delta(G))$ for every graph $G$?
\end{open}

\begin{open}[\citep{BV08,BW08}] 
\label{DeltaSigma}
Is there a function $f$ such that $\sigma(G) \leq f( \Delta(G))$ for every graph $G$?
\end{open}

By \cref{SigmaCharacterisation}, questions \cref{DeltaRho,DeltaSigma} have the same answer. 

\cref{DeltaSigma} was extensively studied by \citet{BW08}. Indeed, \citet{BW08} formulated a conjecture that they claimed would imply a positive answer to \cref{DeltaSigma}. However, \citet{Hendrey20} disproved the conjecture. 
Also note that \citet{Aprile14} claimed to prove an affirmative answer to \cref{DeltaSigma}, but the proof has an error [personal communication, Manuel Aprile 2017].

The analogous lower bound questions are also of interest. 

\begin{open}
Is there a quadratic or super-quadratic lower bound on $\rho$ or $\sigma$ for some graph of maximum degree $\Delta$?
\end{open}






\subsection{Edge Colourings and Line Graphs}

An edge-colouring of a graph $G$ is nonrepetititive if for every path $P$ in $G$, the sequence of colours on the edges of $P$ is not a repetition. Nonrepetitive edge colourings have been studied in several papers~\citep{KT17,BV08, AGHR02,BK-AC04}, as has walk-nonrepetitive edge colourings~\citep{BV08}. Since any two edges incident with a common vertex form a repetition, 
\begin{equation}
\label{EdgePiLowerBound}
\pi'(G) \geq \chi'(G) \geq \Delta(G).
\end{equation}
Conversely, \citet{AGHR02} showed that $\pi'(G) \leq O(\Delta^2)$ for every graph $G$ with maximum degree $\Delta$. Let $L(G)$ be the line graph of a graph $G$. That is, $V(L(G)):=E(G)$ where two vertices in $L(G)$ are adjacent whenever the corresponding edges in $G$ share a common endpoint. Since every path in $G$ corresponds to a path in $L(G)$, 
\begin{equation}
\label{LineGraph}
\pi'(G) \leq \pi(L(G)) ).
\end{equation} 
The best upper bound, due to \citet{Rosenfeld20}, is 
\begin{equation}
\label{EdgePiBounds}
\Delta(G) \leq \pi'(G)\leq \Delta(G)^2 + O(\Delta^{5/3}).
\end{equation}
Resolving the gap in the bounds in \cref{EdgePiBounds} is an important open problem.

\begin{open}[\citep{AGHR02}]
\label{EdgeDegree}
Is there a constant $c$ such that for every graph $G$, 
$$\pi'(G) \leq c\,\Delta(G) \,?$$
\end{open}

Note that equality does not necessarily hold in \cref{LineGraph}. For example, $\pi'(C_4)=2$ but $\pi(L(C_4))=\pi(C_4)=3$. In general, a path in $L(G)$ might correspond to a cycle in $G$, so a nonrepetitive vertex-colouring of $L(G)$ does not necessarily correspond to a nonrepetitive edge-colouring of $G$. Nevertheless we have the following bounds. 
$$\pi(L(G)) 
\leq (1+o(1)) \Delta(L(G))^2 
\leq 
(4+o(1)) \Delta(G)^2
\leq (4+o(1)) \pi'(G)^2.
$$

\citet{AGHR02} determined the nonrepetitive chromatic index of complete graphs as follows (thus answering \cref{EdgeDegree} in the affirmative in this case). 

\begin{thm}[\citep{AGHR02}]
	\label{EdgeCompleteGraph}
For every $k\in\NN$, 
$$\pi'(K_{2^k}) = 2^k-1.$$
\end{thm}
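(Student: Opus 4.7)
The lower bound $\pi'(K_{2^k}) \geq 2^k - 1$ is immediate from \cref{EdgePiLowerBound}, since $\Delta(K_{2^k}) = 2^k - 1$. For the upper bound, the plan is to exhibit an explicit nonrepetitive edge-colouring with $2^k - 1$ colours built from the group structure of the elementary abelian $2$-group. Specifically, I would identify $V(K_{2^k})$ with $\mathbb{F}_2^k$ and colour each edge $uv$ by $\phi(uv) := u + v \in \mathbb{F}_2^k \setminus \{0\}$, where addition is coordinatewise modulo $2$. This is the standard $1$-factorization of $K_{2^k}$ and uses exactly $2^k - 1$ colours. It is clearly proper, since $\phi(uv) = \phi(vw)$ forces $u + v = v + w$, hence $u = w$.

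The only nontrivial step is verifying that $\phi$ is nonrepetitive. First observe that a path with an even number of vertices has an odd number of edges, so its colour sequence cannot be a repetition. It therefore suffices to consider a path $P = (v_1, v_2, \ldots, v_{2t+1})$ on $2t+1$ pairwise distinct vertices and to show that the edge-colour word $c_i := \phi(v_iv_{i+1}) = v_i + v_{i+1}$, $i \in \{1, \ldots, 2t\}$, cannot satisfy $c_i = c_{t+i}$ for every $i \in \{1, \ldots, t\}$.

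The plan for this step is a simple telescoping argument: set $d_i := v_i + v_{t+i}$ for $i \in \{1, \ldots, t+1\}$, and note that the hypothesis $v_i + v_{i+1} = v_{t+i} + v_{t+i+1}$ rearranges in $\mathbb{F}_2^k$ to $d_i = d_{i+1}$. Hence $d_1 = d_2 = \cdots = d_{t+1}$. Adding the equations $d_1 = v_1 + v_{t+1}$ and $d_{t+1} = v_{t+1} + v_{2t+1}$, and using that every element of $\mathbb{F}_2^k$ has order at most $2$, cancels $v_{t+1}$ and yields $v_1 + v_{2t+1} = 0$, i.e.\ $v_1 = v_{2t+1}$. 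This contradicts that $P$ is a path.

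I do not anticipate a serious obstacle. The whole argument is algebraic, and the key feature it exploits is that every nonzero element of $\mathbb{F}_2^k$ is an involution, which is precisely what enables the telescoping collapse of the $d_i$'s. If the ambient group had elements of larger order, one would only conclude that $v_1$ and $v_{2t+1}$ differ by a $2$-torsion element rather than being equal, so the choice of group really is essential here.
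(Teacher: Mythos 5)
Your proposal is correct and matches the paper's argument: the same identification of $V(K_{2^k})$ with $\mathbb{Z}_2^k$, the same colouring $\phi(uv)=u+v$, and the same lower bound via \cref{EdgePiLowerBound}; your telescoping of the differences $d_i=v_i+v_{t+i}$ is just a repackaging of the paper's summation $v_1+v_{t+1}=\sum_i(v_i+v_{i+1})=\sum_i(v_{t+i}+v_{t+i+1})=v_{t+1}+v_{2t+1}$, giving $v_1=v_{2t+1}$ and the same contradiction. No gaps.
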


\begin{proof}
\cref{EdgePiLowerBound} implies the lower bound, $\pi'(K_{2^k}) \geq 2^k-1$. For the upper bound, we may assume that $V(K_{2^k})$ is the set of elements of the additive group $\mathbb{Z}_2^k$. Colour each edge $vw$ by $v+w$, where addition is in $\mathbb{Z}_2^k$. Since $v+w=0$ if and only if $v=w$, each edge is coloured by a non-zero element of $\mathbb{Z}_2^k$. Suppose for the sake of contradiction that $P=(v_1,v_2,\dots,v_{2t+1})$ is a path whose edges are repetitively coloured. Thus $v_i+v_{i+1}=v_{t+i}+v_{t+i+1}$ for each $i\in\{1,\dots,t\}$. Hence 
$$v_1+v_{t+1}=\sum_{i=1}^t(v_i+v_{i+1}) = \sum_{i=1}^t(v_{t+i}+v_{t+i+1}) = v_{t+1}+v_{2t+1}.$$
Therefore $v_1=v_{2t+1}$ and $P$ is a cycle. This contradiction shows that $K_{2^k}$ is nonrepetitively coloured, and $\pi'(K_{2^k}) \leq 2^k-1$. 
\end{proof}

\begin{cor}[\citep{AGHR02}]
For every $n\in\NN$, 
$$n-1 \leq \pi'(K_n) \leq 2n-3.$$
\end{cor}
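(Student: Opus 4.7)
The plan is to derive this statement as a short corollary of the two results already in hand: the lower bound $\pi'(G) \geq \Delta(G)$ from Equation~\cref{EdgePiLowerBound}, and the exact value $\pi'(K_{2^k}) = 2^k - 1$ from \cref{EdgeCompleteGraph}. Essentially no new ideas are required; the argument is just a packing/subgraph observation.

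For the lower bound, I would simply note that $\Delta(K_n) = n-1$, so \cref{EdgePiLowerBound} gives $\pi'(K_n) \geq n - 1$.

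For the upper bound, the idea is to embed $K_n$ inside the nearest $K_{2^k}$ from above. Let $k$ be the unique positive integer satisfying $2^{k-1} < n \leq 2^k$ (treating small cases like $n \leq 2$ separately and trivially). Any nonrepetitive edge-colouring of $K_{2^k}$ restricts to a nonrepetitive edge-colouring of any subgraph, because a repetitively coloured path in the subgraph would be a repetitively coloured path in $K_{2^k}$. Since $K_n$ is a subgraph of $K_{2^k}$, this gives
\[
\pi'(K_n) \;\leq\; \pi'(K_{2^k}) \;=\; 2^k - 1.
\]
By the minimality of $k$ we have $2^{k-1} \leq n-1$, hence $2^k \leq 2(n-1) = 2n - 2$, which yields $\pi'(K_n) \leq 2n - 3$, as required.

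There is no real obstacle here; the only thing to watch is the off-by-one in the choice of $k$, which is why the bound comes out as $2n-3$ rather than $2n-2$. It is worth remarking that this doubling gap between the lower and upper bounds is essentially the best one obtains from this monotone embedding strategy, and closing it would require a genuinely new construction rather than a repackaging of \cref{EdgeCompleteGraph}.
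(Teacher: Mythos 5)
Your proposal is correct and is exactly the argument the paper intends (the corollary is stated without proof, the implicit derivation being the lower bound from \cref{EdgePiLowerBound} with $\Delta(K_n)=n-1$ and the upper bound by restricting a nonrepetitive edge-colouring of $K_{2^k}$, $2^{k-1}<n\leq 2^k$, to the subgraph $K_n$ via \cref{EdgeCompleteGraph}). The off-by-one bookkeeping $2^k-1\leq 2(n-1)-1=2n-3$ is handled correctly.
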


The proof method in \cref{EdgeCompleteGraph} generalises to show that for complete bipartite graphs, 
$\pi'(K_{2^k,2^k}) = 2^k$. Thus $n \leq \pi'(K_n) \leq 2n-1$. 
However, determining 
$\pi(L(K_n))$ and $\pi(L(K_{n,n}))$ are open. 

\begin{open}
What is $\pi(L(K_n))$? We know $n-1 \leq \pi(L(K_n)) \leq (4+o(1))n^2$. 
\end{open}

\begin{open}
\label{LineGraphCompleteBipartite}
What is $\pi(L(K_{n,n}))$? We know $n \leq \pi(L(K_{n,n})) \leq (4+o(1))n^2$. 
\end{open}



Total Thue coloring was introduced by \citet{SS15a}. A colouring of the edges and the vertices of a graph is a \emph{weak total Thue coloring} if the sequence of consecutive vertex-colors and edge-colors of
every path is nonrepetitive. If, in addition, the sequence of vertex-colors and the sequence of edge-colors of any path are both nonrepetitive then this is a \emph{(strong) total Thue coloring}. 

For every path $P$ in a graph $G$, the sequence of vertices and edges in $P$ corresponds to a path in the 1-subdivision of $G$. Thus every nonrepetitive colouring of $G^{(1)}$ defines a weak total nonrepetitive colouring of $G$. Hence the weak total Thue chromatic number of $G$ is at most $\pi(G^{(1)})$. Similarly, if $G'$ is the square of $G^{(1)}$, then every nonrepetitive colouring of $G'$ defines a strong total nonrepetitive colouring of $G$.  Hence the strong total nonrepetitive chromatic number of $G$ is at most $\pi(G')$. 

\citet{Rosenfeld20} proved that every graph with maximum degree $\Delta$ has weak total Thue chromatic number at most $6\Delta$ and at most $\ceil{\frac{17}{4}\Delta}$ if $\Delta\geq 300$. \cref{1SubdivisionDelta} gives an upper bound on $\pi(G^{(1)})$ which implies that every graph with maximum degree $\Delta$ has weak total Thue chromatic number at most $\ceil{5.22\, \Delta}$. 


\section{Trees and Treewidth}
\label{TreesTreewidth}

\subsection{Trees}
\label{Trees}

\citet{BGKNP07} proved that every tree is path-nonrepetitively 4-colourable. \citet{DEJWW20} extended this result for strolls. 

\begin{thm}[\citep{BGKNP07,DEJWW20}]
For every tree $T$, $$\pi(T) \leq \rho(T) \leq 4.$$
\end{thm}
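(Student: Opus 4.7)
The plan is to apply the shadow-complete layering machinery (specifically \cref{ShadowCompleteRho}) to a BFS layering of the tree. Since $\pi(T) \leq \rho(T)$ always holds, it suffices to bound $\rho(T)$ by 4.

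First, I would root $T$ at an arbitrary vertex $r$ and form the BFS layering $(V_0, V_1, \dots, V_n)$ where $V_i$ is the set of vertices at distance exactly $i$ from $r$. The key observation is that this layering is shadow-complete for a tree. Indeed, consider any connected component $H$ of $T[V_i \cup V_{i+1} \cup \cdots]$ for $i \geq 1$. If the shadow of $H$ contained two distinct vertices $u, u' \in V_{i-1}$, then picking neighbours of $u$ and $u'$ in $H$ and a path between them inside $H$ would produce a cycle in $T$ (together with the path through the root ancestors, or more directly, two internally disjoint paths in $T$ connecting $u$ and $u'$). Since $T$ is acyclic, the shadow has at most one vertex, and a single vertex is trivially a clique.

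Next, since $T$ is bipartite with parts respecting the parity of the BFS layers, every $V_i$ is an independent set in $T$, so $T[V_i]$ has no edges and $\rho(T[V_i]) = 1$ (the empty/edgeless graph has no strolls at all, or a single colour suffices). Applying \cref{ShadowCompleteRho} gives
\[
\rho(T) \leq 4 \max_i \rho(T[V_i]) \leq 4 \cdot 1 = 4,
\]
and combining with $\pi(T) \leq \rho(T)$ yields the desired bound.

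There is essentially no obstacle here once \cref{ShadowCompleteRho} is in hand: the only things to verify are shadow-completeness of the BFS layering (immediate from the tree structure) and that each layer is edgeless (immediate from BFS and the fact that trees contain no odd structure forcing intra-layer edges). The real work was done in proving \cref{ShadowCompleteRho}, which combined a walk-nonrepetitive 4-colouring of a path on the layer indices (via \cref{PathSigma}) with trivial 1-colourings of the layers themselves, using \cref{LazyWalks} to handle the projection to the layer-index path.
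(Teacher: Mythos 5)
Your proof is correct and follows the paper's own argument exactly: a BFS layering of the tree is shadow-complete (each shadow being a single vertex, since two shadow vertices would force a cycle), each layer is an independent set and hence stroll-nonrepetitively 1-colourable, and \cref{ShadowCompleteRho} then gives $\rho(T)\leq 4$. Your explicit verification of shadow-completeness is a minor elaboration of what the paper states without proof, but the route is the same.
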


\begin{proof}
Let $r$ be any vertex of $T$. Let $V_i:=\{ v \in V(T): \dist_T(r,v)=i\}$. Thus  $(V_0,V_1,\dots,V_n)$ is a shadow-complete layering of $T$. Each $G[V_i]$ is an independent set and is thus stroll-nonrepetitively 1-colourable. The result then follows from \cref{ShadowCompleteRho}.
\end{proof}

\citet{BGKNP07} show that $\pi(T)=4$ for some tree $T$.

\citet{BW08} characterised walk-nonrepetitive colourings of trees as follows. This strengthens the analogous characterisation for general graphs in \cref{SigmaCharacterisation}. 

\begin{lem}[\citep{BW08}]
\label{TreesWalk}
A colouring $c$ of a tree $T$ is walk-nonrepetitive if and only if $c$ is path-nonrepetitive and distance-$2$.
\end{lem}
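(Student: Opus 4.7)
My proof plan splits the biconditional in the usual way. The forward direction (walk-nonrepetitive $\Rightarrow$ path-nonrepetitive and distance-$2$) is immediate and follows the first half of the proof of \cref{SigmaCharacterisation}: every path is a walk, and for any path $u,v,w$ in $T$ the walk $(u,v,w,v)$ is non-boring and is repetitively coloured whenever $c(u)=c(w)$.

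For the backward direction, by \cref{SigmaCharacterisation} it is enough to show that a path-nonrepetitive, distance-$2$ colouring $c$ of a tree $T$ is also stroll-nonrepetitive. The plan is a minimum counterexample argument: assume $W=(v_1,\ldots,v_{2t})$ is a repetitively coloured stroll with $t$ as small as possible. Since $c$ is path-nonrepetitive, $W$ is not a path. A walk in a tree with no backtrack (no index $i$ with $v_{i-1}=v_{i+1}$) is a path (a closed sub-walk with no backtrack would give a cycle), so $W$ has a backtrack at some position $i\in\{2,\ldots,2t-1\}$.

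The key manoeuvre for the ``off-centre'' cases $i\in\{2,\ldots,t-1\}$ and $i\in\{t+2,\ldots,2t-1\}$ is to use the repetition and the distance-$2$ property to produce a matching backtrack. For instance, if $i\leq t-1$ then $c(v_{i-1})=c(v_{i+1})$ together with $c(v_k)=c(v_{k+t})$ yields $c(v_{i-1+t})=c(v_{i+1+t})$; these two vertices share the common neighbour $v_{i+t}$, so distance-$2$ forces $v_{i-1+t}=v_{i+1+t}$, a backtrack at position $i+t$. Deleting the four positions $i,i+1,i+t,i+t+1$ from $W$ yields a walk, and I would check that it is again a repetitively coloured stroll but of length $2(t-2)$, contradicting the minimality of $t$. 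The case $i\geq t+2$ is symmetric. Base cases $t\leq 2$ are dispatched directly from distance-$2$, and $t\in\{3,4\}$ by observing that the repetition forces $c(v_1)=c(v_{t-1})$, which for these values of $t$ violates distance-$2$ within $P_1$.

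The main obstacle, and where I expect to spend most of the effort, is the residual ``middle-only'' case where all backtracks of $W$ lie at positions $t$ or $t+1$. Then $P_1=(v_1,\ldots,v_t)$ and $P_2=(v_{t+1},\ldots,v_{2t})$ are genuine paths of $T$ sharing a colour sequence. Assuming, say, $v_{t-1}=v_{t+1}=u$, I would form the inner walk $M=(v_2,\ldots,v_{t-1},v_{t+2},\ldots,v_{2t-1})$ of length $2(t-2)$; substituting $c(v_{t+j})=c(v_j)$ shows $M$ is repetitively coloured. If $P_2$'s branch from $u$ avoids $P_1$ then $M$ is in fact a repetitive path in $T$, contradicting path-nonrepetitivity. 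Otherwise $P_2$ initially retraces $P_1$, and I would verify that $M$ is itself a repetitively coloured stroll with its only backtrack at its own middle index $t-2$, placing it back in the same middle-only regime with strictly smaller half-length; iterating the construction eventually reaches a base case. The delicate step is confirming at each iteration that $M$ inherits the stroll property and the ``no off-centre backtracks'' hypothesis (so that the recursion genuinely terminates), and handling the analogous subcase where the backtrack sits at position $t+1$, or at both $t$ and $t+1$, by the same merging trick applied symmetrically.
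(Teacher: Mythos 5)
Your proof is correct, and its engine is the same as the paper's: find a backtrack, mirror it into the other half using the repetition together with the distance-$2$ property, delete four vertices, and contradict minimality. The packaging differs. The paper argues directly about a minimum-length repetitively coloured \emph{non-boring walk}: it picks a repeated pair $v_i=v_j$ with $i\le t-1$ minimising $j-i$, uses the tree property to force $v_{i+1}=v_{j-1}$ and hence a backtrack $v_i=v_{i+2}$, then mirrors and deletes much as you do. Your detour through \cref{SigmaCharacterisation} and strolls buys something concrete: after deletion the surviving antipodal pairs are still of the form $(v_j,v_{t+j})$, so the shortened sequence automatically inherits both repetitivity and the stroll condition, whereas the paper's walk-based shortening leaves the non-degeneracy (non-boringness) of the shorter walk implicit. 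Where you overshoot is the ``middle-only'' case, which you flag as the main obstacle: it is not one. Once you note that $M=(v_2,\dots,v_{t-1},v_{t+2},\dots,v_{2t-1})$ is a walk (via $v_{t-1}=v_{t+1}$), is repetitively coloured (its pairs are $(v_j,v_{t+j})$ for $j=2,\dots,t-1$), and is a stroll (same pairs, and $v_j\neq v_{t+j}$ since $W$ is a stroll), it is a strictly shorter repetitively coloured stroll and already contradicts the minimal choice of $W$. No iteration, no analysis of whether $P_2$ retraces $P_1$, and no verification that the recursion stays in a ``middle-only regime'' is needed; the minimal counterexample was chosen among all repetitively coloured strolls, so any shorter one suffices. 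The backtrack-at-$(t{+}1)$ subcase then follows by reversing $W$, which preserves repetitivity and the stroll property, and your off-centre reduction and base cases $t\le 2$ are fine as stated.
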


\begin{proof}
Every walk-nonrepetitive colouring is path-nonrepetitive and distance-$2$ by \cref{SigmaCharacterisation}. We now prove the converse. Assume $c$ is a nonrepetitive distance-$2$ colouring of $T$. Suppose on the contrary that $T$ has a repetitively coloured non-boring walk. Let $W=(v_1,v_2,\dots,v_{2t})$ be a repetitively coloured non-boring  walk in $T$ of minimum length. Some vertex is repeated in $W$, as otherwise $W$ would be a repetitively coloured path. By considering the reverse of $W$, without loss of generality, $v_i=v_j$ for some $i\in\{1,\dots,t-1\}$ and $j\in\{i+2,\dots,2t\}$. Choose $i$ and $j$ to minimise $j-i$. Thus $v_i$ is not in the sub-walk $(v_{i+1},v_{i+2},\dots,v_{j-1})$. Since $T$ is a tree, $v_{i+1}=v_{j-1}$. Thus $i+1=j-1$, as otherwise $j-i$ is not minimised. That is, $v_i=v_{i+2}$. Assuming $i\neq t-1$, since $W$ is repetitively coloured, $c(v_{t+i})=c(v_{t+i+2})$, which implies that $v_{t+i}=v_{t+i+2}$ because $c$ is a distance-$2$ colouring. Thus, even if $i=t-1$, deleting the vertices $v_{i},v_{i+1},v_{t+i},v_{t+i+1}$ from $W$, gives a walk $(v_1,v_2,\dots,v_{i-1},v_{i+2},\dots,v_{t+i-1},v_{t+i+2},\dots,v_{2t})$ that is also repetitively coloured. This contradicts the minimality of the length of $W$.
\end{proof}

\citet{BW08} showed the following bound on $\sigma(T)$.  

\begin{lem}[\citep{BW08}]
\label{TreeSigma}
For every tree $T$ with maximum degree $\Delta$, 
$$\Delta(T)+1 \leq \sigma(T) \leq 4\Delta.$$
\end{lem}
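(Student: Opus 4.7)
The lower bound $\sigma(T) \geq \Delta(T)+1$ is immediate from \cref{SigmaBounds}. For the upper bound, my plan is to apply \cref{ShadowCompleteWalk} with $k=\Delta$ to a BFS layering of $T$. Root $T$ at an arbitrary vertex $r$ and set $V_i := \{v \in V(T) : \dist_T(r,v) = i\}$. This BFS layering is shadow-complete because any connected component $H$ of $T[V_i \cup V_{i+1} \cup \cdots]$ with $i \geq 1$ is a subtree of $T$, and its minimum-depth vertex $v_H \in V_i$ has a unique neighbor in $V_{i-1}$, namely its parent; so the shadow of $H$ is a single vertex, which is trivially a clique.

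Next I would construct a $\Delta$-coloring $\beta$ of $T$ satisfying the hypotheses of \cref{ShadowCompleteWalk}. Since $T$ is a tree, each layer $V_i$ is an independent set, so $T[V_i]$ has no edges and any coloring is vacuously stroll-nonrepetitive on it. For the second hypothesis, note that distinct $v,w \in V_i$ sharing a common neighbor $u \in V_{i-1} \cup V_i$ cannot have $u \in V_i$ (independence again), so $u \in V_{i-1}$ and $v, w$ are children of the same parent. It therefore suffices to color $T$ so that siblings always receive distinct colors. This is achieved greedily: process vertices in BFS order, giving each non-root vertex a color in $\{1,\dots,\Delta\}$ distinct from those already assigned to its siblings. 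Since each vertex has at most $\Delta$ children, $\Delta$ colors suffice, and \cref{ShadowCompleteWalk} then delivers $\sigma(T) \leq 4\Delta$.

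There is no real obstacle: the structural lemmas \cref{ShadowCompleteWalk} and \cref{SigmaBounds} do the heavy lifting, and everything else reduces to the observation that in a tree there are no within-layer edges in a BFS layering, which simultaneously makes the shadow-completeness automatic and turns the second hypothesis of \cref{ShadowCompleteWalk} into the trivial condition of distinguishing siblings.
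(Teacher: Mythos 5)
Your proof is correct and follows essentially the same route as the paper: the lower bound from \cref{SigmaBounds}, a BFS layering of the tree (shadow-complete, each shadow being a single parent), and \cref{ShadowCompleteWalk} applied with $k=\Delta$ to a colouring in which siblings receive distinct colours. The only cosmetic difference is that the paper roots $T$ at a leaf and greedily produces a proper $\Delta$-colouring of $T$ plus all sibling edges, whereas you root arbitrarily and insist only on sibling-distinctness; this is fine because the stated hypothesis of \cref{ShadowCompleteWalk} does not require the colouring to be proper on $T$.
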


\begin{proof}
The lower bound follows from \cref{SigmaBounds}. For the upper bound, root $T$ at some leaf vertex $r$. Let $V_i:=\{ v \in V(T): \dist_T(r,v)=i\}$. Thus  $(V_0,V_1,\dots,V_n)$ is a shadow-complete layering of $T$. Each $G[V_i]$ is an independent set and is thus stroll-nonrepetitive in any colouring. Each vertex $v\in V_i$ has at most $\Delta-1$ children, all of which are in $V_{i+1}$. Let $G$ be the graph obtained from $T$ by adding an edge between every pair of vertices with a common parent. A greedy algorithm shows that $G$ is $\Delta$-colourable. This colouring of $G$ satisfies the property in \cref{ShadowCompleteWalk}. Thus $\sigma(T) \leq 4\chi(G) \leq 4\Delta(T)$.
\end{proof}

\begin{open}
Is there a constant $c$ such that 
$\sigma(T) \leq \Delta(T)+c$ for every tree $T$?
\end{open}

\cref{TreeSigma,ProductRho} imply:

\begin{cor}
For every graph $G$ and every tree $T$, 
$$\rho(G \boxtimes T)\le 4\Delta(T) \, \rho(G) .$$ 
\end{cor}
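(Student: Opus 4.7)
The plan is to obtain this as a direct consequence of two prior results, namely \cref{ProductRho} and \cref{TreeSigma}. First I would recall that \cref{ProductRho} gives $\rho(G \boxtimes H) \leq \rho(G) \cdot \sigma(H)$ for any two graphs $G$ and $H$. Specialising $H := T$, this yields $\rho(G \boxtimes T) \leq \rho(G) \cdot \sigma(T)$.

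Next I would invoke \cref{TreeSigma}, which bounds the walk-nonrepetitive chromatic number of a tree in terms of its maximum degree: $\sigma(T) \leq 4\Delta(T)$. Substituting this into the previous inequality gives
\begin{equation*}
\rho(G \boxtimes T) \;\leq\; \rho(G) \cdot \sigma(T) \;\leq\; \rho(G) \cdot 4\Delta(T) \;=\; 4\Delta(T)\,\rho(G),
\end{equation*}
which is the desired bound.

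Since both ingredients have already been established in the excerpt, there is no genuine obstacle here: the entire content of the corollary is the composition of the product-type inequality for $\rho$ with the degree-type bound for $\sigma$ on trees. The only subtlety worth flagging is that one really does need \cref{ProductRho} (which mixes $\rho$ on one factor with $\sigma$ on the other), rather than any hypothetical bound of the form $\rho(G \boxtimes H) \leq \rho(G)\rho(H)$, since as noted in the remarks following \cref{ProductRho} no such bound exists. This is why the factor on $T$ must be $\sigma$, and hence why it is natural to pay $4\Delta(T)$ rather than merely $\rho(T) \leq 4$.
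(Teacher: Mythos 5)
Your proposal is correct and matches the paper exactly: the corollary is stated as an immediate consequence of \cref{ProductRho} and \cref{TreeSigma}, i.e.\ $\rho(G \boxtimes T) \leq \rho(G)\,\sigma(T) \leq 4\Delta(T)\,\rho(G)$. Your remark on why the $\sigma$-factor (rather than $\rho(T)\leq 4$) is needed is also accurate.
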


\citet{FOOZ11} proved the following surprising result about the nonrepetitive list chromatic number of trees. 

\begin{thm}[\citep{FOOZ11}]
\label{TreesUnboundedChoose}
Trees have unbounded nonrepetitive list chromatic number.
\end{thm}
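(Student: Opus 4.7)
The plan is to show that for every integer $k \geq 1$ there exists a tree $T_k$ equipped with a $k$-list assignment $L_k$ such that $T_k$ has no nonrepetitive $L_k$-colouring. This immediately gives $\pich(T_k) > k$, so $\pich$ is unbounded over trees. The construction will be recursive: $T_{k+1}$ is built by gluing together many copies of $(T_k, L_k)$ around a common structure whose list of size $k+1$ is designed so that whichever colour is picked at the joining vertices, a repetitive path arises.

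The base case $k=1$ is immediate, since an edge whose two endpoints have the identical singleton list $\{1\}$ must be monochromatically coloured, producing a repetitive path on two vertices. For the inductive step I would develop \emph{forcing gadgets}: given a bad pair $(T_k, L_k)$ and a target vertex $v$ with a $(k+1)$-list, I would attach carefully-listed copies of $T_k$ to $v$ in such a way that an $L$-colouring avoiding a repetitive path inside any copy must restrict the colour at $v$ to a specified subset of its list. Concatenating such gadgets along a backbone path then lets me restrict the colour of each backbone vertex so severely that, by a pigeonhole argument over the finitely many admissible backbone colourings, some subpath of the backbone must be repetitively coloured no matter how the free choices are made.

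The main obstacle is getting the forcing gadgets and the recursion to compose correctly: each gadget must, on the one hand, make genuine restrictions on the root colour from arbitrary $(k+1)$-lists, and, on the other hand, not create any repetitive paths crossing between the gadget and the rest of the tree aside from those we explicitly want to force. Handling these cross-gadget repetitions requires making the backbone sufficiently long and the lists on the attached $T_k$-copies sufficiently varied, which causes the size of $T_{k+1}$ to grow in a tower-of-exponentials fashion in $k$. Since we only need existence of $T_k$ (not efficient bounds), this rapid growth is acceptable, but structuring the recursive statement strongly enough to carry through both the restriction property and the no-spurious-repetitions property simultaneously is the technical heart of the argument.
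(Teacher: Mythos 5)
Your submission is a plan rather than a proof: the two components that would carry all the weight --- the ``forcing gadgets'' and the strengthened recursive statement they are supposed to satisfy --- are never constructed, and you yourself flag their construction as ``the technical heart''. The gap is not merely one of detail. The inductive hypothesis you propose (a tree $T_k$ with $k$-lists admitting no nonrepetitive colouring) does not transfer to a forcing property: colouring the attachment vertex $v$ with some colour $c$ does not delete $c$ from the lists of a pendant copy of $T_k$, because nonrepetitiveness only forbids specific colour patterns on paths, not reuse of $c$ near $v$. So to make a pendant subtree constrain the colour of $v$ at all, you must engineer repetitions that cross the attachment edge, i.e.\ force the colour sequence inside the copy to mirror a sequence outside it for \emph{every} adversarial choice of colours from the lists inside the copy. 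That is a genuinely different (and stronger) inductive statement, and nothing in your outline indicates how to prove it.

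The final pigeonhole step is also unsound as described. Restricting each backbone vertex ``to a specified subset of its list'' cannot by itself force a repetition on the backbone: paths are nonrepetitively list $4$-colourable (\cref{PathChoose}), so the gadgets would have to cut every $(k+1)$-list down to at most $3$ effective colours, and whether $3$ suffices to force a repetition on a path is itself open (\cref{Path3Choose}); in effect the gadgets must pin each backbone vertex to at most two admissible colours, which is essentially the whole difficulty of the theorem relocated into the unbuilt gadget. For comparison, the survey gives no proof of \cref{TreesUnboundedChoose} --- it cites \citet{FOOZ11} --- and the argument there is a concrete construction with explicitly designed lists, not a soft recursion of the kind you sketch; to turn your outline into a proof you would need to supply a construction of comparable substance.
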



\cref{TreesUnboundedChoose} leads to the natural question: which classes of trees have bounded nonrepetitive list chromatic number $\pich$? \citet{GPZ11} proved an affirmative answer for paths (\cref{PathChoose}). More generally, \citet{DJKW16} proved that caterpillars have bounded $\pich$ (see Appendix~B in the arXiv version of \citep{DJKW16}). Since caterpillars are the graphs with pathwidth 1, \citet{DJKW16} asked whether trees or graphs with bounded pathwidth have bounded $\pich$. These questions were completely answered by \citet{GJKM16}, who showed that trees with bounded pathwidth have bounded $\pich$, but this result does not extend to general graphs, by constructing a family of pathwidth-2 graphs with unbounded $\pich$.

While $O(\Delta^2)$ is an almost tight upper bound on the nonrepetitive list chromatic number of graphs with maximum degree $\Delta$, it is interesting to ask for which graph classes is there a $O(\Delta^{2-\epsilon})$ bound for some fixed $\epsilon>0$. \citet{KM13} proved such a result for trees. 

\begin{thm}[\citep{KM13}] 
\label{TreeChooseDelta}
For any fixed $\epsilon>0$, and for every tree $T$ with maximum degree $\Delta$,
$$\pich(T) \leq O(\Delta^{1+\epsilon}).$$
\end{thm}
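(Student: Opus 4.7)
The plan is to apply the Lov\'asz Local Lemma to a random list colouring that is blended with a deterministic depth-based component, exploiting the fact that any path in a tree has a V-shaped depth profile. Root $T$ at an arbitrary vertex and let $d(v)$ denote the depth of $v$. Fix an integer $k$ with $1/k < \epsilon$. The target colouring will be a pair $\phi(v)=(\alpha(v),\beta(v))$, where $\alpha(v)$ is a function of $d(v)$ drawn from a finite alphabet whose size depends only on $k$, and $\beta(v)$ is chosen uniformly at random from a list $L(v)$ of size $\Theta(\Delta^{1+1/k})$. Since the product colouring uses $O(\Delta^{1+1/k}) \leq O(\Delta^{1+\epsilon})$ colours in total, once $\alpha$ is fixed the problem reduces to a list-colouring problem for $\beta$.

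First, I would build $\alpha$ at $k$ nested scales: for each $j\in\{1,\dots,k\}$, let $\alpha_j$ be a nonrepetitive sequence on $\NN$ from a constant-size alphabet (which exists by \cref{PathPi}), and define $\alpha(v):=\bigl(\alpha_1(\lfloor d(v)\rfloor),\alpha_2(\lfloor d(v)/\Delta^{1/k}\rfloor),\dots,\alpha_k(\lfloor d(v)/\Delta^{(k-1)/k}\rfloor)\bigr)$. Call a path $P=(v_1,\dots,v_{2t})$ in $T$ \emph{$\alpha$-admissible} if $\alpha(v_i)=\alpha(v_{t+i})$ for every $i$. The key structural claim is that an $\alpha$-admissible path of length $2t-1$ passing through a given vertex can be chosen in at most $t\,\Delta\,\Delta^{(1+1/k)(2t-2)}$ ways, which is a strict improvement on the $t\Delta(\Delta-1)^{2t-2}$ bound from \cref{PathsContainingVertex} once we enforce the depth constraints. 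The reason is that the V-shape of any tree path, combined with the coincidences $\alpha_j(\lfloor d(v_i)/\Delta^{(j-1)/k}\rfloor)=\alpha_j(\lfloor d(v_{t+i})/\Delta^{(j-1)/k}\rfloor)$ for all $j$, forces each extension step of the path to land in one of only $O(\Delta^{1/k})$ depth-compatible subtrees of the candidate neighbours, rather than all $\Delta$ of them. The coarsest scale ensures the first half and second half enter the same macroscopic region of $T$, while the finer scales progressively align the depth patterns.

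Next, I would run the Weighted Local Lemma (\cref{wlll}) with bad events $A_P$ indexed by the $\alpha$-admissible paths $P$ of even length, weights $t_P:=|V(P)|/2$, and probability bound $\prob(A_P)\leq p^{t_P}$ where $p=1/c$ for $c=\Theta(\Delta^{1+1/k})$. Arguing as in \cref{2DeltaSquared} but with the improved count of $\alpha$-admissible paths through any vertex, the dependency sum becomes
\[
\sum_{A_Q\in\DD_P}(2p)^{t_Q}\leq \sum_{s\geq 1} 2t_P\,s\,\Delta^{(1+1/k)(2s-1)}(2p)^s,
\]
which is at most $t_P/2$ once $c$ is a sufficiently large constant multiple of $\Delta^{1+1/k}$. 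Applying \cref{wlll} yields a nonrepetitive $L$-colouring, proving $\pich(T)\leq O(\Delta^{1+1/k})\leq O(\Delta^{1+\epsilon})$.

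The main obstacle will be the structural claim that $\alpha$-admissibility cuts the branching factor along a long path from $\Delta$ down to $\Delta^{1/k}$ per step. Executing this requires an induction over the $k$ scales: at scale $j$, the coincidence condition on $\alpha_j$ together with the rigidity of $\alpha_j$ as a nonrepetitive sequence on $\NN$ (via \cref{LazyWalks}-type reasoning applied to the projection of the path onto the depth line) forces that the depth sequence of $P$ satisfies $d(v_{t+i})-d(v_i)\in \Delta^{(j-1)/k}\ZZ$ for all $i$; combining these across $j=1,\dots,k$ essentially pins down $d(v_{t+i})=d(v_i)$ up to an error of $O(1)$, and then the tree property (uniqueness of paths between any two vertices) restricts the last free coordinate of each $v_{t+i}$ to one of $O(\Delta^{1/k})$ children at every step. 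Making this chain of restrictions quantitatively tight, while keeping the alphabet of $\alpha$ of constant size, is the technical heart of the argument.
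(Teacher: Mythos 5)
There is a genuine gap, and it sits at the very first step. You define the colouring as a pair $\phi(v)=(\alpha(v),\beta(v))$ with a deterministic depth-based coordinate $\alpha$ and claim that ``once $\alpha$ is fixed the problem reduces to a list-colouring problem for $\beta$''. In the list setting this move is not available: the theorem is about $\pich$, so an adversary hands you an arbitrary list-assignment $L$ with $|L(v)|=O(\Delta^{1+\epsilon})$ and the colour of $v$ must be an element of $L(v)$; you cannot adjoin a deterministic coordinate to it, and the lists have no product structure. Consequently restricting the bad events to $\alpha$-admissible paths is unjustified: a path on which only the random coordinate repeats is still repetitively coloured in the actual $L$-colouring. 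This is precisely the obstruction that makes the list version hard --- it is why trees have unbounded nonrepetitive list chromatic number (\cref{TreesUnboundedChoose}) even though deterministic layering/product tricks give $\pi(T)\leq 4$ --- and any proof of the stated theorem (including the cited one, which the survey does not reproduce) has to work with the random list colours alone, extracting the saving from the structure of paths in trees (for instance, that a tree path splits into two depth-monotone arms and the ascending arm is forced because parents are unique), not from an auxiliary deterministic colour.

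Even if one ignores the list issue and aims only at a bound on $\pi$, the quantitative core does not hold together. The claimed count of $\alpha$-admissible paths on $2s$ vertices through a vertex, $s\Delta\,\Delta^{(1+1/k)(2s-2)}$, is \emph{larger} than the bound of \cref{PathsContainingVertex}, not smaller, and feeding it into \cref{wlll} with $p=\Theta(\Delta^{-(1+1/k)})$ makes your dependency series $\sum_{s\geq 1} s\,\Delta^{(1+1/k)(2s-1)}(2p)^s$ diverge, since the $s$-th term grows like $C^{s}\Delta^{(1+1/k)(s-1)}$ for a constant $C$; to conclude $\pich=O(\Delta^{1+1/k})$ you would need only about $\Delta^{(1+1/k)s}$ relevant paths per vertex, i.e.\ effective branching about $\Delta^{(1+1/k)/2}$ per step, which no depth-based restriction can deliver because a descending step in a tree genuinely offers up to $\Delta-1$ children. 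Moreover, the structural claim driving the restriction is false: for a square-free sequence $\alpha_j$, equality of single letters at two positions says nothing about the difference of those positions, so the coincidences $\alpha_j(\lfloor d(v_i)/\Delta^{(j-1)/k}\rfloor)=\alpha_j(\lfloor d(v_{t+i})/\Delta^{(j-1)/k}\rfloor)$ do not force $d(v_{t+i})-d(v_i)\in\Delta^{(j-1)/k}\ZZ$. The true statement of this flavour requires a walk-nonrepetitive $4$-colouring of the depth line and yields exact equality of depths via \cref{LazyWalks,PathSigma}, but again only when that colour is part of the colouring, which the list setting forbids, and even then it leaves the descending branching untouched.
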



\begin{open}
What is the maxmum nonrepetitive nonrepetitive list chromatic number of  trees with maximum degree $\Delta$. The best lower bound, due to \citet{FOOZ11}, is $\text{polylog}(\Delta)$. The best upper bound is	$O(\Delta^{1+\epsilon})$ due to \citet{KM13}.
\end{open}

\subsection{Treewidth}
\label{Treewidth}

Treewidth measures how similar a given graph is to a tree, and is particularly important in structural and algorithmic graph theory; see the surveys \citep{Bodlaender-TCS98, Reed-AlgoTreeWidth03,HW17}. It is defined as follows. A \emph{tree-decomposition} of a graph $G$ consists of a collection $\{B_x\subseteq V(G) : x\in V(T)\}$ of subsets of $V(G)$, called \emph{bags}, indexed by the vertices of a tree $T$, and with the following properties:
\begin{itemize}
\item for every vertex $v$ of $G$, the set $\{x\in V(T) : v\in B_x\}$ induces a non-empty (connected) subtree of $T$, and
\item for every edge $vw$ of $G$, there is a vertex $x\in V(T)$ for which $v,w\in B_x$.
\end{itemize}
The \emph{width} of such a tree-decomposition is $\max\{|B_x|:x\in V(T)\}-1$. 
The \emph{treewidth} of a graph $G$ is the minimum width of a tree-decomposition of $G$. Tree-decompositions and tree-width were introduced by \citet{RS-II}, although several equivalent notions were previously studied in the literature. 

Tree-decompositions and treewidth are closely related to chordal graphs. A graph is \emph{chordal} if there is no chordless cycle of length greater than 3. It is well-known that a graph is chordal if and only if it has a tree-decomposition in which each bag is  a clique \citep{Diestel5}. It follow that a graph has treewidth $k$ if and only if $G$ is a subgraph of a chordal graph with no clique on $k+2$ vertices \citep{Diestel5}. We will need the following result.

\begin{lem}[\citep{KP-DM08,DMW05}]
\label{ShadowCompleteChordal}
Every BFS-layering $(V_0,\dots,V_n)$ of a connected chordal graph $G$ is shadow-complete.
\end{lem}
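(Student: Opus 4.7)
The plan is to show that each shadow $S$ is a minimal vertex separator of $G$ and then invoke the classical result (due to Dirac) that every minimal separator in a chordal graph is a clique. This avoids a direct chord-chasing argument.

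Fix the BFS root $r$ (so $V_0=\{r\}$), an index $i\geq 1$, a connected component $H$ of $G[V_i\cup V_{i+1}\cup\cdots]$, and its shadow $S\subseteq V_{i-1}$. Pick any $a\in V(H)$. I claim $S$ is a minimal $(a,r)$-separator of $G$. For \emph{separation}, any walk from $a$ to $r$ must at some step cross from $V_i\cup V_{i+1}\cup\cdots$ into $V_0\cup\cdots\cup V_{i-1}$, since consecutive vertices along a walk differ in depth by at most $1$. Up to that first crossing the walk lies entirely in $V_i\cup V_{i+1}\cup\cdots$ and is connected through walk vertices to $a$, hence lies in the component $V(H)$; the first vertex leaving $V_i\cup V_{i+1}\cup\cdots$ must therefore lie in $V_{i-1}$ and be adjacent to a vertex of $V(H)$, so it lies in $S$. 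Hence $S$ separates $a$ from $r$.

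For \emph{minimality}, given $s\in S$, choose a neighbour $h\in V(H)\cap N(s)$, a path from $a$ to $h$ inside $V(H)$ (which stays in $V_i\cup V_{i+1}\cup\cdots$ and so avoids $V_{i-1}\supseteq S$), the edge $hs$, and then the chain of BFS-parents from $s$ up to $r$; these parent-vertices lie in $V_0\cup\cdots\cup V_{i-2}$ and so avoid $S$. Concatenating these pieces produces an $(a,r)$-path in $G-(S\setminus\{s\})$, so no proper subset of $S$ separates $a$ from $r$.

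Applying Dirac's theorem to the chordal graph $G$ now yields that the minimal separator $S$ is a clique. The only delicate point is the separation argument, where one must confirm that a walk from $a$ really stays inside $V(H)$ until its first excursion into $V_0\cup\cdots\cup V_{i-1}$; this follows because $H$ is a connected component of $G[V_i\cup V_{i+1}\cup\cdots]$ and the initial subwalk is connected to $a$ within that subgraph. If a self-contained argument is preferred (avoiding Dirac), one can instead take non-adjacent $u,v\in S$, form a cycle by combining a shortest $uv$-path through $V(H)$ with a shortest $uv$-path through $V_0\cup\cdots\cup V_{i-1}$ via the BFS tree, and chase the forced chord: by minimality no chord can shorten either half, so the chord must be $uv$ itself.
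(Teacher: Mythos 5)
Your argument is correct, but it takes a different route from the paper. The paper's proof is a direct chord-chasing argument, essentially the alternative you sketch in your last sentence: for two non-adjacent shadow vertices $v,w$ it takes a shortest $v$--$w$ path inside the lower layers $V_0\cup\dots\cup V_{i-1}$ (which exists via the root) together with a shortest path in $H$ between a neighbour of $v$ and a neighbour of $w$, and observes that the resulting cycle has at least four vertices and no chord, contradicting chordality. You instead show that the shadow $S$ is a minimal $(a,r)$-separator for any $a\in V(H)$ --- the separation step using that consecutive walk vertices change depth by at most one and that the initial segment of the walk stays in the component $H$, the minimality step using a path through $H$ to a neighbour of $s$ followed by a BFS-parent chain from $s$ to $r$ --- and then invoke Dirac's theorem that minimal separators of chordal graphs are cliques. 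Your approach is more modular and arguably cleaner: all the chordality work is delegated to a classical theorem, and the layering argument is purely structural. The paper's approach buys self-containedness: it never needs Dirac and the (short) chord verification is where chordality enters explicitly. One small point you should add: when $i=1$ the shadow satisfies $S\subseteq V_0=\{r\}$, so $r\in S$ and the phrase ``minimal $(a,r)$-separator'' does not literally apply; but then $|S|\leq 1$ and the clique conclusion is trivial, so this is a one-line patch rather than a gap (and similarly the case $|S|\leq 1$ in general needs no separator argument at all).
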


\begin{proof}
Say $V_0=\{r\}$. Let $H$ be a connected component of $G[V_i]$ for some $i\in\{1,\dots,n\}$. Let $X$ be the set of vertices in $V_{i-1}$ adjacent to some vertex in $H$. Suppose for the sake of contradiction that distinct vertices $v,w\in X$ are not adjacent. There is a walk from $v$ to $w$ through $r$ in $G[V_0\cup\dots\cup V_i]$. Thus there is a shortest path $P$ from $v$ to $w$ in $G[V_0\cup\dots\cup V_i]$. Since $vw\not\in E(G)$, $P$ has at least one internal vertex. By definition $v$ has a neighbour $x$ in $H$, and $w$ has a neighbour $y$ in $H$. Since $H$ is connected, there is a path $Q$ from $y$ to $x$ in $H$. Choose $x$, $y$ and $Q$ to minimise the length of $Q$. It follows that $(P,w,y,Q,x,v)$ is a chordless cycle on at least four vertices in $G$. This contradiction shows that $X$ is a clique, and $(V_0,\dots,V_n)$ is shadow-complete. 
\end{proof}

\citet{BGKNP07} proved that every tree is nonrepetitively $4$-colourable. \citet{BV07} and \citet{KP-DM08} independently proved that graphs of bounded treewidh have bounded nonrepetitive chromatic number. The best bound is due to \citet{KP-DM08}, who showed that every graph with treewidth $k$ is nonrepetitively $4^k$-colourable. \citet{DEJWW20} showed that the proof of \citet{KP-DM08} actually gives the following  stronger result: 

\begin{thm}[\citep{DEJWW20}]
\label{TreewidthRho}
For every graph $G$ of treewidth $k$, 
$$\pi(G) \leq \rho(G)\le 4^k.$$
\end{thm}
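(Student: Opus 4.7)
Since $\rho$ is monotone under taking subgraphs (a stroll-nonrepetitive colouring of a supergraph restricts to one of a subgraph), and every treewidth-$k$ graph is a subgraph of a chordal graph with clique number at most $k+1$, we may assume $G$ itself is chordal with clique number $\leq k+1$. The bound $\pi(G) \leq \rho(G)$ is immediate from the definitions, so it remains to prove $\rho(G) \leq 4^k$. The plan is to induct on $k$. The base case $k=0$ is trivial since then $G$ is edgeless and $\rho(G) = 1$. For the inductive step, we may assume $G$ is connected (strolls live in a single component), take a BFS-layering $(V_0, V_1, \dots, V_n)$ from an arbitrary root, and invoke \cref{ShadowCompleteChordal} to conclude that this layering is shadow-complete.

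The crux is to show that each $G[V_i]$ has treewidth at most $k-1$; the induction then gives $\rho(G[V_i]) \leq 4^{k-1}$, and \cref{ShadowCompleteRho} yields $\rho(G) \leq 4 \cdot 4^{k-1} = 4^k$. Since $G[V_i]$ is chordal as an induced subgraph, it suffices to bound its clique number by $k$. Let $K \subseteq V_i$ be a clique, with $i \geq 1$. Let $C$ be the connected component of $G[V_i \cup V_{i+1} \cup \cdots]$ containing $K$, and let $S \subseteq V_{i-1}$ be its shadow; by shadow-completeness $S$ is a clique in $G$, and by BFS each $v \in K$ has at least one neighbour in $S$. I will argue that some $s \in S$ is adjacent to every vertex of $K$, so that $K \cup \{s\}$ is a $(|K|+1)$-clique in $G$, forcing $|K| \leq k$.

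To prove this, observe that the family $\{N(s) \cap K : s \in S\}$ is nested. Indeed, if two of its members were incomparable there would exist $s_1, s_2 \in S$ and $v_1, v_2 \in K$ with $s_1 v_1, s_2 v_2 \in E(G)$ but $s_1 v_2, s_2 v_1 \notin E(G)$; then $(s_1, v_1, v_2, s_2)$ would be a chordless 4-cycle, using $s_1 s_2 \in E(G)$ (since $S$ is a clique) and $v_1 v_2 \in E(G)$ (since $K$ is a clique), contradicting the chordality of $G$. Since the family is nested and its union is $K$, its maximum element equals $K$, which provides the desired common neighbour. The main obstacle is precisely this chordal 4-cycle argument establishing the common-neighbour property for cliques in BFS layers; everything else is a routine assembly of lemmas already developed earlier in the survey.
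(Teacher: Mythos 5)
Your proof is correct, and its overall skeleton is the same as the paper's: chordalise so that $G$ has clique number at most $k+1$, take a BFS-layering, get shadow-completeness from \cref{ShadowCompleteChordal}, and close the induction with \cref{ShadowCompleteRho}. Where you diverge is in the one substantive sub-step, bounding the treewidth of each layer $G[V_i]$. The paper argues via minors: contracting $V_0\cup\dots\cup V_{i-1}$ to a single vertex shows that $G[V_i]$ plus a universal vertex is a minor of $G$, hence has treewidth at most $k$, and deleting a universal vertex drops the treewidth by exactly one. You instead stay entirely inside chordal-graph combinatorics: $G[V_i]$ is chordal as an induced subgraph, and its clique number is at most $k$ because any clique $K\subseteq V_i$ has a common neighbour in the shadow of its component — your nestedness argument for $\{N(s)\cap K : s\in S\}$ via the forbidden chordless $4$-cycle (using that both $S$ and $K$ are cliques) is sound, each $v\in K$ does have a BFS-neighbour in $S$, so the maximum of the finite chain is $K$ itself, giving a $(|K|+1)$-clique in $G$ and hence $|K|\leq k$. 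Both routes are valid; the paper's is shorter if one grants the standard facts that treewidth is minor-monotone and that adding a universal vertex increases it by exactly one, whereas yours is more self-contained and elementary, trading those facts for a direct Helly-type argument about cliques in BFS layers of chordal graphs (at the mild cost of invoking shadow-completeness twice, once for the clique bound and once for \cref{ShadowCompleteRho}).
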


\begin{proof}
The proof proceeds by induction on $k$. If $k=0$, then $G$ has
no edges, so assigning the same colour to all the vertices gives a stroll-nonrepetitive  colouring.
Now assume that $k\ge 1$. 
We may assume that $G$ is connected. 
Consider a tree-decomposition of $G$ of width at most $k$. 
By adding edges if necessary, we may assume that every bag of the tree-decomposition is a clique. 
Thus, $G$ is chordal with clique-number at most $k+1$.
Let $(V_0,V_1,\ldots)$ be a BFS-layering of $G$. By \cref{ShadowCompleteChordal}, $(V_0,\dots,V_n)$ is shadow-complete.
Moreover, the subgraph $G[V_i]$ of $G$ induced by each layer $V_i$ has treewidth at most $k-1$. This is clear for $i=0$ (since $k\geq 1$), and for $i\geq 1$ this follows from the fact that the graph $G[V_i]$ plus a universal vertex is a minor of $G$ (contract $V_0 \cup \cdots \cup V_{i-1}$ into a single vertex and remove $V_{i+1}, V_{i+2}, \dots$), and thus has treewidth at most $k$. Since removing a universal vertex decreases the treewidth by exactly one, it follows that $G[V_i]$ has treewidth at most $k-1$. 
By induction, $\rho( G[V_i]) \leq 4^{k-1}$ for each $i\in\{0,1,\dots,n\}$. 
By \cref{ShadowCompleteRho}, $\rho(G)\leq 4\cdot 4^{k-1}=4^k$. 
\end{proof}

\begin{open}[\citep{KP-DM08}]
\label{TreewidthPolynomial}
Is there an upper bound on $\pi(G)$ or $\rho(G)$ that is polynomial in $\tw(G)$? There is a quadratic lower bound,
since $\pi(G)\geq\st(G)$ and \citet{ACKKR04} proved that for each $k\in\NN$ there is a graph $G$ with $\tw(G)=k$ and  $\st(G)=\binom{k+2}{2}$. 
\end{open}

The following corollary of \cref{ProductRho,CompleteProductRho,TreewidthRho,} will be useful later. 
 
\begin{lem}[\citep{DEJWW20}]
\label{ProductGraphPathTreewidth}
For every graph $G$, path $P$ and integer $\ell$, 
$$\rho(G\boxtimes P \boxtimes K_\ell) \le \ell\, 4^{\tw(H)+1}.$$
\end{lem}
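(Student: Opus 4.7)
The plan is to prove this as a direct consequence of the three ingredients already established: \cref{ProductRho} for $\rho$ of a strong product, \cref{ProductSigma} for $\sigma$ of a strong product, and \cref{TreewidthRho} bounding $\rho$ by $4^{\tw}$. Associativity of the strong product lets me regroup the factors to isolate $G$ on one side, so I can apply the $\rho$-product bound just once, with the second factor $P \boxtimes K_\ell$, and then control its $\sigma$-value using the other two results.

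First I would write $G \boxtimes P \boxtimes K_\ell = G \boxtimes (P \boxtimes K_\ell)$ and apply \cref{ProductRho} to obtain
$$\rho(G \boxtimes P \boxtimes K_\ell) \le \rho(G) \cdot \sigma(P \boxtimes K_\ell).$$
Next I would bound each factor. By \cref{TreewidthRho}, $\rho(G) \le 4^{\tw(G)}$. For the second factor, \cref{ProductSigma} gives $\sigma(P \boxtimes K_\ell) \le \sigma(P) \cdot \sigma(K_\ell)$, and then \cref{PathSigma} yields $\sigma(P) \le 4$, while $\sigma(K_\ell) = \ell$ (as noted immediately before \cref{CompleteProductRho}, since vertices of $K_\ell$ are pairwise at distance $1$, forcing all $\ell$ colours to be distinct, and the trivial colouring realising this is walk-nonrepetitive). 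Multiplying these bounds gives
$$\rho(G \boxtimes P \boxtimes K_\ell) \le 4^{\tw(G)} \cdot 4 \cdot \ell = \ell \cdot 4^{\tw(G)+1},$$
as required.

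There is no real obstacle here; the result is a one-line corollary once the three earlier lemmas are in hand. The only thing worth being careful about is that \cref{ProductRho} must be applied with $G$ in the $\rho$-slot (left factor) and $P \boxtimes K_\ell$ in the $\sigma$-slot (right factor), since there is no corresponding bound of the form $\rho \le \sigma \cdot \rho$; this is precisely why regrouping the product is needed rather than, say, peeling off $K_\ell$ first via \cref{CompleteProductRho} (which would also work, giving $\rho(G \boxtimes P \boxtimes K_\ell) \le \ell \cdot \rho(G \boxtimes P) \le \ell \cdot \rho(G) \cdot \sigma(P) \le \ell \cdot 4^{\tw(G)} \cdot 4$, yielding the same bound by an alternative route).
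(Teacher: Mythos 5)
Your proof is correct and matches the paper's intent: the lemma is stated there as an immediate corollary of the product lemmas and the treewidth bound, and your chain (ProductRho with $P\boxtimes K_\ell$ in the $\sigma$-slot, then ProductSigma, PathSigma, $\sigma(K_\ell)=\ell$, and TreewidthRho) is just a regrouping of the paper's cited route via CompleteProductRho and PathProductRho, which you yourself note as the alternative. Only remark: the $\tw(H)$ in the statement is a typo for $\tw(G)$, which your argument correctly treats as such.
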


\subsection{Pathwidth}
\label{Pathwidth}

\citet{DJKW16} answered \cref{TreewidthPolynomial} in the affirmative for $\pi$ and pathwidth. The proof works for $\rho$. 

\begin{thm}[\citep{DJKW16}]
\label{PathwidthPi}
For every graph $G$ with pathwidth $k$,
$$\pi(G)\leq \rho(G) \leq 2k^2+6k+1.$$
\end{thm}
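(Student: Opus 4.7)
My plan is to reduce to interval graphs, assign each vertex a ``track'' coordinate of size $k+1$, and combine this with a second coordinate derived from a stroll-nonrepetitive coloring of an auxiliary path-like structure on the bag indices. Since $2k^2+6k+1 = 2(k+1)(k+2)-3$, the target bound is compatible with a product of a $(k+1)$-sized track coordinate with a second coordinate of size roughly $2(k+2)$, up to a small saving from merging. This matches \cref{ShadowCompleteRho,ProductRho} in spirit, but the additional structure coming from pathwidth (not just treewidth) is what allows a polynomial rather than exponential bound.

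First I would take a smooth path-decomposition $(B_1,\dots,B_n)$ of $G$ of width $k$ in which $|B_i|=k+1$ and $|B_i\triangle B_{i+1}|\le 2$ throughout. This makes $G$ a subgraph of the interval graph $H$ on intervals $I_v=[f(v),l(v)]$, where $f(v):=\min\{i:v\in B_i\}$ and $l(v):=\max\{i:v\in B_i\}$, with $\omega(H)=k+1$. A standard left-to-right sweep assigns each vertex a track $p(v)\in\{1,\dots,k+1\}$ such that vertices in a common bag receive distinct tracks. Since $H$ is chordal, \cref{ShadowCompleteChordal} already gives that BFS-layerings of $H$ are shadow-complete, and applying \cref{ShadowCompleteRho} inductively recovers the $4^k$ bound from \cref{TreewidthRho}; to obtain a polynomial bound one must exploit pathwidth more directly.

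Next I would build the second coordinate. For each vertex $v$ I record the bag index $f(v)$. A path $W=(u_1,\dots,u_{2t})$ in $G$ does not project to a walk on the path $P_n=(1,\dots,n)$, because consecutive $u_i,u_{i+1}$ share a bag but their $f$-values can jump by more than one. However, the track labels force $f(u_i)\ne f(u_{i+1})$ unless $u_i=u_{i+1}$ (which doesn't happen in a path), and in fact the $f$-values together with the track labels completely determine the vertices. The second coordinate $c(v)$ should be a coloring of $\{1,\dots,n\}$ that is nonrepetitive on every sequence realizable as $(f(u_1),\dots,f(u_{2t}))$ for a $W$ in which, additionally, $p(u_i)=p(u_{t+i})$ and $c(f(u_i))=c(f(u_{t+i}))$. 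The constraint from tracks means at each time step $f$ moves through the (at most $k+1$) intervals active at some shared bag, which yields a lazy-walk-like structure on a bounded set of indices at each step, and I would color the appropriate auxiliary structure with $2(k+2)$ colors using \cref{PathSigma,LazyWalks,LazyStroll}.

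The final coloring is $\phi(v):=(p(v),c(f(v)))$, which uses at most $(k+1)\cdot 2(k+2)=2k^2+6k+4$ colors; a small saving of $3$ (from the fact that the ``endpoint'' tracks of any bag can share some of the $c$-values without creating conflicts, e.g.\ boundary corrections of the shadow-completeness argument in \cref{ShadowCompleteRho}) should bring this to $2k^2+6k+1$. Verification that $\phi$ is stroll-nonrepetitive follows the template of \cref{ShadowCompleteRho,ProductRho}: given a $\phi$-repetitive lazy stroll, the track-equality from $p(u_i)=p(u_{t+i})$ together with the $c$-repetitiveness forces $f(u_i)=f(u_{t+i})$ for some $i$, whence $u_i=u_{t+i}$ because $(p,f)$ determines the vertex.

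\textbf{Main obstacle.} The principal difficulty is that paths in $G$ do \emph{not} project to walks on the path underlying the decomposition: intervals can skip many indices in one step. Handling this without losing a factor of $\Delta$ or $k^2$ in the second coordinate is delicate, and is precisely where pathwidth (as opposed to treewidth) is used: the linear ordering of bags provides a one-dimensional parameterisation that, combined with the track labels and the stroll/lazy-walk machinery of \cref{LazyStroll,LazyWalks}, compresses the second coordinate to $O(k)$ rather than the $O(k^2)$ needed for arbitrary jumps. Getting the constants exactly right to yield $2k^2+6k+1$ rather than a slightly larger expression is the main bookkeeping challenge.
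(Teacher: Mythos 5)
Your proposal has a genuine gap at exactly the point you flag as the ``main obstacle'', and the flag does not resolve it. A single product colouring $\phi(v)=(p(v),c(f(v)))$ cannot work as described: an interval graph with clique number $k+1$ is \emph{not} a subgraph of $P\boxtimes K_{k+1}$ (think of a star $K_{1,n}$, clique number $2$, whose centre is a long interval meeting $n$ short ones), so the projection of a path of $G$ onto bag indices is not a walk, not a lazy walk, and not confined to bounded jumps — one long interval is active in arbitrarily many bags, so from $f(u_i)$ the next index $f(u_{i+1})$ can land anywhere in the span of $u_i$'s interval. The machinery of \cref{PathSigma,LazyWalks,LazyStroll} only controls repetitions along sequences whose consecutive entries touch in the path $P_n$, and you give no construction of an ``appropriate auxiliary structure'' with $2(k+2)$ colours that is nonrepetitive on these arbitrary-jump sequences; indeed no colouring of $\{1,\dots,n\}$ with $O(k)$ colours can by itself force $f(u_i)=f(u_{t+i})$ under such jumps. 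The claimed saving of $3$ colours to land exactly on $2k^2+6k+1$ is likewise unsupported. So the plan, as written, does not yield a proof.

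The paper resolves the long-interval problem in a structurally different way, and the shape of the bound reveals it: $2k^2+6k+1$ arises from the \emph{additive} recursion $f(k)=f(k-1)+4(k+1)$ with $f(0)=1$, not from a $(k+1)\times O(k)$ product of coordinates. Concretely, \cref{Attack} selects pairwise disjoint bags $X_1,\dots,X_m$ of a path-decomposition and lets $B_i$ be the vertices living strictly between consecutive selected bags; each $G[B_i]$ has pathwidth at most $k-1$, distinct $B_i$'s are non-adjacent, and after deleting the $B_i$'s and adding a clique on each $N_G(B_i)$ the remaining graph genuinely embeds in $P_m\boxtimes K_{k+1}$, hence is stroll-nonrepetitively $4(k+1)$-colourable by \cref{ProductRho} (or \cref{PathProductRho}). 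The union of the $B_i$'s is coloured recursively with a \emph{disjoint} palette, and the two parts are combined via the two-layer shadow-complete argument of \cref{ShadowCompleteRho}. If you want to salvage your approach, you would need to prove a lemma replacing your second coordinate — i.e.\ a bounded colouring of bag indices that is nonrepetitive against the unbounded-jump projections — and there is no evident route to that; following the paper, the long intervals are instead absorbed into the $P_m\boxtimes K_{k+1}$ layer, where the product machinery legitimately applies.
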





The proof of \cref{PathwidthPi} depends on the following helpful way to think about graphs of bounded pathwidth\footnote{\citet{DJKW16} presented \cref{Attack} in terms of the lexicographical product $P_m\cdot K_{k+1}$, which equals $P_m\boxtimes K_{k+1}$.}.

\begin{lem}[\citep{DJKW16}]
  \label{Attack}
  Every graph $G$ with pathwidth $k$ contains pairwise disjoint sets
  $B_1,\dots,B_m$ of vertices, such that: 
  \begin{itemize}
  \item no two vertices in distinct $B_i$ are adjacent,
  \item $G[B_i]$ has pathwidth at most $k-1$ for each $i\in\{1,\dots,m\}$, and
  \item if $H$ is the graph obtained from $G$ by deleting $B_i$ and
    adding a clique on $N_G(B_i)$ for each $i\in\{1,\dots,m\}$, then $H$ is isomorphic to a subgraph of $P_m\boxtimes K_{k+1}$.
  \end{itemize}
\end{lem}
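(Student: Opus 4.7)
I would start from a path-decomposition $(X_1,\dots,X_m)$ of $G$ of width $k$. After the standard smoothing moves one may assume $|X_i|=k{+}1$ for every $i$ and that consecutive bags differ by a single vertex swap. This equips every vertex $v$ with a bag-range $[a_v,b_v]$ and admits a consistent \emph{track} labelling $\rho\colon V(G)\to\{1,\dots,k{+}1\}$ that is constant on $v$'s range and injective on each bag; the track will play the role of the $K_{k+1}$-coordinate when embedding $H$ into the product.

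Next I would define $B_i:=\{v:a_v=b_v=i\}$, the vertices exclusive to $X_i$. These sets are automatically pairwise disjoint. Since every neighbour of a vertex in $B_i$ must share a bag with it (so must lie in $X_i$), while $X_i$ contains no vertex of $B_j$ for $j\neq i$, there are no edges between distinct $B_i$'s. Because $B_i\subseteq X_i$ has at most $k{+}1$ vertices, $G[B_i]$ has pathwidth at most $|B_i|-1$; if $|B_i|=k{+}1$ (the degenerate case where $X_i$ is isolated from the rest of the decomposition), I would simply move one vertex of $B_i$ into $V(H)$, guaranteeing $|B_i|\leq k$ and hence $G[B_i]$ of pathwidth at most $k{-}1$.

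Form $H$ as in the statement. It remains to exhibit an isomorphism from $H$ to a subgraph of $P_m\boxtimes K_{k+1}$, which I would do by assigning each $v\in V(H)$ to the cell $\iota(v):=(\pi_1(v),\rho(v))$ for a carefully chosen $\pi_1(v)\in[a_v,b_v]$. Injectivity is automatic: two vertices with equal $\rho$ lie on the same track, hence have disjoint bag-ranges and so distinct $\pi_1$. For the edge condition $|\pi_1(u)-\pi_1(v)|\leq 1$, the added cliques on $N_G(B_i)\subseteq X_i\setminus B_i$ mean that the vertices of $X_i\setminus B_i$ must all sit in a two-column window about column $i$, and the surviving $G$-edges of $H$ impose analogous constraints at every bag; I would build $\pi_1$ by a left-to-right sweep across the bags, using the room liberated by the $B_i$-removals to slide each persistent (multi-bag) vertex into the window determined by its neighbours.

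The main obstacle is precisely this last step. A naive choice such as $\pi_1(v)=a_v$ does not work, because a long-range vertex can share a bag with a much later-introduced vertex, producing an edge that spans more than one column. The delicate combinatorial content of the lemma is that the bag-exclusive removals together with a careful, perhaps inductive, construction of $\pi_1$ always create enough flexibility to place every persistent vertex within a single column of each of its neighbours simultaneously. Once this column assignment is in place, the map $(\pi_1,\rho)$ realises $H$ as a subgraph of $P_m\boxtimes K_{k+1}$, completing the proof.
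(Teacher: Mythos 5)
Your definition of the sets $B_i$ is the wrong one, and the gap you flag at the end (``the main obstacle is precisely this last step'') is not a delicate point that a cleverer column assignment $\pi_1$ can fix---it is unfixable with your choice of $B_i$. Taking $B_i$ to be only the bag-exclusive vertices means $H$ keeps every persistent vertex, and such a vertex can have arbitrarily large degree in $H$, whereas $P_m\boxtimes K_{k+1}$ has maximum degree $3(k+1)-1$. Concretely, let $G$ be the fan: a path $v_1v_2\dots v_n$ plus a vertex $c$ adjacent to every $v_j$. This has pathwidth $2$, with smooth bags $X_i=\{c,v_i,v_{i+1}\}$. The only bag-exclusive vertices are $v_1$ and $v_n$, so your $H$ is the fan on $v_2,\dots,v_{n-1}$ with centre $c$ of degree $n-2$, which is not a subgraph of $P_m\boxtimes K_3$ for any $m$ once $n$ is large. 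So no choice of $\pi_1$ (or of tracks $\rho$) can complete your argument; the deletion step itself has to remove much more of the graph.

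The paper's proof chooses the deleted sets so that what survives is forced to be small and ``locally consecutive''. It picks a subsequence of bags $X_1,\dots,X_m$ with $X_1$ the first bag and $X_{i+1}$ the \emph{first bag disjoint from} $X_i$ (so the $X_i$ are pairwise disjoint), and defines $B_i$ to be all vertices appearing only in bags strictly between $X_i$ and $X_{i+1}$ (or strictly after $X_m$). Every bag strictly between $X_i$ and $X_{i+1}$ still meets $X_i$, so deleting that intersection shows $G[B_i]$ has pathwidth at most $k-1$; the bags $X_i$ separate distinct $B_j$'s, so there are no edges between them; and every surviving vertex lies in some $X_i$, with $N_G(B_i)\subseteq X_i\cup X_{i+1}$, so all edges of $H$ (including the added cliques) live inside consecutive unions $X_i\cup X_{i+1}$. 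Mapping $X_i$ to column $i$ with distinct $K_{k+1}$-coordinates then embeds $H$ into $P_m\boxtimes K_{k+1}$ immediately---no sweep or balancing argument is needed. In the fan example this collapses $H$ to the single triangle $\{c,v_1,v_2\}$, which is exactly the kind of drastic shrinking your construction cannot achieve.
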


\begin{proof}
  Consider a path decomposition $\mathcal{D}$ of $G$ with width $k$.
  Let $X_1,\dots,X_m$ be the set of bags in $\mathcal{D}$, such that
  $X_1$ is the first bag in $\mathcal{D}$, and for each $i\geq2$, the
  bag $X_i$ is the first bag in $\mathcal{D}$ that is disjoint from
  $X_{i-1}$. Thus $X_1,\dots,X_m$ are pairwise disjoint.  For
  $i\in[1,m]$, let $B_i$ be the set of vertices that only appear in
  bags strictly between $X_i$ and $X_{i+1}$ (or strictly after $X_m$
  if $i=m$). By construction, each such bag intersects $X_i$. Hence
  $G[B_i]$ has pathwidth at most $k-1$. Since each $X_i$ separates
  $B_{i-1}$ and $B_{i+1}$ (for $i\neq m$), no two vertices in distinct
  $B_i$ are adjacent. Moreover, the neighbourhood of $B_i$ is
  contained in $X_i\cup X_{i+1}$ (or $X_i$ if $i=m$).  Hence the graph
  $H$ (defined above) has vertex set $X_1\cup\cdots\cup X_{m}$ where
  $X_i\cup X_{i+1}$ is a clique for each $i\in[1,m-1]$.  Since
  $|X_i|\leq k+1$, the graph $H$ is isomorphic to a subgraph of $P_{m}\boxtimes K_{k+1}$.
\end{proof}

\begin{proof}[Proof of \cref{PathwidthPi}] 
  We proceed by induction on $k\geq 0$.  Every graph with pathwidth 0
  is edgeless, and is thus nonrepetitively 1-colourable, as desired.
  Now assume that $G$ is a graph with pathwidth $k\geq 1$.  Let
  $B_1,\dots,B_m$ be the sets that satisfy \cref{Attack}. Let $X:= B_1\cup\dots\cup B_m$. Since no two vertices in distinct $B_i$ are adjacent, $\pw(G[X])\leq k-1$. By induction, $\rho(G[X]) \leq 2(k-1)^2+6(k-1)+1$. Let $G'$ be the graph obtained from $G$ by adding a clique on $N_G(B_i)$ for each $i\in\{1,\dots,m\}$. By \cref{Attack}, $G'[ V(G)\setminus X ]$ is isomorphic to $P_{m} \boxtimes K_{k+1}$, which is stroll-nonrepetitively $4(k+1)$-colourable by \cref{ProductRho}. By construction, $(V(G)\setminus X,X)$ is a shadow-complete layering of $G'$. By the proof of \cref{ShadowCompleteRho} (using distinct sets of colours for $X$ and $V(G)\setminus X$), we have $\pi(G) \leq \rho(G) \leq \rho(G') \leq 4(k+1)+ 2(k-1)^2+6(k-1)-4=  2k^2+6k-4$. 
\end{proof}

\begin{open}[\citep{DJKW16}] 
What is the maximum nonrepetitive chromatic number of graphs with pathwidth $k$? The best knwon bounds are $\Omega(k)$ and $O(k^2)$. 	
\end{open}

Since graphs of pathwidth $k$ are $k$-degenerate, \cref{SigmaPiDegen,PathwidthPi} imply the following bounds on the walk-nonrepetitive chromatic number of graphs with given pathwidth.

\begin{cor}
	For every graph $G$ with pathwidth $k$, 
	$$\Delta(G)+1\leq \sigma(G) \leq (2k^2+6k+1)(k\,\Delta(G)+1).$$
\end{cor}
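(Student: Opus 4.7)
The statement is an immediate corollary combining three results already available in the paper, so the plan is to stitch them together rather than prove anything new. First I would handle the lower bound: by \cref{SigmaBounds}, every graph $G$ satisfies $\sigma(G) \geq \Delta(G)+1$, which gives the left inequality with no dependence on pathwidth at all.

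For the upper bound I would start by recalling that every graph $G$ with pathwidth at most $k$ is $k$-degenerate. This is a standard fact: taking any path decomposition of width $k$ and repeatedly removing a vertex that appears only in the last bag produces an elimination ordering in which each vertex has at most $k$ later neighbours. I would state this in one sentence rather than prove it.

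Next, apply \cref{SigmaPiDegen} with $d=k$ to get
\begin{equation*}
\sigma(G) \leq \rho(G)\,(k\,\Delta(G)+1).
\end{equation*}
Then plug in the bound $\rho(G) \leq 2k^2+6k+1$ from \cref{PathwidthPi}, which gives the claimed upper bound $\sigma(G) \leq (2k^2+6k+1)(k\,\Delta(G)+1)$.

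There is no substantive obstacle here; the entire content of the corollary is the observation that \cref{SigmaPiDegen} and \cref{PathwidthPi} combine cleanly once one remarks that pathwidth-$k$ graphs are $k$-degenerate. The only minor care needed is to state the degeneracy fact explicitly, since it is used but not proved earlier in the section.
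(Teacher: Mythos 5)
Your proposal is correct and is exactly the paper's argument: the lower bound from \cref{SigmaBounds}, the observation that pathwidth-$k$ graphs are $k$-degenerate, and the combination of \cref{SigmaPiDegen} (with $d=k$) and \cref{PathwidthPi}. Nothing further is needed.
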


\subsection{Treewidth and Degree}
\label{TreewidthDegree}

\citet{BW08} proved the following polynomial bound on $\pi$ for graphs of bounded treewidth and maximum degree, thus solving \cref{TreewidthPolynomial} for bounded degree graphs. The same proof works for $\rho$. 

\begin{lem}[\citep{BW08}]
\label{TreewidthDegreeRhoPi}
For every graph $G$ with treewidth $k$ and maximum degree $\Delta$, 
\begin{align*}
\pi(G) \leq \rho(G) \leq 10(k+1)(\tfrac{7}{2}\Delta-1).
\end{align*}
\end{lem}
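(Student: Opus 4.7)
The plan is to reduce $G$ to a spanning subgraph of a strong product $T \boxtimes K_w$ where $T$ is a tree and $w$ is linear in both $k$ and $\Delta$, and then chain the product-based bounds already developed in the paper. Recall that a \emph{tree-partition} of $G$ is a partition $(B_t)_{t \in V(T)}$ of $V(G)$ indexed by the nodes of a tree $T$ such that every edge of $G$ lies inside a single part or joins two parts $B_s, B_t$ with $st \in E(T)$; its width is $\max_t |B_t|$, and such a tree-partition of width $w$ exists precisely when $G$ embeds as a spanning subgraph of $T \boxtimes K_w$.

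The first (and main) step is to invoke a structural theorem of Ding--Oporowski type (subsequently refined by Wood): every graph with $\tw(G) \leq k$ and $\Delta(G) \leq \Delta$ admits a tree-partition of width at most $\tfrac{5}{2}(k+1)(\tfrac{7\Delta}{2}-1)$. This is the nontrivial input; the proof processes a tree decomposition of width $k$ and uses the degree bound to bundle vertices into parts whose size scales linearly in $\Delta$. For this proposal I take it as a black box. While sharper forms such as $\tfrac{5}{2}(k+1)(\Delta-1)$ are known in the literature, the form stated here is what naturally emerges from a self-contained argument and delivers exactly the target constants.

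The remainder is a short composition. Given the tree-partition, $G$ is a subgraph of $T \boxtimes K_w$ with $w = \tfrac{5}{2}(k+1)(\tfrac{7\Delta}{2}-1)$. Stroll-nonrepetitive colouring is monotone under subgraphs: any stroll in $G$ is also a stroll in the ambient product graph, so restricting a stroll-nonrepetitive colouring of $T \boxtimes K_w$ to $V(G)$ yields a stroll-nonrepetitive colouring of $G$. Hence $\rho(G) \leq \rho(T \boxtimes K_w)$. By \cref{CompleteProductRho}, $\rho(T \boxtimes K_w) \leq w \cdot \rho(T)$, and by the tree bound at the start of \cref{Trees}, $\rho(T) \leq 4$. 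Chaining these inequalities gives $\rho(G) \leq 4w = 10(k+1)(\tfrac{7\Delta}{2}-1)$, as required. Since $\pi(G) \leq \rho(G)$ always, both claimed bounds follow.

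The main obstacle is the tree-partition-width theorem itself, whose proof is intricate (and whose exact constant we need to track carefully so that $4w$ lands at $10(k+1)(\tfrac{7\Delta}{2}-1)$). Once that structural input is in hand, the colouring step is essentially a three-line deduction from \cref{CompleteProductRho} together with the $\rho \leq 4$ bound on trees, with the only subtlety being the (easy) observation that $\rho$ is monotone under taking subgraphs.
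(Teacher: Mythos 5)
Your proof is correct and follows essentially the same route as the paper: the paper also invokes Wood's refinement of the Ding--Oporowski tree-partition theorem to embed $G$ in $T\boxtimes K_\ell$ with $\ell=\floor{\tfrac{5}{2}(k+1)(\tfrac{7}{2}\Delta-1)}$, and then applies \cref{ProductRho} (equivalently \cref{CompleteProductRho}) together with $\rho(T)\leq 4$ to conclude $\rho(G)\leq 4\ell$. No gaps.
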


\begin{proof}
Let $\ell:= \floor{\frac{5}{2}(k+1)(\frac{7}{2}\Delta-1)}$. \citet{Wood09} proved\footnote{The proof is a minor improvement to a similar result by an anonymous referee of the paper by \citet{DO95}. The result in \citep{Wood09,DO95} is presented in terms of tree-partitions, which are easily seen to be equivalent to strong products of a tree and complete graph.} that $G$ is a subgraph of $T\boxtimes K_\ell$ for some tree $T$ with maximum degree at most $\ell\Delta$. By \cref{TreewidthRho}, $\rho(T)\leq 4$. Of course, $\sigma(K_\ell)=\ell$. 
By \cref{ProductRho}, 
\begin{equation*}
\rho(G) \leq \rho(T\boxtimes K_\ell) \leq \rho(T) \, \sigma(K_\ell) \leq 4\ell \leq 
10(k+1)(\tfrac{7}{2}\Delta-1).\qedhere
\end{equation*}
\end{proof}

Now consider walk-nonrepetitive colourings of graphs with given treewidth and given maximum degree. Every graph with treewidth $k$ is $k$-degenerate. Thus \cref{SigmaPiDegen,TreewidthRho} imply that graphs with bounded treewidth have $\Theta(\Delta)$ walk-nonrepetitive chromatic number. 

\begin{cor}
	For every graph $G$ with treewidth $k$ and maximum degree $\Delta$, 
	$$\Delta+1 \leq \sigma(G) \leq 4^k ( k\Delta+1 ).$$
\end{cor}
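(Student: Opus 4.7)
The plan is to derive both inequalities by direct appeal to results already established in the excerpt; no new combinatorial ideas are needed.

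For the lower bound $\Delta+1 \le \sigma(G)$, I would simply invoke \cref{SigmaBounds}, which gives
$$\sigma(G) \ge \max\{\rho(G), \Delta(G)+1\} \ge \Delta(G)+1.$$
This is the trivial direction and requires no case analysis.

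For the upper bound $\sigma(G) \le 4^k(k\Delta+1)$, the key observation is that every graph of treewidth $k$ is $k$-degenerate (a standard fact, following from the greedy argument on any tree-decomposition of width $k$: repeatedly remove a vertex from a leaf bag that is not in its parent). Hence two earlier results apply simultaneously. First, \cref{TreewidthRho} yields $\rho(G) \le 4^k$. Second, \cref{SigmaPiDegen} with $d = k$ yields $\sigma(G) \le \rho(G)\,(k\,\Delta(G)+1)$. Combining these two bounds gives
$$\sigma(G) \le \rho(G)\,(k\Delta+1) \le 4^k(k\Delta+1),$$
as required.

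The only step that needs even a line of justification is the $k$-degeneracy of treewidth-$k$ graphs, and even this is standard enough that a parenthetical reference suffices. There is no real obstacle here; the corollary is essentially a corollary of \cref{SigmaPiDegen} and \cref{TreewidthRho}, packaged to make explicit the dependence on $k$ and $\Delta$.
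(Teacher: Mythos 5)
Your proposal is correct and follows exactly the paper's route: the lower bound comes from \cref{SigmaBounds}, and the upper bound combines the $k$-degeneracy of treewidth-$k$ graphs with \cref{SigmaPiDegen} (taking $d=k$) and \cref{TreewidthRho}. Nothing further is needed.
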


\cref{SigmaPiDegen,SigmaBounds,TreewidthDegreeRhoPi} imply the following polynomial bounds on $\sigma$:

\begin{lem}[\citep{BW08}]
	\label{TreewidthDegreeSigma}
	For every graph $G$ with treewidth $k$ and maximum degree $\Delta$, 
	\begin{align*}
	\Delta+1 \leq \sigma(G) \leq \rho(G)\, \chi(G^2) \leq  10(k+1)(\tfrac72 \Delta-1) \,\min\{ k\Delta+1,\Delta^2+1\}.
	\end{align*}
\end{lem}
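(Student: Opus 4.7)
The plan is to chain together three previously established results; no genuinely new idea is required. The lower bound $\Delta+1\le\sigma(G)$ is immediate from \cref{SigmaBounds}, which already shows $\Delta(G)+1\le\sigma(G)$, and the first upper bound $\sigma(G)\le\rho(G)\,\chi(G^2)$ is also part of the chain of inequalities there, arising by taking the product of a stroll-nonrepetitive $\rho(G)$-colouring of $G$ with a proper $\chi(G^2)$-colouring of $G^2$.

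For the concrete final upper bound, I would bound the two factors separately. The factor $\rho(G)$ is directly handled by \cref{TreewidthDegreeRhoPi}, which gives $\rho(G)\le 10(k+1)(\tfrac72\Delta-1)$. For the factor $\chi(G^2)$, two bounds combine to give $\chi(G^2)\le \min\{k\Delta+1,\Delta^2+1\}$: the bound $\chi(G^2)\le\Delta^2+1$ is the trivial greedy bound using $\Delta(G^2)\le\Delta^2$, while the bound $\chi(G^2)\le k\Delta+1$ uses that every graph of treewidth $k$ is $k$-degenerate (standard from any tree-decomposition of width $k$) together with the observation stated just before \cref{SigmaPiDegen} that the square of a $d$-degenerate graph with maximum degree $\Delta$ is $(d\Delta+1)$-colourable. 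Multiplying the resulting bounds for $\rho(G)$ and $\chi(G^2)$ gives the stated inequality.

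There is no real obstacle. The lemma is essentially a corollary of \cref{SigmaBounds}, \cref{SigmaPiDegen}, and \cref{TreewidthDegreeRhoPi}; the only point one might wish to spell out is that treewidth-$k$ graphs are $k$-degenerate, which is classical.
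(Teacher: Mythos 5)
Your proposal is correct and matches the paper's own derivation, which obtains the lemma by combining \cref{SigmaBounds}, the degeneracy observation behind \cref{SigmaPiDegen} (treewidth-$k$ graphs are $k$-degenerate, so $\chi(G^2)\leq k\Delta+1$), and \cref{TreewidthDegreeRhoPi} for $\rho(G)\leq 10(k+1)(\tfrac72\Delta-1)$. Nothing further is needed.
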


These results lead to the following results for strong products. 
\cref{ProductRho,TreewidthDegreeSigma} imply:

\begin{cor}
\label{GraphTreewidthRho}
For every graph $G$ and for every graph $H$ with treewidth $k$ and maximum degree $\Delta$,  
$$\rho(G \boxtimes H)\leq \rho(G) \; 10(k+1)(\tfrac72 \Delta-1) \,\min\{ k\Delta+1,\Delta^2+1\}.$$
\end{cor}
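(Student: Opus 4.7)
The plan is to combine the two cited results in the order they are given. By \cref{ProductRho}, for any two graphs $G$ and $H$ we have $\rho(G\boxtimes H)\le \rho(G)\cdot \sigma(H)$, so it suffices to bound $\sigma(H)$ by the stated quantity whenever $H$ has treewidth $k$ and maximum degree $\Delta$.

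The second step is to apply \cref{TreewidthDegreeSigma} to $H$. That lemma already yields exactly
\[
\sigma(H) \le 10(k+1)\bigl(\tfrac72\Delta-1\bigr)\,\min\{k\Delta+1,\Delta^2+1\},
\]
so multiplying by $\rho(G)$ completes the bound. No new combinatorial argument is required, and the multiplication is entirely mechanical; this is why the statement is presented as a corollary rather than a theorem.

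The only thing worth double-checking is that \cref{ProductRho} applies in the generality needed here, that is, with no hypothesis on $G$ beyond being a graph and with $H$ supplying the walk-nonrepetitive factor. Looking at the statement of \cref{ProductRho}, the bound $\rho(G\boxtimes H)\le \rho(G)\,\sigma(H)$ is asymmetric but holds for \emph{all} graphs $G$ and $H$, so one may indeed feed the walk-nonrepetitive bound into the $H$ slot. There is no real obstacle; the proof is a two-line chain of inequalities.
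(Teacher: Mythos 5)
Your proposal is correct and matches the paper's own derivation exactly: the corollary is stated as following from \cref{ProductRho} together with \cref{TreewidthDegreeSigma}, i.e.\ $\rho(G\boxtimes H)\le\rho(G)\,\sigma(H)$ followed by the bound $\sigma(H)\le 10(k+1)(\tfrac72\Delta-1)\min\{k\Delta+1,\Delta^2+1\}$. Nothing further is needed.
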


\cref{TreewidthRho,GraphTreewidthRho} imply:

\begin{cor}
For every graph $G$ with treewidth $\ell$ and for every graph $H$ with treewidth $k$ and maximum degree $\Delta$,  
$$\rho(G \boxtimes H)\leq 4^\ell \; 10(k+1)(\tfrac72 \Delta-1) \,\min\{ k\Delta+1,\Delta^2+1\}.$$
\end{cor}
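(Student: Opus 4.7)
The plan is straightforward: this corollary is an immediate consequence of combining the two previously established results, namely \cref{TreewidthRho} (which bounds $\rho$ in terms of treewidth alone) and \cref{GraphTreewidthRho} (which bounds $\rho$ of a strong product in terms of $\rho$ of one factor and structural parameters of the other).

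First I would invoke \cref{TreewidthRho} applied to $G$. Since $G$ has treewidth $\ell$, this yields $\rho(G) \leq 4^\ell$. Next I would invoke \cref{GraphTreewidthRho} with the given graphs $G$ and $H$, noting that $H$ has the requisite treewidth $k$ and maximum degree $\Delta$. This gives
\[
\rho(G \boxtimes H) \leq \rho(G) \cdot 10(k+1)(\tfrac72\Delta - 1) \, \min\{k\Delta + 1, \Delta^2 + 1\}.
\]
Substituting the bound $\rho(G) \leq 4^\ell$ into the right-hand side completes the proof.

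There is really no obstacle here: the corollary is a pure concatenation of two already-proved inequalities, and no new combinatorial argument is required. The only thing worth remarking on is that the bound is asymmetric in the two factors, reflecting the asymmetry already present in \cref{ProductRho} (which drives \cref{GraphTreewidthRho}): one factor contributes its stroll-nonrepetitive chromatic number while the other contributes its walk-nonrepetitive chromatic number, and the walk-nonrepetitive bound requires control of both treewidth and maximum degree via \cref{TreewidthDegreeSigma}. If desired, one could also state the symmetric version obtained by interchanging the roles of $G$ and $H$ (provided $G$ has bounded maximum degree as well), but as stated the proof is simply the two-line substitution above.
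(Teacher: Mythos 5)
Your proposal is correct and matches the paper exactly: the corollary is stated there as an immediate consequence of \cref{TreewidthRho} (giving $\rho(G)\leq 4^\ell$) plugged into \cref{GraphTreewidthRho}, which is precisely your two-line substitution. Nothing further is needed.
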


\cref{DeltaSquared,DegreeLowerBound} give tight bounds (up to a logarithmic factor) on the nonrepetitive list chromatic number of graphs with given maximum degree. However, the nonrepetitive list chromatic number of graphs with given maximum degree and bounded treewidth is wide open. 

\begin{open}
Are graphs of bounded treewidth and maximum degree $\Delta$
nonrepetitively $O(\Delta^{2-\epsilon})$-choosable, for some fixed $\epsilon>0$?
By \cref{TreeChooseDelta}, the answer is `yes' for trees. The treewidth 2 case is open. 
\end{open}

\subsection{Outerplanar Graphs}
\label{Outerplanar}

A graph is \emph{outerplanar} if it has a drawing in the plane with no crossing and with all the vertices on the boundary of a single face. Here we consider nonrepetitive colourings of outerplanar graphs. First, note the  following folklore result:

\begin{lem}
\label{OuterplanarLayering}
Every edge-maximal outerplanar graph has a shadow-complete layering $(V_0,V_1,\dots,V_n)$ such that for each $i\in\{0,1,\dots,n\}$  each connected component of $G[V_i]$ is a path. 
\end{lem}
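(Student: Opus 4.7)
My plan is to use a breadth-first search layering from any vertex on the outer face, and then to exploit the planar structure of an edge-maximal outerplanar graph. Assume first that $G$ is $2$-connected; the general case can be deduced by inducting on the block tree, since each block of $G$ is itself a $2$-connected edge-maximal outerplanar graph. For a $2$-connected edge-maximal outerplanar graph on $n \ge 3$ vertices, the outer face is bounded by a Hamilton cycle $C = v_0v_1\cdots v_{n-1}$ and every internal face is a triangle. In particular $G$ is chordal: any cycle of length $\ge 4$ would enclose an internal face of length $\ge 4$, contradicting the triangulation. Set $r := v_0$ and $V_i := \{v : \dist_G(r,v) = i\}$, and handle the cases $n \le 2$ separately as trivial.

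Shadow-completeness of the layering $(V_0,V_1,\dots)$ is then immediate from \cref{ShadowCompleteChordal}. The substantive work is to show that each component $H$ of $G[V_i]$ is a path. Write $V(H) = \{v_{a_1},\dots,v_{a_t}\}$ with $a_1 < \cdots < a_t$. I plan to establish two properties: (i) the only edges of $G$ with both endpoints in $H$ join cyclically consecutive pairs $v_{a_j}v_{a_{j+1}}$; and (ii) all such edges are present. For (i), suppose towards a contradiction that $v_{a_j}v_{a_k} \in E(G)$ with $k \ge j+2$. Since $a_j \ge 1$ and $a_k - a_j \ge 2$, this edge is a diagonal of the polygon and so splits it into two sub-polygons. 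Let $R$ be the sub-polygon bounded by the arc $v_{a_j}, v_{a_j+1}, \dots, v_{a_k}$ and the diagonal. The vertex $r = v_0$ lies outside $R$, so any path in $G$ from $r$ to a vertex strictly in the interior of the arc of $R$ must cross the diagonal and hence pass through $v_{a_j}$ or $v_{a_k}$; such a vertex is therefore at distance $\ge i+1$ from $r$. But $v_{a_{j+1}}$ lies in the interior of that arc and belongs to $V_i$, contradicting $\dist_G(r,v_{a_{j+1}}) = i$. For (ii), statement (i) shows that $H$ is a subgraph of the path $v_{a_1}-\cdots-v_{a_t}$, and connectedness of $H$ forces every consecutive edge to be present; so $H$ is exactly this path.

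The main obstacle is the planar separation argument underlying (i). It relies crucially on the facts that $r$ lies on the outer face and $i \ge 1$, so that $r$ is strictly separated from the interior of the arc by the diagonal; otherwise there would be nothing forcing shortest paths to pass through $v_{a_j}$ or $v_{a_k}$. Everything else---the chordality needed for shadow-completeness, the deduction of the Hamilton path from connectedness, and the reduction of the non-$2$-connected case to the $2$-connected case via the block tree---is routine bookkeeping.
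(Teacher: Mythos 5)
Your proof is correct, and it shares the paper's skeleton (BFS layering, with shadow-completeness coming from chordality via \cref{ShadowCompleteChordal}), but it proves the ``each layer-component is a path'' part by a genuinely different route. The paper never touches the Hamilton-cycle structure: it contracts the connected union $V_0\cup\dots\cup V_{i-1}$ into a single vertex $w$ dominating all of $V_i$, so a cycle in $G[V_i]$ would yield a $K_4$-minor and a $K_{1,3}$ in $G[V_i]$ would yield a $K_{2,3}$-minor, both impossible in an outerplanar minor; hence $G[V_i]$ is a forest of maximum degree at most $2$. You instead invoke the structure theorem for maximal outerplanar graphs (triangulated polygon with Hamiltonian outer cycle) and a crossing-chords separation argument: a chord between two layer-$i$ vertices of a component that sandwiches a third vertex of that component would force that vertex to have distance at least $i+1$, so the component's edges follow the cyclic order and connectedness makes it exactly the path. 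Your argument buys a sharper combinatorial picture (each component occupies its vertices in outer-cycle order), while the paper's buys brevity and robustness: it uses only that outerplanarity is minor-closed with $K_4,K_{2,3}$ excluded, needs no $2$-connectivity or Hamiltonicity, and is the same contraction trick reused for treewidth elsewhere in the paper. Three small points in your write-up: the block-tree reduction you defer is actually unnecessary, since an edge-maximal outerplanar graph on at least three vertices is automatically $2$-connected (and $n\le 2$ is trivial) --- as sketched, that reduction is the one genuinely unjustified step, so either cite this fact or spell the reduction out; ``cyclically consecutive'' in (i) should just be ``consecutive'' (your proof of (i) does correctly exclude the long chord $v_{a_1}v_{a_t}$); and the one-line chordality justification should say that a \emph{chordless} cycle of length at least $4$ bounds a face because no vertex of an outerplanar embedding lies inside a cycle.
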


\begin{proof}
Since $G$ is edge-maximal, $G$ is connected and chordal. 
Let $(V_0,V_1,\dots)$ be a BFS-layering of $G$.
By \cref{ShadowCompleteChordal}, $(V_0,V_1,\dots)$  is shadow-complete. 
For $i\geq 1$, let $G_i$ be the graph obtained from $G[V_0\cup\dots\cup V_i]$ by 
contracting $V_0\cup\dots\cup V_{i-1}$ into a single vertex $w$.  Since  outerplanarity is a minor-closed property and $G[V_0\cup\dots\cup V_{i-1}]$ is connected, $G_i$ is outerplanar. 
Since every vertex in $V_i$ has a neighbour in $V_{i-1}$, $w$ dominates $G_i$. 
If $G[V_i]$ contains a cycle $C$, then $G_i$ contains a $K_4$-minor, 
which is a contradiction since $K_4$ is not outerplanar. 
If $G[V_i]$ contains $K_{1,3}$, then $G_i$ contains $K_{2,3}$, 
which is a contradiction since $K_{2,3}$ is not outerplanar. 
Thus $G[V_i]$ is a forest with maximum degree at most 2. 
Hence, each connected component of $G[V_i]$ is a path.
\end{proof}

\cref{ShadowCompletePi,OuterplanarLayering} imply the following result independently due to \citet{BV08} and \citet{KP-DM08}:

\begin{thm}[\citep{BV08,KP-DM08}] 
\label{OuterplanarPi}
For every outerplanar graph $G$, 
$$\pi(G) \leq 12.$$
\end{thm}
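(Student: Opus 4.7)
The proof will be essentially a one-line assembly of \cref{ShadowCompletePi} and \cref{OuterplanarLayering}, which are both stated and proved earlier in the excerpt. The plan is as follows.

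First, I would observe that $\pi$ is monotone under taking subgraphs (any nonrepetitive colouring of a supergraph restricts to a nonrepetitive colouring of the subgraph, since every path in the subgraph is also a path in the supergraph). Hence it suffices to prove the bound when $G$ is an edge-maximal outerplanar graph, since an arbitrary outerplanar graph is a spanning subgraph of some edge-maximal outerplanar graph.

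Next, I would invoke \cref{OuterplanarLayering} to obtain a shadow-complete layering $(V_0, V_1, \dots, V_n)$ of $G$ such that every connected component of $G[V_i]$ is a path. By Thue's theorem (\cref{PathPi}), $\pi(P) \leq 3$ for every path $P$. Since $\pi$ of a graph equals the maximum of $\pi$ over its connected components, we get $\pi(G[V_i]) \leq 3$ for each $i \in \{0, 1, \dots, n\}$.

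Finally, \cref{ShadowCompletePi} gives
\[
\pi(G) \leq 4 \max_i \pi(G[V_i]) \leq 4 \cdot 3 = 12,
\]
as required.

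There is no real obstacle here, since all the machinery is already in place; the only mildly delicate point is the reduction to the edge-maximal case, which is justified by monotonicity of $\pi$ under subgraphs.
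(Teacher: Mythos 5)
Your proposal is correct and follows exactly the paper's route: the paper derives the theorem directly from \cref{ShadowCompletePi} and \cref{OuterplanarLayering} together with Thue's bound $\pi(P)\leq 3$ from \cref{PathPi}, just as you do. Your explicit reduction to the edge-maximal case via monotonicity of $\pi$ is a detail the paper leaves implicit, but it is the same argument.
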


\citet{BV07} also proved the following lower bound:

\begin{prop}[\citep{BV07}]
\label{OuterplanarLowerBound}
There exists an outerplanar graph $G$ with
$$\pi(G)\geq 7.$$
\end{prop}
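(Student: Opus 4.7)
The plan is to exhibit an explicit outerplanar graph $G$ and verify, by a combinatorial case analysis, that no nonrepetitive $6$-colouring of $G$ exists. Since outerplanar graphs have treewidth at most $2$, contain no $K_4$, and have chromatic number at most $3$, the lower bound cannot come from elementary obstructions such as cliques or large chromatic number; it must arise from the interplay between proper colouring and the forbidden-repetition condition on paths. So the construction must exploit the fact that many short paths $P_4$ are embedded in the outerplanar graph and all of them simultaneously have to avoid the pattern $abab$.

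The first step is to choose a candidate built from a maximal outerplanar gadget, for instance a triangulated polygon whose dual tree is a path (a "zigzag" strip of triangles, or a fan $K_1 + P_n$). In such a gadget every inner face is a triangle, so in any nonrepetitive colouring every triangle receives three distinct colours, and consecutive triangles sharing an edge are forced to extend the colour pattern in a way that avoids a repetition on any of the many $4$-vertex paths crossing between two neighbouring triangles. The next step is to glue several such strips along short shared subpaths in a way that still admits an outerplanar embedding; in particular one must avoid creating a $K_{2,3}$-minor, which limits how densely gadgets can be attached but still permits arbitrarily long and arbitrarily wide constructions.

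The purpose of this gluing is to create many nearly-independent occurrences of the same local colouring pattern, so that a pigeonhole-style argument produces two segments whose boundary colours agree at the correct offset, and hence a repetitively coloured path threading through the glued region. Concretely, one enumerates the legal colour patterns on a triangle together with the colours at its two attachment vertices, bounds their number in terms of the $6$ available colours, and arranges the gadgets so that repetition of a pattern over a sufficient distance unavoidably realises a repetitively coloured path inside $G$.

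The main obstacle is the case analysis: with $6$ colours there are still a fair number of admissible local patterns on each triangle, and one must check that no assignment of patterns across the glued gadgets is globally consistent. The cleanest route, following \citet{BV07}, is to identify a moderately small critical subgraph (on the order of a couple of dozen vertices) and verify by structured enumeration — optionally assisted by a direct computation — that this particular $G$ admits no nonrepetitive $6$-colouring. The existence of any such $G$ immediately gives $\pi(G)\geq 7$.
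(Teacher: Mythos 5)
There is a genuine gap: your proposal never actually exhibits a graph or carries out the argument that it admits no nonrepetitive $6$-colouring --- the entire burden is deferred to an unexecuted ``structured enumeration, optionally assisted by a direct computation'' on a hypothetical ``moderately small critical subgraph''. No such small critical graph is identified, and its existence is not something you can simply assert; the known proof (Bar\'at--Varj\'u, which is the one reproduced in the paper) does not work this way at all. That proof takes the complete $n$-ary tree of height $n$ for \emph{sufficiently large} $n$, adds a path $P_v$ through the children of each non-leaf vertex $v$, and then argues structurally: long sibling paths force, for each suitable vertex $v$, a set $C_v$ of four colours each appearing at least twice on $P_v$, which pins down $C_v=\{1,\dots,6\}\setminus\{\phi(v),\phi(\mathrm{parent}(v))\}$, and a short descent through the tree then produces an explicit path coloured $132132$. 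The largeness of the branching and of the height is essential to this argument, so the bound is obtained without any finite enumeration, and (as far as is known) not from any two-dozen-vertex witness.

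The constructive half of your plan is also doubtful on its own terms. Strips of triangles whose dual tree is a path (fans, zigzags) are outerplanar graphs of bounded pathwidth, and such graphs are nonrepetitively colourable with few colours; gluing a bounded number of them along short subpaths while avoiding $K_{2,3}$-minors does not obviously create the kind of high-branching structure that the lower bound needs. Your pigeonhole step (``two segments whose boundary colours agree at the correct offset yield a repetitive path'') is exactly the delicate part, and it is not sketched in enough detail to see that a repetitively coloured \emph{path} (rather than a walk revisiting vertices) is actually produced. To repair the proposal you would either have to supply and verify a concrete small witness graph, or switch to the tree-with-sibling-paths construction and reproduce the colour-forcing case analysis that the paper gives.
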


\begin{proof}
Let $n$ be a sufficiently large integer. 
Let $T$ be the complete $n$-ary tree of height $n$. 
Let $G$ be obtained from $T$ by adding, for each non-leaf vertex $v$ of $T$, a path $P_v$ on the children of $v$. This can be done so that $G$ is outerplanar. 
Suppose for the sake of contradiction that there exists a nonrepetitive colouring $\phi$ of $G$ with colour-set $\{1,\dots,6\}$. 

\begin{claim}
For every vertex $v$ of $T$ that is neither a leaf nor the parent of a leaf, there is a set $C_v$ of four colours, each of which appears at least twice on $P_v$. 
\end{claim}

\begin{proof}
Suppose for the sake of contradiction that $\phi(v)=1$ but only three colours, say $2,3,4$, appear on $P_v$. 
All three colours appear on any four consecutive vertices of $P_v$. 
Thus, for sufficiently large $n$, each of $2,3,4$ appears at least twice on $P_v$. 

If this 3-colouring of $P_v$ is distance-2, then the sequence of colours on $P_v$, without loss of generality, starts $234234$, which is a repetitively coloured 6-vertex path. 
Thus the colouring of $P_v$ is not distance~2. 
Hence $P_v$ contains a subpath $(a,b,c,d,e,f,g)$, where without loss of generality, $\phi(a)=\phi(c)=3$ and $\phi(b)=2$ (since $n$ is sufficiently large). 

Let $x$ be any vertex of $P_c$. If $\phi(x)=1$ then $xcva$ is coloured $1313$. 
If $\phi(x)=2$ then $xcba$ is coloured $2323$. 
If $\phi(x)=3$ then $xc$ is coloured $33$. 
Thus $P_c$ is coloured by $4,5,6$ and each colour appears since $n$ is sufficiently large.

Now, $\phi(d)\in\{2,3,4\}$ since $d\in V(P_v)$.
If $\phi(d)=2$ then $abcd$ is coloured $3232$.
If $\phi(d)=3$ then $cd$ is coloured $33$.
Thus $\phi(d)=4$.
Let $x$ be any vertex of $P_d$. 
If $\phi(x)=1$ then $xdvy$ is coloured $1414$ for some $y\in V(P_v)\setminus \{d\}$ coloured 4. 
If $\phi(x)=3$ then $xdcy$ is coloured $3434$ for some $y\in V(P_c)$ coloured 4. 
If $\phi(x)=4$ then $xd$ is coloured $44$. 
Thus $P_d$ is coloured by $2,5,6$ and each colour appears since $n$ is sufficiently large.

Now, $\phi(e)\in\{2,3,4\}$ since $e\in V(P_v)$.
If $\phi(e)=3$ then $edcy$ is coloured $3434$ for some $y\in V(P_c)$ coloured 4. 
If $\phi(e)=4$ then $de$ is coloured $44$.
Thus $\phi(e)=2$.
Let $x$ be any vertex of $P_e$. 
If $\phi(x)=1$ then $xevb$ is coloured $1212$. 
If $\phi(x)=2$ then $xe$ is coloured $22$. 
If $\phi(x)=4$ then $xedy$ is coloured $4242$ for some vertex $y\in V(P_d)$ coloured 2. 
Thus $P_e$ is coloured by $3,5,6$  and each colour appears since $n$ is sufficiently large.

Now, $\phi(f)\in\{2,3,4\}$ since $f\in V(P_v)$.
If $\phi(f)=2$ then $ef$ is coloured $22$.
If $\phi(f)=4$ then $fedy$ is coloured $4242$ for some $y\in V(P_d)$ coloured 2. 
Thus $\phi(f)=3$.
Let $x$ be any vertex of $P_f$. 
If $\phi(x)=1$ then $xfvc$ is coloured $1313$. 
If $\phi(x)=2$ then $xfey$ is coloured $2323$  for some $y\in V(P_e)$ coloured 3. 
If $\phi(x)=3$ then $xf$ is coloured $33$. 
Thus $P_f$ is coloured by $4,5,6$ and each colour appears since $n$ is sufficiently large.

Now, $\phi(g)\in\{2,3,4\}$ since $g\in V(P_v)$.
If $\phi(g)=3$ then $fg$ is coloured $33$.
If $\phi(g)=2$ then $gfey$ is coloured $2323$ for some $y\in V( P_e)$ coloured 3. 
Thus $\phi(g)=4$.

Therefore the subpath $(b,c,d,e,f,g)$ is coloured $234234$. This contradiction shows that $P_v$ is assigned at least four colours.

At this point we have assumed that $n\geq n_0$ for some fixed number $n_0$. Taking $n\geq 6n_0$, we can partition $P_v$ into six disjoint subpaths each with $n_0$ vertices, and by the above argument, at least four distinct colours appear in each subpath. Since at most five colours appear on $P_v$, on at least two of these subpaths the same set of four colours appears. This completes the proof. 
\end{proof}

Let $u$ be a vertex of $T$ that is neither a leaf nor the parent of a leaf. Let $v$ be a vertex in $P_u$ with $\phi(v)\in C_u$. Let $x$ be any vertex in $P_v$. If $\phi(x)=\phi(v)$ then $xv$ is repetitively coloured. If $\phi(x)=\phi(u)$ then $xvuy$ is repetitively coloured for some vertex $y\in V(P_u)\setminus\{v\}$ coloured $\phi(v)$. Such a vertex $y$ exists by the claim and since $\phi(v)\in C_u$. Hence $C_v=\{1,\dots,6\}\setminus\{\phi(v),\phi(u)\}$. 

Let $r$ be the root of $T$. 
By the claim, without loss of generality, $\phi(r)=1$ and $C_r=\{2,3,4,5\}$. 
Let $a$ be a vertex in $P_r$ coloured 2; thus $C_a=\{3,4,5,6\}$. 
Let $b$ be a vertex in $P_a$ coloured 3; thus $C_b=\{1,4,5,6\}$. 
Let $c$ be a vertex in $P_b$ coloured 1. 
Let $d$ be a vertex in $P_r$ coloured 3; thus $C_d=\{2,4,5,6\}$. 
Let $e$ be a vertex in $P_d$ coloured 2. 
These vertices exist and are neither leaves nor parent of leaves, since $T$ has sufficiently large height. 
Now $(c,b,a,r,d,e)$ is a path in $T$ coloured $132132$. 
This contradiction completes the proof.
\end{proof}

Note that the proof of \cref{OuterplanarLowerBound} actually shows that there is an outerplanar graph $G$ such that every 6-colouring has a repetitively coloured path on 2, 4 or 6 vertices.

\begin{open}
\label{OuterplanarPiOpen}
What is the maximum nonrepetitive chromatic number of an outerplanar graph? The answer is in $\{7,8,\dots,12\}$. This question may have a bearing on \cref{TreewidthPolynomial}.
\end{open}

For stroll-nonrepetitive colourings, \cref{ShadowCompleteRho,OuterplanarLayering} imply:

\begin{thm}
\label{OuterplanarRho}
For every outerplanar graph $G$, 
$$\rho(G) \leq 16.$$
\end{thm}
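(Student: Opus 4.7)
The plan is to combine \cref{OuterplanarLayering} with \cref{ShadowCompleteRho}, using the fact (essentially \cref{PathSigma}) that paths have $\rho \leq 4$.

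First I would reduce to the edge-maximal case. Since $\rho$ is monotone under taking subgraphs (any stroll-nonrepetitive colouring of a supergraph restricts to a stroll-nonrepetitive colouring of a subgraph, because every stroll in the subgraph is a stroll in the supergraph), it suffices to prove $\rho(G) \leq 16$ when $G$ is an edge-maximal outerplanar graph. Apply \cref{OuterplanarLayering} to obtain a shadow-complete layering $(V_0, V_1, \dots, V_n)$ of $G$ in which every connected component of $G[V_i]$ is a path.

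Next I would bound $\rho(G[V_i])$ for each layer. Each component $P$ of $G[V_i]$ is a path, so by \cref{PathSigma} we have $\sigma(P) \leq 4$, and since $\rho(P) \leq \sigma(P)$ by definition, $\rho(P) \leq 4$. Because $G[V_i]$ is a disjoint union of paths and no stroll visits vertices in two different components, any collection of stroll-nonrepetitive 4-colourings of the individual paths (all using the same palette of 4 colours) combines into a stroll-nonrepetitive 4-colouring of $G[V_i]$. Hence $\rho(G[V_i]) \leq 4$ for every $i$.

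Finally, \cref{ShadowCompleteRho} applied to this shadow-complete layering gives
\[
\rho(G) \;\leq\; 4 \max_i \rho(G[V_i]) \;\leq\; 4 \cdot 4 \;=\; 16,
\]
as required. There is no real obstacle here: the work is essentially packaged into the shadow-complete layering lemma for outerplanar graphs and into the bound on paths, and the main content is realising that once the layers split into disjoint paths, the factor of 4 from \cref{ShadowCompleteRho} multiplied by the factor of 4 from \cref{PathSigma} yields 16.
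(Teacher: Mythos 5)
Your proposal is correct and is essentially the paper's own argument: the paper derives \cref{OuterplanarRho} directly from \cref{OuterplanarLayering} and \cref{ShadowCompleteRho}, with the layers being disjoint unions of paths and $\rho(P)\leq\sigma(P)\leq 4$ from \cref{PathSigma} supplying the second factor of $4$. Your added details (monotonicity of $\rho$ under subgraphs to reduce to the edge-maximal case, and the observation that strolls stay within one component) are exactly the routine steps the paper leaves implicit.
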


\begin{open}
\label{OuterplanarRhoOpen}
What is the maximum stroll-nonrepetitive chromatic number of an outerplanar graph? The answer is in $\{7,8,\dots,16\}$. Any improvement to the upper bound would have a bearing on \cref{PlanarOpen}. If $\rho(P)\leq 3$ for each path $P$ (\cref{PathRhoOpen}) then $\rho(G) \leq 12$ for every outerplanar graph $G$. 
\end{open}

\citet{LW06} proved that $\chi(G^2)\leq\Delta(G)+2$ for every outerplanar graph $G$. 
\cref{SigmaBounds,OuterplanarRho} thus imply that the walk-nonrepetitive chromatic number of outerplanar graphs is $\Theta(\Delta)$. 

\begin{cor}
For every outerplanar graph $G$ with maximum degree $\Delta$, 
$$\Delta+1 \leq \sigma(G)\leq 16\Delta+32.$$
\end{cor}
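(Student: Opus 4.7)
The corollary is a direct consequence of three results that are already in hand, so the plan is to simply combine them.

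First, for the lower bound, I would invoke \cref{SigmaBounds}, which gives $\sigma(G) \geq \max\{\rho(G), \Delta(G)+1\} \geq \Delta(G)+1$. This is immediate and requires no work specific to the outerplanar setting.

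For the upper bound, the plan is to use the chain
\begin{equation*}
\sigma(G) \leq \rho(G)\,\chi(G^2)
\end{equation*}
from \cref{SigmaBounds}. I would then plug in two outerplanar-specific facts: the bound $\rho(G) \leq 16$ from \cref{OuterplanarRho}, and the Lih--Wang bound $\chi(G^2) \leq \Delta(G)+2$ quoted just before the corollary. Multiplying yields $\sigma(G) \leq 16(\Delta+2) = 16\Delta + 32$, as required.

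There is no real obstacle here, since all the ingredients are established. The only thing to double-check is that \cref{OuterplanarRho} and the Lih--Wang result apply in full generality to every outerplanar graph (not merely to edge-maximal ones), which is the case since both $\rho$ and $\chi(\cdot^2)$ are monotone under taking subgraphs in the appropriate sense (any outerplanar graph is a subgraph of an edge-maximal outerplanar graph, so $\rho(G) \leq \rho(G^+) \leq 16$ for a maximal supergraph $G^+$, and likewise $\chi(G^2) \leq \chi((G^+)^2) \leq \Delta(G^+)+2$; but $\Delta(G^+)$ could exceed $\Delta(G)$, so one must instead apply Lih--Wang directly to $G$ itself, which is valid since the theorem holds for all outerplanar graphs).
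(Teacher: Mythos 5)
Your proposal is correct and is exactly the paper's (implicit) argument: the lower bound comes from \cref{SigmaBounds}, and the upper bound is $\sigma(G)\leq\rho(G)\,\chi(G^2)\leq 16(\Delta+2)$ using \cref{OuterplanarRho} and the Lih--Wang bound. Your care about applying the ingredients to arbitrary (not just edge-maximal) outerplanar graphs is a sensible sanity check but raises no issue beyond what the paper already covers.
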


\section{Planar Graphs and Beyond}
\label{PlanarAndBeyond}

\subsection{Planar Graphs}
\label{Planar}

\citet{AGHR02} first asked whether planar graphs have bounded nonrepetitive chromatic number. For several years, this problem was widely recognised as the most important open problem in the field of nonrepetitive graph colouring. The first non-trivial upper bound was due to \citet*{DFJW13}, who proved that $\pi(G)\leq O(\log |V(G)|)$ for all planar graphs $G$. The above question was solved by \citet{DEJWW20}. Much of the above machinery involving strong products was developed as a tool to answer the question for planar graphs. 

\begin{thm}[\citep{DEJWW20}]
\label{PlanarRho}
For every planar graph $G$, $$\pi(G) \leq \rho(G)\leq 768.$$
\end{thm}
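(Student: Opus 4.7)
My plan is to combine the product-structure machinery developed earlier in the paper with a structural decomposition of planar graphs into a strong product of a low-treewidth graph, a path, and a small clique.

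The key external ingredient is the \emph{planar graph product structure theorem}: every planar graph $G$ is isomorphic to a subgraph of $H \boxtimes P \boxtimes K_3$ for some graph $H$ of treewidth at most $3$ and some path $P$. This is exactly the kind of decomposition that the strong-product bounds in \cref{StrongProducts} were designed to exploit, and it explains the appearance of the factor $3$ and the need to consider strolls rather than paths.

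Given such a decomposition, the proof is essentially a one-line calculation. Since $\rho$ is monotone under taking subgraphs (any stroll-nonrepetitive colouring of the supergraph restricts to one of the subgraph), we have
\begin{equation*}
\rho(G) \;\leq\; \rho(H \boxtimes P \boxtimes K_3).
\end{equation*}
Then \cref{ProductGraphPathTreewidth} applied with $\ell = 3$ and treewidth bound $\tw(H) \le 3$ gives
\begin{equation*}
\rho(H \boxtimes P \boxtimes K_3) \;\leq\; 3 \cdot 4^{\,\tw(H)+1} \;\leq\; 3 \cdot 4^{4} \;=\; 768,
\end{equation*}
and since $\pi(G) \leq \rho(G)$ by definition, the theorem follows. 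It is worth emphasising why strolls are essential here: the final step chains \cref{ProductRho} (strong products behave well for $\rho$ combined with $\sigma$) with the treewidth bound \cref{TreewidthRho}, and neither bound is known to hold for the purely path-based $\pi$ on products.

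The main obstacle is clearly not the calculation but the product-structure theorem itself, which is a deep structural result. Any proof of this ingredient would proceed along the following lines: take a BFS layering of $G$, decompose $G$ into ``vertical'' pieces using the BFS layering (giving the path factor $P$), and use the planar structure of $G$ restricted to a bounded number of consecutive layers to identify, within each vertical piece, a tree-decomposition of width at most $3$ after grouping vertices into triples (giving the $K_3$ factor and the bounded-treewidth factor $H$). This geodesic/layered decomposition of planar graphs is where all the difficulty lies; once it is in hand, \cref{ProductGraphPathTreewidth} does the rest automatically, and the bound $768$ drops out immediately.
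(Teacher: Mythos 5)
Your proof is correct and follows essentially the same route as the paper: both invoke the planar product structure theorem of Dujmovi\'c et al.\ ($G \subseteq H \boxtimes P \boxtimes K_3$ with $\tw(H)\le 3$) and then chain the strong-product bounds for $\rho$, which you do in one step via \cref{ProductGraphPathTreewidth} while the paper applies \cref{CompleteProductRho}, \cref{PathProductRho} and \cref{TreewidthRho} in sequence, yielding the same $3\cdot 4\cdot 4^{3}=768$.
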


\begin{proof}
\citet{DJMMUW19,DJMMUW20} proved that every planar graph $G$ is a subgraph of $H\boxtimes P\boxtimes K_3$ for some graph $H$ with treewidth 3 and path $P$. 
By \cref{CompleteProductRho}, $\rho(G) \leq \rho(H\boxtimes P\boxtimes K_3) 
\leq 3\, \rho(H\boxtimes P)$, which is at most $12\, \rho(H)$ by \cref{PathProductRho}. Since $H$ has treewidth at most 3, $\rho(G) \leq  12\cdot 4^3= 768$ by \cref{TreewidthRho}.
\end{proof}

We now present the best known lower bound on the nonrepetitive chromatic number of planar graphs. 

\begin{prop}[Pascal Ochem; see \citep{DFJW13}]
There is a planar graph $G$ such that $\pi(G)\geq 11$. 
\end{prop}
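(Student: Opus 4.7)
My plan is to adapt the style of case-analytic lower bound used in the proof of \cref{OuterplanarLowerBound} for outerplanar graphs, pushing it into two dimensions to exploit the extra flexibility of planarity. The first step is to identify a small rigid planar ``forcing gadget'' $H$: a planar graph with a distinguished vertex (or edge) whose behaviour in any nonrepetitive $k$-colouring (for $k\le 10$) is severely restricted. Candidates include near-triangulations of an annulus with a few stacked $K_4$s and added short paths between siblings, or the double-wheel $K_2 + C_n$ and its small perturbations; these are planar but contain many intersecting short paths, so that a nonrepetitive colouring of $H$ must ``spend'' colours densely on the outer boundary. The point of $H$ is that along its outer face, any nonrepetitive $10$-colouring leaves only a constant number of possible colour profiles on a length-$O(1)$ boundary segment.

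Next, I would iterate $H$ in a planar manner. Take a rooted infinite planar tree (analogous to the complete $n$-ary tree used in \cref{OuterplanarLowerBound}), replace each non-leaf by a copy of $H$, and connect consecutive children of a parent by edges (as in the path $P_v$ construction). If the gadget is chosen so that the edges between a parent-gadget and its child-gadgets can be realised while preserving planarity --- this is where the difference from the outerplanar case enters, since embedding more edges inside each face gives access to additional short cycles --- then the resulting graph $G$ is planar. Suppose for contradiction that $G$ admits a nonrepetitive $10$-colouring $\phi$. The strategy is exactly the inductive argument used in \cref{OuterplanarLowerBound}: prove a ``child claim'' stating that for every non-leaf vertex $v$, the multiset of colours appearing in the portion of $G$ around $v$ contains some large set $C_v\subseteq\{1,\dots,10\}$ of colours each occurring many times. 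This is proved by supposing otherwise and locating a $P_4$, $P_6$ or $P_8$ which $\phi$ colours $xyxy$, $xyzxyz$ or $xyzwxyzw$ via a short path passing through the gadget.

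Once the child claim is established, I would follow the outerplanar blueprint: trace a root-to-leaf path $r, a_1, a_2,\dots$ in the tree, at each step choosing $a_{i+1}$ to be a vertex of $P_{a_i}$ whose colour lies in $C_{a_i}$; the claim forces $|C_{a_i}|$ to be large enough (here $\geq 9$ after removing $\phi(a_{i-1})$ and $\phi(a_i)$) that the sequence of colours along this path is essentially free. Because there are only $10$ colours, some short prefix of this descending path will be forced to realise a nontrivial repetition $x_1\dots x_t x_1\dots x_t$, contradicting nonrepetitiveness. The sizes of the gadget, of the tree branching, and of the depth needed are then chosen to make this pigeonhole argument go through.

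The main obstacle, by far, is the combinatorial case analysis of the child claim. With $10$ colours available and with several ``levels'' of gadget interacting, the number of sub-cases explodes relative to the six-colour outerplanar proof; indeed the fact that this bound of $11$ is credited only to a personal communication of Ochem strongly suggests that a computer search was used to find a planar graph $G$ for which no nonrepetitive $10$-colouring exists and to verify exhaustively over the colouring space that no valid $10$-colouring is possible. In a written proof I would therefore expect to exhibit the graph $G$ explicitly (as a planar embedding on a few hundred vertices), and to reduce the verification of $\pi(G)\geq 11$ to a finite check by the symmetry-breaking and pruning tricks standard for such enumerations.
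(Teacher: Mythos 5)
There is a genuine gap: your ``child claim'' --- the assertion that any nonrepetitive $10$-colouring of your gadget-tree forces a large set of colours to recur around every non-leaf --- is exactly the heart of the matter, and you never establish it; you only observe that the case analysis ``explodes'' and then fall back on the hope of an unspecified gadget plus a computer search. A proof that ends with ``one would exhibit a graph on a few hundred vertices and verify exhaustively'' without exhibiting the graph or the verification is not a proof, and your guess that the bound $11$ must rest on such a search is in fact wrong. The actual argument is short and entirely by hand, and its key idea is one your plan misses: do not redo a $10$-colour analysis from scratch, but \emph{reuse} the already-proven outerplanar lower bound of \cref{OuterplanarLowerBound} as a black box, amplified by an apex construction.

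Concretely, take an outerplanar $H$ with $\pi(H)\geq 7$, a path $P=(v_1,\dots,v_{22})$, two adjacent vertices $x,y$ joined to every vertex of $P$, and a copy $H_i$ of $H$ completely joined to $v_i$ for each $i$; this $G$ is planar. Given a hypothetical nonrepetitive $10$-colouring, at most $10$ colours appear on the $22$ vertices of $P$, so some two consecutive vertices $v_i,v_{i+1}$ are ``redundant'' (their colours repeat elsewhere on $P$); say $x,y,v_i,v_{i+1}$ get colours $1,2,3,4$. Any vertex of $H_i\cup H_{i+1}$ coloured $1$ or $2$ creates a repetitively coloured $P_4$ through $x$ or $y$ using the redundancy, and a vertex of $H_i$ coloured $4$ together with a vertex of $H_{i+1}$ coloured $3$ creates one through $v_i,v_{i+1}$; a short elimination then shows some $H_i$ avoids all of $1,2,3,4$, leaving at most $6$ colours on $H_i$ and contradicting $\pi(H_i)\geq 7$. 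Your proposal, by contrast, tries to re-derive a strong local structure theorem for $10$-colourings directly, which is both unnecessary and, as you yourself note, not something you can carry out within the sketch; without that claim the pigeonhole descent in your third paragraph has nothing to stand on.
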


\begin{proof}
By \cref{OuterplanarLowerBound}, there is an outerplanar graph $H$ with $\pi(H)\geq 7$. Let $G$ be the following planar graph. Start with a path $P=(v_1,\dots,v_{22})$. Add two adjacent vertices $x$ and $y$ that both dominate $P$. Let each vertex $v_i$ in $P$ be adjacent to every vertex in a copy $H_i$ of $H$. Suppose on the contrary that $G$ is nonrepetitively $10$-colourable. Without loss of generality, $x$ and $y$ are respectively coloured $1$ and $2$. A vertex in $P$ is \emph{redundant} if its colour is used on some other vertex in $P$. If no two adjacent
vertices in $P$ are redundant then at least $11$ colours appear exactly once on $P$, which is a contradiction. Thus some pair of consecutive vertices $v_i$ and $v_{i+1}$ in $P$ are redundant. Without loss of generality, $v_i$ and $v_{i+1}$ are respectively coloured $3$ and $4$. If some vertex in $H_i\cup H_{i+1}$ is coloured $1$ or $2$,
then since $v_i$ and $v_{i+1}$ are redundant, with $x$ or $y$ we have a repetitively coloured path on 4 vertices. Now assume that no vertex
in $H_i\cup H_{i+1}$ is coloured $1$ or $2$. If some vertex in $H_i$
is coloured $4$ and some vertex in $H_{i+1}$ is coloured $3$, then
with $v_i$ and $v_{i+1}$, we have a repetitively coloured path on 4
vertices. Thus no vertex in $H_i$ is coloured $4$ or no vertex in
$H_{i+1}$ is coloured $3$. Without loss of generality, no vertex in
$H_i$ is coloured $4$. Since $v_i$ dominates $H_i$, no vertex in $H_i$
is coloured $3$. We have proved that no vertex in $H_i$ is coloured
$1,2,3$ or $4$, which is a contradiction, since $\pi(H_i)\geq
7$. Therefore $\pi(G)\geq 11$.
\end{proof}

\begin{open}
\label{PlanarOpen}
What is the maximum nonrepetitive chromatic number of a planar graph? The answer is in $\{11,\dots,768\}$. Note that in the proof of \cref{PlanarRho}, since $H$ is planar with treewidth 3, to improve the bound of 768 to 576 it would suffice to show that $\rho(P)\leq 3$ for each path $P$, or $\rho(Q)\leq 12$ for each outerplanar graph $Q$.
\end{open}

We briefly mention that several papers studied colourings of plane graphs in which only facial paths are required to be nonrepetitively coloured \citep{HJSS11,Przybyo14,Przybyo13,BC13,JS12,JS09,JS09,Gutowski18,BDMR17,CJS17}. \citet{BC13} proved that every plane graph is facially-nonrepetitively $24$-colourable, and \citet{Gutowski18} proved that every plane graph is facially-nonrepetitively list $O(1)$-colourable. This latter result is in sharp contrast to \cref{TreesUnboundedChoose}, which says that trees have unbounded nonrepetitive list chromatic number.

\cref{PlanarRho} leads to a $\Theta(\Delta)$ bound on the walk-nonrepetitive chromatic number of planar graphs. \citet{HM03} proved that $\chi(G^2)\leq 2\Delta(G)+25$ for every planar graph $G$. Thus \cref{PlanarRho,SigmaBounds}  implies:

\begin{cor}
For every planar graph $G$, 
$$\Delta(G)+1 \leq \sigma(G) \leq 1536\; \Delta(G) + 19200.$$
\end{cor}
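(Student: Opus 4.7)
The plan is to assemble the corollary from three ingredients that are all in place: the general inequality $\sigma(G) \le \rho(G)\,\chi(G^2)$ from \cref{SigmaBounds}, the bound $\rho(G) \le 768$ for planar graphs from \cref{PlanarRho}, and the theorem of \citet{HM03} (quoted in the paragraph preceding the corollary) stating that $\chi(G^2) \le 2\Delta(G)+25$ for every planar graph $G$.

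First, the lower bound $\Delta(G)+1 \le \sigma(G)$ holds for \emph{every} graph by \cref{SigmaBounds}, since $G^2$ contains a clique of size $\Delta(G)+1$; so no further work is needed on that side.

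For the upper bound I would start from $\sigma(G) \le \rho(G)\,\chi(G^2)$ in \cref{SigmaBounds}, apply \cref{PlanarRho} to bound the first factor by $768$, and apply the Heuvel--McGuinness theorem to bound the second factor by $2\Delta(G)+25$. Multiplying yields
\[
\sigma(G) \;\le\; 768\,(2\Delta(G)+25) \;=\; 1536\,\Delta(G)+19200,
\]
which is the claimed bound.

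I expect no real obstacle: the corollary is a routine packaging of results already established in or cited by the paper, and the only actual computation is the arithmetic $768\cdot 2=1536$ and $768\cdot 25=19200$. The substantive content sits entirely in the inputs (\cref{PlanarRho} and the distance-$2$ colouring bound for planar graphs), whose combination via \cref{SigmaBounds} is precisely the mechanism the paper has set up for turning stroll-nonrepetitive results into walk-nonrepetitive ones.
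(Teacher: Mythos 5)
Your proposal is correct and is exactly the paper's argument: the corollary is stated as an immediate consequence of \cref{SigmaBounds}, \cref{PlanarRho}, and the cited bound $\chi(G^2)\leq 2\Delta(G)+25$ of van den Heuvel and McGuinness, combined by the same multiplication $768(2\Delta(G)+25)=1536\Delta(G)+19200$. Nothing further is needed.
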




The above result for planar graphs can be combined with other results in various ways. For example, \cref{TreewidthDegreeSigma,PlanarRho} imply:

\begin{cor}
\label{PlanarTreewidthRho}
For every planar graph $G$ and for every graph $H$ with treewidth $k$ and maximum degree $\Delta$,  
$$\rho(G \boxtimes H)\leq \rho(G) \,\sigma(H) \leq 
7680\, (k+1)(\tfrac72 \Delta-1) \,\min\{ k\Delta+1,\Delta^2+1\}.$$
\end{cor}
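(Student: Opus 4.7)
The plan is to derive this corollary directly by chaining together three results already established in the excerpt. First I would invoke \cref{ProductRho}, which gives the inequality $\rho(G\boxtimes H)\leq \rho(G)\,\sigma(H)$ for arbitrary graphs $G$ and $H$. This is the conceptual heart of the argument: it reduces the question about the product to two separate single-graph questions, one about $\rho$ of the planar factor and one about $\sigma$ of the bounded-treewidth, bounded-degree factor.

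Next I would bound each factor using the appropriate specialised result. For the planar graph $G$, \cref{PlanarRho} immediately yields $\rho(G)\leq 768$. For the graph $H$ with treewidth $k$ and maximum degree $\Delta$, \cref{TreewidthDegreeSigma} gives
\[
\sigma(H) \leq 10(k+1)(\tfrac72 \Delta-1)\,\min\{k\Delta+1,\Delta^2+1\}.
\]
Multiplying these two upper bounds yields $\rho(G)\,\sigma(H) \leq 768 \cdot 10(k+1)(\tfrac72\Delta-1)\min\{k\Delta+1,\Delta^2+1\} = 7680(k+1)(\tfrac72\Delta-1)\min\{k\Delta+1,\Delta^2+1\}$, which is precisely the stated bound.

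There is essentially no obstacle here, since all the work has been done in the referenced lemmas; the only thing to notice is that the constant $7680$ is simply the product of $768$ (from \cref{PlanarRho}) and $10$ (from \cref{TreewidthDegreeSigma}), and that the $\min\{k\Delta+1,\Delta^2+1\}$ factor comes from applying \cref{SigmaBounds} to $H$ inside the proof of \cref{TreewidthDegreeSigma} (since $\chi(H^2)\leq \min\{k\Delta+1,\Delta^2+1\}$ when $H$ is $k$-degenerate with maximum degree $\Delta$). Thus the proof amounts to three lines: cite \cref{ProductRho}, then substitute the bounds from \cref{PlanarRho} and \cref{TreewidthDegreeSigma}.
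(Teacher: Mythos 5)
Your proposal is correct and matches the paper's own derivation: the paper obtains this corollary precisely by combining \cref{ProductRho} with the bounds $\rho(G)\leq 768$ from \cref{PlanarRho} and $\sigma(H)\leq 10(k+1)(\tfrac72\Delta-1)\min\{k\Delta+1,\Delta^2+1\}$ from \cref{TreewidthDegreeSigma}, with $7680=768\cdot 10$. Nothing further is needed.
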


\subsection{Graphs on Surfaces}
\label{Surfaces}

\citet{DEJWW20} proved the following generalisation of \cref{PlanarRho} for graphs of bounded Euler genus.

\begin{thm}[\citep{DEJWW20}]
\label{GenusRho}
For every graph $G$ with Euler genus $g$,
$$\pi(G) \leq \rho(G) \leq 256 \max\{2g,3\}.$$
\end{thm}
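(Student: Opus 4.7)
The plan is to follow the same template as the proof of \cref{PlanarRho}, but invoke a stronger product structure theorem tailored to bounded Euler genus. Specifically, \citet{DJMMUW19,DJMMUW20} extend the planar product structure theorem to surfaces: every graph $G$ of Euler genus $g$ is isomorphic to a subgraph of $H \boxtimes P \boxtimes K_{\max\{2g,3\}}$ for some graph $H$ with $\tw(H) \leq 3$ and some path $P$. The exponent in our target bound, namely $256 = 4^4$, matches exactly $4^{\tw(H)+1}$ when $\tw(H)=3$, which strongly suggests this is the structural input we should plug in.

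With that structural decomposition in hand, the proof is a one-line computation: by monotonicity of $\rho$ under taking subgraphs and by \cref{ProductGraphPathTreewidth} applied with the graph $H$, path $P$ and integer $\ell:=\max\{2g,3\}$, we immediately obtain
\begin{equation*}
\rho(G) \;\leq\; \rho(H \boxtimes P \boxtimes K_\ell) \;\leq\; \ell \cdot 4^{\tw(H)+1} \;\leq\; 256 \max\{2g,3\}.
\end{equation*}
The bound $\pi(G)\leq\rho(G)$ comes for free from the general inequality recorded earlier. If one prefers to avoid citing \cref{ProductGraphPathTreewidth}, the same bound arises by unfolding it as \cref{CompleteProductRho} (giving a factor $\ell$), then \cref{PathProductRho} (giving a factor $4$), and finally \cref{TreewidthRho} (giving a factor $4^3=64$), with product $4\cdot 64\cdot \ell = 256\ell$.

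The main obstacle is therefore not the colouring argument itself but the availability of the genus-$g$ product structure theorem; all the nonrepetitive content is already packaged in \cref{ProductGraphPathTreewidth}. Two minor points deserve care. First, one must justify the case split producing $\max\{2g,3\}$: when $g=0$ or $g=1$ the clique factor in the structure theorem is $K_3$ rather than $K_{2g}$, which is why the bound is written with the maximum. Second, one should note that the theorem statement is vacuous unless a product structure theorem with treewidth at most $3$ (not merely $O(1)$) is available, since any larger treewidth would produce a worse constant in place of $256$; this is precisely what is proved in \citep{DJMMUW19,DJMMUW20}.
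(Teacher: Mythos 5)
Your proposal is correct and is essentially the paper's own proof: it invokes the same product structure theorem of \citet{DJMMUW19,DJMMUW20} (subgraph of $H\boxtimes P\boxtimes K_{\max\{2g,3\}}$ with $\tw(H)\leq 3$) and then applies the same chain of product lemmas (\cref{CompleteProductRho}, \cref{PathProductRho}, \cref{TreewidthRho}, packaged as \cref{ProductGraphPathTreewidth}) to get $256\max\{2g,3\}$.
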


\begin{proof}
\citet{DJMMUW19,DJMMUW20} proved that every graph $G$ of Euler genus $g$ is a subgraph of $H \boxtimes P \boxtimes K_{\max\{2g,3\}}$ for some graph $H$ with treewidth at most $3$ and some path $P$. Thus \cref{ProductRho,TreewidthRho} imply 
\begin{align*}
\pi(G) \leq \rho(G) 
\leq \rho( H \boxtimes P \boxtimes K_{\max\{2g,3\}} ) 
\leq \max\{2g,3\}  \cdot \rho( H\boxtimes P ) 
& \leq \max\{2g,3\}  \cdot 4 \cdot \rho( H ) \\
& \leq  \max\{2g,3\}  \cdot 4^4\\
& =  256 \max\{2g,3\}.\qedhere
\end{align*}
\end{proof}

\citet{AEH13} proved that for all $\epsilon>0$ and $g\geq 0$, for sufficiently large $\Delta$, every graph $G$ with Euler genus $g$ and maximum degree at most $\Delta$ satisfies $\chi(G^2)\leq (\frac32 + \epsilon) \Delta$. Thus \cref{GenusRho,SigmaBounds}  implies:

\begin{cor}
For every graph $G$ of Euler genus $g$, 
$$\Delta(G)+1 \leq \sigma(G) \leq O( g \Delta (G) ) .$$
\end{cor}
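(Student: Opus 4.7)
The plan is to combine \cref{SigmaBounds} with the result just established (\cref{GenusRho}) and the cited distance-$2$ colouring bound of \citet{AEH13}. By \cref{SigmaBounds},
\begin{equation*}
\max\{\rho(G),\Delta(G)+1\} \leq \sigma(G) \leq \rho(G)\,\chi(G^2),
\end{equation*}
so the lower bound $\Delta(G)+1\leq\sigma(G)$ is free. For the upper bound it therefore suffices to bound each of $\rho(G)$ and $\chi(G^2)$ separately, in a way whose product is $O(g\,\Delta(G))$.

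First I would invoke \cref{GenusRho} directly to get $\rho(G)\leq 256\max\{2g,3\}=O(g)$. Next I would appeal to the \citet{AEH13} theorem mentioned just before the corollary: for every fixed $\epsilon>0$ there is some $\Delta_0=\Delta_0(g,\epsilon)$ such that every graph $G$ of Euler genus $g$ with $\Delta(G)\geq \Delta_0$ satisfies $\chi(G^2)\leq(\tfrac32+\epsilon)\Delta(G)$. Fixing, say, $\epsilon=\tfrac12$, this gives $\chi(G^2)\leq 2\Delta(G)$ in the ``large $\Delta$'' regime, and multiplying by $\rho(G)=O(g)$ yields $\sigma(G)\leq O(g\,\Delta(G))$, as desired.

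The remaining case is the ``small $\Delta$'' regime $\Delta(G)<\Delta_0(g,\tfrac12)$. Here I would use the trivial bound $\chi(G^2)\leq\Delta(G^2)+1\leq\Delta(G)^2+1\leq\Delta(G)\,\Delta_0(g,\tfrac12)+1$. As long as $\Delta_0(g,\tfrac12)=O(g)$ (which is the content of the AEH bound; the threshold on $\Delta$ is linear in $g$), multiplying by $\rho(G)=O(g)$ still gives $\sigma(G)\leq O(g\,\Delta(G))$. Combining both regimes yields the claimed $O(g\,\Delta(G))$ upper bound.

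The main obstacle, and the only non-routine point, is tracking that the threshold $\Delta_0(g,\epsilon)$ from \citet{AEH13} grows at most linearly in $g$ so that the small-$\Delta$ case absorbs into the same $O(g\,\Delta)$ estimate; this is where one must cite the precise quantitative form of the AEH theorem rather than its asymptotic statement. Once that is in hand, the proof is just the chain $\sigma(G)\leq\rho(G)\,\chi(G^2)\leq O(g)\cdot O(\Delta(G))=O(g\,\Delta(G))$.
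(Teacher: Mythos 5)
Your main chain --- $\sigma(G)\le\rho(G)\,\chi(G^2)$ from \cref{SigmaBounds}, $\rho(G)\le 256\max\{2g,3\}=O(g)$ from \cref{GenusRho}, and the distance-two bound $\chi(G^2)\le(\tfrac32+\epsilon)\Delta(G)$ of \citet{AEH13} --- is exactly the paper's proof, which consists of nothing more than this one-line combination. The extra case analysis you add for $\Delta(G)<\Delta_0(g,\epsilon)$ goes beyond what the paper does (the paper simply reads the \citet{AEH13} result as $\chi(G^2)=O(\Delta)$ and never mentions the ``sufficiently large $\Delta$'' threshold), but as written your patch does not deliver the stated bound: from $\chi(G^2)\le\Delta(G)^2+1\le\Delta_0(g)\,\Delta(G)+1$ with $\Delta_0(g)=O(g)$, multiplying by $\rho(G)=O(g)$ gives $\sigma(G)\le O(g^2\Delta(G))$, not $O(g\Delta(G))$. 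So if one insists on an absolute implied constant valid for every $\Delta$, the sub-threshold regime needs a genuinely stronger distance-two bound (even the degeneracy bound $\chi(G^2)\le O(\sqrt{g}\,\Delta)$ only yields $O(g^{3/2}\Delta)$ here); if instead the $O(\cdot)$ is read as the paper implicitly reads it, the case split is unnecessary and your first two paragraphs already reproduce the paper's argument verbatim.
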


\begin{open}
What is the maximum nonrepetitive chromatic number of a graph with Euler genus $g$? \cref{GenusRho} proves a $O(g)$ upper bound. The best known lower bound follows from \cref{DegreeLowerBound} [Louis Esperet, personal communication, 2020]. In particular, \cref{DegreeLowerBound} implies there is a graph $G$ with $m\leq O(n^{3/2} \log^{1/2} n)$ edges and $\pi(G)\geq \Omega(n)$. Say $G$ has Euler genus $g$. Then $g \leq m$ and $\pi(G) \geq \Omega(n) \geq  \Omega( m^{2/3} / \log^{1/3} m)\geq \Omega( g^{2/3} / \log^{1/3} g)$.
\end{open}

%
%
%
%

%

\subsection{Minor-Closed Classes}
\label{MinorClosedClass}

This section proves the result of \citet{DEJWW20} that graphs excluding a fixed minor or fixed topological minor have bounded nonrepetitive chromatic number. The same method shows the analogous result for stroll-nonrepetitive chromatic number. The proof employs the following  graph minor structure theorem of \citet{RS-XVI}. A \emph{torso} of a tree-decomposition is a graph induced by a bag augmented with a clique on each intersection of that bag with another bag of the tree-decomposition. 

\begin{thm}[\citep{RS-XVI}]
\label{GMST}
For every graph $X$, there is an integer $k\geq 1$ such that every $X$-minor-free graph has a tree-decomposition in which each torso is $k$-almost-embeddable.
\end{thm}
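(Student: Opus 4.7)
The final statement is the Graph Minor Structure Theorem of Robertson and Seymour, the main result of their twenty-paper Graph Minors series. A genuine self-contained proof runs to hundreds of pages and introduces major new machinery (tangles, respectful linkages, flat walls, society decompositions, and surface near-embeddings), so any ``sketch'' here is a roadmap rather than an executable plan; in a survey such as this one, the only reasonable course is to quote the theorem as a black box.

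The overall architecture proceeds by fixing $X$ and splitting on treewidth. If an $X$-minor-free graph $G$ has treewidth bounded by some constant $f(|V(X)|)$, then a tree-decomposition of width $f(|V(X)|)$ suffices directly: each torso has bounded size and is $0$-almost-embeddable after padding with apex vertices. Otherwise, the Excluded Grid Theorem of Graph Minors~V yields a large wall $W\subseteq G$, and the flat-wall theorem of Graph Minors~XIII supplies the crucial dichotomy: either a subwall of $W$ is \emph{flat} in $G$ (nearly planar, with controlled attachments), giving an almost-embeddable piece attached to the rest of $G$ along a bounded apex set; or $W$ has high tangle order relative to $G$, which for $X$-minor-free $G$ forces (by the main theorem of Graph Minors~XVII) an approximate embedding in a surface of bounded Euler genus, with a bounded number of apex vertices and bounded-depth vortices.

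Given one such almost-embeddable piece together with its bounded apex separator, one iterates the decomposition on each component of what remains, placing the separator into every inherited bag so that the clique-sum structure is maintained. Induction on a suitable well-founded measure (e.g.\ the lexicographic pair consisting of $|V(G)|$ and the order of a witnessing tangle) ensures termination, and the bounds on apex number, vortex width, and genus collapse into a single parameter $k = k(X)$.

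The main obstacle is the dichotomy step: controlling how a large wall or a high-order tangle sits inside an $X$-minor-free graph and extracting from it either a flat region or a surface near-embedding with vortices. This is exactly what occupies the bulk of the Graph Minors project, and no substantially shorter argument is known, so I would not attempt to reproduce it here.
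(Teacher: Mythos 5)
The paper offers no proof of this statement at all: it is the Robertson--Seymour Graph Minor Structure Theorem, quoted verbatim from Graph Minors~XVI and used as a black box (the survey even omits the definition of $k$-almost-embeddable). Your proposal does exactly the same thing -- correctly identifying the result and declining to reproduce its proof -- so you are in agreement with the paper's treatment, and your roadmap of the Robertson--Seymour machinery is a reasonable, if necessarily schematic, summary.
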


We omit the definition of $k$-almost embeddable from this paper, since we do not need it. All we need to know is the following theorem of \citet{DJMMUW19,DJMMUW20}, where $A+B$ is the complete join of graphs $A$ and $B$. 

\begin{thm}[\citep{DJMMUW19,DJMMUW20}] 
\label{AlmostEmbeddableStructure}
Every $k$-almost embeddable graph is a subgraph of 
$$K_k + ( H\boxtimes P \boxtimes K_{\max\{6k,1\}} )$$ 
for some graph $H$ with treewidth at most $11k+10$ and for some path $P$. 
\end{thm}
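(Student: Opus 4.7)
The plan is to combine the product structure theorem for graphs of bounded Euler genus (which underlies \cref{GenusRho}) with two additional ingredients needed to handle the extra features of almost embeddability: apex vertices and vortices. Let $G$ be $k$-almost embeddable. By the definition of almost embeddability, $V(G) = A \cup V(G_0)$ where $|A| \leq k$ is the set of \emph{apex} vertices, which may be adjacent to anything, and $G_0$ admits a $2$-cell embedding in a surface of Euler genus at most $k$ to which at most $k$ \emph{vortices} are attached, each of depth at most $k$. A vortex attached to a face $F$ has a linear structure: its vertices admit a path-decomposition of width at most $k$ whose bags are indexed by the cyclic sequence of vertices on the boundary of $F$. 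The apex vertices are easily disposed of by placing them in the $K_k$ summand of the complete join $K_k + (\cdot)$, so it suffices to realise $G_0$ together with its vortices as a subgraph of $H \boxtimes P \boxtimes K_{\max\{6k,1\}}$ with $\tw(H) \leq 11k+10$.

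For the embedded skeleton of $G_0$ alone (ignoring the vortex interiors), I would invoke the surface product structure theorem that was already used in the proof of \cref{GenusRho}: such a graph embeds in $H_0 \boxtimes P \boxtimes K_{\max\{2g,3\}}$ with $\tw(H_0)$ bounded by an absolute constant, where $P$ corresponds to a BFS-layering of $G_0$ and $H_0$ encodes a tree-decomposition of a planarisation obtained by cutting the surface along $O(g)$ non-contractible curves. The central new work is to incorporate the vortices. For each vortex attached to a face $F$, I would refine the BFS-layering so that the boundary of $F$ meets only a bounded number of consecutive layers, then enlarge $H_0$ by attaching, at each vertex corresponding to a boundary vertex of $F$, the at most $k$ further vortex vertices whose bag is indexed there. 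Summed over at most $k$ vortices of depth $k$, this adds an $O(k)$ contribution to the treewidth, yielding $\tw(H) \leq 11k+10$; the $K$-factor widens from $K_{\max\{2k,3\}}$ to $K_{\max\{6k,1\}}$ to accommodate, above each vertex of $P$, the handle-traversing copies, the vortex-boundary copies, and the face-interior copies. A complete join with $K_k$ then reintroduces the apex vertices.

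The main obstacle will be the vortex absorption step. One must simultaneously (i) choose the BFS-layering of $G_0$ so that each vortex face has boundary meeting only boundedly many layers, so that the vortex's path-decomposition can be aligned with the ``column'' structure of the product; (ii) arrange that when the attachment cycles of different vortices interact, the resulting tree-decomposition of $H$ still has width only linear in $k$ rather than quadratic; and (iii) keep the $K$-factor linear rather than quadratic in $k$. The explicit constants $11k+10$ and $\max\{6k,1\}$ reflect precisely how this cost is balanced between $\tw(H)$ and the $K$-factor, and the careful bookkeeping of this trade-off is the heart of the cited result.
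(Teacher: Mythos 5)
The first thing to note is that the paper contains no proof of \cref{AlmostEmbeddableStructure} for you to be measured against: the survey imports it as a black box from \citep{DJMMUW19,DJMMUW20} and explicitly declines even to define $k$-almost embeddable, so the only meaningful comparison is with the cited source. Your outline does follow the same broad strategy as that source --- apex vertices handled by the complete join with $K_k$, the embedded part handled by the surface product structure theorem underlying \cref{GenusRho}, and the vortices absorbed into the layered partition by exploiting their path-decompositions indexed along the boundary of the attachment face. So the route is the right one in outline.

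As a proof, however, there is a genuine gap, and you have in effect acknowledged it yourself: the entire content of the theorem is the vortex-absorption step with its linear-in-$k$ accounting (quotient treewidth $11k+10$, clique factor $6k$), and your proposal defers exactly that step (``the careful bookkeeping of this trade-off is the heart of the cited result''). Worse, the one concrete mechanism you do propose for it would not work as stated: you cannot in general ``refine the BFS-layering so that the boundary of $F$ meets only a bounded number of consecutive layers.'' A facial cycle of length $m$ in the embedded graph can meet on the order of $m$ distinct BFS layers, and no refinement that remains a layering with bounded layered width confines it to boundedly many. The argument in \citep{DJMMUW19,DJMMUW20} does not need such confinement; instead each vortex vertex is assigned to the part and layer of a suitable boundary vertex whose bag contains it, and the width and treewidth bounds are obtained by analysing how the parts of the layered $H$-partition meet the facial cycle (the genus-case theorem is used in a strengthened form adapted to the vortex faces, not simply as the black box you invoke). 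Until that assignment and its accounting are actually carried out --- including point (ii) of your own list, that several interacting vortices only cost additively --- the statement, and in particular the specific bounds $11k+10$ and $\max\{6k,1\}$, is not established by your argument.
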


\begin{lem}[\citep{DEJWW20}]
\label{AlmostEmbeddablePi}
For every $k$-almost embeddable graph $G$, 
$$\pi(G) \leq \rho(G)  \leq k+ 6k \cdot 4^{11(k+1)}.$$
\end{lem}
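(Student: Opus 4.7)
The plan is to combine the structure theorem \cref{AlmostEmbeddableStructure} with the product bound \cref{ProductGraphPathTreewidth}, and then handle the apex clique $K_k$ by assigning it unique colours. Specifically, by \cref{AlmostEmbeddableStructure}, $G$ is a subgraph of $K_k + (H\boxtimes P \boxtimes K_{\max\{6k,1\}})$ for some path $P$ and some graph $H$ with $\tw(H)\leq 11k+10$. Let $G_0 := H\boxtimes P \boxtimes K_{\max\{6k,1\}}$. By \cref{ProductGraphPathTreewidth},
\begin{equation*}
\rho(G_0) \leq \max\{6k,1\}\cdot 4^{\tw(H)+1} \leq 6k\cdot 4^{11k+11} = 6k\cdot 4^{11(k+1)}.
\end{equation*}

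Next, I would construct an explicit colouring $\phi$ of $G$ as follows. Fix a stroll-nonrepetitive colouring $\phi_0$ of $G_0$ using a palette $C_0$ of size at most $6k\cdot 4^{11(k+1)}$. Let $C_1$ be a set of $k$ fresh colours disjoint from $C_0$. Colour each vertex of $G_0$ by $\phi_0$, and assign the $k$ vertices of the apex clique $K_k$ distinct colours from $C_1$. This uses at most $k + 6k\cdot 4^{11(k+1)}$ colours in total, which matches the claimed bound.

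The key verification is that $\phi$ is stroll-nonrepetitive on $G$. Suppose for contradiction that $W=(v_1,\dots,v_{2t})$ is a $\phi$-repetitive stroll in $G$, so $v_i\neq v_{t+i}$ for each $i\in\{1,\dots,t\}$. If some $v_i$ lies in $V(K_k)$, then $\phi(v_{t+i})=\phi(v_i)\in C_1$, and since each colour of $C_1$ is used on exactly one vertex, $v_{t+i}=v_i$, contradicting that $W$ is a stroll. Hence all vertices of $W$ lie in $V(G_0)$. Since $G\subseteq K_k + G_0$ and the edges of $K_k+G_0$ between two vertices of $V(G_0)$ are exactly the edges of $G_0$, every consecutive pair in $W$ is joined by an edge of $G_0$, so $W$ is a stroll in $G_0$. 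But then $W$ is a $\phi_0$-repetitive stroll in $G_0$, contradicting the stroll-nonrepetitiveness of $\phi_0$.

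There is no serious obstacle here: the work has already been done in \cref{AlmostEmbeddableStructure} and \cref{ProductGraphPathTreewidth}. The only point to be careful about is the handling of the apex clique $K_k$, and the above argument shows that assigning distinct new colours to its vertices is enough because any stroll passing through $K_k$ would be forced by colour uniqueness to revisit the same apex vertex at symmetric positions, violating the stroll condition.
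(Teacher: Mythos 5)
Your proposal is correct and follows essentially the same route as the paper: it combines \cref{AlmostEmbeddableStructure} with the product/treewidth bounds on $\rho$ (the paper chains \cref{ProductRho,TreewidthRho} where you invoke the packaged \cref{ProductGraphPathTreewidth}), and handles the apex clique by adding $k$ fresh colours, which is exactly the content of the paper's unproved observation that $\rho(G+K_k)=\rho(G)+k$ — your explicit uniqueness argument for strolls through $K_k$ just supplies the (easy) proof of the direction needed.
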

\begin{proof}
Observe that $\rho(G+K_k)=\rho(G)+k$ for every graph $G$ and integer $k\ge 0$. Thus \cref{AlmostEmbeddableStructure,ProductRho,TreewidthRho} imply that for every $k$-almost embeddable graph $G$, 
\begin{align*}
\pi(G) \leq \rho(G) 
& \leq \rho( K_k + ( H\boxtimes P \boxtimes K_{\max\{6k,1\}}) )  \\
& \leq k+ 6k \, \rho(H\boxtimes P)  \\
& \leq k+ 6k \cdot 4  \rho(H)  \\
& \leq k+ 6k \cdot 4^{11(k+1)}.\qedhere
\end{align*}
\end{proof}

A tree-decomposition $(B_x :x\in V(T))$ of a graph $G$ has adhesion $r$ if $|B_x\cap B_y|\leq r$ for each edge $xy\in E(T)$. \citet{DMW17} proved the following useful result in the case of $\pi$. The same proof works for $\rho$. We delay the proof of \cref{RichColour} until \cref{RichColourSubsection}.

\begin{lem}[\citep{DMW17}] 
\label{RichColour}
Let $G$ be a graph that has a tree-decomposition with adhesion $r$. 
Then 
\begin{align*}
(a) \quad  \pi(G)  &\leq  4^r \,\max_H \pi(H)\\
(b) \quad \rho(G)  &\leq  4^r \,\max_H \rho(H).
\end{align*}
where both maximums are taken over the torsos $H$ of the tree-decomposition. 
\end{lem}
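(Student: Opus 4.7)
The plan is to prove both parts by induction on the adhesion $r$, mirroring the proof structure of \cref{TreewidthRho} but replacing the BFS-layering-within-chordal-completion argument by a direct peeling of adhesion representatives. Throughout, let $c:=\max_H \pi(H)$ for part (a) and $c:=\max_H \rho(H)$ for part (b). Root $T$ at an arbitrary node and, for each vertex $v\in V(G)$, write $T_v$ for the subtree of $T$ induced by bags containing $v$ and let $x_v$ be the top (root-most) node of $T_v$.

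For the base case $r=0$, every adhesion set is empty. For each vertex $v$, this forces $T_v=\{x_v\}$, since an edge of $T$ inside $T_v$ would give a non-empty adhesion. Therefore every edge of $G$ lies inside a single $G[B_x]=H_x$, so $G$ is the disjoint union of its torsos, and taking nonrepetitive (respectively stroll-nonrepetitive) $c$-colourings of each $H_x$ from a common palette produces the desired colouring of $G$.

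For the inductive step $r\ge 1$, I would, for each non-root node $x$ with $A_x:=B_x\cap B_{\mathrm{parent}(x)}\neq\emptyset$, pick a distinguished vertex $s_x\in A_x$ and set $S:=\{s_x\}$. Then $(B_x\setminus S:x\in V(T))$ is a tree-decomposition of $G-S$ with adhesion at most $r-1$; moreover each of its torsos is an induced subgraph of the corresponding torso of the original decomposition, so still admits a (stroll-)nonrepetitive $c$-colouring. Induction yields a (stroll-)nonrepetitive colouring $\psi$ of $G-S$ using at most $4^{r-1}c$ colours. To handle $S$, I would use \cref{PathSigma} to pick a walk-nonrepetitive $4$-colouring $\tau$ of an infinite path, and colour $v\in V(G)$ by the pair $\phi(v):=\bigl(\tau(\mathrm{depth}_T(x_v)),\psi'(v)\bigr)$, where $\psi'$ extends $\psi$ to $S$ in a controlled way (for $v\in S$ the second coordinate records the torso colour of $v$ in $H_{x_v}$). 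This uses $4\cdot 4^{r-1}c=4^r c$ colours.

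The main obstacle is showing that $\phi$ is (stroll-)nonrepetitive. The argument will mirror the proofs of \cref{ShadowCompletePi,ShadowCompleteRho}: given a $\phi$-repetitive path (respectively stroll) $W=(v_1,\dots,v_{2t})$, its projection $(x_{v_1},\dots,x_{v_{2t}})$ is a $\tau$-repetitive lazy walk in $T$, which by \cref{LazyWalks} is boring, so $x_{v_i}$ and $x_{v_{t+i}}$ lie at the same depth for every $i$. Combined with the torso structure, this forces the subsequence of $W$ lying in any fixed layer to form a path/stroll in a single torso (using the adhesion cliques introduced in the torso to bridge excursions through descendant bags, exactly as shadow-completeness is used in \cref{ShadowCompletePi,ShadowCompleteRho}), contradicting nonrepetitivity of the torso colouring. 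For part (b) the analogous argument additionally invokes \cref{LazyStroll} to reduce repetitive strolls to lazy strolls within a single torso. The trickiest detail will be verifying that the vertices in $S$ occurring along $W$ align correctly under the repetition so that removing them leaves a valid repetitive configuration in $G-S$, to which the induction hypothesis applies.
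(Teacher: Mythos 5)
Your base case and your observation that deleting a transversal $S$ of the adhesion sets leaves a tree-decomposition of $G-S$ with adhesion at most $r-1$ (whose torsos are induced subgraphs of the original torsos) are both correct. The proof fails at the combination step. The function $v\mapsto \mathrm{depth}_T(x_v)$ is not a layering of $G$: two adjacent vertices can have their topmost bags at arbitrarily different depths (take $B_{t_0}=\{a\}$, $B_{t_1}=\{a\}$, $B_{t_2}=\{a,b\}$ with $ab\in E(G)$; already with adhesion $1$ the edge $ab$ joins depth $0$ to depth $2$, and the gap can be made arbitrarily large since a vertex may persist through many bags). Consequently the projection of a $\phi$-repetitive path onto the sequence of depths is not a lazy walk in a path, so \cref{LazyWalks} cannot be invoked to conclude that it is boring, and the whole mechanism by which you intend to force $v_i$ and $v_{t+i}$ into the same ``layer'' collapses. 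Moreover, even if same-depth alignment were granted, the vertices at a fixed depth are spread over many different torsos and over the deleted set $S$, so the compressed subsequence does not live in any single inductively coloured object: the inductive hypothesis colours $G-S$, your second coordinate on $S$ is a torso colour from an unrelated palette, and nothing rules out a repetition in which $S$-vertices of equal depth but lying in different torsos are matched to one another. The sentence flagged as ``the trickiest detail'' is in fact the entire content of the lemma, and no argument is offered for it.

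For comparison, the paper does not layer by tree depth at all. It first adds the adhesion cliques to $G$ (so the bags induce the torsos and the decomposition becomes $r$-rich), and then \cref{RichShadow} produces a layering by BFS distance in the graph obtained by completing every bag to a clique; the technical work there is to show that this layering is shadow-complete, that shadows have size at most $r$, and—crucially—that each layer inherits an $(r-1)$-rich tree-decomposition, because for every bag meeting a layer some adhesion vertex is pulled into the previous layer along a shortest path. The factor $4$ per level then comes from \cref{ShadowCompletePi,ShadowCompleteRho} applied through the iteration in \cref{ShadowPi,RichColourPi}. Your ``peel one representative per adhesion set'' step decreases the adhesion but does not produce a shadow-complete layering of $G$, which is what the product-with-a-$4$-colouring argument actually requires; if you want to salvage an induction on $r$, you need something playing the role of \cref{RichShadow} rather than the set $S$.
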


The following theorem of \citet{DEJWW20} confirms a conjecture of \citet{Grytczuk-DM08,Gryczuk-IJMMS07}.

\begin{thm}[\citep{DEJWW20}]
\label{MinorPiRho}
For every graph $X$, there is an integer $c$ such that for every $X$-minor-free graph $G$,
$$\pi(G) \leq \rho(G) \leq c.$$
\end{thm}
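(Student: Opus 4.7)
The plan is to combine the Robertson--Seymour Graph Minor Structure Theorem (\cref{GMST}) with the two tools already established: the constant bound $\rho(H) \leq k + 6k \cdot 4^{11(k+1)}$ for $k$-almost embeddable torsos (\cref{AlmostEmbeddablePi}), and the tree-decomposition gluing lemma $\rho(G) \leq 4^r \max_H \rho(H)$ for tree-decompositions of adhesion $r$ (\cref{RichColour}). Everything is in place; what remains is to assemble these ingredients.

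First I would invoke \cref{GMST} to fix, for the given excluded minor $X$, an integer $k=k(X)$ such that every $X$-minor-free graph $G$ admits a tree-decomposition whose every torso is $k$-almost embeddable. The standard form of the Robertson--Seymour theorem in fact guarantees that this tree-decomposition can be taken to have bounded adhesion --- more precisely, adhesion at most some $r=r(X)$, depending only on $X$ (this is part of the structure theorem, even though the version stated as \cref{GMST} in the excerpt does not spell it out). So I would explicitly appeal to this strengthened form and fix $r$.

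Next, I would apply \cref{AlmostEmbeddablePi} to every torso $H$ of this decomposition, obtaining $\rho(H) \leq k + 6k \cdot 4^{11(k+1)}$, a constant depending only on $k$, hence only on $X$. Then I would apply \cref{RichColour}(b) to the tree-decomposition of $G$, yielding
\[
\rho(G) \;\leq\; 4^r \max_H \rho(H) \;\leq\; 4^r \bigl(k + 6k \cdot 4^{11(k+1)}\bigr) \;=:\; c,
\]
which depends only on $X$. Since $\pi(G) \leq \rho(G)$ always, this gives both inequalities simultaneously.

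The only genuinely delicate point is the adhesion bound: the version of the structure theorem recorded as \cref{GMST} in the excerpt is stated purely in terms of almost-embeddable torsos and does not mention adhesion, so one must either cite the stronger form of Robertson--Seymour (which does give bounded adhesion) or observe that \cref{RichColour} still applies in this setting because the intersections between adjacent bags are cliques in each torso of bounded size (a standard feature of the structure theorem). Everything else in the proof is routine bookkeeping; the real content sits in \cref{AlmostEmbeddablePi} and \cref{RichColour}, both of which have already been proved.
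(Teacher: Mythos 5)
Your assembly of \cref{GMST}, \cref{AlmostEmbeddablePi} and \cref{RichColour} is exactly the paper's route, and the only place you diverge is the step you yourself flag as delicate: how to bound the adhesion. You resolve it by appealing to a strengthened form of the Robertson--Seymour structure theorem (bounded adhesion, or bounded-size clique intersections) that is not part of \cref{GMST} as stated. That is mathematically legitimate, but the paper gets the adhesion bound for free, without any stronger citation: once \cref{AlmostEmbeddablePi} guarantees that every torso is (stroll-)nonrepetitively $c$-colourable with $c := k + 6k\cdot 4^{11(k+1)}$, and since every nonrepetitive colouring is proper, no torso can contain a clique on more than $c$ vertices; as $B_x\cap B_y$ is by definition a clique in the torso at $x$, the adhesion of the decomposition is automatically at most $c$. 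Then \cref{RichColour}(b) with $r=c$ gives $\pi(G)\leq\rho(G)\leq c\,4^c$. So the paper's argument is self-contained relative to the stated version of \cref{GMST}, whereas yours imports an extra (true) fact about the structure theorem; if you prefer not to cite the stronger form, replace your adhesion step by the observation that the colour bound on the torsos already caps the clique sizes, hence the adhesion. Everything else in your proposal matches the paper, and both versions yield a constant depending only on $X$.
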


\begin{proof}
\cref{AlmostEmbeddablePi,GMST} imply that every $X$-minor-free graph $G$ has a tree-decomposition $(B_x:x\in V(T))$ such that each torso is nonrepetitively $c$-colourable (where $c:= k+ 6k \cdot 4^{11(k+1)}$). For each edge $xy\in E(T)$, since $B_x\cap B_y$ induces a clique in each torso, the adhesion of $(B_x:x\in V(T))$ is at most $c$. By \cref{RichColour}, $\pi(G)  \leq \rho(G) \leq c\,4^c$, as desired. 
\end{proof}

\begin{open}
What is the maximum nonrepetitive chromatic number of $K_t$-minor-free graphs? Since the above proof depends on the Graph Minor Structure Theorem (\cref{GMST}) the constant in \cref{MinorPiRho} is huge. It would be very interesting to prove \cref{MinorPiRho} without using the Graph Minor Structure Theorem. 
\end{open}

$K_t$-minor-free graphs are $O(t\sqrt{\log t})$-degenerate \citep{Kostochka82,Kostochka84,Thomason84,Thomason01}. Thus \cref{SigmaPiDegen,MinorPiRho} implies the following bounds on the walk-nonrepetitive chromatic number of graphs excluding a fixed minor. 

\begin{thm}
\label{MinorSigma}
For every graph $X$, there is an integer $c$ such that 
for every $X$-minor-free graph $G$ with maximum degree $\Delta$,
$$\Delta+1 \leq \sigma(G) \leq c \Delta.$$
\end{thm}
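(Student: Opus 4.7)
The lower bound $\Delta+1 \le \sigma(G)$ is immediate from \cref{SigmaBounds} (or indeed just from the observation that every walk-nonrepetitive colouring is proper and distance-$2$, so $G^2$ contains a clique on $\Delta+1$ vertices). So the work is entirely in the upper bound, and the plan is to combine the two tools cited in the hint: \cref{MinorPiRho} and \cref{SigmaPiDegen}.

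First I would observe that any $X$-minor-free graph is $K_t$-minor-free for $t:=|V(X)|$, and so by the classical Kostochka--Thomason bound \citep{Kostochka82,Kostochka84,Thomason84,Thomason01} is $d$-degenerate for some $d=d(X)=O(t\sqrt{\log t})$ depending only on $X$. In particular $d$ is a constant once $X$ is fixed. (One does not actually need the full Kostochka--Thomason bound here: any $f(X)$-degeneracy bound, such as the trivial one coming from the fact that $X$-minor-free graphs have bounded average degree, suffices for the conclusion.)

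Next, by \cref{MinorPiRho} applied to the class of $X$-minor-free graphs, there is an integer $c_0=c_0(X)$ such that $\rho(G)\le c_0$ for every $X$-minor-free graph $G$. Since the $X$-minor-free property is closed under taking subgraphs, we may freely combine this with the degeneracy bound.

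Finally, I would invoke \cref{SigmaPiDegen}, which states that every $d$-degenerate graph $H$ satisfies $\sigma(H)\le \rho(H)(d\,\Delta(H)+1)$. Plugging in our bounds gives
\[
\sigma(G) \le \rho(G)\bigl(d\,\Delta(G)+1\bigr) \le c_0\bigl(d\,\Delta+1\bigr) \le c\,\Delta,
\]
for $c:=c_0(d+1)$, which depends only on $X$ (we may assume $\Delta\ge 1$, else $G$ is edgeless and the statement is trivial). There is no real obstacle here; the entire content of the theorem has been off-loaded to \cref{MinorPiRho}, which encapsulates the deep structural ingredient (the Graph Minor Structure Theorem via \cref{AlmostEmbeddablePi} and \cref{RichColour}), while \cref{SigmaPiDegen} provides the clean ``distance-$2$'' factor of $d\,\Delta+1$ that converts a stroll-nonrepetitive bound into a walk-nonrepetitive one.
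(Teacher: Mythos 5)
Your proof is correct and follows essentially the same route as the paper: the lower bound comes from \cref{SigmaBounds}, and the upper bound combines the $O(t\sqrt{\log t})$-degeneracy of $K_t$-minor-free graphs with \cref{MinorPiRho} and \cref{SigmaPiDegen}, exactly as the paper does. Nothing further is needed.
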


To obtain  results for graphs excluding a topological minor we use the following version of the structure theorem of \citet{GM15}.

\begin{thm}[\citep{GM15}] 
\label{GroheMarx}
For every graph $X$, there is a constant $k$ such that every graph excluding $X$ as a topological minor has a tree-decomposition such that each torso is $k$-almost-embeddable or has at most $k$ vertices with degree greater than $k$.
\end{thm}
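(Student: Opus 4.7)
The theorem extends the Robertson--Seymour graph minor structure theorem (\cref{GMST}) to the exclusion of topological minors. The essential new feature is that excluding $X$ as a topological minor is strictly weaker than excluding $X$ as a minor: in particular, the presence of many vertices of very high degree is compatible with topological minor exclusion (subdividing edges preserves the topological minor relation), while any graph with a vertex of degree at least a suitable function of $|V(X)|$ contains $X$ as a minor. This dichotomy suggests the shape of the conclusion: either a torso is $k$-almost-embeddable (as in \cref{GMST}), or the departure from the minor-closed setting is explained by a small set of high-degree vertices.

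My plan is to invoke \cref{GMST} as a black box with respect to exclusion of a sufficiently large complete graph, and then argue that the deviation from almost-embeddability is concentrated at few high-degree vertices. First choose an integer $t$ large enough that every graph containing $K_t$ as a minor and having maximum degree bounded in terms of $X$ also contains $X$ as a topological minor. Given a graph $G$ excluding $X$ as a topological minor, let $S$ be the set of vertices of degree greater than $k$. The rough idea is to separate $S$ out: the graph $G-S$ has bounded average degree and, because of the topological minor exclusion, should contain no $K_t$-minor once we also forbid linkages that route through $S$. Apply \cref{GMST} to obtain a tree-decomposition of $G-S$ into $k$-almost-embeddable torsos, and then reinsert the vertices of $S$ into bags, splitting the decomposition as needed so that each bag acquires only few vertices from $S$.

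The central technical ingredient is a Bollob\'as--Thomason / Koml\'os--Szemer\'edi-style extraction lemma: if a torso of the resulting decomposition contains more than $k$ vertices whose degree in $G$ exceeds $k$, then one can use Menger's theorem to link these vertices through the rest of the torso and through the high-degree vertices already placed in neighbouring bags, producing a topological $X$-minor with the high-degree vertices as branch vertices. This contradicts the hypothesis on $G$, and forces each torso to have at most $k$ vertices of degree exceeding $k$.

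The main obstacle is the Menger-style linkage argument: one must simultaneously route many internally disjoint paths between prescribed high-degree vertices inside a piece whose global structure is constrained by the almost-embeddable decomposition of $G-S$, in particular the vortex regions. Carrying this out uniformly for every $k$-almost-embeddable torso, with all interactions between $S$ and the vortices handled correctly, is the difficult core of the theorem; it is substantially harder than the original \cref{GMST} and constitutes the bulk of the Grohe--Marx paper.
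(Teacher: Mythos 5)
You are trying to prove a statement that this survey does not prove at all: \cref{GroheMarx} is quoted directly from Grohe and Marx \citep{GM15} and used as a black box (in the proof of \cref{TopoMinorPi}), so there is no internal argument to compare yours against; your sketch has to stand on its own, and it does not. The decisive gap is the step where you delete the set $S$ of vertices of degree greater than $k$ and apply \cref{GMST} to $G-S$ on the grounds that it should exclude a large clique minor. That is false: a graph of maximum degree $k$ can contain arbitrarily large clique minors (bounded-degree expanders already do) while excluding as a topological minor every graph having a vertex of degree larger than $k$ --- in particular most choices of $X$ --- simply because a subdivision of $X$ needs branch vertices whose degree in $G$ is at least their degree in $X$. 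So $G-S$ need not satisfy the hypothesis of the minor structure theorem, and no Menger/linkage refinement can repair this, because such expander-like pieces are precisely why the second outcome (``at most $k$ vertices of degree greater than $k$'') appears in the conclusion: those torsos are genuinely not almost-embeddable, and the theorem does not claim they are. Your auxiliary claim that a single vertex of degree at least $f(|V(X)|)$ forces an $X$-minor is also wrong (stars have huge maximum degree and no $K_4$-minor), so the dichotomy you build on is not the right one.

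Consequently the theorem cannot be obtained by black-boxing \cref{GMST} and cleaning up high-degree vertices afterwards. Grohe and Marx instead rework the graph-minors structure machinery itself, analysing each highly connected piece (via tangles and a flat-wall-type argument adapted to topological minors) and showing it is either almost-embeddable or has bounded degree apart from at most $k$ vertices; that reworking, which your proposal explicitly defers as ``the bulk of the Grohe--Marx paper'', is the theorem. For the purposes of this survey the correct move is the one the paper makes: cite \citep{GM15} and use the statement as given.
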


\begin{thm}
\label{TopoMinorPi}
For every graph $X$, there is an integer $c$ such that 
for every $X$-topological-minor-free graph $G$,
$$\pi(G) \leq c.$$
\end{thm}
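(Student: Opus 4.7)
The plan is to mimic the proof of \cref{MinorPiRho}, replacing the Graph Minor Structure Theorem with the Grohe--Marx theorem (\cref{GroheMarx}) and handling the two types of torsos separately. So first I would apply \cref{GroheMarx} to obtain a tree-decomposition $(B_x : x \in V(T))$ of $G$ in which each torso is either $k$-almost-embeddable or has at most $k$ vertices of degree greater than $k$. The goal is then to show that each torso has nonrepetitive chromatic number bounded by some constant $c_1=c_1(k)$, and that the adhesion is bounded by some constant $r=r(k)$; then \cref{RichColour} finishes the argument with $c := 4^r c_1$.

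Next, I would bound $\pi$ for each type of torso. For $k$-almost-embeddable torsos, \cref{AlmostEmbeddablePi} gives $\pi(H) \leq k + 6k \cdot 4^{11(k+1)}$. For a torso $H$ of the second type, let $S$ be the set of at most $k$ vertices of degree greater than $k$ in $H$, and let $H' := H - S$. Every vertex of $H'$ has degree at most $k$ in $H$ and hence also in $H'$, so by \cref{DeltaSquared} (or \cref{2DeltaSquared}) we have $\pi(H') \leq O(k^2)$. To extend a nonrepetitive colouring of $H'$ to $H$, assign each of the at most $k$ vertices of $S$ a private new colour. Any repetitively coloured path in $H$ would force each vertex of $S$ that appears in the first half to equal the corresponding vertex in the second half (since its colour is used only once); removing these matched pairs yields a repetitively coloured path in $H'$, a contradiction. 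Hence $\pi(H) \leq k + O(k^2)$ for this type of torso.

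The remaining step is to bound the adhesion. Each adhesion set $B_x \cap B_y$ is a clique in the corresponding torsos, so it suffices to bound the clique number in each type of torso. In a $k$-almost-embeddable graph the clique number is bounded by a function of $k$ (which can be read off from \cref{AlmostEmbeddableStructure}, since $K_k + (H \boxtimes P \boxtimes K_{\max\{6k,1\}})$ with $\tw(H) \leq 11k+10$ has bounded clique number). In a graph $H$ with at most $k$ vertices of degree greater than $k$, any clique of size $k+2$ would contain two vertices of $V(H) \setminus S$, each of which would then have degree at least $k+1$ in $H$, contradicting the choice of $S$; so the clique number is at most $k+1$. Taking $r$ to be the larger of these two bounds and combining with the per-torso bound on $\pi$ via \cref{RichColour}(a) gives the required constant $c$.

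The main obstacle is really just setting up the second type of torso: verifying cleanly that adding private colours for a bounded number of vertices preserves nonrepetitiveness, and making sure the adhesion bound can be extracted uniformly. Once these routine points are handled, the structure of the proof is parallel to that of \cref{MinorPiRho}, and the same argument with $\rho$ in place of $\pi$ (using \cref{RichColour}(b)) would give the analogous statement for stroll-nonrepetitive chromatic number, though the theorem as stated only asks about $\pi$.
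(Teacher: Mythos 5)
Your proposal is correct and follows essentially the same route as the paper: apply \cref{GroheMarx}, bound $\pi$ of each torso via \cref{AlmostEmbeddablePi} for the $k$-almost-embeddable case and via the bounded-degree bound \cref{DeltaSquared} plus private colours for the at most $k$ high-degree vertices, bound the adhesion, and finish with \cref{RichColour}. The only cosmetic difference is in the adhesion bound: you bound the clique number of each torso type directly, whereas the paper simply observes that each adhesion set is a clique in a torso and any nonrepetitively $c$-colourable graph has clique number at most $c$, so the adhesion is at most $c$.
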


\begin{proof}
\cref{AlmostEmbeddablePi} says that every $k$-almost embeddable graph is nonrepetitively $c_1$-colourable, where $c_1:= k+ 6k \cdot 4^{11(k+1)}$. Equation~\cref{DeltaSquared} implies that if a graph has at most $k$ vertices with degree greater than $k$, then it is nonrepetitively $c_2$-colourable, where $c_2 := k^2+O(k^{5/3}) + k$. Let $c := \max\{c_1,c_2\}$. \cref{GroheMarx} implies that every topological-minor-free graph $G$ has a tree-decomposition $(B_x:x\in V(T))$ such that each torso is nonrepetitively $c$-colourable. For each edge $xy\in E(T)$, since $B_x\cap B_y$ induces a clique in each torso, the adhesion of $(B_x:x\in V(T))$ is at most $c$. By \cref{RichColour}, $\pi(G)  \leq  c\,4^c$, as desired. 
\end{proof}

%
%

\subsection{Proof of \cref{RichColour}}
\label{RichColourSubsection}

A tree decomposition $(B_x\subseteq V(G):x\in V(T))$ of a graph $G$ is \emph{$k$-rich} if $B_x\cap B_y$ is a clique in $G$ on at most $k$ vertices, for each edge $xy\in E(T)$. Rich tree decomposition are implicit in the graph minor structure theorem, as demonstrated by the following lemma.

\begin{lem}[\citep{DMW17}]
\label{ProduceRichDecomp}
For every fixed graph $H$ there are constants $k\geq 1$ and $\ell\geq 1$ depending only on $H$, such that every $H$-minor-free graph $G_0$ is a spanning subgraph of a graph $G$ that has a $k$-rich tree decomposition such that each bag induces an $\ell$-almost-embeddable subgraph of $G$. 
\end{lem}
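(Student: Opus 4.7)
The plan is to invoke the Graph Minor Structure Theorem (\cref{GMST}) directly and then enlarge $G_0$ minimally so that adhesion sets become cliques. First, applying \cref{GMST} to $G_0$ yields a tree-decomposition $(B_x : x \in V(T))$ of $G_0$ such that each torso is $\ell$-almost-embeddable, where $\ell=\ell(H)$ depends only on $H$.

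Second, I would define $G$ to be the graph obtained from $G_0$ by adding, for each edge $xy \in E(T)$, all missing edges inside the adhesion set $B_x \cap B_y$, thereby turning each adhesion into a clique. By construction $G_0$ is a spanning subgraph of $G$. Since each added edge lies inside an existing bag, the family $(B_x : x \in V(T))$ remains a valid tree-decomposition, now of $G$. For each $x \in V(T)$, the induced subgraph $G[B_x]$ is precisely the torso of $B_x$ in the original tree-decomposition of $G_0$, and is therefore $\ell$-almost-embeddable. The adhesion sets $B_x \cap B_y$ are cliques in $G$ by construction, so only their size remains to be controlled.

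The main obstacle is bounding the adhesion: since each $B_x \cap B_y$ is a clique inside the $\ell$-almost-embeddable torso $G[B_x]$, it suffices to prove that every $\ell$-almost-embeddable graph has clique number at most some $k=k(\ell)$. This is a standard structural fact: an $\ell$-almost-embeddable graph is built from a graph embeddable in a surface of Euler genus at most $\ell$, equipped with at most $\ell$ vortices each of width at most $\ell$, together with at most $\ell$ apex vertices. A clique can contribute at most $O(\sqrt{\ell})$ vertices from the embedded part (since $K_n$ has Euler genus $\Omega(n^2)$), at most $\ell+1$ vertices from each vortex (as vortices have bounded pathwidth in their width), and at most $\ell$ apex vertices, giving a total bound $k=k(\ell)$. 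Taking this $k$ together with $\ell=\ell(H)$ from \cref{GMST} yields the desired $k$-rich tree-decomposition of $G$ whose bags induce $\ell$-almost-embeddable subgraphs of $G$, completing the proof.
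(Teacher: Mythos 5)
Your proposal is correct and follows essentially the same route as the paper: apply the Graph Minor Structure Theorem, add a clique on each adhesion set so that each bag induces its torso, and bound the adhesion by the fact that cliques in $\ell$-almost-embeddable graphs have size at most some $k(\ell)$ (which the paper simply cites as an observation of \citet{DMW17}, while you sketch its proof).
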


\begin{proof}
By \cref{GMST}, there is a constant $\ell=\ell(H)$ such that $G_0$ has a tree decomposition $\mathcal{T}:=(B_x\subseteq V(G):x\in V(T))$ in which each torso is $\ell$-almost-embeddable. Let $G$ be the graph obtained from $G$ by adding a clique on  $B_x\cap B_y$ for each edge $xy\in E(T)$. Let $\mathcal{T'}$ be the tree decomposition of $G$  obtained from $\mathcal{T}$. Each bag of $\mathcal{T'}$ is the torso of the corresponding bag of $\mathcal{T}$, and thus induces an $\ell$-almost-embeddable subgraph of $G$. \citet{DMW17} observed that there is a constant $k$ depending only on $\ell$ such that every clique in an $\ell$-almost embeddable graph has size at most $k$. Thus $\mathcal{T'}$ is a $k$-rich tree decomposition of $G$. 
\end{proof}


The following lemma by \citet{DMW17} generalises \cref{ShadowCompleteChordal}. For a subgraph $H$ of a graph $G$, a tree decomposition  $(C_y\subseteq V(H):y\in V(F))$  of  $H$ is  \emph{contained in} a tree decomposition  $(B_x\subseteq V(G):x\in V(T))$ of $G$ if for each bag $C_y$ there is a bag $B_x$ such that $C_y\subseteq B_x$. 

\begin{lem}[\citep{DMW17}]
\label{RichShadow}
Let $G$ be a graph with a  $k$-rich tree decomposition $\mathcal{T}$ for some $k\geq 1$. Then $G$ has a shadow-complete layering $(V_0,V_1,\dots,V_t)$ such that every shadow has size at most $k$, and for each $i\in\{0,\dots,t\}$, the subgraph $G[V_i]$ has a  $(k-1)$-rich tree decomposition contained in $\mathcal{T}$.
\end{lem}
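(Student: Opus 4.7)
The plan is to root the decomposition tree $T$ at an arbitrary node $r$, and to read off the layering directly from this rooting. For every vertex $v \in V(G)$ the set $T_v := \{x \in V(T) : v \in B_x\}$ is a non-empty subtree of $T$, so it has a well-defined root $t(v)$ closest to $r$; for every non-root $x$ I write $a(x) := B_x \cap B_{p(x)}$ for the adhesion, which by hypothesis is a clique in $G$ of size at most $k$. I would then define a level $\ell(v) \in \mathbb{Z}_{\geq 0}$ for each $v$ in terms of the tree position of $T_v$ (roughly, based on the depth of the deepest node of $T_v$, shifted by one whenever $|T_v|>1$) and take $V_i := \{v \in V(G) : \ell(v) = i\}$. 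The key structural fact driving everything is this: if $vw \in E(G)$ and $\mathrm{depth}(t(v)) < \mathrm{depth}(t(w))$, then $T_v$ must contain $t(w)$ and $p(t(w))$, so $v \in B_{t(w)} \cap B_{p(t(w))} = a(t(w))$. Thus the "lower" endpoint of every edge is forced into an adhesion near the "upper" endpoint, and it is this that I would use to pin down $\ell$ and force $|\ell(v)-\ell(w)| \leq 1$.

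Given the layering, the shadow-complete property is the second step. Let $H$ be a connected component of $G[V_i \cup V_{i+1} \cup \cdots]$ with $i \geq 1$, and set $T_H := \bigcup_{v \in H} T_v$. Since $G$-adjacent vertices have intersecting subtrees, $T_H$ is a connected subtree of $T$; since $V_0 \supseteq B_r$ and $H$ avoids $V_0$, the root $r$ does not lie in $T_H$, so $T_H$ has a well-defined root $x^* \in V(T)$ with parent $p^* := p(x^*)$. The main claim is that the shadow $X := N_G(H) \cap V_{i-1}$ is contained in $a(x^*)$: for any $u \in X$ with neighbour $v \in H$, the bag $B_z$ containing both $u$ and $v$ lies in the subtree rooted at $x^*$, so the path in $T$ from $t(u)$ to $z$ passes through both $p^*$ and $x^*$, placing $u$ in $B_{x^*} \cap B_{p^*} = a(x^*)$. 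This gives the clique property and the size bound $|X| \leq |a(x^*)| \leq k$ simultaneously.

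For the $(k-1)$-rich tree decomposition of $G[V_i]$ contained in $\mathcal{T}$, the candidate is simply the restriction $(B_x \cap V_i : x \in V(T))$, whose adhesion at an edge $xy$ is $(B_x \cap B_y) \cap V_i$, a subclique of the original clique $B_x \cap B_y$. The remaining task is to verify that each non-empty adhesion of $\mathcal{T}$ is split between at least two of the layers, so that every restricted adhesion has size at most $k-1$. The design of $\ell$ must ensure precisely this splitting; using the identity $v \in a(t(w))$ from the first step, one checks that for each adhesion $a(x)$ the vertex realising the maximum of $\ell$ on $a(x)$ is pushed one layer above the node at which $a(x)$ is attached, while at least one other vertex of $a(x)$ is left behind.

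The main obstacle I anticipate is coordinating the three requirements—proper layering, shadow-containment inside a single adhesion, and splitting of every adhesion across consecutive layers—under a single definition of $\ell$; the cascading caused by a single vertex participating in many nested adhesions needs careful handling, and one may need to allow a handful of empty layers at the extremes to keep the layering proper. Once the definition is fixed, however, all three verifications reduce to the same combinatorial identity about the shape of the subtrees $T_v$ in the rooted tree $T$.
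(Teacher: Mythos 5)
The decisive ingredient of this lemma is the definition of the layering, and that is precisely what your proposal never supplies; the substitute you sketch does not work. A level function computed from the position of the subtree $T_v$ in the rooted tree $T$ (depth of its deepest node, shifted when $|T_v|>1$) is not even a layering of $G$: take $G$ a star with centre $v$ and leaves $w,u_1,\dots,u_d$, decomposed along a path of bags $\{v,w\},\{v,u_1\},\dots,\{v,u_d\}$ rooted at the first bag (this is $1$-rich). Then $T_w$ is the single root node while $T_v$ reaches depth $d$, so your levels of the adjacent vertices $v$ and $w$ differ by about $d$, and the assumption $V_0\supseteq B_r$ used in your shadow argument also fails. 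No purely tree-positional quantity can play this role. The paper instead defines $V_i$ metrically: it completes every bag into a clique to form $G'$ and lets $V_i$ be the set of vertices at distance $i$ in $G'$ from a fixed vertex $r$ chosen in the root bag. From this definition it derives the three facts that carry the whole proof: writing $B'_x:=B_x\setminus B_{p(x)}$ and $B''_x:=B_x\cap B_{p(x)}$ (with $p(x)$ the parent), every bag lies in two consecutive layers $V_{\ell(x)-1}\cup V_{\ell(x)}$ with $\emptyset\neq B'_x\subseteq V_{\ell(x)}$ and $B_x\cap V_{\ell(x)-1}\subseteq B''_x$; the map $x\mapsto\ell(x)$ is non-decreasing from the root towards the leaves; and every vertex of $B'_x$ has a neighbour, on a shortest $G'$-path to $r$, lying in $B''_x\cap V_{\ell(x)-1}$. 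The last fact is exactly what makes the restricted adhesions have size at most $k-1$; in your write-up this "splitting" is stipulated ("the design of $\ell$ must ensure"), i.e.\ it is the conclusion rather than something proved.

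The same missing facts undermine your shadow argument. For $u\in V_{i-1}$ adjacent to $v\in H$ you assert that the $T$-path from $t(u)$ to a common bag $z$ passes through $x^*$ and its parent; but nothing you have set up forbids $u$ from first appearing in a bag at or below $x^*$, in which case $u\notin a(x^*)$. Ruling this out is exactly where the paper uses the metric layering: if $t(u)$ were a strict descendant (or equal to) $x^*$, then $u\in B'_{t(u)}\subseteq V_{\ell(t(u))}$ with $\ell(t(u))\geq\ell(x^*)=i$ by monotonicity, contradicting $u\in V_{i-1}$; only then does connectivity of $T_u$ force $u\in B''_{x^*}$. Finally, your candidate decomposition of $G[V_i]$, the restriction $(B_x\cap V_i : x\in V(T))$ over \emph{all} nodes, is not $(k-1)$-rich in general: a node $x$ with $\ell(x)=i+1$ can have its entire adhesion $B''_x$ inside $V_i$ (already in the two-node example $B_{x_1}=\{a\}$, $B_{x_2}=\{a,b\}$ with $r=a$ and $i=0$). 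The paper restricts to the subtree of nodes with $\ell(x)\leq i$, and uses the neighbour-one-layer-down fact to get the $k-1$ bound there. So your skeleton (restrict bags to a layer; trap the shadow in the adhesion at the top of the component's subtree) matches the paper, but the graph-distance layering of the clique-completed graph and the bag-localization and monotonicity properties it yields are the actual content of the proof, and they are absent from the proposal.
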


\begin{proof} 
\wish{Can we simplify this proof using \cref{ShadowCompleteChordal}?}
We may assume that $G$ is connected with at least one edge. Say  $\mathcal{T}=(B_x\subseteq V(G):x\in V(T))$  is a $k$-rich tree decomposition of $G$. If $B_x\subseteq B_y$ for some edge $xy\in E(T)$, then contracting $xy$ into $y$ (and keeping bag $B_y$) gives a new $k$-rich tree decomposition of $G$. Moreover, if a tree decomposition of a subgraph of $G$ is contained in the new tree decomposition of $G$, then it is contained in the original. Thus we may assume that  $B_x\not\subseteq B_y$ and $B_y\not\subseteq B_x$ for each edge $xy\in V(T)$.

Let $G'$ be the graph obtained from $G$ by adding an edge between every pair of vertices in a common bag (if the edge does not already exist). Let $r$ be a vertex of $G$. Let $\alpha$ be a node of $T$ such that $r\in B_\alpha$. Root $T$ at $\alpha$. Now every non-root node of $T$ has a parent node. Since $G$ is connected, $G'$ is connected. For $i\geq 0$, let $V_i$ be the  set of vertices of $G$ at distance $i$ from $r$ in $G'$. Thus, for some $t$,  $(V_0,V_1,\dots,V_t)$ is a layering of $G'$ and also of $G$ (since $G\subseteq G'$). 

\wish{In $G'$, each bag is a clique, so $G'$ is chordal. By \cref{ShadowCompleteChordal},  $(V_0,V_1,\dots,V_t)$ is a shadow-complete layering of $G'$.}

Since each bag $B_x$ is a clique in $G'$,  $V_1$ is the set of vertices of $G$ in bags that contain $r$ (not including $r$ itself). More generally, $V_i$ is the set of vertices $v$ of $G$ in bags that intersect $V_{i-1}$ such that $v$ is not in $V_0\cup\dots\cup V_{i-1}$.

Define $B'_\alpha:=B_\alpha\setminus\{r\}$ and $B''_\alpha:=\{r\}$. For a non-root node $x\in V(T)$ with parent node $y$, define $B'_x:=B_x\setminus B_y$ and $B''_x:=B_x\cap B_y$. Since $B_x\not\subseteq B_y$, it follows that $B'_x\neq\emptyset$.  One should think that $B'_x$ is the set of vertices that first appear in $B_x$ when traversing down the tree decomposition from the root, while $B''_x$ is the set of vertices in $B_x$ that appear above $x$ in the tree decomposition. 

Consider a node $x$ of $T$. Since $B_x$ is a clique in $G'$, $B_x$ is contained in at most two consecutive layers. Consider (not necessarily distinct) vertices $u,v$ in the set $B'_x$, which is not empty. Then the distance between $u$ and $r$ in $G'$ equals the distance between $v$ and $r$ in $G'$. Thus $B'_x$ is contained in one layer, say $V_{\ell(x)}$. Let $w$ be the neighbour of $v$ in some shortest path between $v$ and $r$ in $G'$. Then $w$ is in $B''_x\cap V_{\ell(x)-1}$. In conclusion, each bag $B_x$ is contained in precisely two consecutive layers, $V_{\ell(x)-1}\cup V_{\ell(x)}$, such that $\emptyset\neq B'_x\subseteq V_{\ell(x)}$ and $B_x\cap V_{\ell(x)-1}\subseteq B''_x\neq\emptyset$. Also, observe that if $y$ is an ancestor of $x$ in $T$, then $\ell(y)\leq\ell(x)$. Call this property $(\star)$. 

We now prove that $G[V_i]$ has the desired $(k-1)$-rich tree decomposition. Since $G[V_0]$ has one vertex and no edges, this is trivial for $i=0$. Now assume that $i\in\{1,\dots,t\}$.

Let $T_i$ be the subgraph of $T$ induced by the nodes $x$ such that $\ell(x)\leq i$. By property $(\star)$, $T_i$ is a (connected) subtree of $T$. We claim that $\mathcal{T}_i:=(B_x\cap V_i:x\in V(T_i))$ is a $T_i$-decomposition of $G[V_i]$. First we prove that each vertex $v\in V_i$ is in some bag of $\mathcal{T}_i$. Let $x$ be the node of $T$ closest to $\alpha$ such that $v\in B_x$. Then $v\in B'_x$ and $\ell(x)=i$. Hence $v$ is in the bag $B_x\cap V_i$ of $\mathcal{T}_i$, as desired. 

Now  we prove that for each edge $vw\in E(G[V_i])$, both $v$ and $w$ are in a common bag of $\mathcal{T}_i$. Let $x$ be the node of $T$ closest to $\alpha$ such that $v\in B_x$. Let $y$ be the node of $T$ closest to $\alpha$ such that $w\in B_y$. Thus $v\in B'_x$ and $x\in V(T_i)$, and $w\in B'_y$ and $y\in V(T_i)$. Since $vw\in E(G)$, there is a bag $B_z$ containing both $v$ and $w$, and $z$ is a descendant of both $x$ and $y$ in $T$ (by the definition of $x$ and $y$). Without loss of generality, $x$ is on the $y\alpha$-path in $T$. Moreover, $v$ is also in $B_y$ (since $v$ and $w$ are in a common bag of $\mathcal{T}$). Thus $v$ and $w$ are in the bag  $B_y\cap V_i$ of $\mathcal{T}_i$, as desired. 

Finally, we prove that for each vertex $v\in V_i$, the set of bags in $\mathcal{T}_i$ that contain $v$ correspond to a (connected) subtree of $T_i$. By assumption, this property holds in $T$. Let $X$ be the subtree of $T$ whose corresponding bags in $\mathcal{T}$ contain $v$. Let $x$ be the root of $X$. Then $v\in B'_x$ and $\ell(x)=i$. By property $(\star)$, $\ell(z)\geq i$ for each node $z$ in $X$. Moreover, again by property $(\star)$, deleting from $X$ the nodes $z$ such that $\ell(z)\geq i+1$ gives a connected subtree of $X$, which is precisely the subtree of $T_i$ whose bags in $\mathcal{T}_i$ contain $v$. 

Hence $\mathcal{T}_i$ is a $T_i$-decomposition of $G[V_i]$. By definition, $\mathcal{T}_i$ is contained in $\mathcal{T}$. 

We now prove that $\mathcal{T}_i$ is $(k-1)$-rich. Consider an edge $xy\in E(T_i)$. Without loss of generality, $y$ is the parent of $x$ in $T_i$. Our goal is to prove that $B_x \cap B_y\cap V_i=B''_x\cap V_i$ is a clique on at most $k-1$ vertices. Certainly, it is a clique on at most $k$ vertices, since $\mathcal{T}$ is $k$-rich. Now, $\ell(x)\leq i$ (since $x\in V(T_i)$). If $\ell(x)<i$ then $B_x\cap V_i=\emptyset$, and we are done. Now assume that $\ell(x)=i$. Thus $B'_x\subseteq V_i$ and $B'_x\neq\emptyset$. Let $v$ be a vertex in $B'_x$. Let $w$ be the neighbour of $v$ on a shortest path in $G'$ between $v$ and $r$. Thus $w$ is in $B''_x\cap V_{i-1}$. Thus  $|B''_x\cap V_i|\leq k-1$, as desired. Hence $\mathcal{T}_i$ is $(k-1)$-rich. 

We now prove that $(V_0,V_1,\dots,V_t)$ is shadow-complete. Let $H$ be a connected component of $G[V_i\cup V_{i+1}\cup\dots\cup V_t]$ for some $i\in\{1,\dots,t\}$. Let $X$ be the subgraph of $T$ whose corresponding bags in $\mathcal{T}$ intersect $V(H)$. Since $H$ is connected, $X$ is indeed a connected subtree of $T$. Let $x$ be the root of $X$. 
Consider a vertex $w$ in the shadow of $H$. That is, $w\in V_{i-1}$ and $w$ is adjacent to some vertex $v$ in $V(H)\cap V_i$. 
Let $y$ be the node closest to $x$ in $X$ such that $v\in B_y$. Then $v\in B'_y$ and $w\in B''_y$. Thus $\ell(y)=i$. 
Note that $B_x\subseteq V_{\ell(x)-1} \cup V_{\ell(x)}$ and some vertex in $B_x$ is in $V(H)$ and is thus in $V_i\cup V_{i+1}\cup\dots \cup V_t$. Thus $\ell(x)\geq i$. Since $x$ is an ancestor of $y$ in $T$, 
$\ell(x)\leq \ell(y)=i$ by property $(\star)$, implying $\ell(x)=i$. Thus $w\in B''_x$. Since $B''_x$ is a clique, the shadow of $H$ is a clique. Hence $(V_0,V_1,\dots,V_t)$ is shadow-complete. 
Moreover, since $|B''_x|\leq k$, the shadow of $H$ has size at most $k$. 
\end{proof}


Iterating \cref{ShadowCompletePi} gives the next lemma.

\begin{lem}[\citep{DMW17}] 
\label{ShadowPi}
For some number $c$, let $\mathcal{G}_0$ be a class of graphs $G$ with $\pi(G)\leq c$. For $k\geq1$, let $\mathcal{G}_k$ be a class of graphs that have a shadow-complete layering such that each layer induces a graph in $\mathcal{G}_{k-1}$. Then $\pi(G) \leq c\,4^k$ for every graph $G\in \mathcal{G}_k$.
\end{lem}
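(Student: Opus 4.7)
The plan is to prove this by straightforward induction on $k$, using \cref{ShadowCompletePi} as the engine that does all the work. The statement is in fact set up precisely so that a single application of \cref{ShadowCompletePi} at each layer of the recursion yields a multiplicative factor of $4$, and the definitions of $\mathcal{G}_0,\mathcal{G}_1,\ldots$ are exactly the hypotheses needed to iterate.

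For the base case $k=0$, there is nothing to prove: by definition of $\mathcal{G}_0$, every $G\in\mathcal{G}_0$ satisfies $\pi(G)\leq c=c\cdot 4^0$. For the inductive step, I fix $k\geq 1$ and assume $\pi(H)\leq c\cdot 4^{k-1}$ for every $H\in\mathcal{G}_{k-1}$. Then I take any $G\in\mathcal{G}_k$ and invoke the defining property of $\mathcal{G}_k$: $G$ admits a shadow-complete layering $(V_0,V_1,\dots,V_n)$ with $G[V_i]\in\mathcal{G}_{k-1}$ for each $i$. By the induction hypothesis applied layer-wise, $\pi(G[V_i])\leq c\cdot 4^{k-1}$ for every $i$. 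Now \cref{ShadowCompletePi} gives
\begin{equation*}
\pi(G) \leq 4\max_i \pi(G[V_i]) \leq 4\cdot c\cdot 4^{k-1} = c\cdot 4^k,
\end{equation*}
which closes the induction.

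There is no real obstacle here: the lemma is precisely the iterated form of \cref{ShadowCompletePi}, and each step of the induction simply repackages the previous bound. The only thing worth verifying carefully is that the hypothesis on $\mathcal{G}_k$ is strong enough for a uniform bound over all graphs in the class, but this is immediate because the induction hypothesis delivers the bound $c\cdot 4^{k-1}$ uniformly on $\mathcal{G}_{k-1}$, which is exactly what \cref{ShadowCompletePi} consumes via the $\max_i$ term.
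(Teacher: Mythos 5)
Your induction is exactly the paper's argument: the paper simply states that iterating \cref{ShadowCompletePi} gives the lemma, and your base case plus the layer-wise application of the induction hypothesis followed by \cref{ShadowCompletePi} is precisely that iteration, spelled out. Correct, and no difference in approach.
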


\cref{RichShadow,ShadowPi} lead to the following result:

\begin{lem}
\label{RichColourPi}
Let $G$ be a graph that has a $k$-rich tree decomposition $\mathcal{T}$ such that the subgraph induced by each bag is nonrepetitively $c$-colourable. Then 
$$\pi(G) \leq c\,4^k.$$
\end{lem}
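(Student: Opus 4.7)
The plan is to proceed by induction on $k$, with Lemmas \ref{RichShadow} and \ref{ShadowCompletePi} (the latter already packaged as Lemma \ref{ShadowPi}) doing essentially all the work. A single application of Lemma \ref{RichShadow} peels off one level of richness at the cost of a factor of $4$ in the chromatic number, and we iterate this $k$ times.

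For the base case $k=0$, a $0$-rich tree decomposition has $B_x \cap B_y = \emptyset$ for every edge $xy$ of the decomposition tree. The standard subtree property then forces each vertex of $G$ to lie in exactly one bag and each edge of $G$ to lie inside a single bag, so $G$ is the disjoint union of the subgraphs $G[B_x]$. Since each $G[B_x]$ is nonrepetitively $c$-colourable by hypothesis, so is their disjoint union, giving $\pi(G)\leq c = c\cdot 4^0$.

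For the inductive step, assume the result for $(k-1)$-rich decompositions and let $\mathcal{T}=(B_x:x\in V(T))$ be a $k$-rich tree decomposition of $G$ in which each bag-induced subgraph is nonrepetitively $c$-colourable. Apply Lemma \ref{RichShadow} to obtain a shadow-complete layering $(V_0,V_1,\dots,V_t)$ of $G$ such that, for each $i$, the subgraph $G[V_i]$ has a $(k-1)$-rich tree decomposition contained in $\mathcal{T}$. The bags of this sub-decomposition have the form $B_x\cap V_i$ and so induce subgraphs of the corresponding $G[B_x]$; since restricting a path-nonrepetitive $c$-colouring to an induced subgraph remains path-nonrepetitive, each bag of the sub-decomposition is itself nonrepetitively $c$-colourable. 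The induction hypothesis therefore gives $\pi(G[V_i])\leq c\cdot 4^{k-1}$ for every $i$, and Lemma \ref{ShadowCompletePi} then yields
\[
\pi(G)\ \leq\ 4\max_i \pi(G[V_i])\ \leq\ 4\cdot c\cdot 4^{k-1}\ =\ c\cdot 4^k,
\]
as required.

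There is no real obstacle: the two supporting lemmas have been carefully set up exactly for this application. The only point deserving a moment's care is checking that the sub-decompositions produced by Lemma \ref{RichShadow} inherit the "bag-induced subgraphs are nonrepetitively $c$-colourable" hypothesis, which follows because each new bag is contained in an old bag and path-nonrepetitiveness is preserved under taking induced subgraphs. (Alternatively one can bypass the explicit induction entirely by invoking Lemma \ref{ShadowPi} with $\mathcal{G}_0$ chosen to be the class of nonrepetitively $c$-colourable graphs and observing that iterating Lemma \ref{RichShadow} places $G$ in $\mathcal{G}_k$.)
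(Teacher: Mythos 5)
Your proof is correct and follows essentially the same route as the paper: the paper formalises your induction by defining the classes $\mathcal{G}_j$ of induced subgraphs admitting $j$-rich tree decompositions contained in $\mathcal{T}$ and then invoking \cref{RichShadow} together with \cref{ShadowPi} (the iterated form of \cref{ShadowCompletePi}), which is exactly the alternative you mention in your closing parenthetical. Your check that the bags of the sub-decompositions inherit nonrepetitive $c$-colourability because they are contained in the original bags is the right (and only) point needing care, and it matches the paper's treatment.
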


\begin{proof}
For $j\in\{0,\dots,k\}$, let $\mathcal{G}_j$ be the set of induced subgraphs of $G$ that have a $j$-rich tree decomposition contained in $\mathcal{T}$. Note that $G$ itself is in $\mathcal{G}_k$. Consider a graph $G'\in \mathcal{G}_0$. Then $G'$ is the union of disjoint subgraphs of $G$, each of which is contained in a bag of $\mathcal{T}$ and is thus nonrepetitively $c$-colourable. Thus  $G'$ is nonrepetitively $c$-colourable. Now consider some $G'\in \mathcal{G}_j$ for some $j\in\{1,\dots,k\}$. Thus $G'$ is an induced subgraph of $G$ with a $j$-rich tree decomposition contained in $\mathcal{T}$. By \cref{RichShadow}, $G'$ has a shadow-complete layering $(V_0,\dots,V_t)$ such that for each layer $V_i$, the induced subgraph $G'[V_i]$  has a $(j-1)$-rich tree decomposition $\mathcal{T}_i$ contained in $\mathcal{T}$. Thus $G'[V_i]$ is in $\mathcal{G}_{j-1}$. By \cref{ShadowPi}, the graph $G$ is nonrepetitively $4^kc$-colourable.
\end{proof}

An identical proof using \cref{ShadowCompleteRho} instead of \cref{ShadowCompletePi} gives the following analogous result for stroll-nonrepetitive colourings. 


\begin{lem}
\label{RichColourRho}
Let $G$ be a graph that has a $k$-rich tree decomposition $\mathcal{T}$ such that the subgraph induced by each bag is stroll-nonrepetitively $c$-colourable. Then $$\rho(G) \leq c\,4^k.$$
\end{lem}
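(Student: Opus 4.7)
The plan is to mirror the proof of \cref{RichColourPi} step by step, replacing every use of \cref{ShadowCompletePi} by its stroll-analog \cref{ShadowCompleteRho}. Since every lemma in the chain has an explicit $\rho$-version already in the paper, nothing new needs to be proved; the task is purely bookkeeping.

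First, I would establish the $\rho$-analog of \cref{ShadowPi} by a straightforward induction on $k$: if $\mathcal{G}_0$ is a class with $\rho(G)\leq c$ for all $G\in\mathcal{G}_0$, and $\mathcal{G}_k$ consists of graphs admitting a shadow-complete layering whose layers induce graphs in $\mathcal{G}_{k-1}$, then $\rho(G)\leq c\cdot 4^k$ for every $G\in\mathcal{G}_k$. The induction step is immediate from \cref{ShadowCompleteRho}, which provides the factor of $4$ at each level.

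Next, given a graph $G$ with a $k$-rich tree decomposition $\mathcal{T}$ in which each bag induces a stroll-nonrepetitively $c$-colourable subgraph, I would define, for each $j\in\{0,1,\dots,k\}$, the family $\mathcal{G}_j$ of induced subgraphs of $G$ that possess a $j$-rich tree decomposition contained in $\mathcal{T}$. Any graph in $\mathcal{G}_0$ is a disjoint union of subgraphs of bags (since a $0$-rich tree decomposition has empty adhesion, giving a disjoint union of its bags), hence stroll-nonrepetitively $c$-colourable. By \cref{RichShadow}, each graph $G'\in\mathcal{G}_j$ with $j\geq 1$ admits a shadow-complete layering whose layers induce graphs in $\mathcal{G}_{j-1}$. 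Iterating the $\rho$-analog of \cref{ShadowPi} established above, we conclude $\rho(G')\leq c\cdot 4^j$ for every $G'\in\mathcal{G}_j$. Since $G\in\mathcal{G}_k$, this gives $\rho(G)\leq c\cdot 4^k$ as required.

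There is no real obstacle: the argument is structurally identical to the proof of \cref{RichColourPi}, and the substitution of \cref{ShadowCompleteRho} for \cref{ShadowCompletePi} goes through without any modification, because \cref{RichShadow} provides shadow-complete layerings whose layers admit tree decompositions of reduced adhesion, and \cref{ShadowCompleteRho} propagates the stroll-nonrepetitive bound along exactly the same recursion with the same multiplicative factor $4$.
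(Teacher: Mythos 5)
Your proposal is correct and is exactly the paper's argument: the paper proves \cref{RichColourPi} via \cref{RichShadow} and \cref{ShadowPi}, and then notes that an identical proof with \cref{ShadowCompleteRho} in place of \cref{ShadowCompletePi} yields \cref{RichColourRho}, which is precisely the substitution (including the $\rho$-analogue of \cref{ShadowPi} and the handling of $\mathcal{G}_0$ as disjoint unions of subgraphs of bags) that you carry out.
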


If a graph $G$ has a tree-decomposition with adhesion $r$ such that each torso is nonrepetitively $c$-colourable, then $G$ is a subgraph of a graph that has an $r$-rich tree-decomposition such that each bag is nonrepetitively $c$-colourable. Thus \cref{RichColourPi,RichColourRho} imply \cref{RichColour}, whose proof was the goal of this subsection. 


\subsection{Non-Minor-Closed Classes}

This section explores generalisations of the results in \cref{Planar,Surfaces} for various non-minor-closed classes. All the results are due to \citet{DMW} and are based on the previous work on treewidth and strong products. \citet{DMW} only presented their results for $\pi$, but the proofs immediately generalise for $\rho$. 

A graph is \emph{$k$-planar} if it has a drawing in the plane in which each edge is involved in at most $k$ crossings. Such graphs provide a natural  generalisation of planar graphs, and are important in graph drawing research; see the recent bibliography on 1-planar graphs and the 140 references therein \citep{KLM17}. \citet{DMW} extended the above-mentioned result of \citet{DJMMUW19,DJMMUW20} to show that every 1-planar graph is a subgraph of $H\boxtimes P\boxtimes K_{30}$ for some planar graph $H$ with treewidth at most 3 and for some path $P$. \cref{ProductGraphPathTreewidth} then implies:

\begin{thm}[\citep{DMW}] 
For every $1$-planar graph $G$, $$\pi(G)\le \rho(G) \leq 30\times 4^4=7680.$$
\end{thm}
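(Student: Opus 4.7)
The plan is to follow exactly the recipe sketched in the paragraph preceding the theorem: invoke the product structure theorem for $1$-planar graphs and then apply \cref{ProductGraphPathTreewidth}. Concretely, the first step is to cite the structural result of \citet{DMW}, which asserts that every $1$-planar graph $G$ is isomorphic to a subgraph of $H \boxtimes P \boxtimes K_{30}$, where $H$ is a planar graph with $\tw(H) \leq 3$ and $P$ is a path. This is the nontrivial geometric/combinatorial ingredient, but I am allowed to assume it.

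The second step is to observe that both $\pi$ and $\rho$ are monotone under taking subgraphs (every nonrepetitive colouring of the host graph restricts to a nonrepetitive colouring of any subgraph, and the same holds for stroll-nonrepetitive colourings). Therefore
\begin{equation*}
\pi(G) \leq \rho(G) \leq \rho(H \boxtimes P \boxtimes K_{30}),
\end{equation*}
where the first inequality is the general fact recalled in the introduction.

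The third and final step is to apply \cref{ProductGraphPathTreewidth}, which bounds $\rho(H\boxtimes P\boxtimes K_\ell)$ by $\ell\cdot 4^{\tw(H)+1}$. Substituting $\ell=30$ and $\tw(H)\leq 3$ gives
\begin{equation*}
\rho(H\boxtimes P\boxtimes K_{30}) \leq 30\cdot 4^{3+1} = 30\cdot 4^{4} = 7680,
\end{equation*}
which is the claimed bound. There is no genuine obstacle in this argument: the deep work is hidden inside the product structure theorem for $1$-planar graphs and inside \cref{ProductGraphPathTreewidth} (which in turn rests on \cref{ProductRho}, \cref{PathProductRho}, and \cref{TreewidthRho}). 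Once those are in hand, the proof is a one-line chain of inequalities, so the only thing to write is the citation of the structural result followed by the arithmetic $30\cdot 4^{4}=7680$.
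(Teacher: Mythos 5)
Your proposal is correct and is exactly the argument the paper intends: cite the product structure theorem of \citet{DMW} (every $1$-planar graph is a subgraph of $H\boxtimes P\boxtimes K_{30}$ with $\tw(H)\leq 3$), use subgraph monotonicity of $\rho$, and apply \cref{ProductGraphPathTreewidth} to get $30\cdot 4^{4}=7680$. Nothing is missing.
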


Similarly, \citet{DMW} proved that every $k$-planar graph is a subgraph of $H\boxtimes P\boxtimes K_{18k^2+48k+30}$, for some graph $H$ of treewidth $\binom{k+4}{3}-1$ and for some path $P$.  \cref{ProductGraphPathTreewidth} then implies:

\begin{thm}[\citep{DMW}] 
For every $k$-planar graph $G$, 
$$\pi(G)\le \rho(G) \leq (18k^2+48k+30) 4^{\binom{k+4}{3}}.$$
\end{thm}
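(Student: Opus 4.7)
The proof plan mirrors the immediately preceding theorem for $1$-planar graphs, just with the strengthened product structure result quoted from \citet{DMW}. The first step is to invoke the stated structural fact: every $k$-planar graph $G$ is a subgraph of $H \boxtimes P \boxtimes K_{18k^2+48k+30}$ for some graph $H$ with $\tw(H) \leq \binom{k+4}{3}-1$ and some path $P$. This is the workhorse input and I will simply quote it.

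Next I would apply \cref{ProductGraphPathTreewidth} with parameters $H$, $P$, and $\ell := 18k^2+48k+30$. That lemma gives
\[
\rho\bigl(H \boxtimes P \boxtimes K_{\ell}\bigr) \;\leq\; \ell \cdot 4^{\tw(H)+1} \;\leq\; (18k^2+48k+30) \cdot 4^{\binom{k+4}{3}},
\]
using the treewidth bound on $H$ to simplify the exponent (note that $\binom{k+4}{3}-1+1 = \binom{k+4}{3}$).

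Finally, since a stroll-nonrepetitive colouring of any supergraph restricts to one on $G$ (stroll-nonrepetitiveness is clearly preserved under taking subgraphs, as every stroll in $G$ is a stroll in the supergraph), we have $\rho(G) \leq \rho(H \boxtimes P \boxtimes K_{\ell})$, which yields the claimed bound on $\rho(G)$. The inequality $\pi(G) \leq \rho(G)$ holds for every graph by the definitions recalled in the introduction.

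There is no real obstacle here — the entire difficulty has been absorbed into the cited product structure theorem of \citet{DMW}, whose proof is the genuinely new ingredient and lies outside this statement. Once that black box is accepted, the argument is a mechanical two-line application of \cref{ProductGraphPathTreewidth} together with monotonicity of $\rho$ under subgraphs, exactly as in the $k=1$ case proved just above.
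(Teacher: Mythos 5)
Your proposal is correct and is exactly the paper's argument: quote the product structure result of \citet{DMW} that every $k$-planar graph is a subgraph of $H\boxtimes P\boxtimes K_{18k^2+48k+30}$ with $\tw(H)\leq\binom{k+4}{3}-1$, then apply \cref{ProductGraphPathTreewidth} together with monotonicity of $\rho$ under subgraphs and $\pi\leq\rho$. Nothing is missing.
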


More generally, a graph $G$ is \emph{$(g,k)$-planar} if it has a drawing in a surface with Euler genus at most $g$ in which each edge is involved in at most $k$ crossings. \citet{DMW} proved that every $(g,k)$-planar graph is a subgraph of $H\boxtimes P \boxtimes K_\ell$ for some graph $H$ with $\tw(H) \leq \binom{k+5}{4}-1$, where $\ell:=\max\{2g,3\}\cdot(6k^2+16k+10)$. \cref{ProductGraphPathTreewidth} then implies:

\begin{thm}[\citep{DMW}] 
For every $(g,k)$-planar graph $G$, 
$$  \pi(G)\le \rho(G) \leq \max\{2g,3\}\cdot(6k^2+16k+10) 4^{\binom{k+5}{4}}.$$
\end{thm}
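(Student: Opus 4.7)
The plan is to combine the structural decomposition of $(g,k)$-planar graphs quoted from \citet{DMW} with \cref{ProductGraphPathTreewidth}, which is tailored precisely to bound $\rho$ on strong products of the form $H \boxtimes P \boxtimes K_\ell$.

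First I would invoke the stated structural result: every $(g,k)$-planar graph $G$ is a subgraph of $H \boxtimes P \boxtimes K_\ell$ for some graph $H$ with $\tw(H) \leq \binom{k+5}{4} - 1$, some path $P$, and $\ell := \max\{2g,3\}\cdot(6k^2+16k+10)$. Since $\rho$ and $\pi$ are monotone under taking subgraphs (any stroll-nonrepetitive or path-nonrepetitive colouring of a supergraph restricts to one of the subgraph), it suffices to bound $\rho(H \boxtimes P \boxtimes K_\ell)$.

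Next I would apply \cref{ProductGraphPathTreewidth} with this choice of $H$, $P$, and $\ell$, yielding
\[
\rho(H \boxtimes P \boxtimes K_\ell) \;\leq\; \ell \cdot 4^{\tw(H)+1}.
\]
Substituting the treewidth bound $\tw(H) + 1 \leq \binom{k+5}{4}$ and the value of $\ell$ gives
\[
\pi(G) \;\leq\; \rho(G) \;\leq\; \max\{2g,3\}\cdot(6k^2+16k+10)\cdot 4^{\binom{k+5}{4}},
\]
which is the desired inequality. The bound $\pi(G) \leq \rho(G)$ comes for free from the hierarchy $\pi \leq \rho \leq \sigma$ established earlier in the survey.

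There is essentially no obstacle: both ingredients are black-boxed from earlier results (the product-structure theorem for $(g,k)$-planar graphs from \citet{DMW}, and \cref{ProductGraphPathTreewidth} which itself packages \cref{CompleteProductRho}, \cref{PathProductRho}, and \cref{TreewidthRho}). The proof is simply a one-line substitution once the structure theorem is cited, entirely parallel to the preceding proofs for planar, $1$-planar, and $k$-planar graphs in the same subsection.
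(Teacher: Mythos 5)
Your proposal is correct and follows exactly the paper's argument: cite the product-structure theorem of \citet{DMW} for $(g,k)$-planar graphs (subgraph of $H\boxtimes P\boxtimes K_\ell$ with $\tw(H)\leq\binom{k+5}{4}-1$ and $\ell=\max\{2g,3\}\cdot(6k^2+16k+10)$), then apply \cref{ProductGraphPathTreewidth} and use monotonicity of $\rho$ under subgraphs. Nothing is missing.
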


Map graphs provide another natural generalisation of graphs embedded in surfaces. Start with a graph $G_0$ embedded in a surface of Euler genus $g$, with each face labelled a `nation' or a `lake', where each vertex of $G_0$ is incident with at most $d$ nations. Let $G$ be the graph whose vertices are the nations of $G_0$, where two vertices are adjacent in $G$ if the corresponding faces in $G_0$ share a vertex. Then $G$ is called a \emph{$(g,d)$-map graph}.  A $(0,d)$-map graph is called a (plane) \emph{$d$-map graph}; see \citep{FLS-SODA12,CGP02} for example. The $(g,3)$-map graphs are precisely the graphs of Euler genus at most $g$; see \citep{DEW17}. So $(g,d)$-map graphs generalise graphs embedded in a surface; now assume that $d\geq 4$.

\citet{DMW} proved that every $d$-map graph  is a subgraph of $H \boxtimes P \boxtimes K_{21d(d-3)}$ for some path $P$ and for some graph $H$ with $\tw(H)\leq 9$. \cref{ProductGraphPathTreewidth} then implies:

\begin{thm}[\citep{DMW}]
\label{PlaneMapPartition}
For every $d$-map graph $G$,
$$ \pi(G) \leq \rho(G) \leq 21 \cdot 4^{10} d(d-3).$$
\end{thm}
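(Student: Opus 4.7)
The plan is to combine the structural decomposition of $d$-map graphs with the general product bound \cref{ProductGraphPathTreewidth}, essentially reducing the theorem to a one-line calculation once the right ingredients are in place. All the conceptual work has already been done in earlier sections; what remains is a direct application.

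First I would invoke the result of \citet{DMW} stated immediately before the theorem: every $d$-map graph $G$ is a subgraph of $H \boxtimes P \boxtimes K_{21d(d-3)}$ for some path $P$ and some graph $H$ with $\tw(H) \leq 9$. This step encapsulates all of the genuine geometric and combinatorial content, and I would simply cite it rather than reprove it.

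Next I would observe that both $\pi$ and $\rho$ are monotone under taking subgraphs, since any (stroll-)nonrepetitive colouring of a supergraph remains (stroll-)nonrepetitive when restricted to a subgraph. Therefore it suffices to bound $\rho(H \boxtimes P \boxtimes K_{21d(d-3)})$. I would then apply \cref{ProductGraphPathTreewidth} with $\ell := 21d(d-3)$ to the graph $H$ of treewidth at most $9$, obtaining
\[
\rho(H \boxtimes P \boxtimes K_{\ell}) \;\leq\; \ell \cdot 4^{\tw(H)+1} \;\leq\; 21d(d-3)\cdot 4^{10}.
\]
Combining with $\pi(G)\leq\rho(G)$ (which holds for every graph and is noted throughout the survey) gives the claimed bound.

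There is no real obstacle here: the theorem is essentially a corollary assembled from two prior results, and the only thing to verify is the arithmetic $4^{\tw(H)+1} = 4^{10}$ and the subgraph monotonicity of $\rho$. The same template is used in the immediately preceding theorems about $1$-planar, $k$-planar, and $(g,k)$-planar graphs, so the proof would fit into that uniform style, consisting of at most two or three sentences.
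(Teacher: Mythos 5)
Your proposal is correct and follows exactly the paper's own route: cite the \citet{DMW} structural result that every $d$-map graph is a subgraph of $H \boxtimes P \boxtimes K_{21d(d-3)}$ with $\tw(H)\leq 9$, then apply \cref{ProductGraphPathTreewidth} (together with subgraph monotonicity of $\rho$ and $\pi\leq\rho$) to get $21d(d-3)\cdot 4^{10}$. Nothing further is needed.
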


\citet{DMW} proved that for integers $g\geq 0$ and $d\geq 4$, if $\ell:=  7d(d-3)\, \max\{2g,3\}$ then every $(g,d)$-map graph $G$ is a subgraph of $H \boxtimes P \boxtimes K_{\ell}$ for some path $P$ and for some graph $H$ with $\tw(H)\leq 14$. \cref{ProductGraphPathTreewidth} then implies:

\begin{thm}[\citep{DMW}]
For integers $g\geq 0$ and $d\geq 4$, and for every $(g,d)$-map graph $G$,
$$ \pi(G) \leq  7\cdot 4^{15} \,d(d-3)\, \max\{2g,3\} .$$
\end{thm}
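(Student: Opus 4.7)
The plan is to follow the same template used for all the preceding product-structure corollaries in this section (1-planar, $k$-planar, $(g,k)$-planar, and plane $d$-map graphs): invoke the structural product-decomposition that is already quoted in the paragraph immediately before the theorem, and then feed it into \cref{ProductGraphPathTreewidth}. No new combinatorial ingredient is needed, since both ``halves'' of the argument are already in place by the time the theorem is stated.

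Concretely, I would proceed as follows. Let $G$ be a $(g,d)$-map graph and set $\ell := 7d(d-3)\,\max\{2g,3\}$. By the product-structure theorem for $(g,d)$-map graphs due to \citet{DMW} (quoted in the paragraph preceding the theorem), $G$ is a subgraph of $H \boxtimes P \boxtimes K_\ell$ for some path $P$ and some graph $H$ with $\tw(H) \leq 14$. Since $\pi(G) \leq \rho(G)$ and $\rho$ is monotone under taking subgraphs, it suffices to bound $\rho(H \boxtimes P \boxtimes K_\ell)$. Now apply \cref{ProductGraphPathTreewidth} to $H$, $P$, and $\ell$ to obtain
\begin{equation*}
\rho(H \boxtimes P \boxtimes K_\ell) \;\leq\; \ell \cdot 4^{\tw(H)+1} \;\leq\; 7\,d(d-3)\,\max\{2g,3\} \cdot 4^{15},
\end{equation*}
which is exactly the claimed bound.

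There is essentially no obstacle: the two non-trivial inputs (the product-structure decomposition and the bound $\rho(H\boxtimes P \boxtimes K_\ell)\leq \ell\,4^{\tw(H)+1}$) are both quoted and proved earlier in the survey, and the remaining work is just substitution of the values $\tw(H)\leq 14$ and $\ell = 7d(d-3)\max\{2g,3\}$. The only bookkeeping is checking that the constant $4^{15}$ comes from $4^{\tw(H)+1}$ with $\tw(H)\le 14$, which matches the statement. Compared to the plane $d$-map case (\cref{PlaneMapPartition}), the only change is the replacement of the constant $21\cdot 4^{10}$ by $7\cdot 4^{15}\,\max\{2g,3\}$, reflecting the extra factor $\max\{2g,3\}$ in $\ell$ and the larger treewidth bound ($14$ instead of $9$) that arises when accommodating an arbitrary surface in the decomposition.
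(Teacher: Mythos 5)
Your proposal is correct and is essentially the paper's own proof: the paper likewise quotes the \citet{DMW} product-structure result that every $(g,d)$-map graph is a subgraph of $H \boxtimes P \boxtimes K_\ell$ with $\tw(H)\leq 14$ and $\ell = 7d(d-3)\max\{2g,3\}$, and then applies \cref{ProductGraphPathTreewidth} to get $\pi(G)\leq\rho(G)\leq \ell\cdot 4^{15}$. The substitution and bookkeeping you describe match the paper exactly, so there is nothing to add.
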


\section{Subdivisions}
\label{Subdivisions}

This section studies nonrepetitive colourings of graph subdivisions. These results are of independent interest, and will be important when considering expansion in the following section. 

\subsection{Upper Bounds: Small Subdivisions}

\citet{NOW11} observed the following simple upper bounds on the nonrepetitive chromatic number of any subdivision. Note that much better bounds follow. 

\begin{lem}[\citep{NOW11}]
	\label{NonRepSub}
	\textup{(a)} For every $(\leq1)$-subdivision $H$ of a graph $G$, $$\pi(H)\leq\pi(G)+1.$$
	\textup{(b)} For every $(\leq2)$-subdivision $H$ of a graph $G$, $$\pi(H)\leq\pi(G)+2.$$
	\textup{(c)} For every subdivision $H$ of a graph $G$, $$\pi(H)\leq\pi(G)+3.$$
\end{lem}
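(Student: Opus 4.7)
The unifying plan is to take a nonrepetitive $\pi(G)$-colouring $\phi_0$ of $G$, keep the original vertices of $H$ coloured by $\phi_0$, and colour the subdivision vertices of $H$ with new colours that are disjoint from those used by $\phi_0$. The crucial structural observation, which holds for any subdivision $H$ of $G$, is that two subdivision vertices coming from distinct edges of $G$ are never adjacent in $H$. Consequently every sub-path of $H$ consisting only of subdivision vertices lies inside the subdivision of a single edge of $G$.

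I will choose the colouring of the subdivision vertices to match each case. For (a), a $(\leq 1)$-subdivision has no adjacent subdivision vertices at all, so a single extra colour (assigned to every subdivision vertex) suffices. For (b), I use two extra colours $c_1,c_2$ and insist that the two internal vertices of each twice-subdivided edge receive distinct new colours. For (c), I invoke Thue's theorem (\cref{PathPi}) to nonrepetitively $3$-colour the internal vertices of the subdivision of each edge of $G$ using three new colours. This gives $\pi(G)+1$, $\pi(G)+2$, and $\pi(G)+3$ colours in the three cases respectively.

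Suppose for contradiction that a repetitively coloured path $P=(v_1,\dots,v_{2t})$ exists in $H$. Because original and subdivision vertices have disjoint colour-sets, $v_i$ is original if and only if $v_{t+i}$ is, so the two halves of $P$ have identical original/subdivision patterns. If no original vertex appears in $P$, then by the structural observation $P$ lies inside the internal path of a single edge of $G$; in case (c) this contradicts the Thue colouring of that path, while in (a) and (b) it is ruled out directly by length constraints and the chosen colours. Otherwise let $u_1,\dots,u_s$ and $u'_1,\dots,u'_s$ be the original vertices in the first and second halves of $P$, listed in order. The repetition gives $\phi_0(u_j)=\phi_0(u'_j)$ for each $j$, and between any two consecutive original vertices of $P$ all intermediate subdivision vertices lie on a single edge of $G$, so consecutive pairs $u_j,u_{j+1}$, and also $u_s$ and $u'_1$, are joined by edges of $G$. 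Hence $(u_1,\dots,u_s,u'_1,\dots,u'_s)$ is a repetitively $\phi_0$-coloured path in $G$, contradicting the choice of $\phi_0$. The only step requiring careful verification is this projection argument together with the short-path edge cases in (a) and (b); everything else is routine.
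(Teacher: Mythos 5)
Your proposal is correct and follows essentially the same route as the paper: keep the nonrepetitive colouring of $G$ on the original vertices, give the division vertices new colours disjoint from it (one colour for (a), a Thue 3-colouring along each subdivided edge for (c)), and project any repetitive path of $H$ onto its original vertices to obtain a repetitive path in $G$. The only cosmetic differences are that you prove (b) directly with two new colours where the paper simply applies (a) twice, and your projection (restricting the path to its original vertices and using that consecutive runs of division vertices lie on a single edge) is a slightly cleaner write-up of the paper's replacement argument.
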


\begin{proof}
	First we prove (a). Given a nonrepetitive $k$-colouring of $G$, introduce a new colour for each division vertex of $H$. Since this colour does not appear elsewhere, a repetitively coloured path in $H$ defines a repetitively coloured path in $G$. Thus $H$ contains no repetitively coloured path. Part (b) follows by applying (a) twice. 
	
	Now we prove (c). Let $n$ be the maximum number of division vertices on some edge of $G$. By \cref{PathPi}, $P_n$ has a nonrepetitive $3$-colouring $(c_1,c_2,\dots,c_n)$. Arbitrarily orient the edges of $G$. Given a nonrepetitive $k$-colouring of $G$, choose each $c_i$ to be one of three new colours
	for each arc $vw$ of $G$ that is subdivided $d$ times, 
	colour the division vertices from $v$ to $w$ by $(c_1,c_2,\dots,c_d)$. 
	Suppose $H$ has a repetitively coloured path $P$. Since $H-V(G)$ is a collection of disjoint paths, each of which is nonrepetitively coloured, $P$ includes some original vertices of $G$. Let $P'$ be the path in $G$ obtained from $P$ as follows. If $P$ includes the entire subdivision of some edge $vw$ of $G$ then replace that subpath by $vw$ in $P'$. If $P$ includes a subpath of the subdivision of some edge $vw$ of $G$, then without loss of generality, it includes $v$, in which case replace that subpath by $v$ in $P'$. Since the colours assigned to division vertices are distinct from the colours assigned to original vertices, a $t$-vertex path of division vertices in the first half of $P$ corresponds to a $t$-vertex path of division vertices in the second half of $P$. Hence $P'$ is a repetitively coloured path in $G$. This contradiction proves that $H$ is nonrepetitively coloured. Hence $\pi(H)\leq k+3$.
\end{proof}

Note that \cref{NonRepSub}(a) is best possible in the weak sense that $\pi(C_5)=4$ and $\pi(C_4)=3$; see \citep{Currie-TCS05}. But better asymptotic results can be obtained. 

First we prove that for graphs with maximum degree $\Delta$, the $O(\Delta)$ upper bound for general graphs can be improved to $O(\Delta)$ for subdivided graphs. The proof uses Rosenfeld counting. 

\begin{thm}
\label{1SubdivisionDelta}
For every $(\geq 1)$-subdivision of a graph with maximum degree $\Delta$, 
$$\pich(G) \leq \ceil{5.22\, \Delta}.$$
\end{thm}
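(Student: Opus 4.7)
The plan is to follow the Rosenfeld counting strategy of \cref{RosenfeldRevised}, substituting \cref{PathsContainingVertex} with a tighter path-counting bound that exploits the structure of a $(\geq 1)$-subdivision. The key structural observation is that in such a subdivision $G$, any two original (non-division) vertices are at distance at least $2$ in $G$, so any path of $G$ on $2s$ vertices contains at most $s-1$ original vertices among its $2s-2$ internal positions, and the continuation of the path is forced at every degree-$2$ division vertex.

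First I would prove the following substitute for \cref{PathsContainingVertex}: for every $(\geq 1)$-subdivision $G$ of a graph with maximum degree $\Delta$, for every vertex $v\in V(G)$ and every $s\in\NN$, there are at most $s\,\Delta(\Delta-1)^{s-1}$ paths on $2s$ vertices in $G$ containing $v$. Fix a position $i\in\{1,\dots,2s\}$ of $v$ in a directed $2s$-vertex path and bound the number of such paths by counting the ways to extend $v$ by $i-1$ vertices backward and $2s-i$ vertices forward. Across all $2s-1$ extension steps, the number of choices for the next vertex is at most $\deg(u)-1$ where $u$ is the vertex being left — this is exactly $1$ at every division vertex and at most $\Delta-1$ at every original vertex — together with one extra factor of at most $\Delta$ for the very first step leaving $v$. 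Since internal positions $\{2,\dots,2s-1\}$ contain at most $s-1$ originals by the distance-$2$ constraint, the product over all $2s-1$ slots is at most $\Delta(\Delta-1)^{s-1}$. Summing over $i\in\{1,\dots,2s\}$ and dividing by $2$ (each undirected path is counted in both directions) gives $s\,\Delta(\Delta-1)^{s-1}$.

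Next I would plug this bound into the induction of \cref{RosenfeldRevised} verbatim. For a parameter $r\in(0,1)$ to be chosen later, set $\beta:=(\Delta-1)/r$. The refined estimate $|F_i|\le i\,\Delta(\Delta-1)^{i-1}\,\beta^{-(i-1)}\,\Pi(G-v,L)=i\,\Delta\,r^{i-1}\,\Pi(G-v,L)$ combined with $\sum_{i\ge 1} i\,r^{i-1}=1/(1-r)^2$ yields $|F|\le \Delta\,\Pi(G-v,L)/(1-r)^2$. Consequently the inductive step $\Pi(G,L)\ge\beta\,\Pi(G-v,L)$ holds provided
\[
c \;\ge\; \beta+\frac{\Delta}{(1-r)^2} \;=\; \frac{\Delta-1}{r}+\frac{\Delta}{(1-r)^2}.
\]

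The final step is to minimise $g(r):=1/r+1/(1-r)^2$ on $(0,1)$. Setting $g'(r)=0$ gives $2r^2=(1-r)^3$, whose unique root in $(0,1)$ is $r^*\approx 0.3611$, and $g(r^*)\approx 5.219<5.22$. Since $(\Delta-1)/r^*+\Delta/(1-r^*)^2 < 5.22\,\Delta$, the choice $c:=\lceil 5.22\,\Delta\rceil$ is valid, completing the proof. The main obstacle will be the careful bookkeeping in the path-counting lemma — one must correctly handle the two extra choice slots at $v$ itself (one per extension) and verify that the maximum number of internal original positions is exactly $s-1$ rather than $s$, which is exactly what converts the quadratic bound of the unsubdivided case into a linear one.
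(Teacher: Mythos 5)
Your proposal is correct and follows essentially the same route as the paper: the paper proves the theorem via a Rosenfeld-counting lemma for $(\geq 1)$-subdivisions with $\beta=(\Delta-1)/r$ and $c=\lceil\beta+\Delta/(1-r)^2\rceil$, using exactly the adapted path-count of at most $i\,\Delta(\Delta-1)^{i-1}$ paths on $2i$ vertices through a fixed vertex, and then takes $r=0.36$ (your optimised $r^*\approx 0.361$ gives the same $<5.22\,\Delta$ bound).
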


\cref{1SubdivisionDelta} follow from the next lemma with $r= 0.36$. Recall that $\Pi(G,L)$ is the number of nonrepetitive $L$-colourings of a graph $G$.

\begin{lem}
	\label{SubdividedRosenfeld}
	Fix an integer $\Delta\geq 2$ and a real number $r \in(0,1)$. Let 
	$$\beta := \frac{\Delta-1}{r} \quad \text{and}\quad c:= \ceil*{\beta + \frac{\Delta}{(1-r)^2}}.$$  
	Then for every $(\geq 1)$-subdivision $G$ of a graph with maximum degree $\Delta$, 
	for every $c$-list assignment $L$ of $G$, 
	and for every vertex $v$ of $G$, 
	$$\Pi(G,L) \geq \beta\, \Pi(G-v,L).$$ 
\end{lem}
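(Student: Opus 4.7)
The plan is to reprise the inductive Rosenfeld counting argument used in the proof of \cref{RosenfeldRevised}, with one crucial improvement: the generic bound on paths containing a fixed vertex supplied by \cref{PathsContainingVertex} is replaced by a sharper bound that exploits the fact that $G$ is a $(\geq 1)$-subdivision. This sharper bound changes the effective branching exponent from $(\Delta-1)^{2i-2}$ to $(\Delta-1)^{i-1}$, which is exactly what allows us to set $\beta=(\Delta-1)/r$ rather than $(\Delta-1)^2/r$, and in turn gives the $O(\Delta)$ bound in \cref{1SubdivisionDelta}.

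The first step is to establish the following analogue of \cref{PathsContainingVertex}: if $G$ is a $(\geq 1)$-subdivision of a graph with maximum degree $\Delta$, then for every vertex $v$ of $G$ and every $i\in\NN$, the number of paths on $2i$ vertices in $G$ containing $v$ is at most $i\,\Delta\,(\Delta-1)^{i-1}$. The proof is structural: every subdivision vertex of $G$ has degree exactly $2$, so a path that arrives at such a vertex has only one non-backtracking choice for its continuation; only original vertices act as \emph{branch points}, each contributing a factor of at most $\Delta-1$ (or $\Delta$ for $v$ itself, if $v$ is original). Moreover, any two original vertices of $G$ are at distance $\geq 2$ in $G$, so a path on $2i$ vertices contains at most $i$ original vertices. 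Enumerating by the position $j\in\{1,\dots,2i\}$ of $v$ and extending in both directions, using these two observations, yields the claimed bound (with the final division-by-two for path reversal absorbed into the leading factor of $i$).

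Armed with this counting lemma, the remainder of the proof follows the pattern of \cref{RosenfeldRevised} almost verbatim. We induct on $|V(G)|$. Let $F$ be the set of $L$-colourings of $G$ that are repetitive but become nonrepetitive after removing $v$, and let $F_i\subseteq F$ be those whose repetitive path has $2i$ vertices, say $PQ$ with $v\in V(P)$. Each $\phi\in F_i$ is completely determined by its restriction to $G-V(P)$, since $\phi|_{V(P)}$ mirrors $\phi|_{V(Q)}$; applying the inductive hypothesis $i-1$ times gives $\Pi(G-v,L)\geq\beta^{i-1}\,\Pi(G-V(P),L)$. Combined with the counting lemma, this produces
\[
|F_i|\;\leq\;i\,\Delta\,(\Delta-1)^{i-1}\,\beta^{1-i}\,\Pi(G-v,L)\;=\;i\,\Delta\,r^{i-1}\,\Pi(G-v,L)
\]
after substituting $\beta=(\Delta-1)/r$. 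Summing over $i\geq1$ and using $\sum_{i\geq1}i\,r^{i-1}=1/(1-r)^2$ yields $|F|\leq\Delta(1-r)^{-2}\,\Pi(G-v,L)$, so
\[
\Pi(G,L)\;\geq\;c\,\Pi(G-v,L)-|F|\;\geq\;\left(c-\tfrac{\Delta}{(1-r)^2}\right)\Pi(G-v,L)\;\geq\;\beta\,\Pi(G-v,L)
\]
by the choice of $c$.

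The main obstacle is the counting lemma. One must handle cleanly both the case that $v$ is an original vertex (where $v$ itself acts as a branch point) and the case that $v$ is a subdivision vertex (where $\deg_G(v)=2$, so the bound is only better), and must correctly account for paths whose endpoints fall in the interior of a subdivided edge rather than at original vertices. Beyond verifying these cases, the rest of the argument is mechanical substitution into the template of \cref{RosenfeldRevised}.
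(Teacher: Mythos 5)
Your proposal is correct and follows essentially the same route as the paper: induction on $|V(G)|$ with the Rosenfeld charging argument, where the only change from \cref{RosenfeldRevised} is replacing the path count $i\Delta(\Delta-1)^{2i-2}$ by $i\Delta(\Delta-1)^{i-1}$, valid because division vertices have degree $2$ and original vertices form an independent set, so a $2i$-vertex path has at most $i$ branch points. The paper dismisses this counting step as ``a simple adaptation of \cref{PathsContainingVertex}''; you spell it out, which is a reasonable elaboration rather than a different argument.
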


\begin{proof}
	We proceed by induction on $|V(G)|$. The base case with $|V(G)|=1$ is trivial (assuming $\Pi(G,L)=1$ if $V(G)=\emptyset$). Let $n$ be an integer such that the lemma holds for all graphs with less than $n$ vertices.
	Let $G$ be an $n$-vertex $(\geq 1)$-subdivision of a graph with maximum degree $\Delta$. 
	Let $L$ be a $c$-list assignment of $G$. Let $v$ be any vertex of $G$. Let $F$ be the set of $L$-colourings of $G$ that are repetitive but are nonrepetitive on $G-v$. Then 
	\begin{align}
	\label{Fsubdiv}
	\Pi(G,L) = |L(v)| \, \Pi(G-v,L) \,-\,|F| \geq c \, \Pi(G-v,L) \,-\,|F|.
	\end{align}
	We now upper-bound $|F|$. For $i\in\NN$, let $F_i$ be the set of colourings in $F$, for which there is a  repetitively path in $G$ on $2i$ vertices. Then $|F| \leq \sum_{i\in\NN} |F_i|$. For each colouring $\phi$ in $F_i$ there is a repetitively path $PQ$ on $2i$ vertices in $G$ such that $v\in V(P)$, 
	$G-V(P)$ is nonrepetitively coloured by $\phi$, and $\phi$ is completely determined by the restriction of $\phi$ to $G-V(P)$ colouring (since the colouring of $Q$ is identical to the colouring of $P$). Charge $\phi$ to $PQ$. The number of colourings in $F_i$ charged to $PQ$ is at most $\Pi(G-V(P),L)$. Since $P$ contains $v$ and $i-1$ other vertices, by induction
	\begin{align*}
	\Pi(G-v,L) \geq \beta^{i-1} \, \Pi(G-V(P),L). 
	\end{align*}
	Thus the number of colourings in $F_i$ charged to $PQ$ is at most $\beta^{1-i}\,\Pi(G-v,L)$. A simple adaptation of \cref{PathsContainingVertex} shows that there are at most $i\Delta(\Delta-1)^{i-1}$ paths on $2i$ vertices including $v$. 
	Thus 
	\begin{align*}
	|F_i|  \leq  	i\,\Delta(\Delta-1)^{i-1}\,\beta^{1-i}\,\Pi(G-v,L)
	= 	i\,\Delta r^{i-1}\,\Pi(G-v,L).
	\end{align*}
	Hence
	\begin{align*}
	|F|  \leq \sum_{i\in\NN} |F_i| 
	=
	\sum_{i\in\NN}  i\,\Delta r^{i-1}\,\Pi(G-v,L)
	=
	\Delta\, \Pi(G-v,L) \sum_{i\in\NN}  i\,r^{i-1}
	=
	\frac{\Delta}{(1-r)^2}\, \Pi(G-v,L) .
	\end{align*}
	By \cref{Fsubdiv}, 
	\begin{align*}
	\Pi(G,L) 
	\geq c \, \Pi(G-v,L) \,-\,|F|
	\geq c \, \Pi(G-v,L) \,-\,\frac{\Delta}{(1-r)^2}\, \Pi(G-v,L)	
	\geq \beta\, \Pi(G-v,L)  ,
	\end{align*}
	as desired. 
\end{proof}

Upper bounds on $\pi(G^{(1)})$ that do not depend on $\Delta(G)$ are difficult. There is a lower bound,  
$$\pi(G^{(1)}) \geq \st(G^{(1)}) \geq \sqrt{\chi(G)},$$
where the second inequality was proved by \citet{Wood-DMTCS05}. We have the following upper bound that also involves $\chi(G)$. 

\begin{lem}
\label{Subdiv1}
For every graph $G$, $$\pi(G^{(1)}) \leq 2 \ceil{ ( \chi(G)\,\pi(G) )^{1/3} }^2 + \chi(G)$$
\end{lem}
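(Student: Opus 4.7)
Set $\chi = \chi(G)$, $\pi = \pi(G)$, and $k = \lceil(\chi\pi)^{1/3}\rceil$, so that $\chi\pi \le k^3$. Fix a proper $\chi$-colouring $c_0$ of $G$ and a nonrepetitive $\pi$-colouring $\phi$ of $G$. Choose any injection $\iota\colon [\chi]\times[\pi] \to [k]^3$ with coordinate functions $\iota_1,\iota_2,\iota_3$, and define $\psi_i(v) := \iota_i(c_0(v),\phi(v))$ for each $v \in V(G)$, so that the triple $(\psi_1(v),\psi_2(v),\psi_3(v))$ together with $c_0(v)$ determines $\phi(v)$. The plan is to colour $G^{(1)}$ using one palette of $\chi$ colours on the original vertices and a disjoint palette of $2k^2$ colours on the division vertices, so that no repetitive path can switch between the two palettes.

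I would colour each original vertex $v$ by $c_0(v)$. To colour the division vertices, orient each edge $uv$ of $G$ canonically by $c_0$ (so $u\to v$ whenever $c_0(u)<c_0(v)$) and assign each division vertex $x_{uv}$ a colour of the form $(t(uv),q(uv)) \in \{0,1\}\times [k]^2$. The bit $t(uv)$ and the pair $q(uv)$ should be chosen from the six quantities $\psi_i(u),\psi_j(v)$ ($i,j\in\{1,2,3\}$) so that the following propagation property holds: along any path $a_1,\ldots,a_{2s}$ in $G$ whose sequence $c_0(a_1),\ldots,c_0(a_{2s})$ is periodic of period $s$, the equalities $c_1(x_{a_ja_{j+1}}) = c_1(x_{a_{s+j}a_{s+j+1}})$ for every interior $j$ force $\psi_i(a_j)=\psi_i(a_{s+j})$ for every $i\in\{1,2,3\}$ and every $j\in\{1,\ldots,s\}$. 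The role of the bit $t(uv)$ is to rotate which two of the three $\psi$-coordinates are stored on consecutive division vertices along a path, so that all three $\psi$-coordinates of every interior original vertex can be recovered from the colours of a bounded number of incident division vertices.

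For the verification, suppose for contradiction that $G^{(1)}$ has a repetitively coloured path $P$ on $2t$ vertices. Since the original-vertex and division-vertex palettes are disjoint and $G^{(1)}$ is bipartite between the two sides, the repetition forces $t$ to be even, say $t=2s$, and after reversing $P$ if necessary we may write $P=(a_1,b_1,a_2,b_2,\ldots,a_{2s},b_{2s})$ with each $a_j$ an original vertex and each $b_j$ a division vertex. The repetition gives $c_0(a_j)=c_0(a_{s+j})$ and $c_1(b_j)=c_1(b_{s+j})$ for every $j\in\{1,\ldots,s\}$. If $s=1$, then $a_1a_2 \in E(G)$ yet $c_0(a_1)=c_0(a_2)$, contradicting that $c_0$ is proper. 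For $s\ge 2$, the propagation property of $c_1$ yields $\psi_i(a_j) = \psi_i(a_{s+j})$ for every $i\in\{1,2,3\}$ and every $j$, so by injectivity of $\iota$ we obtain $\phi(a_j)=\phi(a_{s+j})$ for every $j$; hence $(a_1,\ldots,a_{2s})$ is a $\phi$-repetitive path in $G$, contradicting nonrepetitiveness of $\phi$. The main obstacle is the construction of $t(uv)$ and the coordinate-selection rule for the division-vertex colouring, so that the propagation property truly holds using only $2k^2$ colours, rather than the $3k^2$ colours that a naive per-edge encoding of a single $\psi$-coordinate pair would require; the factor of $2$ (rather than $3$) in the statement suggests that the parity bit plus a careful pairing of edge types with coordinate choices is the essential trick.
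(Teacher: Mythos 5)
Your proposal is not a proof: the entire difficulty is concentrated in the "propagation property" of the division-vertex colouring, and you explicitly leave the coordinate-selection rule and the bit $t(uv)$ unconstructed ("the main obstacle"). Worse, the plan as described cannot work with the palette split you commit to ($\chi$ colours on original vertices, $2\lceil(\chi\pi)^{1/3}\rceil^2$ on division vertices). Since $P$ is a path, it visits each original vertex $a_j$ exactly once, through at most two division vertices on the path, and each division vertex carries only two $[k]$-values which must also serve its other endpoint. So matching division-vertex colours across the two halves can force at most two of the three coordinates $\psi_1(a_j),\psi_2(a_j),\psi_3(a_j)$ to agree with those of $a_{s+j}$ for interior vertices (and even fewer for $a_1$ and $a_{s+1}$, each incident to a single on-path division vertex per half). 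Two coordinates together with $c_0(a_j)=c_0(a_{s+j})$ do not determine $\phi(a_j)$ unless every $c_0$-colour class meets at most $k^2$ of the $\phi$-classes, i.e.\ roughly $\pi(G)\leq k^2\approx(\chi\pi)^{2/3}$; this fails badly for $G=K_{n,n}$, where $\chi=2$, $\pi=n+1$ but $k^2=O(n^{2/3})$, so injectivity of your $\iota$ (or of any two-coordinate projection on a fibre) cannot be invoked, and no rotation/parity trick changes the count of values available per vertex.

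You have also misread where the "$2$" and the "$+\chi$" in the bound come from, and the paper's proof fixes exactly the deficiency above: it writes the nonrepetitive colouring with colour-set $A\times B$ where $|A|=\lceil k^2/\chi\rceil$ and $|B|=k$, colours each original vertex $v$ by the \emph{pair} $(c(v),a)$ where $\phi(v)=(a,b)$ — so the originals use $\chi\lceil k^2/\chi\rceil\leq k^2+\chi$ colours, not $\chi$ — and colours the division vertex of $vw$ (with $c(v)<c(w)$) by the ordered pair $(b,b')$ of $B$-coordinates, using $k^2$ colours. The ordering by the proper colouring tells which slot of $(b,b')$ belongs to which endpoint, so from a repetition one reads off, for each original $p_j$ on the path, both its $A$-coordinate (from the vertex colour) and its $B$-coordinate (from one incident division vertex), giving $\phi(p_j)=\phi(p_{t+j})$ directly and a $\phi$-repetitive path in $G$. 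If you want to salvage your write-up, move roughly $k^2$ of the information onto the original vertices as the paper does; a scheme that stores only the proper colour on originals would need a genuinely different (non-local) forcing argument, which you have not supplied.
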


\begin{proof}
	Let $k:= \ceil{ ( \chi(G)\,\pi(G) )^{1/3} }$. 
	Let $c$ be a proper colouring of $G$ with colour-set $\{1,\dots,\chi(G)\}$. 
	Let $A:= \{1,\dots,\ceil{ \frac{k^2}{\chi(G)} } \}$ and $B:= \{1,\dots,k \}$. 
	Let $\phi$ be a nonrepetitive colouring of $G$ with colour-set $A\times B$ 
	(which exists since $|A|\,|B| = \ceil{ \frac{k^2}{\chi(G)} } k 	\geq   \frac{k^3}{\chi(G)} \geq	 \pi(G)$).
	For each vertex $v$ of $G$, if $c(v)=i$ and $\phi(v)=(a,b)$ then colour $v$ by $(i,a)$. 
	For each edge $vw$ of $G$, if $c(v)<c(w)$ and $\phi(v)=(a,b)$ and $\phi(w)=(a',b')$, then colour the division vertex of $vw$ by $(b,b')$. 
	The number of colours is at most $\chi(G) |A|  + |B|^2 = \chi(G) 
	\ceil{ \frac{k^2}{\chi(G)} } + k^2 \leq
2k^2 + 	\chi(G)
= 2 \ceil{ ( \chi(G)\,\pi(G) )^{1/3} }^2 + \chi(G)$. 
	
	Suppose for the sake of contradiction that $G^{(1)}$ contains a repetitively coloured path $P=(p_1,\dots,p_{2t})$. Since $P$ alternates between original and division vertices, exactly one of $p_1$ and $p_{2t}$ is original. Without loss of generality, $p_1$ is an original vertex (otherwise consider the reverse path). Since original and division vertices are assigned distinct colours, $p_i$ is original if and only if $p_{t+i}$ is original. Thus $Q:=(p_1,p_3,\dots,p_{t-1},p_{t+1},p_{t+3},\dots,p_{2t-1})$ is path in $G$. In particular, $t$ is even and at least 2. Consider each $j\in\{1,3,\dots,t-1\}$. Then $p_j$ and $p_{t+j}$ are coloured $(i,a)$, and $p_{j+1}$ and $p_{t+j+1}$ are coloured $(b,b')$ for some distinct $i,i'\in\{1,\dots,\chi(G)\}$ and $a\in A$ and $b,b'\in B$. If $i<i'$ then $\phi(p_j)=\phi(p_{t+j})=(a,b)$, otherwise $\phi(p_j)=\phi(p_{t+j})=(a,b')$. Hence $Q$ is $\phi$-repetitive. 
	
	This contradiction shows that $G^{(1)}$ is nonrepetitively coloured. Hence 	$\pi(G^{(1)}) \leq 
	2 \ceil{ ( \chi(G)\,\pi(G) )^{1/3} }^2 + \chi(G)$.
\end{proof}

For 2-subdivisions and 3-subdivisions we have the following improved upper bounds.

\begin{lem}
\label{Subdiv2}
For every graph $G$, $$\pi(G^{(2)}) \leq 3 \ceil{ \pi(G)^{1/2}} .$$
\end{lem}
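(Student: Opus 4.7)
The plan is to refine the product-style colouring of \cref{Subdiv1} by distributing the information of a nonrepetitive colouring of $G$ across the original vertices and the two division vertices on each edge. Let $k := \lceil\pi(G)^{1/2}\rceil$, so $k^2\ge\pi(G)$, and fix a nonrepetitive colouring $\phi\colon V(G)\to [k]\times[k]$; write $\phi(v)=(a(v),b(v))$. Orient the edges of $G$ arbitrarily, and for each oriented edge $vw$ let the two internal vertices of the corresponding path in $G^{(2)}$ be $x_{vw}$ (adjacent to $v$) and $y_{vw}$ (adjacent to $w$). I will colour $V(G^{(2)})$ with the $3k$ colours in $\{0,1,2\}\times[k]$: an original vertex $v$ receives $(0,a(v))$, the division vertex $x_{vw}$ receives $(1,b(v))$, and the division vertex $y_{vw}$ receives $(2,b(w))$. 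The structural property driving the proof is that the second coordinate of any division vertex equals $b(u)$, where $u$ is the unique original vertex adjacent to it in $G^{(2)}$; this is independent of the orientation of the edge on which that division vertex lies.

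Suppose for contradiction that $P=(p_1,\dots,p_{2t})$ is a repetitively coloured path in $G^{(2)}$. The three ``types'' $0,1,2$ use disjoint sets of colours, so the type of $p_i$ equals the type of $p_{t+i}$ for each $i\in\{1,\dots,t\}$. Since $P$ is simple and cannot reverse direction inside a subdivided edge, between any two consecutive original vertices of $P$ lie exactly two division vertices; hence the positions of the original vertices of $P$ form an arithmetic progression of common difference $3$ with smallest term $i_1\in\{1,2,3\}$. Combining this with the type-symmetry, and ruling out the degenerate cases $t\le 2$ (which cannot accommodate a repetition), one obtains $t=3s$ for some $s\ge 1$, that $P$ contains exactly $2s$ original vertices $q_1,\dots,q_{2s}$, and that $Q:=(q_1,\dots,q_{2s})$ is a path in $G$.

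The contradiction will come from showing that $Q$ is $\phi$-repetitive. Matching original positions across the shift by $t$ immediately yields $a(q_j)=a(q_{j+s})$ for all $j\in\{1,\dots,s\}$. For the $b$-equalities, apply the key structural property: whenever a division position $i\le t$ of $P$ is matched with position $t+i$, both second coordinates equal $b$ of the unique original vertex of $Q$ adjacent to the corresponding division vertex. For \emph{internal} division positions (those strictly between two consecutive $q_j$'s), this adjacent original vertex is either $q_j$ or $q_{j+1}$; for the at most two division positions in the prefix or suffix of $P$, it is $q_1$ or $q_{2s}$. A short case analysis on $i_1\in\{1,2,3\}$ (with $i_1=3$ reducing to $i_1=1$ by reversing $P$) shows that these matches collectively produce $b(q_j)=b(q_{j+s})$ for every $j\in\{1,\dots,s\}$; in particular the delicate equality $b(q_s)=b(q_{2s})$ arises by matching an internal division adjacent to $q_s$ with a prefix or suffix division adjacent to $q_{2s}$. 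Hence $\phi(q_j)=\phi(q_{j+s})$ for all $j\in\{1,\dots,s\}$, so $Q$ is a $\phi$-repetitive path in $G$, contradicting the choice of $\phi$. The main obstacle is the boundary bookkeeping, which the colouring is designed to handle: since the second coordinate of a division vertex depends only on its adjacent original vertex (not on the edge or its orientation), prefix/suffix divisions on edges of $G$ outside $Q$ still contribute the $b$-values of $q_1$ and $q_{2s}$.
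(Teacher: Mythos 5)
Your colouring is exactly the paper's (your types $0,1,2$ play the roles of its classes $B_a$, $C_i$, $D_j$, with the value coordinate of each division vertex recording $b$ of its unique original neighbour), and your repetition argument -- project onto the original vertices, then transfer the $b$-equalities via the division vertex immediately following each matched original vertex, handling the boundary offset by reversing the path -- is essentially the paper's proof, which replaces your case analysis on $i_1$ by a single without-loss-of-generality reversal. One harmless slip: when a prefix or suffix of $P$ contains two division vertices, the outermost one is adjacent to an original vertex \emph{not} on the path (so not to $q_1$ or $q_{2s}$), but your argument only ever uses the innermost prefix/suffix divisions, so nothing breaks.
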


\begin{proof}
Let $k:=  \ceil{ \pi(G)^{1/2} }$. Let $\phi$ be a nonrepetitive colouring of $G$ with colour-set $\{1,\dots,k\}\times\{1,\dots,k\}$. Arbitrarily orient the edges of $G$. Let $A(G)$ be the resulting set of arcs of $G$. For each original vertex $v\in V(G)$, if $\phi(v)=(a,i)$ then colour $v$ by $B_a$. For each arc $vw\in A(G)$, if $(v,x,y,w)$ is the path in $G^{(2)}$ corresponding to $vw$, and $\phi(v)=(a,i)$ and $\phi(w)=(b,j)$, then colour $x$ by $C_i$ and colour $y$ by $D_j$. There are at most $3k$ colours.

Suppose for the sake of contradiction that $(p_1,\dots,p_n,q_1,\dots,q_n)$ is a repetitively coloured path in $G^{(2)}$. Since only original vertices are assigned a type-$A$ colour, 
$p_i$ is original if and only if $q_i$ is original. Say $p_{\ell_1},p_{\ell_2},\dots,p_{\ell_t},q_{\ell_1},q_{\ell_2},\dots,q_{\ell_t}$ are the original vertices in $Q$. 

Suppose that $t=0$. Then $Q$ is the subpath formed by the two division vertices of some edge of $G$. These two vertices are assigned distinct colours, implying $Q$ is nonrepetitively coloured. 

Now assume that $t\geq 1$. Then $R:=(p_{\ell_1},p_{\ell_2},\dots,p_{\ell_t},q_{\ell_1},q_{\ell_2},\dots,q_{\ell_t})$ is a path in $G$. Without loss of generality, $p_{\ell_t+1}$ is in the first half of $Q$ (otherwise consider $Q$ in the reverse order). For each $i\in\{1,\dots,t\}$, if $p_{\ell_i}$ and $q_{\ell_i}$ are coloured $B_a$, and $p_{\ell_i+1}$ and $q_{\ell_i+1}$ are coloured $C_j$ or $D_j$, then $\phi(p_{\ell_i})=\phi(q_{\ell_i})=(a,j)$. Hence $R$ is $\phi$-repetitive. 

This contradiction shows that $G^{(2)}$ is nonrepetitively coloured. Hence 
$\pi(G^{(2)}) \leq 3k$. 
\end{proof}

\begin{lem}
	\label{Subdiv3}
	For every graph $G$, $$\pi(G^{(3)}) \leq (4+o(1))\, \pi(G)^{2/5} .$$
\end{lem}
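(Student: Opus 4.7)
The plan is to generalise the construction of \cref{Subdiv2}. Let $k := \lceil \pi(G)^{1/5}\rceil$ and let $\phi : V(G) \to [k]^5$ be a nonrepetitive colouring of $G$; write $\phi(v) = (a_1(v), a_2(v), a_3(v), a_4(v), a_5(v))$. After orienting the edges of $G$ arbitrarily, I would colour $G^{(3)}$ using four pairwise-disjoint colour-types $A, B, C, D$, each of size $k^2$: each original vertex $v$ of $G$ receives colour $A_{(a_1(v), a_2(v))}$, and for each oriented edge $v \to w$ with division path $v, x_1, x_2, x_3, w$, vertex $x_1$ receives $B_{(a_3(v), a_4(v))}$, vertex $x_2$ receives $C_{(a_5(v), a_5(w))}$, and vertex $x_3$ receives $D_{(a_3(w), a_4(w))}$. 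The total number of colours is $4k^2 \le 4(\pi(G)^{1/5} + 1)^2 = (4 + o(1))\pi(G)^{2/5}$.

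To verify nonrepetitiveness, suppose for a contradiction that $P = (p_1, \dots, p_n, q_1, \dots, q_n)$ is a repetitively coloured path in $G^{(3)}$. Since the four colour-types are disjoint, $p_i$ and $q_i$ share type for every $i$; in particular $p_i$ is an original vertex of $G$ if and only if $q_i$ is. Let $t$ be the number of original vertices among $p_1, \dots, p_n$, at positions $\ell_1 < \dots < \ell_t$. If $t = 0$, then all vertices of $P$ are division vertices, and since consecutive division vertices must lie on a single subdivided edge of $G$, the path $P$ lies inside one subdivided edge, forcing $n = 1$; but then $p_1$ and $q_1$ are adjacent division vertices of different types among $B, C, D$, so they have different colours, a contradiction.

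For the main case $t \ge 2$, I would argue that $R := (p_{\ell_1}, \dots, p_{\ell_t}, q_{\ell_1}, \dots, q_{\ell_t})$ is a $\phi$-repetitive path in $G$. Consecutive originals of $P$ are separated by exactly three division vertices lying on a single subdivided edge of $G$, hence adjacent in $G$; the same argument applied to the division vertices between $p_{\ell_t}$ and $q_{\ell_1}$ shows that $p_{\ell_t}q_{\ell_1} \in E(G)$. The coordinates $(a_1, a_2)$ of $p_{\ell_j}$ and $q_{\ell_j}$ match because their $A$-colours match. The coordinates $(a_3, a_4)$ match because the division vertex of $P$ adjacent to $p_{\ell_j}$ has $B$- or $D$-type, and either one encodes $(a_3(p_{\ell_j}), a_4(p_{\ell_j}))$; its paired neighbour of $q_{\ell_j}$ has the same colour, hence the same type and same tuple. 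Finally, $a_5$ matches because the alignment of $B$/$D$-types forces the edges at $p_{\ell_j}$ and $q_{\ell_j}$ on the same side of $R$ to be oriented in the same sense relative to these originals, so the two paired $x_2$-vertices encode $a_5(p_{\ell_j})$ and $a_5(q_{\ell_j})$ in the same coordinate of their $C$-tuples. Thus $R$ is $\phi$-repetitive, contradicting that $\phi$ is nonrepetitive.

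The hard part will be the case $t = 1$, where no $R$-edge is contained in a single half of $P$ so the clean $a_5$-extraction above has nothing to attach to. Here I would first deduce the rigid structure that the $n-1$ division vertices between $p_{\ell_1}$ and $q_{\ell_1}$ in $P$ lie consecutively on one subdivided edge; since a division-only path through a $3$-subdivided edge from an endpoint-neighbour to the opposite endpoint-neighbour must traverse all three of its division vertices, this gives $n = 4$ and $p_{\ell_1}q_{\ell_1} \in E(G)$. A finite case analysis over $\ell_1 \in \{1, 2, 3, 4\}$ and the orientations of the (at most three) relevant edges of $G$ then shows that every configuration either exhibits an immediate $B$-versus-$D$ type mismatch at some pair of paired positions in $P$ (so $P$ cannot be repetitive) or forces every coordinate of $\phi(p_{\ell_1})$ and $\phi(q_{\ell_1})$ to agree. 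In the latter case $\phi(p_{\ell_1}) = \phi(q_{\ell_1})$, contradicting that $\phi$ is proper on the edge $p_{\ell_1}q_{\ell_1}$.
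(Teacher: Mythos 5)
Your colouring is the paper's colouring up to relabelling (the paper's $B'$ is your $D$, and its colour-set $\{1,\dots,k^2\}\times\{1,\dots,k^2\}\times\{1,\dots,k\}$ is your $[k]^5$ with coordinates grouped as $(a_1,a_2)$, $(a_3,a_4)$, $a_5$), and your verification for $t\ge 2$ is exactly the paper's decoding argument. The one place you stop short is $t=1$: you assert, but do not carry out, a finite case analysis over $\ell_1$ and the edge orientations. In fact no special case is needed, and the paper treats every $t\ge 1$ uniformly: since consecutive original vertices on the path are exactly four positions apart, $t=1$ forces $n=4$, and after possibly reversing the path one may assume $\ell_1+2\le n$; the same decoding at positions $\ell_1+1,\ell_1+2$ (a $B$/$D$-type vertex adjacent to the original, then the $C$-type middle vertex, with their pairs $q_{\ell_1+1},q_{\ell_1+2}$ in range) gives $\phi(p_{\ell_1})=\phi(q_{\ell_1})$, contradicting that $\phi$ is proper on the edge $p_{\ell_1}q_{\ell_1}$ — exactly the contradiction you predict. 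Equivalently, in your formulation, for $t=1$ one of the two directions (forward if $\ell_1\le 2$, backward if $\ell_1\ge 3$) always has both its two positions and their paired copies inside the path, so the generic extraction applies verbatim and the feared absence of an $R$-edge inside a single half causes no difficulty.
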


\begin{proof}
Let $k:=  \ceil{ \pi(G)^{1/5}}$. Let $\phi$ be a nonrepetitive colouring of $G$ with colour-set $\{1,\dots,k^2\}\times\{1,\dots,k^2\}\times\{1,\dots,k\}$. Arbitrarily orient the edges of $G$. Let $A(G)$ be the resulting set of arcs of $G$. For each original vertex $v\in V(G)$, if $\phi(v)=(a,b,c)$ then colour $v$ by $A_a$. For each arc $vw\in A(G)$, if $(v,x,m,y,w)$ is the path in $G^{(2)}$ corresponding to $vw$, and $\phi(v)=(a,b,c)$ and $\phi(w)=(a',b',c')$, then colour $x$ by $B_b$, colour $m$ by $C_{c,c'}$, and colour $y$ by $B'_{b'}$. There are at most $4k^2$ colours.

Suppose for the sake of contradiction that $(p_1,\dots,p_n,q_1,\dots,q_n)$ is a repetitively coloured path in $G^{(2)}$. Since only original vertices are assigned a type-$A$ colour, 
$p_i$ is original if and only if $q_i$ is original. Say $p_{\ell_1},p_{\ell_2},\dots,p_{\ell_t},q_{\ell_1},q_{\ell_2},\dots,q_{\ell_t}$ are the original vertices in $Q$. 

Suppose that $t=0$. Then $Q$ is contained in the subpath formed by the three division vertices of some edge of $G$. These three vertices are assigned distinct colours, implying $Q$ is nonrepetitively coloured. 

Now assume that $t\geq 1$. Then $R:=(p_{\ell_1},p_{\ell_2},\dots,p_{\ell_t},q_{\ell_1},q_{\ell_2},\dots,q_{\ell_t})$ is a path in $G$. Without loss of generality, $p_{\ell_t+1}$ is in the first half of $Q$ (otherwise consider $Q$ in the reverse order). Consider each $i\in\{1,\dots,t\}$. Then $p_{\ell_i}$ and $q_{\ell_i}$ are coloured $A_a$ for some $a\in \{1,\dots,k^2\}$. And $p_{\ell_i+1}$ and $q_{\ell_i+1}$ are coloured $B_b$ or $B'_b$, for some $b\in\{1,\dots,k^2\}$. And $p_{\ell_i+2}$ and $q_{\ell_i+2}$ are coloured $C_{c,c'}$, for some $c,c'\in\{1,\dots,k\}$. If $p_{\ell_i+1}$ and $q_{\ell_i+1}$ are coloured $B_b$, then 
$\phi(p_{\ell_i})=\phi(q_{\ell_i})=(a,b,c)$. Otherwise, $p_{\ell_i+1}$ and $q_{\ell_i+1}$ are coloured $B'_b$, implying $\phi(p_{\ell_i})=\phi(q_{\ell_i})=(a,b,c')$. In both cases, $\phi(p_{\ell_i})=\phi(q_{\ell_i})$. Hence $R$ is $\phi$-repetitive. This contradiction shows that $G^{(2)}$ is nonrepetitively coloured. Hence 
$\pi(G^{(3)}) \leq 4k^2$. 
\end{proof}

\subsection{Upper Bounds: Large Subdivisions}

Loosely speaking, \cref{NonRepSub} says that nonrepetitive colourings of subdivisions are not much ``harder" than nonrepetitive colourings of the original graph. This intuition is made more precise if we subdivide each edge many times. Then nonrepetitive colourings of subdivisions are much ``easier" than nonrepetitive colourings of the original graph. In particular, \citet{Gryczuk-IJMMS07} proved that every graph has a nonrepetitively 5-colourable subdivision. This bound was improved to 4 by \citet{BW08} and by \citet{MS09}, and to 3 by \citet{PZ09} (affirming a conjecture of \citet{Gryczuk-IJMMS07}). This deep generalisation of Thue's Theorem implies that the class of nonrepetitively $3$-colourable graphs is not contained in a proper topologically-closed class. 

For each of these results, the number of division vertices per edge is $O(|V(G)|)$ or $O(|E(G)|)$. Improving these bounds, \citet{NOW11} proved that every graph has a nonrepetitively 17-colourable subdivision with $O(\log |V(G)|)$ division vertices per edge, and that $\Omega(\log n)$ division vertices are needed on some edge of any nonrepetitively $O(1)$-colourable subdivision of $K_n$ (see \cref{CompleteGraphSubdivLowerBound} below). No attempt was made to optimise the constant 17. \citet{GPZ11} asked whether every graph has a nonrepetitively $c$-choosable subdivision, for some constant $c$? This problem was solved by \citet{DJKW16}, who proved that every graph has a nonrepetitively $5$-choosable subdivision. Each edge $vw$ is subdivided $O(\log\deg(v)+\log\deg(w))$ times, which is $O(\log \Delta)$ for graphs of maximum degree $\Delta$, which is at most the $O(\log|V(G)|)$ bound of \citet{NOW11}. 

\begin{table}[H]
	\caption{Bounds on the number of colours and number of division vertices in nonrepetitively colourable subdivisions.}
	\begin{center}
		\begin{tabular}{ccl}
			\hline
			\# colours & \# division vertices per edge $vw$ & reference\\
			\hline
			$\pi\leq 5$ & $O(|E(G)|)$ & \citet{Gryczuk-IJMMS07}\\
			$\pi\leq 4$ & $O(|V(G)|)$ & \citet{BW08} \\
			$\pi\leq 4$ & $O(|E(G)|)$ & \citet{MS09}\\
			$\pi\leq17$ & $O(\log |V(G)|)$ & \citet{NOW11}\\ 
			$\pich\leq5$ & $O(\log \deg(v) + \log\deg(w) ))$ & \citet{DJKW16} \\ 
			$\pi\leq 5$ & $O(\log \pi(G))$ & \cref{5Subdiv}\\ 
			$\pi\leq 3$ & $O(|E(G)|)$ & \citet{PZ09}\\
			\hline
		\end{tabular}
	\end{center}
\end{table}

\cref{5Subdiv} below presents a new construction with 5 colours and $O(\log\pi(G))$ division vertices per edge. Like the result of \citet{DJKW16} the number of division vertices per edge depends on the original graph, and in the worst case is $O(\log|V(G)|)$, thus matching the upper bound of \citet{NOW11}. For graphs of maximum degree $\Delta$, since $\pi(G)\leq O(\Delta^2)$, the new upper bound matches the bound $O(\log\Delta)$ by \citet{DJKW16}. Note that \citet{BGKNP07} proved that every tree has a nonrepetitively 3-colourable subdivision.

\begin{open}
Does every graph $G$ have a $c$-choosable subdivision with $O(\log \pi(G)))$ division vertices per edge, for some constant $c$?
\end{open}

Here we include the proof of \citet{BW08}\footnote{The original proof of \citet{BW08} had an error, which was reported and corrected by Antonides, Spychalla and Yamzon; see the corrigendum in \citep{BW08}.}. 

\begin{thm}[\citep{BW08}]
\label{4ColourableSubdivision}
Every graph $G$ has a nonrepetitively 4-colourable subdivision. 
\end{thm}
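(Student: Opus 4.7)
My approach is to reserve one of the four colours for the original vertices of $G$, so that any repetitively coloured path in the subdivision must align its originals, and then to design the colouring of each edge's subdivision to make such an alignment impossible except in a trivial way. Fix an infinite Thue nonrepetitive sequence $\alpha=\alpha_1\alpha_2\dots$ over $\{0,1,2\}$ from \cref{PathPi}, number the vertices of $G$ as $v_1,\dots,v_n$, orient each edge from its lower-indexed to its higher-indexed endpoint, and colour every original vertex with the fourth colour $\star$.

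For each edge $e=v_iv_j$ with $i<j$, I would replace $e$ by a path $v_i,u_1,\dots,u_{m(i,j)},v_j$ whose interior vertices receive a \emph{signature} word $s_{ij}\in\{0,1,2\}^{m(i,j)}$, chosen so that (a) each $s_{ij}$ is a contiguous subword of $\alpha$ (hence itself nonrepetitive), (b) the pair consisting of $s_{ij}$ and its reverse uniquely determines $(i,j)$, and (c) no proper prefix or suffix of any $s_{ij}$ agrees with a prefix or suffix of any $s_{i'j'}$ for a different edge. A natural first attempt is to assign each edge a disjoint far-apart window of $\alpha$, framed by a short distinguishing ``tag'' block over $\{0,1,2\}$ that encodes $(i,j)$; the tag is what makes $(i,j)$ readable from the signature.

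For the verification, suppose for contradiction that $P=p_1\cdots p_{2t}$ is a repetitively coloured path in the subdivision. Since $\star$ appears only on originals, $p_k$ is original iff $p_{k+t}$ is. If $P$ contains no original then $P$ lies inside the subdivision of a single edge, whose interior colouring is a subword of $\alpha$ and hence nonrepetitive, contradicting \cref{PathPi}. Otherwise, the originals of $P$ in the first half occur at positions $a_1<\cdots<a_r$; between consecutive originals $P$ performs a full traversal of the subdivision of a single edge (since $P$ is simple, it cannot enter and leave an edge's subdivision through the same endpoint). Matching the signature of the traversal from $p_{a_s}$ to $p_{a_{s+1}}$ with that of the traversal from $p_{a_s+t}$ to $p_{a_{s+1}+t}$ and applying (b) forces these to be the same edge in the same direction, yielding $p_{a_s}=p_{a_s+t}$ and contradicting simplicity of $P$. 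The tail fragments of $P$ before $p_{a_1}$ and after $p_{a_r}$ are handled by the same identification applied to partial signatures, using property (c).

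The main obstacle is designing signatures that genuinely satisfy (c): the global squarefreeness of $\alpha$ does not by itself prevent accidental coincidences between prefixes or suffixes drawn from two disjoint windows of $\alpha$, so one must introduce boundary tag blocks that cannot be spoofed by any gluing through an original vertex. Working out concrete tags and verifying this non-spoofing condition is where most of the technical work lies; it is also what governs how many division vertices per edge are actually required.
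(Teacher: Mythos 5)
Your plan is not yet a proof: the construction it hinges on is exactly the part that is missing, and in the form you state it, it cannot exist. Condition (c) as written -- that no proper prefix or suffix of any signature agrees with a prefix or suffix of any other edge's signature -- already fails at length one: over the three-letter alphabet available to division vertices, as soon as $G$ has more than three edges two signatures must begin (or end) with the same letter. So (c) must be weakened, and once it is weakened the verification no longer goes through as described. Concretely, the dangerous case is not the one you dispose of with full signatures (two originals in a half does force two full traversals of the same edge, and that already contradicts simplicity of $P$ regardless of direction), but the case of exactly one original vertex in each half. There the repetition forces the interior word $w$ of the middle edge $e=vu$ to factor as $w=xy$, where $x$ must coincide with the word read into some other edge $e''$ at $u$ and $y$ with the word read (reversed) into some other edge $e'$ at $v$. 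Ruling this out is a condition about triples of edges sharing endpoints and about arbitrary factorizations of $w$, not about prefixes of one signature versus another, and nothing in the proposal establishes it. Moreover, the ``tag'' blocks you suggest framing the windows of $\alpha$ with would in general destroy property (a): a square can form across the boundary between a tag and its window, and (a) is what you use to kill repetitively coloured paths containing no original vertex. You acknowledge that designing tags satisfying these constraints is ``where most of the technical work lies''; since that work is precisely the theorem, the argument as it stands has a genuine gap.

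For contrast, the paper's proof of \cref{4ColourableSubdivision} sidesteps the need for edge-identifying signatures entirely. It orders $V(G)$ by BFS distance from a root, subdivides the edge $v_iv_j$ ($i<j$) exactly $2(j-i)-1$ times so that in the resulting graph every edge joins consecutive depths, and then colours each vertex by $\phi(p_d)$ where $d$ is its depth and $\phi$ is a walk-nonrepetitive $4$-colouring of a path (\cref{PathSigma}). A repetitively coloured path then projects to a repetitively coloured walk on the depth path, which must be boring, and a short case analysis about depths finishes the argument. If you want to salvage your approach, you would need to formulate and prove the factorization condition above (and preserve squarefreeness of each signature), which is substantially harder than the layering argument and is likely to cost more than four colours or more structure than disjoint windows of $\alpha$ provide.
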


\begin{proof}
Without loss of generality, $G$ is connected. Say $V(G)=\{v_0,v_1,\dots,v_{n-1}\}$ ordered by non-decreasing distance from $v_0$. As illustrated in \cref{Subdiv}, let $G'$ be the subdivision of $G$ obtained by subdividing each edge $v_iv_j\in E(G)$ (with $i<j$) $2(j-i)-1$ times. The \emph{depth} of each vertex $x$ of $G'$ is the distance from $v_0$ to $x$  in $G'$. In the original graph $G$, for each $j\in\{1,\dots,n\}$, vertex $v_j$ has a neighbour $v_i$ with $i<j$; it follows that $v_j$ is at depth $2j$. For $i\geq 0$, let $V_i$ be the set of vertices in $G'$ at depth $i$. So $(V_0,V_1,\dots,V_{2n})$ is a layering of $G'$. Note that the endpoints of each edge are in consecutive layers. (Think of $v_0,v_1,\dots,v_{n-1}$ on a horizontal line in this order, with a vertical line through each $v_i$, and an additional vertical line between $v_i$ and $v_{i+1}$. Each edge of $G$ is subdivided at each point it crosses a vertical line.)

\begin{figure}[!ht]
\begin{center}
\includegraphics{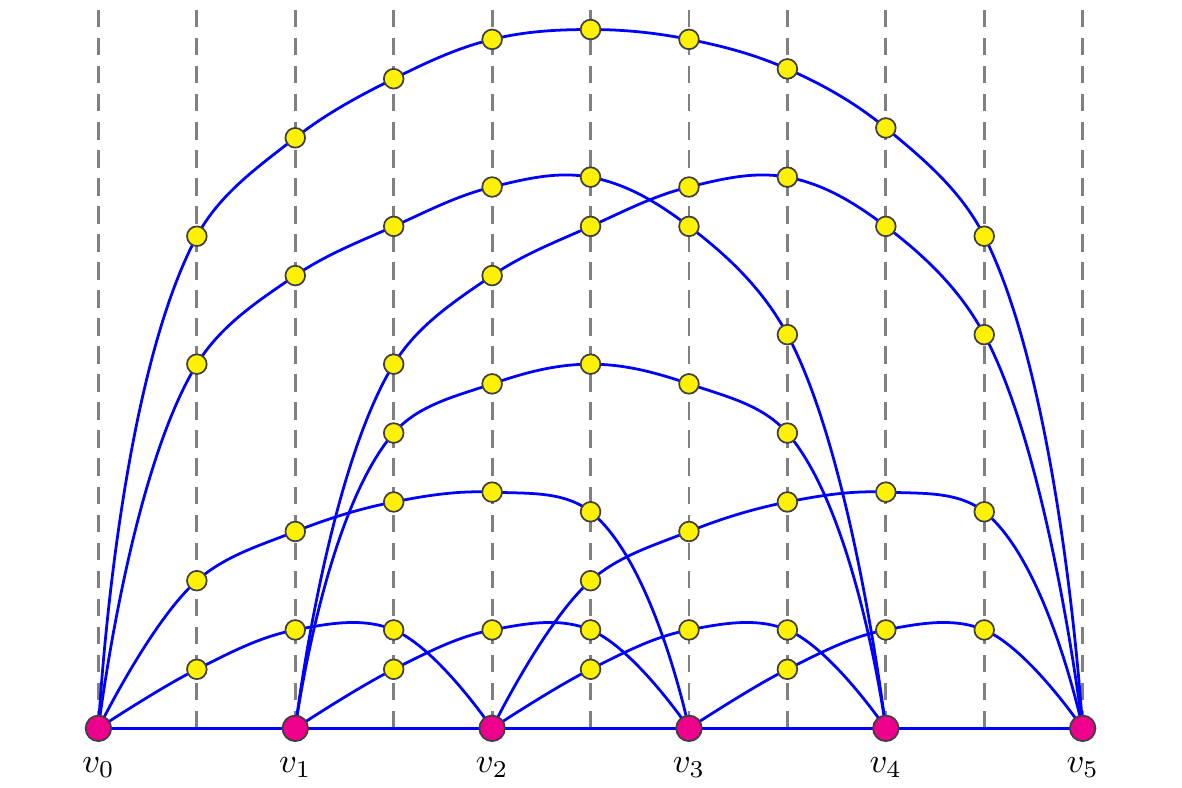}
\caption{\label{Subdiv} Proof of \cref{4ColourableSubdivision}: The subdivision $H$ with $G=K_6$.}
\end{center}
\end{figure}

By \cref{PathSigma}, there is a walk-nonrepetitive $4$-colouring $\phi$ of the path $P=(p_0,p_1,\dots,p_{2n})$. Colour each vertex of $G'$ at depth $j$ by $\phi(p_j)$. 

Suppose that $G'$ contains a repetitively coloured path $Q=(x_1,x_2,\dots,x_t,y_1,y_2,\dots,y_t)$. Let $R$ be the projection of $Q$ into $P$. Since no two adjacent vertices in $G'$ are at the same depth, $R$ is a walk in $P$. Since $\phi$ is walk-nonrepetitive, $R$ is boring. Thus $x_j$ and $y_j$ are at the same depth for each $j\in\{1,\dots,t\}$ Since no two adjacent vertices in $G'$ are at the same depth, $t\geq 2$. 

First suppose that $t=2$. Since $x_1$ and $y_1$ are at the same depth, and $x_2$ is adjacent to both $x_1$ and $y_1$, it must be that $x_2$ is an original vertex (since division vertices only have two neighbours, and they are at distinct depths). Similarly, $y_1$ is an original vertex. This is a contradiction, since no two original vertices of $G$ are adjacent in $G'$. Now assume that $t\geq 3$. 

First suppose that $x_{j-1}$ and $x_{j+1}$ are at the same depth for some $j\in\{2,\dots,t-1\}$. Thus $x_j$ is an original vertex of $G$. Say $x_j$ is at depth $i$. Without loss of generality, $x_{j-1}$ and $x_{j+1}$ are at depth $i-1$. There is at most one original vertex in each layer. Thus $y_j$, which is also at depth $i$, is a division vertex. Now $y_j$ has two neighbours in $H$, which are at depths $i-1$ and $i+1$. Thus $y_{j-1}$ and $y_{j+1}$ are at depths $i-1$ and $i+1$, which contradicts the fact that $x_{j-1}$ and $x_{j+1}$ are both at depth $i-1$.

Now assume that for all $j\in\{2,\dots,t-1\}$, the vertices $x_{j-1}$ and $x_{j+1}$ are at distinct depths.

Say $x_1$ is at depth $i$. Without loss of generality, $x_2$ is at depth $i+1$ (since no edge of $G'$ has both endpoints at the same depth). It follows that $x_j$ is at depth $i+j-1$ for each $j\in\{1,\dots,t\}$. In particular, $x_t$ is at depth $i+t-1$. Now, $y_1$ is at depth $i$ (the same depth as $x_1$). Since $x_ty_1$ is an edge, and every edge goes between consecutive levels, $|(i+t-1)-i|=1$, implying $t=2$, which is a contradiction. Hence we have a path-nonrepetitive $4$-colouring of $G'$. 
\end{proof}

The next lemma is a key to our $O(\log\pi(G))$ bounds on the number of division vertices per edges in a nonrepetitively $c$-colourable subdivision.

\begin{lem}
\label{SubdivLemma}
Assume that there exist at least $k$ distinct nonrepetitive $r$-colourings of the $t$-vertex path, for some $k,r,t\in\NN$. Then for every graph $G$ with $\pi(G)\leq k$,
$$\pi(G^{(2t+1)}) \leq r+2.$$
\end{lem}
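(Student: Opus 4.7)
The plan is to build an explicit $(r+2)$-colouring of $G^{(2t+1)}$ that encodes a nonrepetitive $k$-colouring $\phi:V(G)\to\{1,\dots,k\}$ of $G$ together with $k$ distinct nonrepetitive colourings $\pi_1,\dots,\pi_k:\{1,\dots,t\}\to\{1,\dots,r\}$ of $P_t$. Introduce two fresh colours $\alpha,\beta\notin\{1,\dots,r\}$, and colour every original vertex with $\alpha$. For each edge $vw\in E(G)$, write the subdivided path as $(v=u_0,u_1,\dots,u_{2t+1},u_{2t+2}=w)$ and set $u_j:=\pi_{\phi(v)}(j)$ for $j=1,\dots,t$, $u_{t+1}:=\beta$, and $u_{2t+2-j}:=\pi_{\phi(w)}(j)$ for $j=1,\dots,t$. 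Reading along the subdivided edge one sees
\[\alpha,\;\pi_{\phi(v)}(1),\dots,\pi_{\phi(v)}(t),\;\beta,\;\pi_{\phi(w)}(t),\dots,\pi_{\phi(w)}(1),\;\alpha,\]
so $\beta$ appears at a unique interior position of each subdivided edge while $\alpha$ appears in $G^{(2t+1)}$ exactly at the original vertices of $G$. The claim is that this colouring is path-nonrepetitive.

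Suppose for contradiction that $P=(p_1,\dots,p_{2s})$ is a repetitively coloured path. If $P$ lies inside a single subdivided edge, then $\beta$ appears at most once on $P$, so any repetitive subpath must avoid $\beta$ and lie in one of the two halves; the same argument applied to the single appearance of $\alpha$ in each half reduces a hypothetical repetitive subpath to a repetitive subsequence of $\pi_{\phi(v)}$ or $\pi_{\phi(w)}$, contradicting their nonrepetitivity.

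Otherwise $P$ passes through at least one original vertex; let the original vertices on $P$ appear at positions $q_1<q_2<\dots<q_m$. Consecutive original vertices in $P$ are separated by a full subdivided edge, so $q_{a+1}-q_a=2t+2$. Since the $\alpha$-positions of $P$ are precisely $\{q_1,\dots,q_m\}$, the repetitive condition forces this set to be invariant under the shift $i\mapsto i+s$; a standard counting argument on the arithmetic progression then gives $s=l(2t+2)$ for some integer $l\ge 1$, $m=2l$, and the pairing $q_{a+l}=q_a+s$. Writing $o_a$ for the original vertex at $q_a$, the sequence $(o_1,\dots,o_{2l})$ is a path in $G$.

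It remains to show $\phi(o_a)=\phi(o_{a+l})$ for every $a\in\{1,\dots,l\}$, which produces a $\phi$-repetitive path in $G$ and the required contradiction. The colours on $P$ at positions $q_a+j$ for $j=1,\dots,t$ are exactly $\pi_{\phi(o_a)}(j)$, and likewise at $o_{a+l}$. When $q_a+t\le s$, the repetitive condition matches these two blocks directly, yielding $\pi_{\phi(o_a)}=\pi_{\phi(o_{a+l})}$ and hence $\phi(o_a)=\phi(o_{a+l})$ by distinctness of the $\pi_i$. When instead $q_a\ge t+1$, the symmetric argument using the $t$ division vertices of $P$ immediately preceding $o_a$ (whose colours are $\pi_{\phi(o_a)}(1),\dots,\pi_{\phi(o_a)}(t)$) and their shifts by $s$ gives the same conclusion. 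The one bookkeeping point to check—and the main thing to get right—is that at least one of these regimes applies for every $a$: if $q_a\le t$, then $q_a+t\le 2t\le l(2t+2)=s$, so the first regime is automatic; otherwise $q_a\ge t+1$ and the second regime applies.
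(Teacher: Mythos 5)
Your proposal is correct and is essentially the paper's own proof: the same colouring (one fresh colour for original vertices, one for the middle division vertex of each subdivided edge, and the $t$ division vertices nearest an endpoint $v$ coloured by the path-colouring indexed by $\phi(v)$), followed by the same projection argument in which matched blocks of division-vertex colours force $\pi_{\phi(o_a)}=\pi_{\phi(o_{a+l})}$ and distinctness of the path-colourings yields a $\phi$-repetitive path in $G$. The only difference is bookkeeping: you resolve the ``read forward or backward from each original vertex'' issue with a per-vertex two-regime case analysis, whereas the paper does it once globally by reversing the repetitive path so that the middle vertex of the crossing edge lies in its first half.
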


\begin{proof}
Fix a nonrepetitive colouring $\phi$ of $G$ with colour-set $\{1,\dots,k\}$. Let $c_1,\dots,c_{k}$ be distinct nonrepetitive colourings of the path $(1,\dots,t)$, each with colour-set $\{1,\dots,r\}$. That is, for all distinct $i,j\in\{1,\dots,k\}$ there exists $\ell\in\{1,\dots,t\}$ such that $c_i(\ell)\neq c_j(\ell)$. 
	
For each edge $vw$ of $G$, if $P_{vw}=(v,x_1,\dots,x_t,z,y_t,\dots,y_1,w)$ is the path in $G^{(2t+1)}$ corresponding to $vw$ and $\phi(v)=i$, then colour $x_\ell$ by $c_{i}(\ell)$ for each $\ell\in\{1,\dots,t\}$. The same rule colours $y_\ell$ by $c_j(\ell)$ where $j=\phi(w)$. Call $z$ a \emph{middle} vertex. Colour each original vertex $r+1$ and colour each middle vertex $r+2$. 
	
Suppose for the sake of contradiction that $G^{(2t+1)}$ contains a repetitively coloured path $Q=(p_1,\dots,p_n,q_1,\dots,q_n)$. Since only original vertices are coloured $r+1$, 
$p_i$ is original if and only if $q_i$ is original. 
Say $p_{i_1},p_{i_2},\dots,p_{i_a},q_{i_1},q_{i_2},\dots,q_{i_a}$ are the original vertices in $Q$.

Suppose that $a=0$. Then for some edge $vw$ of $G$, $Q$ is a subpath of $(x_1,\dots,x_t,z,y_t,\dots,y_1)$ using the above notation for $P_{vw}$. Since only middle vertices are coloured $r+2$, without loss of generality, 
$Q$ is a subpath of $(x_1,\dots,x_t)$, which is a contradiction since $(x_1,\dots,x_t)$ is nonrepetitively coloured. 

Now assume that $a\geq 1$. Then $R:=(p_{i_1},p_{i_2},\dots,p_{i_a},q_{i_1},q_{i_2},\dots,q_{i_a})$ is a path in $G$.
Without loss of generality, the middle vertex of the edge $p_{i_a}q_{i_1}$ is in the first half of $Q$ 
(otherwise consider $Q$ in the reverse order). 
Consider $j\in\{1,2,\dots,a\}$ and $\ell\in\{1,2,\dots,t\}$. 
By construction,  
$p_{i_j+\ell}$ is coloured $c_\alpha(\ell)$ where $\alpha:=\phi(p_{i_j})$, and
$q_{i_j+\ell}$ is coloured $c_\beta(\ell)$ where $\beta:=\phi(q_{i_j})$.
Since $Q$ is repetitively coloured, 
$p_{i_j+\ell}$ and $q_{i_j+\ell}$ are assigned the same colour. 
That is, $c_\alpha(\ell) = c_\beta(\ell)$ for each $\ell\in\{1,2,\dots,t\}$.
Since $c_1,\dots,c_k$ are distinct colourings, $\alpha=\beta$. 
Thus $\phi(p_{i_j})=\phi(q_{i_j})$. 
Hence $R$ is a $\phi$-repetitively coloured path in $G$.

This contradiction shows that $G^{(2t+1)}$ is nonrepetitively ($r+2$)-coloured, and $\pi(G^{(2t+1)})\leq r+2$. 
\end{proof}

Numerous authors have shown that paths have exponentially many nonrepetitive 3-colourings; see \citep{Berstel05} for a survey of such results. For example, \citet{EZ98} proved that for every $t\in\NN$ there are at least $2^{t/17}$ distinct nonrepetitive 3-colourings of the $t$-vertex path. This result and  \cref{SubdivLemma} imply:

\begin{thm}
\label{5Subdiv}
For every graph $G$, if $d:=2\ceil{17  \log_2 \pi(G)}+1$ then $$\pi(G^{(d)})\leq 5.$$
\end{thm}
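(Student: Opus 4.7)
The plan is to apply \cref{SubdivLemma} directly, using the exponential lower bound of \citet{EZ98} on the number of nonrepetitive $3$-colourings of a path.

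Set $r:=3$ and $k:=\pi(G)$. To invoke \cref{SubdivLemma} with these values, I need a path length $t$ for which the number of nonrepetitive $3$-colourings of the $t$-vertex path is at least $k=\pi(G)$. By the Ekhad--Zeilberger bound quoted just above the theorem, the number of such colourings is at least $2^{t/17}$. So it suffices to choose $t$ with $2^{t/17}\geq \pi(G)$, i.e.\ $t\geq 17\log_2 \pi(G)$. The natural choice is $t:=\ceil{17\log_2\pi(G)}$.

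With this $t$, \cref{SubdivLemma} yields $\pi(G^{(2t+1)})\leq r+2=5$. Since $2t+1=2\ceil{17\log_2\pi(G)}+1=d$, this is exactly the desired bound $\pi(G^{(d)})\leq 5$.

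There is no real obstacle here: the theorem is essentially an immediate corollary combining two results already at hand (\cref{SubdivLemma} and the $2^{t/17}$ lower bound on nonrepetitive $3$-colourings of paths). The only thing to check is the arithmetic matching $d$ to $2t+1$, and to note that the Ekhad--Zeilberger bound gives \emph{distinct} colourings as required by the hypothesis of \cref{SubdivLemma}. If one wanted a self-contained presentation, the lower bound $2^{t/17}$ could be replaced by the weaker bound $(r-2)^t$ implied by \cref{MultiColourPath} with $r=4$; but since the theorem statement uses the constant $17$ explicitly, the Ekhad--Zeilberger bound is the right tool.
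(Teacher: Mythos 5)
Your proof is correct and is essentially identical to the paper's: \cref{5Subdiv} is obtained there exactly by combining the Ekhad--Zeilberger bound of $2^{t/17}$ nonrepetitive $3$-colourings of the $t$-vertex path with \cref{SubdivLemma}, using the same choice $t=\ceil{17\log_2\pi(G)}$ so that $2^{t/17}\geq\pi(G)$ and $d=2t+1$. (Only your closing aside is off: \cref{MultiColourPath} requires $r\geq 4$, so substituting it would give $r+2\geq 6$ colours rather than the stated $5$.)
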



For 6 or more colours, \cref{MultiColourPath} with $t:= \ceil{\log_{r-2} \pi(G)}$ and  \cref{SubdivLemma} imply:

\begin{thm}
\label{GenSubdiv}
For every graph $G$ and integer $r\geq 4$, 
if $d := 2\ceil{\log_{r-2} \pi(G)}+1$ then $$\pi(G^{(d)})\leq r+2.$$
\wish{make this $d\geq ....$}
\end{thm}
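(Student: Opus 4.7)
The plan is to combine \cref{SubdivLemma} with \cref{MultiColourPath} in a straightforward way. Set $k := \pi(G)$ and $t := \ceil{\log_{r-2} \pi(G)}$, so that $d = 2t+1$ matches the statement. To apply \cref{SubdivLemma} we need at least $\pi(G)$ distinct nonrepetitive $r$-colourings of the $t$-vertex path $P_t$.

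To produce these colourings, I would invoke \cref{MultiColourPath} with the constant $r$-list assignment $L(v) := \{1,\dots,r\}$ for each vertex $v$ of $P_t$. The theorem gives at least $(\tfrac12(r+\sqrt{r^2-4r}))^t \geq (r-2)^t$ distinct nonrepetitive $L$-colourings of $P_t$, and every such $L$-colouring is a nonrepetitive $r$-colouring in the ordinary sense. By the choice of $t$,
\[
(r-2)^t \geq (r-2)^{\log_{r-2}\pi(G)} = \pi(G) = k,
\]
so there are at least $k$ distinct nonrepetitive $r$-colourings of $P_t$, as required.

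Now \cref{SubdivLemma} immediately yields $\pi(G^{(2t+1)}) \leq r+2$, i.e.\ $\pi(G^{(d)}) \leq r+2$. There is no real obstacle here, beyond checking that $\pi(G)$ is finite so that the logarithm makes sense; this is fine since $G$ is finite and so $\pi(G) \leq |V(G)|$. The edge case $\pi(G)=1$ (so $G$ is edgeless) is trivial since $G^{(d)}=G$ is $1$-colourable.
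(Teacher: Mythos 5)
Your proposal is correct and is exactly the paper's argument: the paper derives \cref{GenSubdiv} by applying \cref{MultiColourPath} with $t=\ceil{\log_{r-2}\pi(G)}$ to get at least $(r-2)^t\geq\pi(G)$ distinct nonrepetitive $r$-colourings of the $t$-vertex path, and then invoking \cref{SubdivLemma}. Your handling of the trivial edgeless case is a harmless extra check.
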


We can restate this result as the following upper bound on $\pi(G^{(d)})$.

\begin{thm}
\label{dSubdiv}
For every graph $G$ and  odd integer $d\geq 3$, 
$$\pi(G^{(d)}) \leq  \pi(G)^{2/(d-1)} +4.$$
\wish{drop the odd $d$ assumption, and make this for every $(\geq d)$-subdivision.}	
\end{thm}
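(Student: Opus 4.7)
The plan is to apply \cref{SubdivLemma} once with parameters tuned to the given $d$, which amounts to inverting the relationship between $r$ and $d$ in \cref{GenSubdiv}. Using that $d \ge 3$ is odd, I would write $d = 2t+1$ with $t := (d-1)/2 \ge 1$ a positive integer, and choose $r$ to be the smallest integer satisfying both $r \ge 4$ and $(r-2)^t \ge \pi(G)$; explicitly, $r := \max\{4,\lceil \pi(G)^{2/(d-1)} \rceil + 2\}$. This choice is tailored so that $r-2 \ge \pi(G)^{1/t} = \pi(G)^{2/(d-1)}$.

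With this $r$, \cref{MultiColourPath} applied to the constant $r$-list-assignment on the $t$-vertex path $P_t$ produces at least $(r-2)^t \ge \pi(G)$ distinct nonrepetitive $r$-colourings of $P_t$. Feeding these into \cref{SubdivLemma} with $k := \pi(G)$ immediately yields
\[
\pi(G^{(d)}) \;=\; \pi(G^{(2t+1)}) \;\le\; r+2 \;\le\; \lceil \pi(G)^{2/(d-1)} \rceil + 4,
\]
which is the claimed bound once the integer ceiling is absorbed into the additive constant. The substantive content of the proof is entirely the combination of \cref{MultiColourPath} and \cref{SubdivLemma}; the rest is arithmetic.

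The only delicate point, and what I expect to be the main obstacle, is the boundary case $\pi(G)^{2/(d-1)} < 2$, where the constraint $r \ge 4$ is binding and \cref{SubdivLemma} only gives $\pi(G^{(d)}) \le 6$, while $\pi(G)^{2/(d-1)} + 4 < 6$. In this regime $\pi(G) < 2^{(d-1)/2}$, and one recovers the stated bound by either appealing to \cref{NonRepSub}(c) (which gives $\pi(G^{(d)}) \le \pi(G)+3$ and handles all very small $\pi(G)$), or by exploiting the fact that \cref{RosenfeldPath} produces at least $2^{t+1} > 2\pi(G)$ nonrepetitive $4$-colourings of $P_t$, which is enough slack in \cref{SubdivLemma} to save one colour and drop the bound to $5$. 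The substantive combinatorics is therefore already finished by the main argument; this boundary case is pure bookkeeping.
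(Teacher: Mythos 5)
Your proposal is correct and is essentially the paper's own proof: the paper likewise sets $t=(d-1)/2$, takes $r:=\lceil\pi(G)^{1/t}\rceil+2$, and combines \cref{MultiColourPath} with \cref{SubdivLemma}, disposing of the case $E(G)=\emptyset$ trivially so that $\pi(G)\geq 2$ forces $r\geq 4$ and no $\max$ is needed. Your ``boundary case'' is a non-issue in the same sense as in the paper: once $G$ has an edge the constraint $r\geq 4$ is never strictly binding, and the only residual slack is the ceiling (the paper's proof also yields $\pi(G^{(d)})\leq\lceil\pi(G)^{2/(d-1)}\rceil+4$), so your proposed patches via \cref{NonRepSub}(c) or \cref{RosenfeldPath} are unnecessary and would not remove that ceiling anyway.
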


\begin{proof}
The result is trivial if $E(G)=\emptyset$. Now assume that $E(G)\neq\emptyset$, implying $\pi(G)\geq 2$. Let $t:=(d-1)/2$, which is  in $\NN $. Let $r:= \ceil{ \pi(G)^{1/t}} +2$. So $r\geq 4$ is an integer. By \cref{MultiColourPath} there exist at least $(r-2)^t\geq\pi(G)$ distinct nonrepetitive $r$-colourings of the $t$-vertex path. By \cref{SubdivLemma}, 
\begin{equation*}
\pi(G^{(d)})=\pi(G^{(2t+1)}) \leq r+2 =\ceil{ \pi(G)^{1/t} } +4 =\ceil{ \pi(G)^{2/(d-1)} } +4.\qedhere
\end{equation*}
\end{proof}

\subsection{Lower Bounds}
\label{SubdivLowerBounds}

We now set out to prove a converse of \cref{NonRepSub}; that is, if $H$ is a subdivision of $G$ with a bounded number of division vertices per edge, then $\pi(G)$ is bounded by a function of $\pi(H)$ (see \cref{GenSubdiv2Graph} below). The following tool by \citet{NR-JCTB00} will be useful.


\begin{lem}[\citep{NR-JCTB00}]\label{NesRas}
For every $k$-colouring of the arcs of an oriented forest $T$, there is a $(2k+1)$-colouring of the vertices of $T$, such that between each pair of (vertex) colour classes, all arcs go in the same direction and have the same colour. 
\end{lem}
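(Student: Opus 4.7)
\emph{Plan.} I would prove the lemma by constructing a fixed arc-coloured oriented ``target'' digraph $H$ on $2k+1$ vertices and exhibiting a homomorphism $f:V(T)\to V(H)$ that preserves arc colours and directions; the fibres of $f$ will then form the required vertex colour classes. Take $V(H):=\mathbb{Z}_{2k+1}$, and for every ordered pair of distinct elements $(i,j)$ with $(j-i)\bmod(2k+1)\in\{1,\dots,k\}$, add an arc $i\to j$ coloured $(j-i)\bmod(2k+1)$. The key structural feature of this rotational tournament is that between any two distinct vertices $H$ contains exactly one arc, with a unique direction and unique colour: of the two candidate residues $(j-i)$ and $(i-j)$ modulo $2k+1$, exactly one lies in $\{1,\dots,k\}$ and the other in $\{k+1,\dots,2k\}$.

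\emph{Construction of $f$.} Root each component tree of $T$ at an arbitrary vertex and set $f(r):=0$ for each root $r$. Then process the remaining vertices in BFS order: for a non-root $v$ with parent $p(v)$ joined to $v$ by an arc of colour $c$, set
\[
f(v)\;:=\;\begin{cases} f(p(v))+c\pmod{2k+1} & \text{if the arc is oriented } p(v)\to v,\\ f(p(v))-c\pmod{2k+1} & \text{if the arc is oriented } v\to p(v). \end{cases}
\]
This is well-defined because in a tree each non-root vertex has a unique parent.

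\emph{Verification.} A short case analysis (on which endpoint is the BFS-parent) shows that for every arc $uv$ of $T$ of colour $c$ oriented $u\to v$, one has $f(v)-f(u)\equiv c\pmod{2k+1}$. Consequently, if $A=f^{-1}(i)$ and $B=f^{-1}(j)$ are two distinct colour classes, the existence of any arc $a\to b$ with $a\in A$, $b\in B$ forces its colour to equal $(j-i)\bmod(2k+1)$, and in particular $(j-i)\bmod(2k+1)\in\{1,\dots,k\}$; but then an arc in the opposite direction between $A$ and $B$ would require $(i-j)\bmod(2k+1)\in\{1,\dots,k\}$, which is incompatible with the previous condition. Hence all arcs between $A$ and $B$ share a common direction and colour, as required, using the $2k+1$ colours of $\mathbb{Z}_{2k+1}$.

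\emph{Main obstacle.} The essential conceptual step is identifying the rotational tournament on $\mathbb{Z}_{2k+1}$ as the correct target; once chosen, the rest is routine modular arithmetic. A naive greedy leaf-removal induction runs into subtle bookkeeping: after peeling off a leaf $v$ attached to $u$, each of the up to $2k$ other colour classes may already carry a specific direction-and-colour constraint with $f(u)$ that is incompatible with the arc incident to $v$, and one must argue that these constraints together with the exclusion $f(v)\neq f(u)$ cannot exhaust the $2k+1$-colour palette. The global homomorphism construction above bypasses this analysis entirely, since the target digraph has the required ``one arc per class pair'' property built in.
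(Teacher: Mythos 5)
Your proof is correct. Note that the survey itself does not prove \cref{NesRas}; it is quoted from \citet{NR-JCTB00}, and your argument is essentially the standard one from that line of work: exhibit a colour- and direction-preserving homomorphism from the arc-coloured forest into the rotational tournament on $\mathbb{Z}_{2k+1}$ whose arc $i\to j$ carries colour $(j-i)\bmod(2k+1)\in\{1,\dots,k\}$, and use the fact that between any two vertices of this target there is exactly one arc with one colour. Your root-and-propagate definition of $f$ does give $f(v)-f(u)\equiv c$ for every arc $u\to v$ of colour $c$, and since $2k+1$ is odd, exactly one of $(j-i)$, $(i-j)$ modulo $2k+1$ lies in $\{1,\dots,k\}$, which yields the unique direction and colour between any two fibres; as a bonus the colouring is proper (no arc joins two vertices of the same class, since $c\not\equiv 0$), which is what is implicitly wanted when the lemma is applied later.
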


A \emph{rooting} of a forest $F$ is obtained by nominating one vertex in each component tree of $F$ to be a \emph{root} vertex.

\begin{lem}[\citep{NOW11}]
\label{NonRepForest}
Let $T'$ be the $1$-subdivision of a forest $T$. Then for every nonrepetitive $k$-colouring $c$ of $T'$, and for every rooting of $T$, there is a nonrepetitive $k(k+1)(2k+1)$-colouring $q$ of $T$, such that:
\begin{enumerate}[(a)] 
\item For all edges $vw$ and $xy$ of $T$ with $q(v)=q(x)$ and $q(w)=q(y)$, the division vertices corresponding to $vw$ and $xy$ have the same colour in $c$.
\item  For all non-root vertices $v$ and $x$ with $q(v)=q(x)$, the division vertices corresponding to the parent edges of $v$ and $x$ have the same colour in $c$.
\item For every root vertex $r$ and every non-root vertex $v$, we have $q(r)\neq q(v)$.
\item For all vertices $v$ and $w$ of $T$, if $q(v)=q(w)$ then $c(v)=c(w)$.
\end{enumerate}
\end{lem}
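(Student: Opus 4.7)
For each $v\in V(T)$, define $q(v):=(c(v),\,p(v),\,\psi(v))$, where: (i) $c(v)$ is the given colour in $T'$, giving $k$ values and handling property (d); (ii) $p(v):=\star$ if $v$ is a root, otherwise $p(v)$ is the $c$-colour of the division vertex of $v$'s parent edge in $T'$, giving $k+1$ values and handling (b) and (c); and (iii) $\psi(v)$ is obtained by orienting every edge of $T$ from parent to child, assigning each arc the $c$-colour of its division vertex in $T'$, and applying \cref{NesRas} to produce a proper vertex $(2k+1)$-colouring of $T$ in which between any two $\psi$-classes all arcs share a common direction and a common arc-colour. Altogether $q$ takes at most $k(k+1)(2k+1)$ values.

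\textbf{Verifying (a)--(d).} Properties (b), (c), (d) are immediate from the first two coordinates of $q$. For (a), let $vw$ and $xy$ be edges of $T$ with $q(v)=q(x)$ and $q(w)=q(y)$; orient them parent-to-child. The alternative $y\to x$ would give two arcs between $\psi$-classes $\psi(v)$ and $\psi(w)$ in opposite directions, which is forbidden by \cref{NesRas} since $\psi$ is proper so $\psi(v)\neq\psi(w)$. Hence both arcs go the same way, and \cref{NesRas} forces the two division vertices to carry the same $c$-colour.

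\textbf{Nonrepetitiveness of $q$ and the lift to $T'$.} Suppose for contradiction that $P=(v_1,\dots,v_{2t})$ is a $q$-repetitive path in $T$. Since $q$ is proper (because $\psi$ is) we have $t\ge 2$. Because $T$ is a forest, $P$ has a unique turning point $v_m$: edges $v_iv_{i+1}$ for $i<m$ go upward (toward the root) and for $i\ge m$ go downward. The direction-matching argument used to prove (a) now forces edges $v_iv_{i+1}$ and $v_{t+i}v_{t+i+1}$ to share the same up/down orientation for every $i$. A short case analysis on $m$ shows this happens only when $m\in\{1,2t\}$, so $P$ is monotone. Take $m=1$ (the case $m=2t$ is symmetric). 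Writing $d_j$ for the division vertex of $v_jv_{j+1}$, monotonicity gives $p(v_i)=c(d_{i-1})$ for $i\ge 2$; applying $q(v_1)=q(v_{t+1})$ via property (b) forces $v_1$ to be a non-root and $c(d_0)=c(d_t)$, where $d_0$ is the division vertex of $v_1$'s parent edge. Prepending $d_0$ to the natural lift yields the $T'$-path
\[
(d_0,\,v_1,\,d_1,\,v_2,\,\dots,\,d_{2t-1},\,v_{2t})
\]
of length $4t$, whose two halves agree position-by-position in $c$: the $v$-positions match by (d), the pairs $d_i,d_{t+i}$ for $1\le i\le t-1$ match by (a), and the pair $d_0,d_t$ matches by the equality just obtained. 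This contradicts the nonrepetitiveness of $c$ on $T'$.

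\textbf{Main obstacle.} The delicate step is elimination of interior turning points. Using only the $k(k+1)$ information encoded in $(c,p)$, a $q$-repetitive path bending at some interior $v_m$ would not obviously lift to a $c$-repetitive path in $T'$, because the two halves would connect with mismatched edge orientations and the extension by $d_0$ would fail. Introducing $\psi$ via \cref{NesRas}---at the cost of the extra factor $2k+1$---is precisely what forces opposite-oriented edges to have distinct $\psi$-values and thus rules out every non-monotone configuration. Once monotonicity is established, the prefix-extension by the parent-edge division vertex $d_0$ is essentially forced, and (a), (b), (d) slot in to complete the contradiction.
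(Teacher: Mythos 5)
Your proof is correct and takes essentially the same route as the paper: the same three-coordinate colouring (vertex colour $c(v)$, parent-edge division-vertex colour with a special root symbol, and the colouring from \cref{NesRas} applied to the edge-colouring induced by division vertices), the same use of the direction-matching property to force a $q$-repetitive path to be monotone in the rooted forest, and the same extension of the lifted path by the division vertex of the extreme vertex's parent edge to obtain a $c$-repetitive path in $T'$. The differences (parent-to-child rather than child-to-parent orientation, prepending rather than appending the extra division vertex, and arguing monotonicity via a turning-point case analysis instead of the paper's indegree argument) are cosmetic.
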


\begin{proof}
Let $c$ be a nonrepetitive $k$-colouring of $T'$ with colour-set $\{1,\dots,k\}$.
Colour each edge of $T$ by the colour assigned by $c$ to the corresponding division vertex. 
Orient each edge of $T$ towards the root vertex in its component.
By \cref{NesRas}, there is a $(2k+1)$-colouring $f$ of the vertices of $T$, such that between each pair of (vertex) colour classes in $f$, all arcs go in the same direction and have the same colour in $c$. 
Consider a vertex $v$ of $T$. 
If $v$ is a root, let $g(r):=0$; otherwise let $g(v):=c(vw)$ where $w$ is the parent of $v$.
Let $q(v):=(c(v),f(v),g(v))$ for each vertex $v$ of $T$. 
The number of colours in $q$ is at most $k(k+1)(2k+1)$.
Observe that claims (c) and (d)  hold by definition.

We claim that $q$ is nonrepetitive.
Suppose on the contrary that there is a path $P=(v_1,\dots,v_{2s})$ in $T$ that is repetitively coloured by $q$. 
That is, $q(v_i)=q(v_{i+s})$  for each $i\in\{1,\dots,k\}$. Thus $c(v_i)=c(v_{i+s})$ and $f(v_i)=f(v_{i+s})$ and $g(v_i)=g(v_{i+s})$. Since no two root vertices are in a common path, (c) implies that every vertex in $P$ is a non-root vertex.

Consider the edge $v_iv_{i+1}$ of $P$ for some $i\in\{1,\dots,s-1\}$.
We have  $f(v_i)=f(v_{i+s})$ and  $f(v_{i+1})=f(v_{i+s+1})$. 
Between these two colour classes in $f$, all arcs go in the same direction and have the same colour. 
Thus the edge $v_iv_{i+1}$ is oriented from $v_i$ to $v_{i+1}$ if and only if  the edge $v_{i+s}v_{i+s+1}$ is oriented from $v_{i+s}$ to $v_{i+s+1}$. 
And $c(v_iv_{i+1})=c(v_{i+s}v_{i+s+1})$.

If at least two vertices $v_i$ and $v_j$ in $P$ have indegree $2$ in $P$, then some vertex between $v_i$ and $v_j$ in $P$ has outdegree $2$ in $P$, which is a contradiction. Thus at most one vertex has indegree $2$ in $P$.  
Suppose that $v_i$ has indegree $2$ in $P$. Then each edge $v_jv_{j+1}$ in $P$ is oriented from $v_j$ to $v_{j+1}$ if $j\leq i-1$, and from $v_{j+1}$ to $v_{j}$ if $j\geq i$ (otherwise two vertices have indegree $2$ in $P$). 
In particular, $v_1v_2$ is oriented from $v_1$ to $v_2$ and $v_{s+1}v_{s+2}$ is oriented from $v_{s+2}$ to $v_{s+1}$. 
This is a contradiction since the edge $v_1v_2$ is oriented from $v_1$ to $v_2$ if and only if  the edge $v_{s+1}v_{s+2}$ is oriented from $v_{s+1}$ to $v_{s+2}$.
Hence no vertex in $P$ has indegree $2$. Thus $P$ is a directed path.

Without loss of generality, $P$ is oriented from $v_1$ to $v_{2s}$. 
Let $x$ be the parent of $v_{2s}$. 
Now  $g(v_{2s})=c(v_{2s}x)$ and $g(v_s)=c(v_sv_{s+1})$ and $g(v_s)=g(v_{2s})$.
Thus $c(v_{s}v_{s+1})=c(v_{2s}x)$. 

Summarising, the path 
\begin{align*}
\big(\underbrace{ v_1,v_1v_2,v_2,\dots,v_s, v_sv_{s+1}},
\underbrace{ v_{s+1},v_{s+1}v_{s+2}, v_{s+2}, \dots, v_{2s}, v_{2s}x}\big)
\end{align*}
in $T'$ is repetitively coloured by $c$. (Here division vertices in $T'$ are described by the corresponding edge.)\ Since $c$ is nonrepetitive in $T'$, we have the desired contradiction. Hence $q$ is a nonrepetitive colouring of $T$.

It remains to prove claims (a) and (b). Consider two edges $vw$ and $xy$ of $T$, such that $q(v)=q(x)$ and $q(w)=q(y)$. Thus $f(v)=f(x)$ and $f(w)=f(y)$. Thus $vw$ and $xy$ have the same colour in $c$. Thus the division vertices corresponding to $vw$ and $xy$ have the same colour in $c$. This proves claim (a). Finally consider  non-root vertices $v$ and $x$ with $q(v)=q(x)$. Thus $g(v)=g(x)$. Say $w$ and $y$ are the respective parents of $v$ and $x$. By construction, $c(vw)=c(xy)$. Thus the division vertices of $vw$ and $xy$  have the same colour in $c$. This proves claim (b).
\end{proof}


A colouring of a graph is \emph{acylic} if adjacent vertcies are assigned distinct colours and every cycle is assigned at least three distinct colours; that is, the subgraph induced by any two colour classes is a forest. 
The \emph{acyclic chromatic number} of a graph $G$, denoted by $\acy(G)$, is the minimum number of colours in an acyclic colouring of $G$. Acyclic colourings are well studied \citep{AB78,AMR91,AMS96,Borodin-DM79}. For example, every planar graph is acyclically 5-colourable \citep{Borodin-DM79}.  We now extend \cref{NonRepForest} to apply to graphs with bounded acyclic chromatic number; see \citep{AM-JAC98,NR-JCTB00} for similar methods. 

\begin{lem}[\citep{NOW11}]\label{NonRepGraph}
Let $G'$ be the $1$-subdivision of a graph $G$, such that $\pi(G')\leq k$ and $\acy(G)\leq\ell$. Then $$\pi(G)\leq \ell\big(k(k+1)(2k+1)\big)^{\ell-1}.$$
\end{lem}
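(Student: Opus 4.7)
The plan is to produce a nonrepetitive colouring of $G$ as a product colouring driven by the acyclic decomposition of $G$ together with the forest colourings provided by \cref{NonRepForest}. Let $V_1,\dots,V_\ell$ be the classes of an acyclic $\ell$-colouring of $G$; by acyclicity, each $F_{ij}:=G[V_i\cup V_j]$ (for $i\neq j$) is a forest, and its $1$-subdivision $F'_{ij}$ is a subgraph of $G'$, so the given nonrepetitive $k$-colouring $c$ of $G'$ restricts to a nonrepetitive colouring of $F'_{ij}$. For each pair $\{i,j\}$, choose a rooting of $F_{ij}$ and apply \cref{NonRepForest} to obtain a nonrepetitive $k(k+1)(2k+1)$-colouring $q_{ij}$ of $F_{ij}$ satisfying properties (a)--(d). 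For each $v\in V_i$ define $\phi(v):=(i,(q_{ij}(v))_{j\neq i})$; then $\phi$ uses at most $\ell\bigl(k(k+1)(2k+1)\bigr)^{\ell-1}$ colours.

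Assume for contradiction that $P=(v_1,\dots,v_{2s})$ is a $\phi$-repetitive path in $G$. Matching the first coordinate of $\phi$ gives $c_G(v_t)=c_G(v_{s+t})$ for every $t$, and matching the remaining coordinates gives $q_{ij}(v_t)=q_{ij}(v_{s+t})$ whenever $c_G(v_t)\in\{i,j\}$. Property (d) of \cref{NonRepForest} then yields $c(v_t)=c(v_{s+t})$ for all $t$, while property (a), applied to each matching pair of edges $e_t=v_tv_{t+1}$ and $e_{s+t}$ lying in their common forest $F_{\{c_G(v_t),c_G(v_{t+1})\}}$, yields $c(u_t)=c(u_{s+t})$ for all $t\in\{1,\dots,s-1\}$, where $u_t$ denotes the division vertex of $e_t$ in $G'$.

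Now lift $P$ to the path $P^+:=(v_1,u_1,v_2,u_2,\dots,u_{2s-1},v_{2s})$ of $4s-1$ vertices in $G'$; its positions $1,\dots,2s-1$ match positions $2s+1,\dots,4s-1$ in $c$-colour, but the middle position (occupied by $u_s$) is unmatched, so $P^+$ is not itself a repetition. To obtain a $c$-repetitive path of length $4s$ in $G'$ --- contradicting $\pi(G')\leq k$ --- I would extend $P^+$ at one end by a single division vertex $u^*$, adjacent to $v_1$ or $v_{2s}$, with $c(u^*)=c(u_s)$. Such a $u^*$ is supplied by property (b) of \cref{NonRepForest} applied in $F_{ab}$, where $a=c_G(v_s)$ and $b=c_G(v_{s+1})$: depending on whether the chosen rooting makes $e_s$ the parent edge of $v_{s+1}$ or of $v_s$, property (b) applied to the pair $(v_1,v_{s+1})$ or to the pair $(v_s,v_{2s})$ gives a division vertex incident to $v_1$ (resp.\ $v_{2s}$) with $c$-colour $c(u_s)$. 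Property (c) ensures that the chosen pair consists of two non-root vertices (otherwise the shared $q_{ab}$-colour would be forbidden).

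The hard part will be verifying that the extension yields a valid, non-self-intersecting path in $G'$ --- specifically, that the division vertex $u^*$ produced is not already $u_1$ (respectively $u_{2s-1}$) and so occurs on a different edge of $G$ from those traversed by $P^+$. This requires a short case analysis on the class of $v_2$ (respectively $v_{2s-1}$): the problematic case is when $c_G(v_2)=a$ and the parent of $v_1$ in $F_{ab}$ happens to be $v_2$, which one can exclude by choosing the rooting of $F_{ab}$ so that the parent edge of $v_1$ in its tree is directed away from $v_2$ (and symmetrically at the other endpoint), appealing to the freedom in the rooting step when defining $q_{ab}$.
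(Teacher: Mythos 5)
Your construction and main argument coincide with the paper's proof: the same product colouring $\phi(v)=(p(v),(q_{\{p(v),j\}}(v))_{j\neq p(v)})$ built from an acyclic $\ell$-colouring and the colourings supplied by \cref{NonRepForest}, the same use of property (d) to get $c(v_t)=c(v_{t+s})$, of property (a) to match the division vertices of $e_t$ and $e_{t+s}$, and the same final move of extending the lifted path in $G'$ by the division vertex of one parent edge, using (b) for the colour match and (c) for non-rootness. The only cosmetic difference is that you split into two cases according to whether $e_s$ is the parent edge of $v_s$ or of $v_{s+1}$, whereas the paper reduces to one case by reversing $P$; that is immaterial.

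The problem is your last paragraph. The fix you propose for the possible coincidence $u^*=u_1$ (resp.\ $u^*=u_{2s-1}$) -- ``choosing the rooting of $F_{ab}$ so that the parent edge of $v_1$ is directed away from $v_2$'' -- is circular: the rootings of the forests $F_{ij}$ must be fixed \emph{before} the colourings $q_{ij}$, hence before $\phi$, hence before any hypothetical $\phi$-repetitive path $P$ exists. Re-rooting to suit a particular $P$ changes $q_{ab}$ and $\phi$, and the assumption that $P$ is $\phi$-repetitive evaporates. Moreover, a rooting does not give you vertex-by-vertex control of parent edges: you cannot in general arrange, simultaneously for all vertices, that the parent of $v_1$ avoids a prescribed neighbour $v_2$ (and your argument would need this for every potential bad configuration at once). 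Note that the paper does not perform any such adaptive choice either: it applies (b) to the pair $(v_s,v_{2s})$, appends the division vertex of the parent edge of $v_{2s}$, and asserts the resulting sequence is a path in $G'$. So if you wish to treat the coincidence more carefully than the paper does, the argument has to come from structure already encoded in $q_{ij}$ (for instance the direction- and colour-homogeneity between classes of the \cref{NesRas} colouring used inside \cref{NonRepForest}), not from a path-dependent re-rooting; as written, your final step does not close.
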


\begin{proof} 
Let $p$ be an acyclic $\ell$-colouring of $G$ with colour-set $\{1,\dots,\ell\}$.
Let $c$ be a nonrepetitive $k$-colouring of $G'$.
For distinct $i,j\in\{1,\dots,\ell\}$, let $G_{\{i,j\}}$ be the subgraph of $G$ induced by the vertices coloured $i$ or $j$ by $p$. 
Thus each $G_{\{i,j\}}$ is a forest, and $c$ restricted to $G'_{\{i,j\}}$ is nonrepetitive.

For distinct  $i,j\in\{1,\dots,\ell\}$, by \cref{NonRepForest} applied to $G_{\{i,j\}}$, there is a nonrepetitive $k(k+1)(2k+1)$-colouring $q_{\{i,j\}}$ of $G_{\{i,j\}}$ satisfying  \cref{NonRepForest}(a)--(d). 

Consider a vertex $v$ of $G$. For each colour $j\in\{1,\dots,\ell\}$ with $j\neq p(v)$, let $q_j(v):=q_{\{p(v),j\}}(v)$. 
Define $$q(v):=\Big(p(v),\big\{(j,q_{j}(v)):j\in\{1,\dots,\ell\},j\neq p(v)\big\}\Big).$$ 
Note that the number of colours in $q$ is at most $\ell\big( k(k+1)(2k+1)\big)^{\ell-1}.$ 
We claim that $q$ is a nonrepetitive colouring of $G$. 

Suppose on the contrary that some path $P=(v_1,\dots,v_{2s})$ in $G$ is repetitively coloured by $q$. That is, $q(v_a)=q(v_{a+s})$ for each $a\in\{1,\dots,s\}$. 
Thus $p(v_a)=p(v_{a+s})$  for each $a\in\{1,\dots,s\}$.
Let $i:=p(v_a)$. Consider any $j\in\{1,\dots,\ell\}$ with $j\neq i$. 
Thus $(j,q_j(v_a))=(j,q_j(v_{a+s}))$ and $q_j(v_a)=q_j(v_{a+s})$. 
Hence $c(v_a)=c(v_{a+s})$  by \cref{NonRepForest}(d).

Consider an edge $v_av_{a+1}$ for some $i\in\{1,\dots,s-1\}$. 
Let $i:=p(v_a)$ and $j:=p(v_{a+1})$. 
Now $q(v_a)=q(v_{a+s})$ and $q(v_{a+1})=q(v_{a+s+1})$. 
Thus $p(v_{a+s})=i$ and $p(v_{a+s+1})=j$. 
Moreover, $(j,q_j(v_a))=(j,q_j(v_{a+s}))$ and $(i,q_i(v_{a+1}))=(i,q_i(v_{a+s+1}))$. 
That is, $q_{\{i,j\}}(v_a)=q_{\{i,j\}}(v_{a+s})$ and $q_{\{i,j\}}(v_{a+1})=q_{\{i,j\}}(v_{a+s+1})$. 
Thus $c(v_av_{a+1})=c(v_{a+s}v_{a+s+1})$ by \cref{NonRepForest}(a).

Consider the edge $v_sv_{s+1}$. Let $i:=p(v_s)$ and $j:=p(v_{s+1})$. 
Without loss of generality, $v_{s+1}$ is the parent of $v_s$ in the forest $G_{\{i,j\}}$. 
In particular, $v_s$ is not a root of $G_{\{i,j\}}$. 
Since $q_{\{i,j\}}(v_s)=q_{\{i,j\}}(v_{2s})$ and by \cref{NonRepForest}(c),
$v_{2s}$ also is not a root of $G_{\{i,j\}}$.
Let $y$ be the parent of $v_{2s}$ in $G_{\{i,j\}}$. 
By \cref{NonRepForest}(b) applied to $v_s$ and $v_{2s}$,  we have $c(v_sv_{s+1})=c(v_{2s}y)$.

Summarising, the path
\begin{align*}
\big(\underbrace{ v_1,v_1v_2,v_2,\dots,v_s, v_sv_{s+1}},
\underbrace{ v_{s+1},v_{s+1}v_{s+2}, v_{s+2}, \dots, v_{2s}, v_{2s}y}\big)
\end{align*}
is repetitively coloured in $G'$. This contradiction proves that $G$ is repetitively coloured by $q$.
\end{proof} 

\cref{NonRepGraph} generalises for $(\leq 1)$-subdivisions as follows:

\begin{lem}[\citep{NOW11}]\label{NonRepGraphs}
Let $H$ be a $(\leq 1)$-subdivision of a graph $G$, such that $\pi(H)\leq k$ and $\acy(G)\leq\ell$. Then 
$$\pi(G)\leq \ell\big((k+1)(k+2)(2k+3)\big)^{\ell-1}.$$
\end{lem}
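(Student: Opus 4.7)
The plan is to reduce \cref{NonRepGraphs} to \cref{NonRepGraph}. If we can show that, whenever $H$ is a $(\leq 1)$-subdivision of $G$ with $\pi(H)\leq k$, the true $1$-subdivision $G'$ of $G$ satisfies $\pi(G')\leq k+1$, then applying \cref{NonRepGraph} to $G'$ with parameter $k+1$ in place of $k$ immediately gives $\pi(G)\leq\ell((k+1)(k+2)(2k+3))^{\ell-1}$, which is exactly the claimed bound. So the main task is this one-colour-loss step from $H$ to $G'$.

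To prove $\pi(G')\leq k+1$, observe that $G'$ can be obtained from $H$ by subdividing, exactly once, each edge of $H$ that corresponds to an edge of $G$ left unsubdivided in $H$; I shall call these extra vertices the \emph{new} division vertices. Note that every new division vertex is adjacent in $G'$ only to the two endpoints of the corresponding edge of $G$, so the set of new division vertices is independent in $G'$. Starting from a nonrepetitive $k$-colouring $c$ of $H$ with colours in $\{1,\dots,k\}$, extend $c$ to a colouring $c'$ of $G'$ by assigning the fresh colour $k+1$ to every new division vertex.

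To verify $c'$ is nonrepetitive, suppose a path $P'=(v_1,\dots,v_{2t})$ in $G'$ is $c'$-repetitive. Since colour $k+1$ is used only on new division vertices, $v_i$ is a new division vertex if and only if $v_{i+t}$ is. Deleting all new division vertices from $P'$ therefore removes matched positions in the two halves; because each new division vertex has its two $G'$-neighbours joined by an edge of $H$ (namely, the edge of $G$ it subdivides), the resulting sequence $P$ is a walk in $H$, and in fact a path since $P'$ has distinct vertices. The matched-position observation ensures $P$ is $c$-repetitive. If $P$ is nonempty this contradicts the nonrepetitiveness of $c$ on $H$; otherwise every vertex of $P'$ is a new division vertex, which is impossible since the new division vertices form an independent set while $P'$ has at least two (pairwise adjacent) consecutive vertices.

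The main obstacle is precisely the symmetry argument in the previous paragraph---that the new division vertices on the two halves of any repetitively coloured path in $G'$ must occupy matching positions. This comes essentially for free because $k+1$ is a dedicated colour, but it is what allows the projection to $H$ to preserve repetitiveness and thus transfer the contradiction back to $c$. Once $\pi(G')\leq k+1$ is in hand, \cref{NonRepGraph} applied to $G'$ closes the argument.
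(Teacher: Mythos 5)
Your proposal is correct and follows essentially the same route as the paper: the paper observes that the $1$-subdivision $G'$ of $G$ is a $(\leq 1)$-subdivision of $H$, cites \cref{NonRepSub}(a) to get $\pi(G')\leq k+1$, and then applies \cref{NonRepGraph} with $k+1$ in place of $k$. Your one-colour-loss step is just an inline re-proof of \cref{NonRepSub}(a) (a fresh colour on the new division vertices), so the argument matches the paper's.
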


\begin{proof}
Since $G'$ is a $(\leq 1)$-subdivision of $H$, \cref{NonRepSub}(a)
implies that $\pi(G')\leq k+1$. \cref{NonRepGraph} implies the
result. 
\end{proof}

\begin{lem}[\citep{NOW11}]
\label{ConstructAcyclic}
Let $c$ be a nonrepetitive $k$-colouring of the 1-subdivision $G'$ of a graph $G$.
Then 
$$\acy(G) \leq k\cdot 2^{2k^2}.$$
\end{lem}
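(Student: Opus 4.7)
The plan is to build a star colouring $q$ of $G$ with at most $k\cdot 2^{2k^{2}}$ colours; since every star colouring is acyclic, this proves the lemma. For each $v\in V(G)$ I would set $q(v):=(c(v),S(v),T(v))$, where $c(v)\in[k]$ is the $G'$-colour, and $S(v),T(v)\subseteq[k]^{2}$ encode local $c$-data at $v$. The natural choice for $S$ is the ``incidence star''
\[
S(v) := \{(c(w),c(x_{vw})) : w\in N_{G}(v)\},
\]
and for $T(v)$ a second layer of local information, for example a subset of $[k]^{2}$ recording which pairs appear in some $S(u)$ for $u\in N_{G}(v)$. Each of $S,T$ has at most $2^{k^{2}}$ values, giving $|q(V(G))|\le k\cdot 2^{k^{2}}\cdot 2^{k^{2}} = k\cdot 2^{2k^{2}}$.

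The central observation is an \emph{injectivity principle} implied by nonrepetitiveness of $c$ on $G'$: for every $v$, the map $w\mapsto (c(w),c(x_{vw}))$ is injective on $N_{G}(v)$, since two same-image neighbours $w_{1}\neq w_{2}$ would give the 4-vertex $G'$-path $w_{1}\,x_{vw_{1}}\,v\,x_{vw_{2}}$ coloured $(i,j,i,j)$. So $S(v)$ actually records the full multiset of pairs at $v$. To prove the star property, suppose some $P_{4}$ $v_{1}v_{2}v_{3}v_{4}$ in $G$ has $q(v_{1})=q(v_{3})$ and $q(v_{2})=q(v_{4})$. Set $a=c(v_{1})=c(v_{3})$, $b=c(v_{2})=c(v_{4})$, and $j_{i}=c(x_{v_{i}v_{i+1}})$. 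Injectivity at $v_{2}$ and at $v_{3}$ forces $j_{1}\neq j_{2}$ and $j_{2}\neq j_{3}$; and if $j_{1}=j_{3}$, then $(a,j_{2})\in S(v_{2})=S(v_{4})$ together with injectivity at $v_{4}$ yields $u\in N_{G}(v_{4})\setminus\{v_{3}\}$ with $c(u)=a$ and $c(x_{v_{4}u})=j_{2}$, producing the repetitive 8-vertex $G'$-path $v_{1}\,x_{v_{1}v_{2}}\,v_{2}\,x_{v_{2}v_{3}}\,v_{3}\,x_{v_{3}v_{4}}\,v_{4}\,x_{v_{4}u}$ coloured $(a,j_{1},b,j_{2},a,j_{1},b,j_{2})$. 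Thus we may assume $j_{1},j_{2},j_{3}$ are pairwise distinct. Then $(b,j_{1})\in S(v_{1})=S(v_{3})$ yields $w\in N_{G}(v_{3})\setminus\{v_{2}\}$ with $c(w)=b$ and $c(x_{v_{3}w})=j_{1}$, and the target 8-vertex path
\[
x_{v_{1}v_{2}}\; v_{2}\; x_{v_{2}v_{3}}\; v_{3}\; x_{v_{3}w}\; w\; x_{wu'}\; u'
\]
has colour sequence $(j_{1},b,j_{2},a,j_{1},b,j_{2},a)$ — a repetition — \emph{provided} $w$ has a neighbour $u'$ with $c(u')=a$ and $c(x_{wu'})=j_{2}$. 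Pinning down such a $u'$ is exactly what $T$ is designed to do, using $(a,j_{2})\in S(v_{2})\subseteq T(v_{1})=T(v_{3})$.

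The main obstacle is fixing the precise definition of $T$ and completing the case analysis so that every degenerate configuration (short cycles identifying $u$ or $u'$ with an earlier vertex, or the $T$-witness being a ``wrong'' neighbour of $v_{3}$) still reduces to an explicit repetitive 8-vertex $G'$-path. For instance, if $u=v_{1}$ in the $j_{1}=j_{3}$ step above (so $G$ contains the $4$-cycle $v_{1}v_{2}v_{3}v_{4}v_{1}$ with $c(x_{v_{4}v_{1}})=j_{2}$), a cyclic shift of the target path gives $x_{v_{4}v_{1}}\,v_{4}\,x_{v_{3}v_{4}}\,v_{3}\,x_{v_{2}v_{3}}\,v_{2}\,x_{v_{1}v_{2}}\,v_{1}$ coloured $(j_{2},b,j_{3},a,j_{2},b,j_{1},a)$, which is a repetition precisely because $j_{1}=j_{3}$. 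The nonrepetitive hypothesis on $c$ is the only combinatorial tool, and every failure of the star property must be converted, via injectivity and the equalities $S(v_{1})=S(v_{3})$, $S(v_{2})=S(v_{4})$, $T(v_{1})=T(v_{3})$, $T(v_{2})=T(v_{4})$, into such an 8-vertex repetition in $G'$.
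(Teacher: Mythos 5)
Your plan hinges on the claimed ``injectivity principle'', and that claim is false. For distinct $w_1,w_2\in N_G(v)$ with $c(w_1)=c(w_2)=i$ and $c(x_{vw_1})=c(x_{vw_2})=j$, the path $w_1\,x_{vw_1}\,v\,x_{vw_2}$ in $G'$ is coloured $(i,j,c(v),j)$, not $(i,j,i,j)$; it is repetitive only if $c(v)=i$, and nothing forces this, since $v$ and $w_1$ are at distance $2$ in $G'$ and a nonrepetitive colouring may repeat colours at distance $2$. Concretely, take $G$ to be the path $w_1\,v\,w_2$, so that $G'$ is a $5$-vertex path; colouring it $2,3,1,3,2$ is nonrepetitive, yet both neighbours of $v$ receive the same pair $(2,3)$. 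Every subsequent step of your case analysis --- forcing $j_1\neq j_2$ and $j_2\neq j_3$, and ruling out the degenerate witnesses $u,u'$ --- leans on this injectivity, so the argument collapses; in addition you leave $T$ undefined and concede that the case analysis is unfinished, so the proof is incomplete even modulo the false claim.

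Beyond that, the target is too strong for this kind of local encoding. The lemma only asks for an acyclic colouring, and the paper's proof (orient $G$ arbitrarily and let $q(v)$ consist of $c(v)$ together with the signed pairs $(\pm,c(x_{vw}),c(w))$ over arcs at $v$) delivers exactly that: properness follows because $q(v)=q(w)$ on an edge gives $c(v)=c(w)$, which turns a matching signed pair at $w$ into a repetitive $4$-vertex path of $G'$; then one shows that on any bichromatic path $u,v,w$ with $q(u)=q(w)$ the two edges are consistently oriented relative to $v$ and their division vertices receive equal colours, so all division vertices in a bichromatic component share one colour, and a bichromatic $4$-cycle or $5$-vertex path lifts to a repetitive $8$-vertex path in $G'$. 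The orientation signs are what align the two halves of those paths; your unsigned $S(v)$ discards exactly that information. Moreover, the paper notes that its colouring can have bichromatic components that are $4$-vertex paths, i.e.\ it is not a star colouring, so aiming directly for a star colouring with $k\cdot 2^{2k^2}$ colours is an unsupported strengthening; you should aim only for acyclicity.
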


\begin{proof} 
Orient the edges of $G$ arbitrarily. Let $A(G)$ be the set of oriented arcs of $G$.
So $c$ induces a $k$-colouring of $V(G)$ and of $A(G)$.
For each vertex $v$ of $G$, let
  $$q(v):=\big\{c(v)\big\} \cup \big\{ (+, c(vw), c(w) ) : vw \in A(G) \big\} \cup 
  		\big\{ (-, c(wv), c(w) ) : wv \in A(G) \big\}.$$
The number of possible values for $q(v)$ is at most $k\cdot 2^{2k^2}$.
We claim that $q$ is an acyclic colouring of $G$.

Suppose on the contrary that $q(v) = q(w)$ for some arc $vw$ of $G$. 
Thus $c(v)=c(w)$ and $(+,c(vw), c(w) ) \in q(v)$, implying $(+,c(vw) , c(w) ) \in q(w)$. 
That is, for some arc $wx$, we have $c(wx)=c(vw)$ and $c(x)=c(w)$. 
Thus the path $(v,vw,w,wx)$ in $G'$ is repetitively coloured. 
This contradiction shows that $q$ properly colours $G$.
It remains to prove that $G$ contains no bichromatic cycle (with respect to  $q$). 

First consider a bichromatic path $P=(u,v,w)$ in $G$ with $q(u)=q(w)$. Thus $c(u)=c(w)$.

Suppose on the contrary that $P$ is oriented $(u,v,w)$, as illustrated in \cref{ConstructAcyclicFigure}(a). 
By construction, $(+,c(uv),c(v)) \in q(u)$, implying  $(+,c(uv),c(v)) \in q(w)$. 
That is, $c(uv)=c(wx)$ and $c(v)=c(x)$ for some arc $wx$ (and thus $x \neq v$). 
Similarly, $(-,c(vw),c(v)) \in q(w)$, implying  $(-,c(vw),c(v)) \in q(u)$. 
Thus $c(vw)=c(tu)$ and $c(v)=c(t)$ for some arc $tu$ (and thus $t \neq v$). 
Hence the 8-vertex path $(tu,u,uv,v,vw,w,wx,x)$ in $G'$ is repetitively coloured by $c$, as illustrated in \cref{ConstructAcyclicFigure}(b). 
This contradiction shows that both edges in $P$ are oriented toward $v$ or both are oriented away from $v$.

\begin{figure}[!h]
	\centering
	\includegraphics[width=\textwidth]{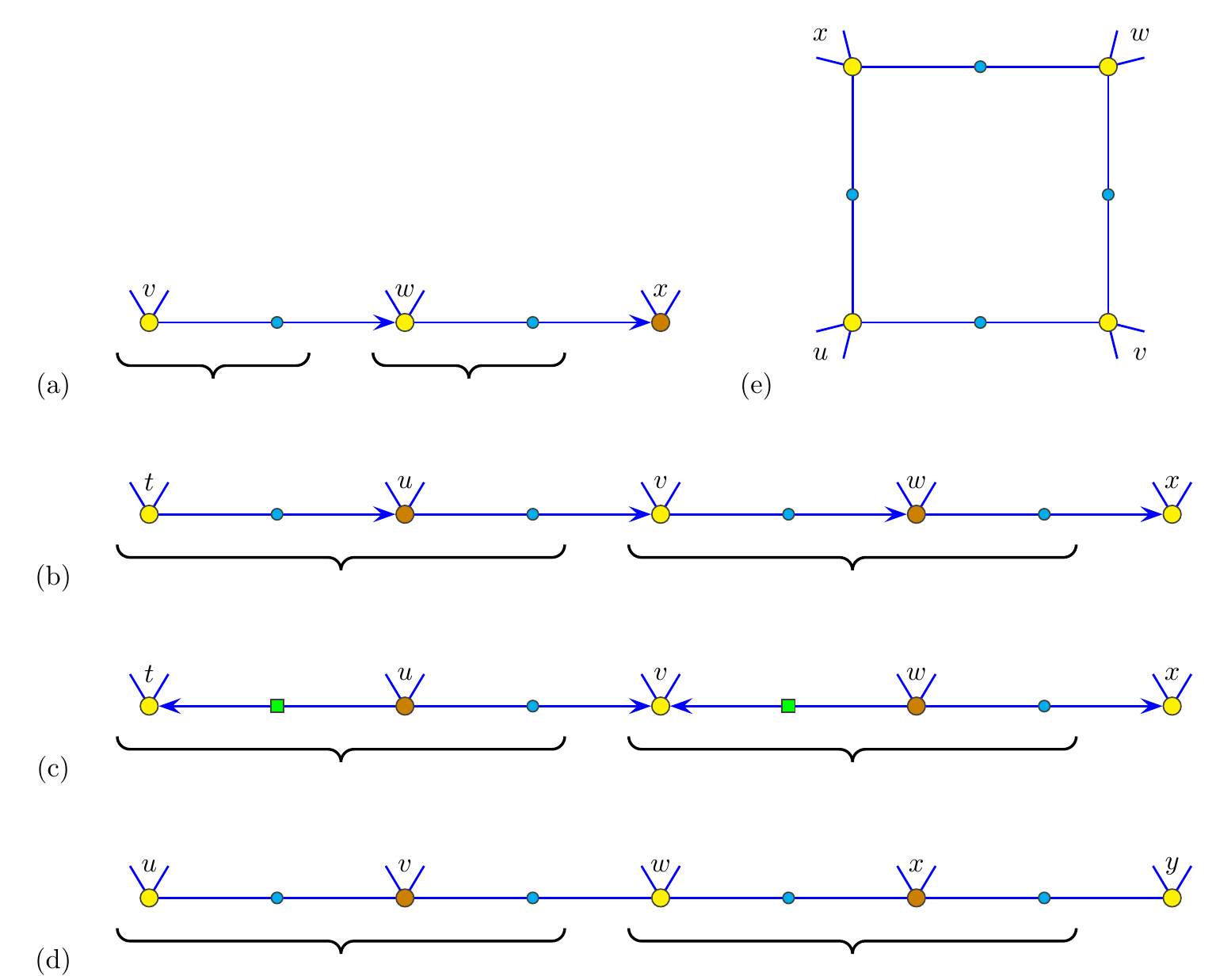}
	\caption{Illustration for \cref{ConstructAcyclic}.\label{ConstructAcyclicFigure}}
\end{figure}

Consider the case in which both edges in $P$ are oriented toward $v$.
Suppose on the contrary that $c(uv)\neq c(wv)$.
By construction, $(+,c(uv),c(v))\in q(u)$, implying $(+,c(uv),c(v))\in q(w)$.
That is, $c(uv)=c(wx)$ and $c(v)=c(x)$ for some arc $wx$ (implying $x \neq v$ since $c(uv)\neq c(wv)$). 
Similarly, $(+,c(wv),c(v))\in q(w)$, implying $(+,c(wv),c(v))\in q(u)$.
That is, $c(wv)=c(ut)$ and $c(t)=c(v)$ for some arc $ut$ (implying $t \neq v$ since $c(ut)=c(wv)\neq c(uv)$). 
Hence the path $(ut,u,uv,v,wv,w,wx,x)$ in $G' $ is repetitively coloured in $c$, as illustrated in \cref{ConstructAcyclicFigure}(c). 
This contradiction shows that $c(uv)=c(wv)$.
By symmetry, $c(uv)=c(wv)$ when both edges in $P$ are oriented away from $v$.

Hence in each component of $G'$, all the division vertices have the same colour in $c$.
Every bichromatic cycle contains a 4-cycle or a 5-path. 
If  $G$ contains a bichromatic 5-path $(u,v,w,x,y)$, then all the division vertices in $(u,v,w,x,y)$ have the same colour in $c$, and  $(u,uv,v,vw,w,wx,x,xy)$ is a repetitively coloured path in $G'$, as illustrated in \cref{ConstructAcyclicFigure}(d). 
Similarly, if $G$ contains a bichromatic 4-cycle $(u,v,w,x)$, then  all the division vertices in $(u,v,w,x)$  have the same colour in $c$, and $(u,uv,v,vw,w,wx,x,xu)$ is a repetitively coloured path in $G'$, as illustrated in \cref{ConstructAcyclicFigure}(e). 
 
Thus $G$ contains no bichromatic cycle, and $q$ is an acyclic colouring of $G$.
\end{proof}

Note that the above proof establishes the following stronger statement: If the 1-subdivision of a graph $G$ has a $k$-colouring that is nonrepetitive on paths with at most $8$ vertices, then $G$ has an acyclic $k\cdot 2^{2k^2}$-colouring in which each component of each 2-coloured subgraph is a star or a 4-path.

\cref{ConstructAcyclic,NonRepSub}(a) imply:

\begin{lem}[\citep{NOW11}]
\label{ConstructAcyclicPlus}
If some $(\leq1)$-subdivision of a graph $G$ has a nonrepetitive $k$-colouring, then 
$$\acy(G) \leq(k+1)\cdot 2^{2(k+1)^2}.$$
\end{lem}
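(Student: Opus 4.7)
The plan is to reduce immediately to the case of the full $1$-subdivision and then invoke \cref{ConstructAcyclic}. Let $H$ be the given $(\leq 1)$-subdivision of $G$ with a nonrepetitive $k$-colouring, and let $G'$ denote the (full) $1$-subdivision of $G$. The key observation is that $G'$ can be obtained from $H$ by subdividing exactly once each edge of $G$ that was left undivided in $H$. In other words, $G'$ is itself a $(\leq 1)$-subdivision of $H$.

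Once this observation is in place, the bound on $\pi(G')$ follows directly from \cref{NonRepSub}(a) applied with $H$ in place of $G$: since $\pi(H)\leq k$, we obtain
\[
\pi(G') \leq \pi(H) + 1 \leq k+1.
\]
In particular, $G'$ admits a nonrepetitive $(k+1)$-colouring.

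Now apply \cref{ConstructAcyclic} to the graph $G$ and its $1$-subdivision $G'$, but with the parameter $k+1$ in place of $k$. That lemma yields
\[
\acy(G) \leq (k+1) \cdot 2^{2(k+1)^2},
\]
which is the desired inequality. There is no serious obstacle here: the proof is a straightforward two-step composition, and the only thing to double-check is the mild bookkeeping that a full $1$-subdivision of $G$ is indeed a $(\leq 1)$-subdivision of any intermediate $(\leq 1)$-subdivision $H$, which is immediate from the definitions.
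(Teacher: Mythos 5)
Your proof is correct and is essentially the paper's own argument: the paper likewise notes that the full $1$-subdivision $G'$ is a $(\leq 1)$-subdivision of $H$, applies \cref{NonRepSub}(a) to get $\pi(G')\leq k+1$, and then invokes \cref{ConstructAcyclic} with parameter $k+1$.
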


\begin{lem}[\citep{NOW11}]
\label{Subdiv2Graph}
If $\pi(H)\leq k$ for some $(\leq 1)$-subdivision of a graph $G$, then 
$$\pi(G)\leq (k+1)\cdot 2^{2(k+1)^2}\big((k+1)(k+2)(2k+3)\big)^{(k+1)\cdot 2^{2(k+1)^2}-1}.$$
\end{lem}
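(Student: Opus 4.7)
The plan is to combine two of the results established immediately above (\cref{ConstructAcyclicPlus} and \cref{NonRepGraphs}) in a single line. The hypothesis gives a $(\leq 1)$-subdivision $H$ of $G$ with $\pi(H) \leq k$, which is exactly the hypothesis of \cref{ConstructAcyclicPlus}. Applying that lemma immediately yields an acyclic colouring of $G$ with $\ell := (k+1)\cdot 2^{2(k+1)^2}$ colours.

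Now the same subdivision $H$, along with the bound $\acy(G) \leq \ell$ just obtained, satisfies the hypothesis of \cref{NonRepGraphs}. Applying that lemma with these parameters gives
\[
\pi(G) \leq \ell \big( (k+1)(k+2)(2k+3) \big)^{\ell-1},
\]
which upon substituting the value of $\ell$ is exactly the claimed bound.

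There is essentially no obstacle: all the work is already done in the two preceding lemmas, and the statement is packaged to be their direct composition. The only thing to be careful about is consistency of the parameter $k$ across the two applications, but in both lemmas $k$ plays the same role, namely an upper bound on $\pi(H)$ for a $(\leq 1)$-subdivision $H$ of $G$, so no adjustment is necessary. Thus the proof will be essentially one sentence: ``Apply \cref{ConstructAcyclicPlus} to obtain $\acy(G) \leq (k+1)\cdot 2^{2(k+1)^2}$, then apply \cref{NonRepGraphs}.''
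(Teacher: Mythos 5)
Your proposal is correct and is exactly the paper's own proof: \cref{ConstructAcyclicPlus} gives $\acy(G)\leq(k+1)\cdot 2^{2(k+1)^2}$, and then \cref{NonRepGraphs} with $\ell=(k+1)\cdot 2^{2(k+1)^2}$ yields the stated bound. Nothing further is needed.
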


\begin{proof}
$\acy(G)\leq(k+1)\cdot2^{2(k+1)^2}$ by \cref{ConstructAcyclicPlus}. 
The result follows from \cref{NonRepGraphs} with $\ell=(k+1)\cdot 2^{2(k+1)^2}$.
\end{proof}

Iterated application of \cref{Subdiv2Graph} proves the following theorem. 

\begin{thm}[\citep{NOW11}]
\label{GenSubdiv2Graph}
There is a function $f$ such that for every graph $G$ and every $(\leq d)$-subdivision $H$ of $G$,
$$\pi(G)\leq f(\pi(H),d).$$
\end{thm}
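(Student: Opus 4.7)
The plan is to induct on $d$, using \cref{Subdiv2Graph} as the engine to reduce the ``depth'' $d$ of the subdivision by (roughly) a factor of two at each step. More precisely, I would show that every $(\leq d)$-subdivision $H$ of $G$ can be realised as a $(\leq 1)$-subdivision of some $(\leq \lfloor d/2 \rfloor)$-subdivision $G'$ of $G$; \cref{Subdiv2Graph} then bounds $\pi(G')$ in terms of $\pi(H)$, and the inductive hypothesis (applied to $G'$ and $G$ with parameter $\lfloor d/2 \rfloor$) bounds $\pi(G)$ in terms of $\pi(G')$ and $d$.

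To construct $G'$, for each edge $e=vw\in E(G)$, let $P_e = (v,x_1,x_2,\dots,x_{k_e},w)$ be the corresponding path in $H$ with $k_e\leq d$ internal vertices. In $G'$, replace $P_e$ by the path $(v,x_2,x_4,\dots,w)$ consisting of $v$, $w$, and every second internal vertex of $P_e$. Each edge of this shorter path corresponds to a subpath of $P_e$ of length $1$ or $2$ in $H$ (the potential length-$1$ edge occurs only at the end, when $k_e$ is even and nonzero, or at the beginning when $k_e=0$). Hence $H$ is a $(\leq 1)$-subdivision of $G'$, while $G'$ is a $(\leq \lfloor d/2\rfloor)$-subdivision of $G$.

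The induction is then straightforward. For the base case $d=0$, we have $H=G$, so $\pi(G)=\pi(H)$ and we may take $f(k,0):=k$. For the inductive step, let $g(k)$ be the function supplied by \cref{Subdiv2Graph}, namely
$$g(k) := (k+1)\cdot 2^{2(k+1)^2}\bigl((k+1)(k+2)(2k+3)\bigr)^{(k+1)\cdot 2^{2(k+1)^2}-1}.$$
Given a $(\leq d)$-subdivision $H$ of $G$ with $d\geq 1$, construct $G'$ as above. By \cref{Subdiv2Graph}, $\pi(G')\leq g(\pi(H))$. Since $G'$ is a $(\leq \lfloor d/2\rfloor)$-subdivision of $G$, the inductive hypothesis yields $\pi(G)\leq f(\pi(G'),\lfloor d/2\rfloor)\leq f(g(\pi(H)),\lfloor d/2\rfloor)$. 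Thus the recurrence $f(k,d):=f(g(k),\lfloor d/2\rfloor)$ for $d\geq 1$ produces a valid bound, and unrolling gives $f(k,d)=g^{(\lceil \log_2(d+1)\rceil)}(k)$, the $\lceil\log_2(d+1)\rceil$-fold iterate of $g$ applied to $k$.

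There is no real obstacle here: the only point to verify with any care is the claim that $H$ is a $(\leq 1)$-subdivision of $G'$, which reduces to checking that each new edge in $G'$ corresponds to a path of length at most $2$ in $H$, a routine parity check on $k_e$. The bound obtained is astronomical (a tower of exponentials of height $\Theta(\log d)$), which is a reflection of the iterated use of \cref{Subdiv2Graph} rather than any inefficiency in this reduction step.
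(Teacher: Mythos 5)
Your proposal is correct and is essentially the paper's own argument: the paper proves this theorem by a one-line "iterated application of \cref{Subdiv2Graph}", and your halving construction (realising a $(\leq d)$-subdivision as a $(\leq 1)$-subdivision of a $(\leq\lfloor d/2\rfloor)$-subdivision) together with the induction on $d$ is exactly the natural way to carry out that iteration. Your parity check and the resulting tower-type bound $f(k,d)=g^{(\lceil\log_2(d+1)\rceil)}(k)$ are fine.
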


\cref{GenSubdiv2Graph} can be interpreted as follows. For each fixed integer $c\geq 3$, let $f_c(G)$ be the minimum integer $k$ such that $\pi(G')\leq c$ for some $(\leq k)$-subdivision $G'$ of $G$. The results discussed at the start of this section show that $f_c$ is well-defined. For $c\geq 5$, \cref{5Subdiv} shows that $f_c(G)\leq O(\log\pi(G))$ for every graph $G$. \cref{GenSubdiv2Graph} shows that a converse also holds. That is, for each integer $c\geq 5$, the parameters $f_c$ and $\pi$ are tied. 

\begin{open}
Does every graph $G$ have a nonrepetitively 3-colourable subdivision with $O(\log|V(G)|)$ or even $O(\log\pi(G))$ division vertices per edge? Note that \citet{BGKNP07} proved that $\pi(T^{(12)})\leq 3$ for every tree $T$. 
\end{open}

\begin{open}
Does every nonrepetitively $k$-colourable subdivision of a graph $G$ have an edge subdivided at least $c\log_k\pi(G))$ times, for some absolute constant $c>0$?
\end{open}

The results in the next section give an affirmative answer to this question for complete graphs, and indeed for any graph $G$ with $\pi(G)\geq c|V(G)|$.


\subsection{Subdivisions of Dense Graphs}

\citet{NOW11} proved the following generalisation of \cref{ExtremalPi} in the case of complete graphs. The same proof works for all graphs. 

\begin{lem}
\label{ExtremalSubdivision}
For every $(\leq d)$-subdivision $G'$ of a graph $G$,
$$|E(G)| \leq 2\pi(G')^{d+1} ( |V(G)| - \tfrac{c+1}{2} ) .$$
Moreover, 
$$|E(G)| \leq \pi(G^{(d)})^{d+1} ( |V(G)| - \tfrac{c+1}{2} ) .$$
\end{lem}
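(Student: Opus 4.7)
The plan is to refine the proof of \cref{ExtremalPi} by classifying edges of $G$ not merely by their endpoint colours but by the entire colour sequence on the corresponding subdivided path. Fix a nonrepetitive $c$-colouring $\phi$ of $G'$ with $c:=\pi(G')$ (respectively, of $G^{(d)}$ with $c:=\pi(G^{(d)})$ for the second bound). For each edge $e=vw\in E(G)$, let $P_e=(v=u_0,u_1,\ldots,u_{k_e+1}=w)$ be the corresponding path in $G'$ with $k_e\le d$ division vertices, and define the \emph{type} of $e$ to be the sequence $T(e)=(\phi(u_0),\phi(u_1),\ldots,\phi(u_{k_e+1}))$, considered up to reversal. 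Let $G_T$ be the spanning subgraph of $G$ whose edges have type $T$, so $\{E(G_T)\}_T$ partitions $E(G)$.

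The heart of the proof is the claim that every $G_T$ is a forest. Suppose for contradiction that $C$ is a cycle in $G_T$, where $T=(c_0,\mu,c_1)$. Traversing $C$ in $G'$ yields a closed walk whose colour sequence is periodic with period $2(d+1)$. When $c_0\neq c_1$, $G_T$ is bipartite with parts $V_{c_0},V_{c_1}$, so $C$ has even length $\ge 4$ and the two alternating edge-orientations produce the repeating block $(c_0,\mu,c_1,\mu^R)$. When $c_0=c_1=c^*$ and $\mu$ is palindromic, two adjacent edges of $G_T$ already give a path of $2(d+1)$ distinct vertices in $G'$ with colour pattern $(c^*,\mu,c^*,\mu)$, which is itself repetitive; hence $G_T$ is a matching. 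When $c_0=c_1=c^*$ and $\mu$ is not palindromic, canonically orient each edge of $G_T$ so that forward traversal reads $\mu$: if two incident edges at some vertex point in opposite senses (one into, one out of the vertex), walking through that vertex produces the pattern $(c^*,\mu,c^*,\mu)$ over $2(d+1)$ distinct vertices, again repetitive. So all incident edges at each vertex agree, forcing the orientations around $C$ to alternate, which in turn forces $|C|$ even and $\ge 4$. In every remaining case, any four consecutive edges of $C$ give an open path of $4(d+1)$ distinct vertices in $G'$ whose first half of length $2(d+1)$ equals its second half — a repetition, contradicting the nonrepetitivity of $\phi$.

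With the forest claim established, the edge count mirrors \cref{ExtremalPi}. Writing $n_i:=|\phi^{-1}(i)\cap V(G)|$ and $n:=|V(G)|$, we have $|E(G_T)|\le n_{c_0}+n_{c_1}-1$ when $c_0\ne c_1$, and $|E(G_T)|\le n_{c^*}-1$ when $c_0=c_1=c^*$. The number of types sharing a given (unordered) endpoint colour pair is at most $\sum_{\ell=0}^d c^\ell\le 2c^d$ for a $(\le d)$-subdivision, and at most $c^d$ for $G^{(d)}$ (where $\mu$ has length exactly $d$). Summing the per-type forest bounds yields
\[
\sum_{\{c_0,c_1\}:\,c_0\ne c_1}(n_{c_0}+n_{c_1}-1)+\sum_{c^*}(n_{c^*}-1)=(c-1)n-\binom{c}{2}+(n-c)=c\Bigl(n-\tfrac{c+1}{2}\Bigr),
\]
so multiplying by the type-count bound gives $|E(G)|\le 2c^{d+1}(n-\tfrac{c+1}{2})$ in the $(\le d)$-subdivision case and $|E(G)|\le c^{d+1}(n-\tfrac{c+1}{2})$ in the $d$-subdivision case. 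The main obstacle is the forest claim, and within it, the bookkeeping of edge orientations when $c_0=c_1$ and $\mu$ is non-palindromic, which is needed to rule out odd cycles via local repetitions and to convert even cycles into a single global repetition of length $4(d+1)$.
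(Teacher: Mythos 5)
Your proof is correct and takes essentially the same route as the paper's (which derives the lemma from the stronger \cref{ExtremalSubdivisionExtra}): there, edges are likewise classified by the colour string along the subdivided path — via an arbitrary fixed orientation, ordered endpoint colours and the division-vertex string, rather than your unordered types up to reversal — each class is shown to be a forest by extracting a repetition on $2(d'+1)$ vertices from a directed two-edge path and on $4(d'+1)$ vertices from a cycle with forced alternating orientation, and the count concludes identically with $c\,|Z|\,(|V(G)|-\tfrac{c+1}{2})$ and $|Z|<2c^{d}$ (resp.\ $\leq c^{d}$ for $G^{(d)}$). Your case analysis (palindromic versus non-palindromic $\mu$, equal versus distinct endpoint colours) is just a reorganisation of those same two repetition-extraction arguments.
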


\cref{ExtremalSubdivision} is implied by the following stronger result. This strengthening will be useful in \cref{BoundedExpansion}. 

\begin{lem}
\label{ExtremalSubdivisionExtra}
Let $G'$ be a $(\leq d)$-subdivision a graph $G$. Assume that $G'$ is $c$-colourable with no repetitively coloured paths on at most $4d+4$ vertices. Then 
$$|E(G)| \leq 2 c^{d+1} ( |V(G)| - \tfrac{c+1}{2} ) .$$
Moreover, if $G'=G^{(d)}$ then 
$$|E(G)| \leq c^{d+1} ( |V(G)| - \tfrac{c+1}{2} ) .$$
\end{lem}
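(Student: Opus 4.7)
The plan is to generalize Proposition~\ref{ExtremalPi} by assigning each edge of $G$ a ``colour type'' determined by the coloring $c'$ along its subdivision path, and showing that each type induces a forest in $G$. Write $n=|V(G)|$ and let $n_i$ be the number of vertices of $G$ with $c'$-colour~$i$. The hypothesis rules out 2-vertex repetitive paths, so $c'$ is a proper colouring of $G'$; in particular, no monochromatic edge of $G$ (that is, $vw\in E(G)$ with $c'(v)=c'(w)$) is unsubdivided. For each edge $e=vw$ with $k_e\le d$ internal vertices $x_1^e,\dots,x_{k_e}^e$ ordered from $v$ to $w$, let $\sigma_e=(c'(x_1^e),\dots,c'(x_{k_e}^e))$ be the internal colour sequence. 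I would classify each bichromatic edge (with $c'(v)\ne c'(w)$) by the triple $(i,\sigma,j)$ where $i<j$ are the endpoint colours and $\sigma$ is read from the $i$-endpoint (so each bichromatic edge has a unique type), and each monochromatic edge of colour $i$ by the pair $(i,\sigma_e)$, accepting that a monochromatic edge whose internal sequence is not a palindrome is counted twice across $\sigma_e$ and its reverse.

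The crux is that each class $H_\tau\subseteq G$ of edges of a given type is a forest. Suppose for contradiction $H_\tau$ contains a cycle $u_1u_2\cdots u_r u_1$, and let $k=|\sigma|$ be the common internal length. The corresponding closed walk in $G'$ has length $r(k+1)$. In the bichromatic case, the endpoint-colour requirement forces the $u_\ell$ to alternate between colours $i$ and $j$, so $r$ is even; since $G$ is simple, $r\ge 4$. The subpath formed by any four consecutive edges around the cycle has $4(k+1)\le 4d+4$ vertices with colour sequence $(i,\sigma,j,\text{rev}(\sigma),i,\sigma,j,\text{rev}(\sigma))$, whose two halves of length $2(k+1)$ coincide---a repetitive path forbidden by hypothesis. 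In the monochromatic case, each $u_\ell$ has colour $i$ and each edge's internal sequence is $\sigma$ or $\text{rev}(\sigma)$. If two consecutive edges carry the same internal sequence from their shared endpoint, then the concatenated subpath reads $(i,\sigma,i,\sigma)$ on $2(k+1)\le 2d+2$ vertices, a forbidden repetition; hence the orientations must alternate around the cycle, forcing $r$ even (and $r\ge 4$). Then four consecutive edges form a subpath $(i,\sigma,i,\text{rev}(\sigma),i,\sigma,i,\text{rev}(\sigma))$ on $4(k+1)\le 4d+4$ vertices, again a forbidden repetition. Hence $H_\tau$ is a forest.

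A forest in $G$ spanning the vertices of colours $\{i,j\}$ has at most $n_i+n_j-1$ edges; a forest spanning vertices of colour $i$ has at most $n_i-1$ edges. For the $G^{(d)}$ case (every $k_e=d$) there are $\binom{c}{2}c^d$ bichromatic types and $c\cdot c^d$ (ordered) monochromatic types, so
\[ |E(G)| \le c^d\bigl((c-1)n - \tbinom{c}{2}\bigr) + c^d(n-c) = c^{d+1}\bigl(n-\tfrac{c+1}{2}\bigr), \]
where the monochromatic term is a valid upper bound on $|E_{\mathrm{mono}}|$ since each monochromatic edge contributes at least once to the sum. For the general $(\le d)$-subdivision, one runs the same argument for each $k\in\{0,1,\dots,d\}$ separately and sums: the factor $c^d$ is replaced by $S_d:=\sum_{k=0}^{d}c^k=(c^{d+1}-1)/(c-1)$, yielding $|E(G)|\le c\cdot S_d\cdot (n-\tfrac{c+1}{2})$. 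The elementary inequality $c\cdot S_d\le 2c^{d+1}$, valid for $c\ge 2$ (equivalently, $2c^d\le c^{d+1}+1$), then produces the stated bound $2c^{d+1}(n-\tfrac{c+1}{2})$. The principal obstacle is the forest claim: the constant $4$ in ``$4d+4$'' is used tightly, since in the monochromatic case the alternating-orientation cycle is only defeated by a full $4(k+1)$-vertex subpath; a weaker $2d+2$ bound on forbidden repetitions would not suffice.
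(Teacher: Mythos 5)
Your proof is correct and is essentially the paper's own argument: the paper likewise classifies the edges of $G$ by their endpoint colours together with the colour sequence on the division vertices (using an arbitrary orientation and classes $G_{i,j,z}$ with $i=j$ allowed, rather than your bichromatic/monochromatic split), shows each class is a forest because two consecutive edges read the same way give a $2(k+1)$-vertex repetition while a four-edge segment of an alternating cycle gives a $4(k+1)\le 4d+4$-vertex repetition, and then sums the forest bounds, picking up the factor $\sum_{k\le d}c^k<2c^d$ in the $(\le d)$ case. The only quibble is a wording slip in your monochromatic case: the configuration producing the $(i,\sigma,i,\sigma)$ repetition is two consecutive edges read identically in the direction of traversal (equivalently, with mutually reversed readings from the shared endpoint), not ``the same internal sequence from their shared endpoint''---but the conclusion you draw, that readings must alternate around the cycle, and the ensuing four-edge repetition are exactly right.
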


\begin{proof}
Arbitrarily orient each edge of $G$. Let $A(G)$ be the resulting set of arcs of $G$. Let $\phi$ be a $c$-colouring of $G'$ with colour-set $\{1,\dots,c\}$ and with no repetitively coloured paths on at most $4d+4$ vertices. Let $V_i$ be the set of vertices in $G$ coloured $i$. For each arc $e=vw\in A(G)$, if $e_1,\dots,e_d$ is the sequence of division vertices on $e$ from $v$ to $w$, then let $f(e):=( \phi(e_1), \dots,\phi(e_d) )$. Let $Z:= \{f(e) : e\in A(G)\}$. Note that 
$$|Z| \leq \sum_{i=0}^d c^i = \frac{c^{d+1}-1}{c-1} < 2c^d.$$
Moreover, if $G'=G^{(d)}$ then $|Z|\leq c^d$. 

For $i,j\in\{1,\dots,c\}$ and $z\in\{1,\dots,c\}^d$, let $G_{i,j,z}$ be the subgraph of $G$ with $V(G_{i,j,z}):=V_i\cup V_j$ and $E(G_{i,j,k}):=\{vw\in E(G): v\in V_i, w\in V_j, f(vw)=z\}$. Note that possibly $i=j$. We now bound the number of edges in each $G_{i,j,z}$. 
	
First suppose that $G_{i,j,z}$ contains a directed path $(u,v,w)$. Let $\alpha:=uv$ and $\beta:=vw$. Thus $f(\alpha)=f(\beta)=z$, and $\alpha$ and $\beta$ are both subdivided $d'$ times, for some $d'\in\{0,1,\dots,d\}$. Moreover, $\phi(u)=\phi(v)$ and $\phi(\alpha_i)=\phi(\beta_i)$ for each $i\in\{1,\dots,d'\}$. Thus $(u,\alpha_1,\dots,\alpha_{d'},v,\beta_1,\dots,\beta_{d'})$ is a repetitively coloured path in $G'$ on $2d'+2\leq 4d+4$ vertices, as illustrated in \cref{PathCyclePath}(a). This contradiction shows that $G_{i,j,z}$ contains no 2-edge directed path. 
	
Let $\overline{G_{i,j,z}}$ be the undirected graph underlying $G_{i,j,z}$. Suppose that $\overline{G_{i,j,z}}$ contains a cycle $C=(v_1,v_2,\dots,v_k)$ for some $k\geq 3$. Each edge of $C$ is subdivided $d'$ times, for some $d'\in\{0,1,\dots,d\}$. Since $G_{i,j,z}$ contains no 2-edge directed path, without loss of generality, the edges of $C$ are oriented $v_1v_2, v_3v_4, v_5v_6,\dots$ and $v_3v_2, v_5v_4, v_7v_6,\dots$. Thus $k\geq 4$. By construction, $i=\phi(v_1)=\phi(v_3)=\cdots$ and $j=\phi(v_2)=\phi(v_4)=\cdots$. Let $\alpha:=v_1v_2$ and $\beta:=v_3v_2$ and $\gamma:=v_3v_4$ and $\delta:= v_5v_4$. By construction $\phi(\alpha_i)=\phi(\beta_{d'+1-i})=\phi(\gamma_i)=\phi(\delta_{d'+1-i})$ for each $i\in\{1,\dots,k\}$. Thus $$(v_1,\alpha_1,\alpha_2,\dots,\alpha_{d'},v_2,\beta_{d'},\beta_{d'-1},\dots,\beta_1,v_3,\gamma_1,\gamma_2,\dots,\gamma_{d'},v_4,\delta_{d'},\delta_{d'-1},\dots,\delta_1)$$ is a repetitively coloured path in $G'$ on $4d'+4\leq 4d+4$ vertices, as illustrated in \cref{PathCyclePath}(b). 
	
This contradiction shows that $\overline{G_{i,j,z}}$ is acyclic. Thus $|E(\overline{G_{i,j,z}})| \leq |V_i|+|V_j|-1 $. Hence 
\begin{align*}
|E(G)| \leq \!\!\! \sum_{i,j\in\{1,\dots,c\}} \!\!\!\!\! |Z| (|V_i|+|V_j|-1)
	= & |Z| \left( -\frac{c(c+1)}{2} + \!\!\! \sum_{i\in\{1,\dots,c\}} \!\!\!\! c|V_i| \right)\\
	= &c\,|Z| \left( |V(G)| -\frac{c+1}{2} \right).
\end{align*}
The result follows by the above bounds on $|Z|$. 
\end{proof}

\begin{figure}
\centering
\includegraphics[width=\textwidth]{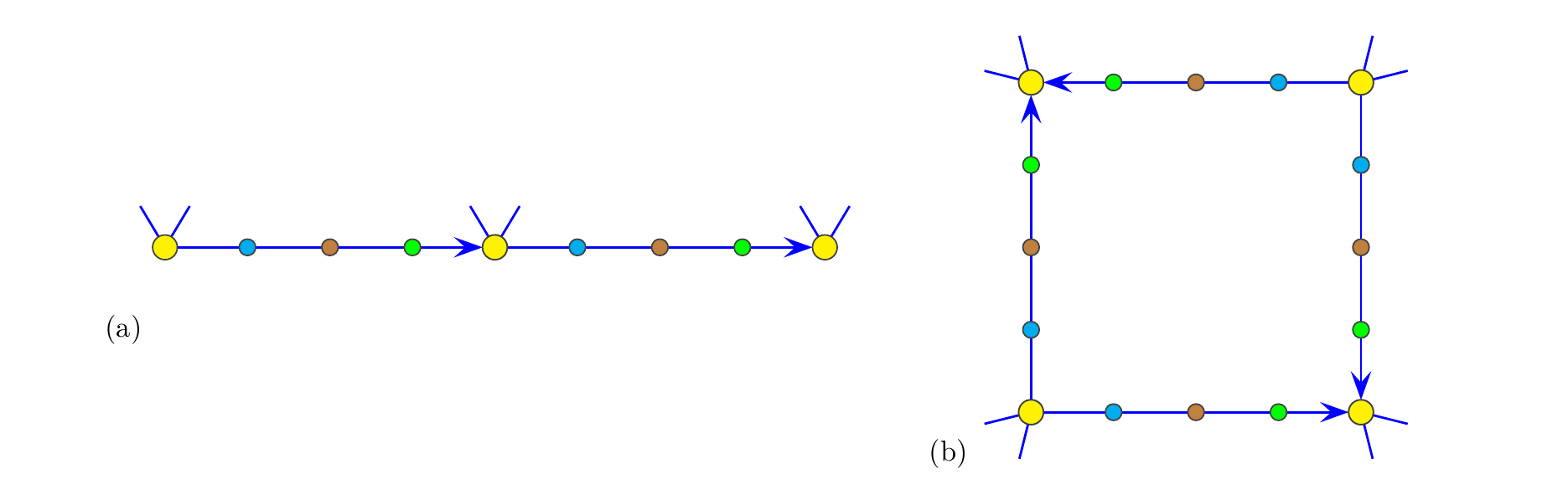}
\caption{Illustration for \cref{ExtremalSubdivision}.\label{PathCyclePath}}
\end{figure}

\cref{NonRepSub}(c) and \cref{ExtremalSubdivision} imply:

\begin{lem}
\label{GeneralCompleteSubdivision}
For every graph $G$ and integer $d\geq 0$, if $G'$ is any $(\leq d)$-subdivision of $G$, then
$$\pi(G') \geq \left(\frac{|E(G)|}{|V(G)|-1}\right)^{1/(d+1)} -3 .$$
\end{lem}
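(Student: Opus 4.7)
The plan is to combine \cref{NonRepSub}(c) (which says any subdivision adds at most 3 to the nonrepetitive chromatic number) with the "moreover" part of \cref{ExtremalSubdivision} (which gives a sharp density bound for \emph{exact} $d$-subdivisions). The idea is to pad the given $(\leq d)$-subdivision up to a full $d$-subdivision, apply the tighter bound there, and then pay an additive loss of $3$ to come back.

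Concretely, given an arbitrary $(\leq d)$-subdivision $G'$ of $G$, I would construct $G^{(d)}$ by further subdividing each edge of $G'$ (for each original edge $vw\in E(G)$ with fewer than $d$ division vertices in $G'$, insert the remaining division vertices along the corresponding subpath). This makes $G^{(d)}$ a subdivision of $G'$, so by \cref{NonRepSub}(c),
\[
\pi(G^{(d)}) \;\leq\; \pi(G') + 3.
\]
Next, apply the "moreover" clause of \cref{ExtremalSubdivision} with $c:=\pi(G^{(d)})$:
\[
|E(G)| \;\leq\; \pi(G^{(d)})^{d+1}\Big(|V(G)| - \tfrac{c+1}{2}\Big) \;\leq\; \pi(G^{(d)})^{d+1}\big(|V(G)|-1\big),
\]
where the last inequality uses $c\geq 1$. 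Rearranging gives
\[
\pi(G^{(d)}) \;\geq\; \left(\frac{|E(G)|}{|V(G)|-1}\right)^{1/(d+1)}.
\]
Combining the two displayed inequalities yields the claim:
\[
\pi(G') \;\geq\; \pi(G^{(d)}) - 3 \;\geq\; \left(\frac{|E(G)|}{|V(G)|-1}\right)^{1/(d+1)} - 3.
\]

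There is essentially no obstacle here; both ingredients are already stated. The only subtlety is to use the sharper (factor-of-2 free) bound available for exact $d$-subdivisions — this is precisely why one first passes from $G'$ to $G^{(d)}$ rather than applying the general $(\leq d)$-subdivision bound directly (which would produce an extra factor of $2^{1/(d+1)}$ inside the root). The additive loss of $3$ from \cref{NonRepSub}(c) is harmless compared to the targeted bound, since the statement already allows a $-3$ term on the right-hand side.
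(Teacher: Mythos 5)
Your proof is correct and is exactly the paper's argument: the paper derives this lemma in one line by combining \cref{NonRepSub}(c) (applied to $G^{(d)}$ viewed as a subdivision of $G'$) with the ``moreover'' clause of \cref{ExtremalSubdivision}, just as you do. Your reconstruction, including the observation that $c\geq 1$ lets one replace $|V(G)|-\tfrac{c+1}{2}$ by $|V(G)|-1$, fills in the details faithfully.
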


\subsection{Complete Graphs}

\cref{GenSubdiv2Graph} with $G=K_n$ implies that there is a function $f$ such that for every $(\leq d)$-subdivision $H$ of $K_n$, 
$$\pi(H)\geq f(n,d),$$ 
and  $\lim_{n\rightarrow\infty}f(n,d)=\infty$ for all fixed $d$. \citet{NOW11} obtained reasonable bounds on $f$, and indeed, for fixed $d\geq 2$, determined $\pi(K_n^{(d)})$ up to a constant factor.

\begin{lem}[\citep{NOW11}]
\label{CompleteGraphSubdivUpperBound}
Let $A\geq1$ and $B\geq2$ and $d\geq2$ be integers. If $n\leq A\cdot B^d$ then 
$$\pi(K_n^{(d)})\leq A+8B.$$
\end{lem}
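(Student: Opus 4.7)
The bound $A + 8B$ suggests a direct explicit construction: $A$ colours for original vertices, and $8B = 4 \cdot 2 \cdot B$ colours for division vertices, where the factor of $4$ comes from a walk-nonrepetitive $4$-colouring of $P_d$ (Lemma PathSigma), the factor of $2$ distinguishes two halves of each subdivided edge, and the $B$ encodes one coordinate of a $[B]^d$-label. Since $n \leq A \cdot B^d$, I would give each vertex $v \in V(K_n)$ a distinct label $(\alpha(v), \sigma(v)) \in [A] \times [B]^d$, orient each edge of $K_n$ (using an orientation with no source or sink so that each vertex has in- and out-edges), and colour each original vertex by $\alpha(v)$.

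\textbf{Construction of the division colouring.} Fix a walk-nonrepetitive $4$-colouring $s_1,\dots,s_d$ of $P_d$ from Lemma PathSigma, and let $t := \lceil d/2 \rceil$. For each edge $e = vw$ oriented $v \to w$, with division vertices $z_1,\dots,z_d$ in order, colour
\begin{equation*}
z_i :=
\begin{cases}
(B,\; s_i,\; \sigma_i(v)) & \text{if } 1 \leq i \leq t,\\
(C,\; s_i,\; \sigma_i(w)) & \text{if } t < i \leq d,
\end{cases}
\end{equation*}
so the ``$B$-type'' colours record the first $t$ letters of the start vertex's label and the ``$C$-type'' colours record the last $d - t$ letters of the end vertex's label. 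The total number of colours is $A + 2 \cdot 4 \cdot B = A + 8B$.

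\textbf{Verification.} Suppose for contradiction that $P = (p_1, \dots, p_{2k})$ is a repetitively coloured path in $K_n^{(d)}$, chosen with minimum $k$. Because the three types $A, B, C$ are disjoint, $p_i$ and $p_{k+i}$ share a type; hence the original vertices in $P$ align into symmetric pairs $u_j \leftrightarrow u_{j+m}$ with matching $\alpha$-values. Project the position-marker coordinate of division vertices of $P$ to a sequence in $\{1, \dots, d\}$, and extend through original vertices by repeating the boundary position (so the projection is a lazy/walk in $P_d$). Walk-nonrepetitiveness of $s$ combined with Lemma LazyWalks forces this walk to be boring, so $p_i$ and $p_{k+i}$ always lie at the same position of their respective edges and on the same $B/C$-side. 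Matching the letter coordinates then forces $\sigma_i(u_j) = \sigma_i(u_{j+m})$ for each $i$ that is ``visible'' from a division adjacent to $u_j$ in $P$. By the ``no source/no sink'' orientation, every internal original vertex of $P$ is adjacent to a $B$-side division (revealing its first $t$ coordinates) and a $C$-side division (revealing its last $d-t$ coordinates), so the matching covers all $d$ letters of $\sigma$. Thus $u_j = u_{j+m}$, contradicting simplicity of $P$. The edge case where $P$ has no original vertex is handled by the fact that on a single edge's subdivision the division colours form a path whose $s$-coordinates inherit nonrepetitiveness, and the case where $P$ starts or ends mid-subdivision is eliminated by the minimality choice (extending $P$ by one vertex into the nearest original shortens any ``shortcut'' obtainable by the usual reduction argument from Lemma SmallWalks).

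\textbf{Main obstacle.} The delicate point is arranging the letter encoding so that, regardless of how $P$ enters and leaves each interior original vertex $u_j$, at least one adjacent division vertex on each side exposes the ``right half'' of $\sigma(u_j)$. This hinges on the orientation of $K_n$ and on how the $B$- and $C$-halves are aligned with the incoming/outgoing edges of $u_j$ along $P$; if $u_j$ is ``convergent'' or ``divergent'' in $P$ (both adjacent edges of $P$ point into $u_j$ or both point out), one half of the label is hidden and the encoding must be adjusted (e.g.\ by using four palettes indexed by $(B\text{-}\text{or}\text{-}C) \times (\text{in}\text{-}\text{or}\text{-}\text{out})$ but still of total size $8B$). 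Carrying out this bookkeeping rigorously is the heart of the proof.
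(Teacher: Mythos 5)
The step that fails is exactly the one you flag as the ``main obstacle'', and it is not mere bookkeeping: in your encoding, an original vertex at which the path uses two out-arcs (or two in-arcs) never has the hidden half of its label compared, and a source/sink-free orientation does nothing to prevent this, since whether a vertex is divergent or convergent is decided by the path, not by the orientation. Concretely, take $d=2$, $t=1$, originals $u_1\neq u_3$ with the same $\alpha$ and the same $\sigma_1$ (but different $\sigma_2$), and $u_2\neq u_4$ with the same $\alpha$ and the same $\sigma_2$ (but different $\sigma_1$). If the orientation contains $u_1\to u_2$, $u_3\to u_2$, $u_3\to u_4$, some arc $u_1\to c$ with $c\neq u_2$ and some arc $c'\to u_4$ with $c'\neq u_3$, then the $12$-vertex path formed by the first division vertex of $u_1\to c$, then $u_1$, the two division vertices of $u_1\to u_2$, $u_2$, the two division vertices of $u_3\to u_2$ traversed from the $u_2$ end, $u_3$, the two division vertices of $u_3\to u_4$, $u_4$, and the second division vertex of $c'\to u_4$, has both halves coloured $(B,s_1,\sigma_1(u_1)),\,\alpha(u_1),\,(B,s_1,\sigma_1(u_1)),\,(C,s_2,\sigma_2(u_2)),\,\alpha(u_2),\,(C,s_2,\sigma_2(u_2))$ and the same with $u_3,u_4$ in place of $u_1,u_2$ --- a repetition. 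Here $u_1,u_3$ are divergent and $u_2,u_4$ convergent, so $\sigma_2(u_1),\sigma_2(u_3)$ and $\sigma_1(u_2),\sigma_1(u_4)$ are never compared. When $n$ is close to $A\cdot B^d$ there are many such label-coincident pairs, and nothing in your proposal rules these patterns out (a generic orientation contains them in abundance); so the colouring itself, not just its verification, is broken, and ``four palettes of total size $8B$'' cannot repair it, because the issue is which coordinates are recorded at all, not how they are tagged. (A smaller inaccuracy: the projection onto positions is not a lazy walk in $P_d$, since positions $1$ and $d$ can be consecutive around an original vertex.)

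The missing idea is how the paper spends the same $2\cdot 4\cdot B$ budget: fix a total order of the original vertices, root each subdivided edge $vw$ at its smaller endpoint $v$, and colour its $i$-th division vertex by the triple $(\delta(v_i,w_i),\,c_i,\,v_i)$, where $\delta$ is the equality indicator and $(c_1,\dots,c_d)$ is a nonrepetitive word on four symbols with $c_1\neq c_d$. Thus every division vertex records a letter of the \emph{root} together with a comparison bit, so the non-root endpoint's letters are never written down, yet their agreement with the root's letters can be read off wherever needed; moreover the colours along a transition determine its root, which instantly kills any repetition whose half contains two or more original vertices (the matched transitions would share their root, contradicting that $P$ is a path), and the remaining case of one original vertex per half is closed using the overhanging pieces of $P$ on either side of the unique transition, with $c_1\neq c_d$ pinning down the traversal direction. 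Your split of the label into a tail half and a head half has no analogue of this comparison bit, which is why the convergent/divergent case cannot be closed.
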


\begin{proof}
Let $(c_1,\dots,c_d)$ be a nonrepetitive sequence such that $c_1=0$ and $\{c_2,c_3,\dots,c_d\}\subseteq\{1,2,3\}$. Let $\preceq$ be a total ordering of the original vertices of $K_n^{(d)}$. Since $n\leq A\cdot B^d$, the original vertices of $K_n^{(d)}$ can be labelled $$\{v=\blah{v_0,v_1,\dots,v_d}:1\leq v_0\leq A,1\leq v_i\leq B,1\leq i\leq d\}.$$ 

Colour each original vertex $v$ by $\col(v):=v_0$. 
Consider a pair of original vertices $v$ and $w$ with $v\prec w$.
If $(v,r_1,r_2,\dots,r_d,w)$ is the transition from $v$ to $w$, then for $i\in[1,d]$, colour the division vertex $r_i$ by $$\col(r_i):=(\delta(v_i,w_i),c_i,v_i),$$ where
$\delta(a,b)$ is the indicator function of $a=b$. 
We say this transition is \emph{rooted} at $v$.
Observe that the number of colours is at most $A+2\cdot4\cdot B=A+8B$.

Every transition is coloured $$\big(x_0,(\delta_1,c_1,x_1),(\delta_2,c_2,x_2),\dots,(\delta_d,c_d,x_d),x_{d+1}\big)$$
for some $x_0\in[1,A]$ and $x_1,\dots,x_{d+1}\in[1,B]$ and $\delta_1,\dots,\delta_d\in\{\text{true},\text{false}\}$.
Every such transition is rooted at the original vertex $\blah{x_0,x_1,\dots,x_d}$. That is, the colours assigned to a transition determine its root.

Suppose on the contrary that $P=(a_1,\dots,a_{2s})$ is a repetitively coloured path in $K_n^{(d)}$. Since every original vertex receives a distinct colour from every division vertex, for all $i\in[s]$, $a_i$ is an original vertex if and only if $a_{i+s}$ is an original vertex, and $a_i$ is a division vertex if and only if $a_{i+s}$ is a division vertex. 

By construction, every transition is coloured nonrepetitively. 
Thus $P$ contains at least one original vertex, implying $\{a_1,\dots,a_s\}$ contains at least one original vertex. 
If $\{a_1,\dots,a_s\}$ contains at least two original vertices, then $\{a_1,\dots,a_s\}$ contains a transition $(a_i,\dots,a_{i+d+1})$, implying $(a_{s+i},\dots,a_{s+i+d+1})$ is another transition receiving the same tuple of colours. Thus $(a_i,\dots,a_{i+d+1})$ and $(a_{s+i},\dots,a_{s+i+d+1})$ are rooted at the same original vertex, implying $P$ is not a path. 

Now assume there is exactly one original vertex $a_i$ in $\{a_1,\dots,a_s\}$. Thus $a_{s+i}$ is the only original vertex in $\{a_{s+1},\dots,a_{2s}\}$. Hence $(a_i,\dots,a_{s+i})$ is a transition, implying $s=d+1$. Without loss of generality, $a_i\prec a_{s+i}$ and this transition is rooted at $a_i$. 

Let $v:=a_i$ and $w:=a_{s+i}$. For $j\in[1,d]$, the vertex $a_{i+j}$ is the $j$-th vertex in the transition from $v$ to $w$, and is thus coloured $(\delta(v_j,w_j),c_j,v_j)$.

Suppose that $i\leq s-1$. Let $x$ be the original vertex such that the transition between $w$ and $x$ contains $\{a_{s+i+1},\dots,a_{2s}\}$. 
Now $$\col(a_{s+i+1})=\col(a_{i+1})=(\delta(v_1,w_1),c_1,v_1).$$
Since $c_1\neq c_d$, we have $w\prec x$.
For $j\in[1,s-i]$, the vertex $a_{s+i+j}$ is the $j$-th vertex in the transition from $w$ to $x$, and thus $$(\delta(w_j,x_j),c_j,w_j)=\col(a_{s+i+j})=
\col(a_{i+j})=(\delta(v_j,w_j),c_j,v_j).$$ 
In particular, $v_j=w_j$ for all $j\in[1,s-i]$. Note that if $i=s$ then this conclusion is vacuously true.

Now suppose that $i\geq 2$. Let $u$ be the original vertex such that the transition between $u$ and $v$ contains $\{a_1,\dots,a_{i-1}\}$. 
Now $$\col(a_{i-1})=\col(a_{s+i-1})=(\delta(v_d,w_d),c_d,v_d).$$
Since $c_d\neq c_1$, we have $u\prec v$. 
For $j\in[s-i+1,d]$, the vertex $a_{i+j-s}$ is the $j$-th vertex in the transition from $u$ to $v$, and thus $$(\delta(u_j,v_j),c_j,u_j)=\col(a_{i+j-s})=\col(a_{i+j})=(\delta(v_j,w_j),c_j,v_j).$$ In particular, $v_j=u_j$ and $\delta(v_j,w_j)=\delta(u_j,v_j)$. Thus $v_j=w_j$ for all $j\in[s-i+1,d]$. Note that if $i=1$ then this conclusion is vacuously true.

Hence $v_j=w_j$ for all $j\in[1,d]$. 
Now $v$ is coloured $v_0$, and $w$ is coloured $w_0$. 
Since $v=a_i$ and $w=a_{s+i}$ receive the same colour, $v_0=w_0$.
Therefore $v_j=w_j$ for all $j\in[0,d]$. That is, $v=w$, which is the desired contradiction. 

Therefore there is no repetitively coloured path in $K_n^{(d)}$.
\end{proof}

\begin{thm}[\citep{NOW11}]
\label{CompleteGraphSubdiv}
For $d\geq2$, 
$$\left(\frac{n}{2}\right)^{1/(d+1)}\leq \pi(K_n^{(d)})\leq 9\ceil{n^{1/(d+1)}}.$$
\end{thm}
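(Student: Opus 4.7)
The plan is to establish the upper and lower bounds separately, with each one reducing to a result already proved in the excerpt.

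For the upper bound, I would simply invoke \cref{CompleteGraphSubdivUpperBound} with a well-chosen pair $(A,B)$. That lemma says: if $n \le A\cdot B^d$ for integers $A\ge 1$, $B\ge 2$, $d\ge 2$, then $\pi(K_n^{(d)}) \le A + 8B$. The natural choice is $A = B = \lceil n^{1/(d+1)}\rceil$, which is $\ge 2$ whenever $n \ge 2^{d+1}$ (small cases are trivial or can be absorbed by choosing $B=2$ and adjusting $A$). Then $A\cdot B^d = \lceil n^{1/(d+1)}\rceil^{\,d+1} \ge n$, so the hypothesis holds, and $A+8B = 9\lceil n^{1/(d+1)}\rceil$, giving exactly the stated bound. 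So the upper bound is a one-line application.

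For the lower bound, I would apply \cref{ExtremalSubdivision} to $G=K_n$. That lemma gives
$$|E(G)| \le \pi(G^{(d)})^{d+1}\bigl(|V(G)|-\tfrac{c+1}{2}\bigr),$$
where $c=\pi(G^{(d)})$. Substituting $|E(K_n)|=n(n-1)/2$ and $|V(K_n)|=n$ and using the trivial bound $n - (c+1)/2 \le n$, we get
$$c^{d+1}\cdot n \;\ge\; \frac{n(n-1)}{2},$$
so $c^{d+1} \ge (n-1)/2$, and hence $c \ge \bigl((n-1)/2\bigr)^{1/(d+1)}$. For $n\ge 2$ this is at least $(n/2)^{1/(d+1)}$ up to a factor tending to $1$; being slightly more careful (since $c$ is already known to be $O(n^{1/(d+1)})$, so $(c+1)/2$ is negligible compared to $n$ for $d\ge 2$), one obtains the stated form $(n/2)^{1/(d+1)}$.

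Neither direction has a real obstacle: both steps are corollaries of lemmas already established in the paper, and the only nontrivial content has already been proved in \cref{CompleteGraphSubdivUpperBound} and \cref{ExtremalSubdivision}. If there is any delicacy, it is in matching the precise constant $(n/2)^{1/(d+1)}$ in the lower bound, which requires the slight refinement just described (using that $c$ is small compared to $n$ to absorb the $(c+1)/2$ term).
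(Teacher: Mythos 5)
Your proposal follows the paper's proof exactly: both inequalities are read off from \cref{CompleteGraphSubdivUpperBound} and \cref{ExtremalSubdivision}, and your choice $A=B=\ceil{n^{1/(d+1)}}$ is if anything cleaner than the paper's parameters ($B=(n/8)^{1/(d+1)}$, $A=8B$), since it gives $A+8B=9\ceil{n^{1/(d+1)}}$ on the nose. (Minor point: $\ceil{n^{1/(d+1)}}\geq 2$ already for every $n\geq 2$, not only for $n\geq 2^{d+1}$; the sole degenerate case is $n=1$, which is trivial.)

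The one slip is in how you recover the exact constant in the lower bound. Discarding the subtracted term via $n-\tfrac{c+1}{2}\leq n$ gives only $c^{d+1}\geq (n-1)/2$, and your proposed repair --- that $c=O(n^{1/(d+1)})$ makes $(c+1)/2$ negligible --- points the wrong way: making $n-\tfrac{c+1}{2}$ close to $n$ only weakens the resulting bound and still yields no more than $\bigl((n-1)/2\bigr)^{1/(d+1)}$. What is needed is a lower bound on $(c+1)/2$, not an upper bound: since $c\geq 1$, one has $n-\tfrac{c+1}{2}\leq n-1$, so $\binom{n}{2}\leq c^{d+1}(n-1)$ gives $c^{d+1}\geq n/2$ exactly, i.e.\ $\pi(K_n^{(d)})\geq (n/2)^{1/(d+1)}$ as stated. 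With that one-line correction your argument is complete and coincides with the paper's.
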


\begin{proof}
The lower bound follows from \cref{ExtremalSubdivision}. The upper bound is \cref{CompleteGraphSubdivUpperBound} with $B=(n/8)^{1/(d+1)}$ and $A=8B$. 
\end{proof}

As mentioned earlier, \citet{NOW11} showed that for a $O(1)$-colourable subdivision of $K_n$, 
$\Theta(\log n)$ division vertices per edge is best possible. 

\begin{thm}
\label{CompleteGraphSubdivLowerBound}
For every $n\in\NN$, the $(1 + 2\ceil{17\log_2 n})$-subdivision of $K_n$ has a nonrepetitive $5$-colouring. 
Conversely, if $H$ is a subdivision of $K_n$ and $\pi(H)\leq c$ then some edge of $K_n$ is subdivided at least $\log_{c+3}(\frac{n}{2})-1$ times.
\end{thm}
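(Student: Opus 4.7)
The plan is to handle the two halves of the statement separately, and both reduce essentially to results already established earlier in the excerpt.

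For the first assertion, I would apply Theorem \ref{5Subdiv} directly to $G = K_n$. By Lemma \ref{NaiveUpperBound}, since $K_n$ is a complete multipartite graph with $\alpha(K_n) = 1$, we have $\pi(K_n) = n$. Substituting into Theorem \ref{5Subdiv}, taking $d := 2\ceil{17 \log_2 \pi(K_n)} + 1 = 1 + 2\ceil{17 \log_2 n}$, yields $\pi(K_n^{(d)}) \leq 5$, which is exactly the claimed upper bound. So this direction is essentially a one-line application and requires no new ideas.

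For the converse, suppose $H$ is a subdivision of $K_n$ with $\pi(H) \leq c$, and suppose that every edge of $K_n$ is subdivided at most $d$ times, so that $H$ is a $(\leq d)$-subdivision of $K_n$. I would then plug into Lemma \ref{GeneralCompleteSubdivision} with $G = K_n$ to get
$$\pi(H) \;\geq\; \left( \frac{|E(K_n)|}{|V(K_n)| - 1} \right)^{1/(d+1)} - 3 \;=\; \left(\frac{n}{2}\right)^{1/(d+1)} - 3.$$
Combining with $\pi(H) \leq c$ gives $(c+3)^{d+1} \geq n/2$, and taking logarithms base $c+3$ yields $d \geq \log_{c+3}(n/2) - 1$. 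Since $d$ was the maximum subdivision count, this says that some edge of $K_n$ is subdivided at least $\log_{c+3}(n/2) - 1$ times, as required.

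There is no serious obstacle here: both directions are immediate consequences of earlier lemmas. The only care needed is (i) identifying $\pi(K_n) = n$ from the naive bound and matching the subdivision parameter to the form stated in Theorem \ref{5Subdiv}, and (ii) computing $|E(K_n)|/(|V(K_n)|-1) = n/2$ cleanly in the application of Lemma \ref{GeneralCompleteSubdivision}. No new combinatorial constructions or probabilistic arguments are needed.
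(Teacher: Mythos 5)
Your proposal is correct and follows the same route as the paper: the upper bound is an application of \cref{5Subdiv} (with $\pi(K_n)=n$), and the lower bound applies \cref{GeneralCompleteSubdivision} to a $(\leq d)$-subdivision and rearranges $(\tfrac{n}{2})^{1/(d+1)}-3\leq c$ into $d\geq\log_{c+3}(\tfrac{n}{2})-1$. Nothing further is needed.
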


\begin{proof}
The upper bound follows from \cref{5Subdiv}. For the lower bound, suppose that  $H$ is a $(\leq d)$-subdivision of $K_n$ and $\pi(H)\leq c$. By \cref{GeneralCompleteSubdivision}, $(\frac{n}{2})^{1/(d+1)}-3\leq \pi(H)\leq c$.
That is, $\log_{c+3}\frac{n}{2}-1\leq d$.
Hence some edge of $H$ is subdivided at least 
$\log_{c+3}(\frac{n}{2})-1$ times.
\end{proof}

Now consider nonrepetitive colourings of the 1-subdiivsion of $K_n$. \citet{NOW11} proved the following upper bound\footnote{The proof of \cref{CompleteSubdiv1} corrects an error in the proof in \citep{NOW11}.} 

\begin{prop}[\citep{NOW11}]
\label{CompleteSubdiv1}
For every $n\in\NN$, 
$$\pi(K_n^{(1)})\leq \tfrac{5}{2} n^{2/3} + O(n^{1/3}).$$
\end{prop}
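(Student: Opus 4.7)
The plan is to construct an explicit nonrepetitive colouring of $K_n^{(1)}$ using at most $\tfrac{5}{2} n^{2/3} + O(n^{1/3})$ colours, refining the framework of \cref{Subdiv1} by exploiting that in $K_n^{(1)}$ every pair of original vertices is joined by a distinguished division vertex.

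Setup: let $k := \lceil n^{1/3} \rceil$, so that $k^3 \geq n$, and identify each of the $n$ original vertices with a distinct triple in $[k]^3$. Fix two disjoint colour sets $\mathcal{A}$ and $\mathcal{B}$. First, I would colour each original vertex $v = (v_1, v_2, v_3)$ with $\alpha(v) := (v_2, v_3) \in \mathcal{A}$, giving $|\mathcal{A}| \leq k^2$. Two originals then share a colour iff they differ only in the first coordinate (i.e.\ they lie in a common ``fibre''). The key design principle is then to colour the division vertex $e_{uv}$ of each edge $uv$ by an element of $\mathcal{B}$ that encodes (i) the smallest coordinate $i$ at which $u$ and $v$ differ, (ii) the unordered pair $\{u_i, v_i\}$, and (iii) an auxiliary bit for edges whose endpoints lie in a common fibre, so that the pair $(\alpha(v), \psi(e_{uv}))$ determines enough of $u$ to derive a contradiction later. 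A careful count gives $|\mathcal{B}| \leq \tfrac{3}{2}k^2 + O(k)$, yielding a total of at most $\tfrac{5}{2}k^2 + O(k) \leq \tfrac{5}{2}n^{2/3} + O(n^{1/3})$ colours.

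To verify nonrepetitiveness, first note that since $\mathcal{A} \cap \mathcal{B} = \emptyset$, any repetitive path $(u_1, \dots, u_{2t})$ must have $u_i$ and $u_{t+i}$ of the same type (original or division) for all $i$, which forces $t$ to be even. For $t = 2$ (the $4$-vertex case), a path $(u, e_{uv}, v, e_{vw})$ with $\alpha(u) = \alpha(v)$ lies in a single fibre for its two original vertices; the fibre-aware encoding of $\psi$ then ensures $\psi(e_{uv}) = \psi(e_{vw})$ forces $u = w$, contradicting the path condition. For $t = 4$ (the $8$-vertex case) and larger even $t$, the constraints $\alpha(u_{2j-1}) = \alpha(u_{2j-1+t})$ and $\psi(u_{2j}) = \psi(u_{2j+t})$ can be chained via the decoding property of $\psi$: each matched pair of division colours determines the matched coordinates of the adjacent originals, and propagating these equalities along the path forces two of the $2t$ vertices to coincide, contradicting distinctness.

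The main obstacle is the case analysis for $t = 4$ when the five relevant original vertices $w_1,\dots,w_5$ of the associated $K_n$-path all belong to the same fibre (so they all share a common value of $(v_2, v_3)$ and differ only in $v_1$). Here the ``first differing coordinate'' is always $1$, and ruling out a repetition forces the auxiliary bit in $\psi$ to be chosen extremely carefully: essentially it must serve as a proper-edge-colouring invariant for $K_k$ restricted to each fibre, while simultaneously being compatible with the inter-fibre encoding. Getting this fibre-internal substructure correct, while not inflating $|\mathcal{B}|$ beyond $\tfrac{3}{2} k^2 + O(k)$, is the delicate technical heart of the argument; indeed the footnote in the paper records that the original proof had an error precisely around handling these intra-fibre configurations.
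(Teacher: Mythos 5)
Your colour count matches the target, but the colouring you describe is not nonrepetitive: the ``decoding property'' you rely on fails, and one can write down a repetitively coloured path on $8$ vertices explicitly. Take distinct $a,b\in[k]$, distinct $y,y'\in[k]$ and any $x\in[k]$, and set $u=(a,x,y)$, $v=(a,x,y')$, $u'=(b,x,y)$, $v'=(b,x,y')$. Consider the path $\big(u,\,e_{uv},\,v,\,e_{vu'},\,u',\,e_{u'v'},\,v',\,e_{v'u}\big)$ in $K_n^{(1)}$ (its last vertex is the division vertex of the edge $v'u$; the original $u$ is not revisited, so all eight vertices are distinct). Positions $1$ and $5$ are coloured $(x,y)$, positions $3$ and $7$ are coloured $(x,y')$. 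The edges $uv$ and $u'v'$ both first differ at coordinate $3$ with value pair $\{y,y'\}$, and neither is intra-fibre (their endpoints have distinct $\alpha$-colours), so positions $2$ and $6$ receive the same $\mathcal{B}$-colour; likewise $vu'$ and $v'u$ both first differ at coordinate $1$ with value pair $\{a,b\}$ and are not intra-fibre, so positions $4$ and $8$ agree. Hence this $P_8$ is repetitively coloured, so no chaining argument can rescue the scheme. The root cause: your original colour $(v_2,v_3)$ omits the first coordinate, and your division colour records only the first differing coordinate and the unordered pair of values there; so whenever an edge's endpoints agree on coordinate $1$ (first differing at coordinate $2$ or $3$), the common first coordinate is invisible, and ``parallel'' transitions differing only in that coordinate receive identical colour triples. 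The auxiliary bit cannot help, since the offending edges are not intra-fibre, and encoding the hidden coordinate would push $|\mathcal{B}|$ far beyond $\tfrac32 k^2+O(k)$. (Note also that the underlying configuration in $K_n$ is a $4$-cycle, exactly the failure pattern of \cref{ExtremalSubdivisionExtra}.)

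The paper's proof of \cref{CompleteSubdiv1} avoids this by labelling the originals \emph{asymmetrically}: with $N=\lceil n^{1/3}\rceil$, originals are pairs $\langle i,k\rangle$ with $i\in[N^2]$ and $k\in[N]$, coloured $A_i$ — so the colour of an original reveals its entire class coordinate $i$, and each colour class has only $N$ members. A division vertex joining classes $i<j$ is coloured by the \emph{ordered} pair $B_{k,\ell}$ of within-class indices (the order supplied by comparing $i$ with $j$), and an intra-class division vertex is coloured $C_{k,\ell}$ with $k<\ell$. This uses $N^2+N^2+\binom{N}{2}\leq\tfrac52 n^{2/3}+O(n^{1/3})$ colours, and, crucially, every transition (original, division, original) is uniquely determined by its colour triple; a repetitive path with at least $4$ vertices per half would then contain the same transition in both halves, and a short separate argument handles the $4$-vertex case. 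To salvage a triple-based labelling you would need the pair (colour of an original, colour of an incident division vertex) to determine the other endpoint of that subdivided edge — which your encoding does not do, and which appears impossible within the $\tfrac52 n^{2/3}$ budget for symmetric $[k]^3$ labels.
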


\begin{proof}
Let $N:=\ceil{n^{1/3}}$. In $K_{N^3}'$, let $\{\langle i,k\rangle: 1\leq i\leq N^2,1\leq k\leq N\}$ be the original vertices, and let $\langle i,k;j,\ell\rangle$ be the division vertex having
$\langle i,k\rangle$  and $\langle j,\ell\rangle$  as its neighbours.

Colour each original vertex \blah{i,j} by $A_i$.
Colour each division vertex \blah{i,k;j,\ell} by $B_{k,\ell}$ if $i<j$. 
Colour each division vertex \blah{i,k;i,\ell} by $C_{k,\ell}$ where $k<\ell$.

Suppose that $PQ$ is a repetitively coloured path. Since original and division vertices are assigned distinct colours, $|P|$ is even.

First suppose that $|P|\geq 4$. Then $P$ contains some transition $T$. Observe that each transition is uniquely identified by the three colours that it receives. In particular, the only transition coloured $A_iB_{k,\ell}A_j$ with $i<j$ is $\langle i,k\rangle\langle i,k;j,\ell\rangle\langle j,\ell\rangle$. And the only transition coloured $A_iC_{k,\ell}A_i$ is $\langle i,k\rangle\langle i,k;i,\ell\rangle\langle i,\ell\rangle$. Thus $T$ is repeated in $Q$, which is a contradiction.

Otherwise $|P|=2$. Thus $PQ$ is coloured $A_iC_{k,\ell}A_iC_{k,\ell}$ for some $k<\ell$. But the only edges coloured $A_iC_{k,\ell}$ are the two edges in the transition $\langle i,k\rangle\langle i,k;i,\ell\rangle\langle i,\ell\rangle$, which again is a contradiction

Hence there is no repetitively coloured path. The number of colours is $N^2+N^2+\binom{N}{2}\leq\frac{5}{2}N^2
\leq \frac{5}{2} n^{2/3}+O(n^{1/3})$.
\end{proof}

\begin{open}
What is $\pi(K_n^{(1)})$? \cref{ExtremalSubdivision,CompleteSubdiv1} imply $(\frac{n}{2})^{1/2} \leq \pi(K_n^{(1)}) \leq O(n^{2/3})$. The slightly better lower bound  $\pi(K_n^{(1)})\geq\sqrt{n}$ follows from 
the previously mentioned lower bound $\pi(K_n^{(1)})\geq\st(K_n^{(1)})\geq\sqrt{n}$ by  \citet{Wood-DMTCS05}. The best known upper bound is $O(n^{2/3})$ in \cref{CompleteSubdiv1}.
\end{open}


\begin{open}[\citep{DJKW16}]
Is there a function $f$ such that $\pi(G/M)\leq f(\pi(G))$ for every graph $G$ and for every matching $M$ of $G$, where $G/M$ denotes the graph obtained from $G$ by contracting the edges in $M$? This would generalise the results in \cref{SubdivLowerBounds} about subdivisions (when each edge in $M$ has one endpoint of degree 2).
\end{open}

\begin{open}
Does every graph have a nonrepetitively $4$-choosable subdivision? Even $3$-choosable might be possible (although this is open even for paths). 
\end{open}


%
%
%

\section{Bounded Expansion}
\label{BoundedExpansion}

\citet{Sparsity} introduced the notion of bounded expansion graph classes as a robust measure of graph sparsity. 
The main result in this section is that graphs with bounded nonrepetitive chromatic number have bounded expansion, as proved by \citet*{NOW11}. 

For $r\in \mathbb{N}$, a graph $H$ is an \emph{$r$-shallow minor} of a  graph $G$ if there is a set $\mathcal{X}$ of pairwise disjoint connected induced subgraphs of $G$, each with radius at most $r$, such that $H$ is isomorphic to a subgraph of the graph obtained from $G$ by contracting each subgraph in $\mathcal{X}$ into a vertex. For a graph $G$, \citet{Sparsity} defined $\nabla_r(G)$ to be the maximum, taken over all $r$-shallow minors $H$ of $G$, of the average degree of $H$. 

A graph class $\GG$ has \emph{bounded expansion} with \emph{bounding function} $f$ if $\nabla_r(G)\leq f(r)$ for each $G\in\GG$ and $r\in\mathbb{N}$. We say $\GG$ has \emph{linear expansion} if, for some constant $c$, for all $r\in\NN$, every graph $G\in\GG$ satisfies $\nabla_r(G) \leq cr$. Similarly, $\GG$ has \emph{polynomial expansion} if, for some constant $c$, for all $r\in\NN$, every graph $G\in\GG$ satisfies $\nabla_r(G) \leq cr^c$. For example, when $f(r)$ is a constant, $\mathcal{G}$ is contained in a proper minor-closed class. As $f(r)$ is allowed to grow with $r$ we obtain larger and larger graph classes.

Bounded expansion classes can be characterised by excluded subdivisions. A rational number $r$ is a \emph{half-integer} if $2r$ is an integer. For a half-integer $r$, a graph $H$ is an \emph{$r$-shallow topological minor} of a graph $G$ if a $(\leq 2r)$-subdivision of $H$ is a subgraph of $G$. \citet{Sparsity} defined $\widetilde{\nabla}_r(G)$ to be the maximum of $\frac{|E(H)|}{|V(H)|}$ taken over all $r$-shallow topological minors $H$ in $G$. 

Now add nonrepetitive chromatic number into the picture. Consider a graph $G$. By definition, some subgraph $G'$ of $G$ is a $(\leq 2r)$-subdivision of some graph $H$ with $\frac{|E(H)|}{|V(H)|} = \widetilde{\nabla}_r(G)$. By  \cref{ExtremalSubdivision}, 
$$|E(H)| \leq 2\pi(G')^{2r+1} ( |V(H)| - \tfrac{c+1}{2} )  < 2\pi(G')^{2r+1} |V(H)|  .$$
Since $G'$ is a subgraph of $G$, we have $\pi(G')\leq\pi(G)$. Thus
\begin{equation}
\label{TopoNablaPi}
\widetilde{\nabla}_r(G) = \frac{|E(H)|}{|V(H)|} < 2\pi(G')^{2r+1} \leq 2\pi(G)^{2r+1}. 
\end{equation}
It follows from a result of \citet{Dvorak08} that $\nabla_r(G)$ and $\widetilde{\nabla}_r(G)$ are equivalent in the sense that 
\begin{equation}
\label{NablaTopoNabla}
\widetilde{\nabla}_r(G) 
\leq
\nabla_r(G)
\leq
4(4\widetilde{\nabla}_r(G) )^{(r+1)^2}.
\end{equation}
\cref{NablaTopoNabla,TopoNablaPi} imply:
\begin{align*}
\nabla_r(G)
\leq
4(8 \pi(G)^{2r+1} )^{(r+1)^2}.
\end{align*}
This implies the following theorem:

\begin{thm}[\citep{NOW11}]
\label{PiBoundedExpansion}
For each $c\in\NN$ the class of graphs $\{G:\pi(G)\leq c\}$ has bounded expansion.
\end{thm}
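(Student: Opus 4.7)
The plan is to essentially formalize the chain of inequalities presented in the paragraph just preceding the theorem statement; the hard work has already been packaged into two earlier results, and what remains is mostly an accounting exercise.

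First I would fix $c\in\NN$ and let $G$ be any graph with $\pi(G)\leq c$, with the goal of bounding $\nabla_r(G)$ by a function of $r$ alone (since $c$ is fixed). The strategy is to route through the topological grad $\widetilde{\nabla}_r(G)$: by definition there is an $r$-shallow topological minor $H$ of $G$ with $\tfrac{|E(H)|}{|V(H)|}=\widetilde{\nabla}_r(G)$, which means that some subgraph $G'\subseteq G$ is a $(\leq 2r)$-subdivision of $H$. Applying \cref{ExtremalSubdivision} with $d=2r$ gives $|E(H)|\leq 2\pi(G')^{2r+1}|V(H)|$, and monotonicity of $\pi$ under taking subgraphs yields $\pi(G')\leq\pi(G)\leq c$. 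Combining these yields the key estimate
\[
\widetilde{\nabla}_r(G)\;<\;2c^{2r+1}.
\]

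Next I would invoke the result of \citet{Dvorak08} stated as Equation~\cref{NablaTopoNabla} to convert this bound on $\widetilde{\nabla}_r$ into one on $\nabla_r$:
\[
\nabla_r(G)\;\leq\;4\bigl(4\widetilde{\nabla}_r(G)\bigr)^{(r+1)^2}\;\leq\;4\bigl(8\,c^{2r+1}\bigr)^{(r+1)^2}.
\]
Setting $f(r):=4(8c^{2r+1})^{(r+1)^2}$, which depends only on $r$ (with $c$ fixed), we have $\nabla_r(G)\leq f(r)$ for every $G$ in the class $\{G:\pi(G)\leq c\}$. This is exactly the definition of bounded expansion with bounding function $f$.

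The main obstacle is not really present here: both \cref{ExtremalSubdivision} and the Dvořák equivalence between $\nabla_r$ and $\widetilde{\nabla}_r$ are assumed. The only thing that requires care is the monotonicity step $\pi(G')\leq\pi(G)$, which is immediate from the definition of nonrepetitive colouring (the restriction of a nonrepetitive colouring of $G$ to $V(G')$ remains nonrepetitive, since any repetitively coloured path in $G'$ would be a repetitively coloured path in $G$). So the whole proof is just a one-paragraph chaining of prior results, and there is nothing left to prove by hand.
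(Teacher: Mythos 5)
Your proposal is correct and follows essentially the same route as the paper: bound $\widetilde{\nabla}_r(G)$ via \cref{ExtremalSubdivision} and monotonicity of $\pi$, then convert to a bound on $\nabla_r(G)$ using \cref{NablaTopoNabla}. Nothing further is needed.
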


\citet{NOW11} actually proved a stronger result that we now present. The idea originates in a notion of 
\citet{DujWoo05}, who defined a graph parameter $\alpha$ to be {\em topological} if for some function $f$, every graph $G$ satisfies $\alpha(G)\leq f(\alpha(G^{(1)}))$ and $\alpha(G^{(1)})\leq f(\alpha(G))$, where $G^{(1)}$ is the $1$-subdivision of $G$. For instance, tree-width and genus are topological, but chromatic number is not. Sightly more generally, \citet{NOW11} defined a graph parameter $\alpha$ to be {\em strongly topological} if for some function $f$, for every graph $G$ and every $(\leq 1)$-subdivision $H$ of $G$, we have $\alpha(G)\leq f(\alpha(H))$ and $\alpha(H)\leq f(\alpha(G))$. 
\cref{Subdiv2Graph,NonRepSub}(a) imply:

\begin{thm}[\citep{NOW11}] 
	$\pi$ is strongly topological. 
\end{thm}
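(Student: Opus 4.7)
The plan is to handle the two directions separately and then take the larger of the two bounding functions. For one direction, namely $\pi(H)\leq f(\pi(G))$, \cref{NonRepSub}(a) immediately gives what we want: for every $(\leq 1)$-subdivision $H$ of a graph $G$ we have $\pi(H)\leq \pi(G)+1$, so the linear function $x\mapsto x+1$ suffices on this side.

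For the other direction, namely $\pi(G)\leq f(\pi(H))$, I would invoke \cref{Subdiv2Graph}, which is precisely the content needed: if $\pi(H)\leq k$ for some $(\leq 1)$-subdivision $H$ of $G$, then $\pi(G)$ is bounded by an explicit (albeit enormous) function of $k$, namely
\[
\pi(G)\leq (k+1)\cdot 2^{2(k+1)^2}\big((k+1)(k+2)(2k+3)\big)^{(k+1)\cdot 2^{2(k+1)^2}-1}.
\]
Call this bound $g(k)$. Applying this with $k=\pi(H)$ gives $\pi(G)\leq g(\pi(H))$.

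Combining the two sides, it suffices to define $f(x):=\max\{x+1,\, g(x)\}$; then $f$ witnesses that $\pi$ is strongly topological. The real content is entirely in \cref{Subdiv2Graph}, whose proof (via the acyclic-chromatic-number machinery in \cref{ConstructAcyclicPlus,NonRepGraphs}) already carries all the difficulty; no additional obstacle arises here, and the argument is essentially a one-line composition of the two cited results.
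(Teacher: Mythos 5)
Your proposal is correct and matches the paper's own argument, which derives the theorem directly from \cref{Subdiv2Graph} together with \cref{NonRepSub}(a). The only addition you make — explicitly taking $f(x):=\max\{x+1,g(x)\}$ to get a single bounding function — is exactly what the paper leaves implicit.
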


\citet{NOW11} characterised bounded expansion classes as follows. A graph parameter $\alpha$ is {\em monotone} if $\alpha(H)\leq\alpha(G)$ for every subgraph  $H$ of $G$, and $\alpha$ is {\em degree-bound} if for some function $f$, every graph $G$ has a vertex of degree at most $f(\alpha(G))$. \citet{NOW11} characterised bounded expansion classes as follows:

\begin{lem}[\citep{NOW11}]
\label{TopoExp}
A graph class $\mathcal C$ has bounded expansion if and only if there exists a strongly topological, monotone, degree-bound graph parameter $\alpha$ and a constant $c$ such that $\mathcal C\subseteq\{G: \alpha(G)\leq c\}$.
\end{lem}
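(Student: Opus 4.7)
The plan is to prove both implications. The forward (``if'') direction generalises the argument behind \cref{PiBoundedExpansion}: given $\alpha$ strongly topological with witness $s$, monotone, and degree-bound with witness $d$, and $\alpha(G)\leq c$ throughout $\mathcal{C}$, I would fix $G\in\mathcal{C}$, $r\geq 0$, and an arbitrary $r$-shallow topological minor $H$ of $G$ certified by a $(\leq 2r)$-subdivision $H'\subseteq G$ of $H$. Monotonicity gives $\alpha(H')\leq c$. The key observation is that any $(\leq m)$-subdivision of a graph $K$ can be produced from $K$ by at most $\lceil\log_2(m+1)\rceil$ iterated $(\leq 1)$-subdivisions, so applying the strongly topological bound logarithmically many times yields $\alpha(H)\leq s^{(\lceil\log_2(2r+1)\rceil)}(c)$, a quantity depending only on $c$ and $r$. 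Applying monotonicity and the degree-bound property to every induced subgraph of $H$ shows $H$ is $d(\alpha(H))$-degenerate, so its average degree is bounded by a function of $c$ and $r$. Taking suprema over $H$ bounds $\widetilde{\nabla}_r(G)$, and \cref{NablaTopoNabla} converts this into a bound on $\nabla_r(G)$, establishing bounded expansion.

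For the converse, suppose $\mathcal{C}$ has bounded expansion with non-decreasing bounding function $f$. Replace $f$ by a regular majorant $\widetilde{f}$ chosen so that $\widetilde{f}(2r+1)/\widetilde{f}(r)$ is bounded uniformly in $r$, and define
\[
\alpha(G):=\sup_{r\geq 0}\frac{\widetilde{\nabla}_r(G)}{\max\{1,\widetilde{f}(r)\}}.
\]
This is finite for every finite $G$ because $\widetilde{\nabla}_r(G)$ stabilises once $r\geq|V(G)|$. The hypothesis on $\mathcal{C}$ gives $\widetilde{\nabla}_r(G)\leq\nabla_r(G)\leq f(r)\leq\widetilde{f}(r)$ for $G\in\mathcal{C}$, so $\alpha(G)\leq 1$, and we take $c:=1$. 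Monotonicity is inherited from $\widetilde{\nabla}_r$: every $r$-shallow topological minor of a subgraph of $G$ is one of $G$. Degree-boundedness follows from the $r=0$ term, since $\widetilde{\nabla}_0(G)\leq\alpha(G)\widetilde{f}(0)$ upper-bounds the edge-to-vertex ratio of every subgraph of $G$, hence its degeneracy and minimum degree (the standard averaging argument).

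The remaining property is strongly topological, and it splits naturally into two comparisons. Let $H$ be any $(\leq 1)$-subdivision of $G$. One direction is essentially free: any $r$-shallow topological minor of $H$ pulls back to an $r$-shallow topological minor of $G$, because each subdivision vertex of $H$ that appears in the witness has both its incident half-edges in that witness (it has degree only $2$ in $H$), so the two-edge path can be contracted to the corresponding original edge of $G$; this yields $\widetilde{\nabla}_r(H)\leq\widetilde{\nabla}_r(G)$ and hence $\alpha(H)\leq\alpha(G)$. In the other direction, an $r$-shallow topological minor of $G$ lifts to a $(2r+1)$-shallow topological minor of $H$ by subdividing each edge of the witness at most once, giving $\widetilde{\nabla}_r(G)\leq\widetilde{\nabla}_{2r+1}(H)$; combining with the controlled ratios of $\widetilde{f}$ yields $\alpha(G)\leq K\alpha(H)$ for a universal constant $K$. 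The main obstacle I expect is precisely this smoothing step: constructing $\widetilde{f}$ that dominates $f$, is finite at every $r$, and satisfies $\widetilde{f}(2r+1)/\widetilde{f}(r)=O(1)$, which is routine when the raw expansion of $\mathcal{C}$ grows at most exponentially but more delicate in general and may require an iterated-logarithm rescaling or passing to $\log\widetilde{\nabla}_r$ instead. Once this smoothing is in place, everything else is bookkeeping on top of \cref{NablaTopoNabla} and the extremal edge-count bound of \cref{ExtremalSubdivision}.
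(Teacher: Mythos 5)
The survey states \cref{TopoExp} with a citation to \citet{NOW11} and does not reproduce a proof, so I assess your argument on its own terms. Your forward implication is sound and is precisely the abstract form of the computation used around \cref{PiBoundedExpansion}: monotonicity bounds $\alpha$ on the subdivision witness $H'\subseteq G$, expressing $H'$ as $\lceil\log_2(2r+1)\rceil$ iterated $(\leq 1)$-subdivisions of $H$ transfers the bound to $\alpha(H)$ via the strongly topological function, monotone plus degree-bound gives a degeneracy bound and hence a bound on $|E(H)|/|V(H)|$, and \cref{NablaTopoNabla} converts $\widetilde{\nabla}_r$ into $\nabla_r$. (You tacitly assume the witnessing functions are non-decreasing; that is harmless.)

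The converse, however, contains a genuine gap, which you flag but do not resolve, and it is not a routine smoothing issue: the required majorant does not exist in general. If $\widetilde f$ is non-decreasing and $\widetilde f(2r+1)\leq K\,\widetilde f(r)$ for all $r$, then $\widetilde f(2^i-1)\leq K^i\widetilde f(0)$, hence $\widetilde f(r)\leq K\widetilde f(0)(r+1)^{\log_2 K}$: the doubling condition forces \emph{polynomial} growth. So your $\alpha(G)=\sup_r\widetilde{\nabla}_r(G)/\max\{1,\widetilde f(r)\}$ can only be set up when $\mathcal{C}$ has polynomial expansion. This excludes most bounded expansion classes; for example, the class of cubic graphs has bounded expansion but every bounding function grows exponentially in $r$ (replacing each vertex of $K_n$ by a balanced binary tree gives a cubic graph with $\nabla_{O(\log n)}\geq\Omega(n)$). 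Passing to $\log\widetilde{\nabla}_r$ merely moves the threshold to (quasi-)exponential bounding functions, and no fixed rescaling works for all $f$, so the fallback you sketch does not close the gap.

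The repair is to exploit that ``strongly topological'' only demands $\alpha(G)\leq g(\alpha(H))$ for \emph{some} function $g$, not a linear bound, so one should not normalise by a ratio at all. Assume without loss of generality that $f$ is increasing and unbounded (replace $f(r)$ by $f(r)+r$), set $f_1:=f$ and $f_{t+1}(r):=f_t(2r+2)$, and define $\alpha(G):=\min\{t\geq 1:\widetilde{\nabla}_r(G)\leq f_t(r)\text{ for all }r\}$, which is finite since $\widetilde{\nabla}_r(G)\leq|E(G)|$ and $f_t(0)\to\infty$. Then $\mathcal{C}\subseteq\{\alpha\leq 1\}$, monotonicity is inherited from $\widetilde{\nabla}_r$, degree-boundedness comes from the $r=0$ term exactly as in your argument, and both halves of the strongly topological condition hold with $g(t)=t+1$, because moving between $G$ and a $(\leq 1)$-subdivision $H$ changes the relevant depth by at most a factor of two plus an additive constant: your inequality $\widetilde{\nabla}_r(G)\leq\widetilde{\nabla}_{2r+1}(H)$ in one direction, and in the other a pullback showing every $r$-shallow topological minor of $H$ is an $(r+O(1))$-shallow topological minor of $G$. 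On that last point, note that your one-line contraction argument is also slightly too quick: branch vertices of the witness may themselves be division vertices of $H$ (they then have degree at most $2$ and must be slid to an endpoint of the subdivided edge, with a little care about collisions); this is fixable, but it is not free.
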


By definition, $\pi$ is a monotone graph parameter. By \cref{ExtremalPi}, every graph $G$ has a vertex of degree at most $2\pi(G)-2$, implying that $\pi$ is degree-bound. Thus \cref{TopoExp} is applicable with $\mathcal{C}=\{G:\pi(G)\leq c\}$ where $\alpha$ is nonrepetitive chromatic number itself. In particular, \cref{TopoExp} implies \cref{PiBoundedExpansion}. 

The following well-known folklore theorem takes this result further, and actually characterises bounded expansion classes in terms of nonrepetitive colourings. 

\begin{thm}
\label{BoundedExpansionIffNonRep}
A graph class $\GG$ has bounded expansion if and only if there is a function $f$ such that for every graph $G\in\GG$ and every  $k\in\NN$, there is an $f(k)$-colouring of $G$ with no repetitively coloured path on at most $2k$ vertices. 
\end{thm}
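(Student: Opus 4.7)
The plan is to prove both implications separately, using \cref{ExtremalSubdivisionExtra,NablaTopoNabla} for one direction and the $p$-centered colouring characterisation of bounded expansion for the other.

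For the ($\Leftarrow$) direction, suppose the colouring property holds with function $f$. Fix $r \in \NN$ and a graph $G \in \GG$, and let $H$ be any $r$-shallow topological minor of $G$, realised by a $(\leq 2r)$-subdivision $G'$ of $H$ contained in $G$. Set $k := 4r+2$, so that $2k = 4(2r)+4$. The hypothesis provides an $f(k)$-colouring of $G$ with no repetitively coloured path on at most $2k$ vertices; its restriction to $G'$ retains this property. Applying \cref{ExtremalSubdivisionExtra} with $d = 2r$ and $c = f(k)$ gives $|E(H)| \leq 2\,f(4r+2)^{2r+1}\,|V(H)|$, so $\widetilde{\nabla}_r(G) \leq 2\,f(4r+2)^{2r+1}$, a quantity depending only on $r$. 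Then \cref{NablaTopoNabla} bounds $\nabla_r(G)$ by a function of $r$ alone, so $\GG$ has bounded expansion.

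For the ($\Rightarrow$) direction, I would invoke the characterisation of bounded expansion via $p$-centered colourings due to \citet{Sparsity}. A colouring $\phi$ of $G$ is \emph{$p$-centered} if every connected subgraph $H$ of $G$ either uses more than $p$ distinct colours under $\phi$, or has some colour that appears on exactly one vertex of $H$. Their theorem asserts that $\GG$ has bounded expansion if and only if for every $p \in \NN$ there is a constant $M(p)$ such that every $G \in \GG$ admits a $p$-centered colouring using at most $M(p)$ colours. Given $k \in \NN$, choose such a $k$-centered colouring $\phi$ of $G$, using at most $M(k)$ colours. Suppose for contradiction that $\phi$ has a repetitively coloured path $P = (v_1, \dots, v_{2s})$ with $1 \leq s \leq k$. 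Then $P$ is a connected subgraph on $2s$ distinct vertices using at most $s \leq k$ distinct colours, so by the $k$-centered property some colour appears on exactly one vertex of $P$. However, the identity $\phi(v_i) = \phi(v_{s+i})$ for each $i \in \{1,\dots,s\}$ forces every colour on $P$ to appear on at least two (distinct) vertices. This contradiction shows that $f(k) := M(k)$ witnesses the required colouring property.

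The principal obstacle is the $p$-centered colouring characterisation of bounded expansion: its proof is substantial, but it is a pillar of the sparsity framework and may be invoked as a black box. Once it is in hand, the ruling out of short repetitive paths reduces to the one-line observation that repetition forces every colour on the path to occur at least twice, contradicting the existence of a centre. The $(\Leftarrow)$ direction requires only the correct bookkeeping between depth parameters, setting $k := 4r+2$ so that $2k$ matches the $4d+4$ hypothesis of \cref{ExtremalSubdivisionExtra} with $d = 2r$, and then combining the resulting density bound on topological minors with the equivalence in \cref{NablaTopoNabla}.
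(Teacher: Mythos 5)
Your proposal is correct and follows essentially the same route as the paper: the forward direction via the $p$-centred colouring characterisation of bounded expansion (your ``more than $p$ colours'' convention for a $k$-centered colouring is just the paper's $(k+1)$-centred colouring, so the one-line repetition argument is identical), and the backward direction via \cref{ExtremalSubdivisionExtra} with $d=2r$, $k=4r+2$ combined with \cref{NablaTopoNabla}. No gaps.
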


\begin{proof}
The following definition is useful for the proof. A coloring of a graph $G$ is \emph{$p$-centred} if for every connected subgraph $H$ of $G$, some color appears exactly once in $H$ or $H$ is assigned at least $p$ colours. Let $\chi_p(G)$ be the minimum number of colours in a $p$-centred colouring of $G$. 

$(\Longrightarrow)$ (This direction is similar to a result of \citet{Gryczuk-IJMMS07} about  weak colouring numbers and a result of \citet{Yang09} about transitive fraternal augmentations.)\ Let $\GG$ be a graph class with bounded expansion. \citet{Sparsity} proved that $\GG$ has bounded $\chi_p$ for every $p\in\NN$. That is, there exists a function $f$ (depending on the expansion function of $\GG$) such that $\chi_p(G)\leq f(p)$ for every graph $G\in\GG$. Consider a $(k+1)$-centred colouring $\phi$ of a graph $G\in\GG$ with at most $f(k+1)$ colours. Let $P$ be a path in $G$ on at most $2k$ vertices. If $P$ is repetitively coloured by $\phi$, then no colour appears exactly once in $P$, implying that $P$ is assigned at least $k+1$ colours, which is impossible for a repetitively coloured path on at most $2k$ vertices. Thus paths with at most $2k$ vertices in $G$ are nonrepetitiely coloured. 

$(\Longleftarrow)$  Let $\GG$ be a graph class and let $f$ be a function such that for every graph $G\in\GG$ and every  $k\in\NN$, there is an $f(k)$-colouring of $G$ with no repetitively coloured path on at most $2k$ vertices. 
To show that $\widetilde{\nabla}_r(G)$ is bounded for $G\in\GG$, we use the argument at the start of this section. By definition, some subgraph $G'$ of $G$ is a $(\leq 2r)$-subdivision of some graph $H$ with 
$\frac{|E(H)|}{|V(H)|} = \widetilde{\nabla}_r(G)$. Let $k:=4r+2$. By assumption, there is an $f(k)$-colouring of $G$ with no repetitively coloured path on at most $2k$ vertices. The same property holds for the subgraph $G'$ of $G$. 
By \cref{ExtremalSubdivisionExtra} with $d=2r$ and $4d+4=2k$, we have $\widetilde{\nabla}_r(G) = \frac{|E(H)|}{|V(H)|} < 2 f(k)^{2r+1}$. By \cref{NablaTopoNabla}, $\nabla_r(G)\leq4(8 f(k))^{2r+1} )^{(r+1)^2}$, 
which is a function of $r$. Hence $\GG$ has bounded expansion.
\end{proof}

Many graph classes with bounded expansion also have bounded nonrepetitive chromatic number (such as planar graphs, graphs excluding a fixed minor, graphs excluding a fixed subdivision, $(g,k)$-planar graphs, etc.) The following open problem is probably the most important direction for future research on nonrepetitive colourings. 

\begin{open}
Do graphs of linear / polynomial / single exponential expansion have bounded nonrepetitive chromatic number? 	Single exponential expansion would be best possible here, since by \cref{CompleteGraphSubdivLowerBound}, the $o(\log n)$-subdivision of $K_n$ has unbounded $\pi$. It is even possible that if a graph class $\mathcal{C}$ has bounded expansion with expansion function $f(r)$, then for some constant $c$, every graph $G\in\mathcal{C}$ satisfies
	\begin{equation}
	\label{Speculative}
	\pi(G)\leq c \sup_r f(r)^{2/r}
	\end{equation}
Note that graphs $G$ with maximum degree $\Delta$ have bounded expansion with expansion function $f(r)\leq\Delta^r$. So if \eqref{Speculative} holds, then $\pi(G)\leq c \sup_r \Delta^2$, implying \eqref{DeltaSquared}. This is the reason for the 2 in \eqref{Speculative}. This question is highly speculative. Whether graphs with linear or polynomial expansion have bounded $\pi$ is already a challenging question. Note that 
\cref{Speculative} was jointly formulated with Gwena\"el Joret.
\end{open}

\subsection*{Acknowledgements} Thanks to J{\'a}nos Bar{\'a}t, Vida Dujmovi\'c, Louis Esperet,  Gwena\"el Joret, Jakub Kozik, Jaroslav Ne{\v{s}}et{\v{r}}il, Patrice Ossona de Mendez, Bartosz Walczak, Paul Wollan and Timothy Wilson,  with whom I have collaborated on nonrepetitive graph colouring. Thanks to Louis Esperet, Kevin Hendrey, Robert Hickingbotham, Gwena\"el Joret and Piotr Micek for insightful comments about this survey. Thanks to Arseny Shur for helpful discussions about reference \citep{Shur10}. Thanks to Timothy Chan for pointing out  reference~\citep{ArithmeticoGeometricSequence}.  Thanks to St\'ephan Thomass\'e for asking a good question. 

  \let\oldthebibliography=\thebibliography
  \let\endoldthebibliography=\endthebibliography
  \renewenvironment{thebibliography}[1]{%
    \begin{oldthebibliography}{#1}%
      \setlength{\parskip}{0ex}%
      \setlength{\itemsep}{0ex}%
  }{\end{oldthebibliography}}

\bibliographystyle{../../BibTex/myNatbibStyle}
\bibliography{../../BibTex/myBibliography}

\end{document}